\numberwithin{equation}{section}
\numberwithin{figure}{section}
\numberwithin{table}{section}
\theoremstyle{plain}
\newtheorem{theorem}{Theorem}[section]
\newtheorem{proposition}[theorem]{Proposition}
\newtheorem{lemma}[theorem]{Lemma}
\theoremstyle{definition}
\theoremstyle{remark}
\newtheorem{remark}{Remark}[section]
\begin{document}

\title{%
Recursive calculation of connection formulas
for systems of differential equations of Okubo normal form}

\author{%
Toshiaki Yokoyama}

\date{%
{\small
Department of Mathematics,
Chiba Institute of Technology,
Narashino 275-0023, Japan%
}}



\maketitle

\begin{abstract}
We study the structure of analytic continuation of
solutions of an even rank system of
linear ordinary differential equations of
Okubo normal form (ONF).
We develop an adjustment of the method
by using the Euler integral
for evaluating the connection formulas of
the Gauss hypergeometric function
${}_2F_1(\alpha, \beta, \gamma; x)$
to the system of ONF.
We obtain recursive relations
between connection coefficients for
the system of ONF and ones for
the underlying system of half rank.
\end{abstract}



\section{Introduction}
\label{sec:intro}
The Gauss hypergeometric differential equation
\begin{equation} \label{eq:gauss}
x(1-x)\frac{d^2u}{dx^2}
+\{c-(1+a+b)x\}\frac{du}{dx}
-abu=0
\end{equation}
is transformed into a system of first-order differential
equations of the form
\begin{displaymath}
\left(x I_2-\begin{pmatrix}0&\\&1\end{pmatrix}\right)
\frac{d}{dx}
\begin{pmatrix}u_1\\u_2\end{pmatrix}
=\begin{pmatrix}1-c & 1 \\
-(1-c+a)(1-c+b) & c-1-a-b\end{pmatrix}
\begin{pmatrix}u_1\\u_2\end{pmatrix},
\end{displaymath}
where
\begin{equation} \label{trans:gausstookubo}
u_1=u, \quad u_2=x\frac{du}{dx}-(1-c)u.
\end{equation}
As a generalization of this system,
Okubo (\cite{O1}, \cite{O2}) studies a rank  $n$ system
of linear differential equations of the form
\begin{equation} \label{sys:okubo}
(xI_n-T)\frac{du}{dx}=Au,
\end{equation}
where $u$ is an $n$-vector of unknown functions,
$A$ is an $n\times n$ constant matrix,
$T$ is an $n\times n$ diagonal constant matrix,
and $I_n$ denotes the $n\times n$ identity matrix.
We call (\ref{sys:okubo}) a system of
linear differential equations of
{\em Okubo normal form} (ONF).

In this paper we shall study the structure of
analytic continuation of solutions of
a rank $2n$ system of ONF of the form
\begin{equation} \label{sys:E2}
\begin{gathered}
\left(x I_{2n}-\mathcal{T}\right)
\frac{dU}{dx}
=\mathcal{A}_{P}U,
\\
\mathcal{T}=\begin{pmatrix}T&\\&t I_n\end{pmatrix},
\quad
\mathcal{A}_{P}=\begin{pmatrix}A & P \\
-(A'-\rho_{1}I_n)(A'-\rho_{2}I_n)P^{-1} & (\rho_{1}+\rho_{2})I_n-A'
\end{pmatrix},
\end{gathered}
\end{equation}
where $U=U(x)$ is a $2n$-vector of unknown functions,
$T,A\in \mathrm{M}(n;\mathbb{C})$, $P\in \mathrm{GL}(n;\mathbb{C})$
are matrices such that $T$ and $A'=P^{-1}AP$ are diagonal,
$t$ is a complex number satisfying $\det(t I_n-T)\ne0$,
and $\rho_j$ ($j=1,2$) are complex constants.

The Gauss hypergeometric equation
is intimately related to the Euler-Darboux equation.
Our system (\ref{sys:E2}) is obtained through the process
of making the relation of the two equations clear.
Set
\begin{displaymath}
E(\alpha,\beta,\gamma)
=(\xi-\eta)^2\frac{\partial^2}{\partial\xi\partial\eta}
-\alpha(\xi-\eta)\frac{\partial}{\partial\xi}
-\beta(\eta-\xi)\frac{\partial}{\partial\eta}
-\gamma.
\end{displaymath}
In the case that $\gamma=0$,
which is done by using the relation
\begin{equation} \label{rel:underlyingE}
E(\alpha,\beta,\gamma)(\xi-\eta)^{\delta}
=(\xi-\eta)^{\delta}E(\alpha+\delta,\beta+\delta,
\gamma+\delta(\delta+\alpha+\beta-1))
\end{equation}
with a suitable choice of the parameter $\delta$,
the partial differential equation
\begin{equation} \label{eq:eulerdarboux}
E(\alpha,\beta,0)f=0
\end{equation}
is called the Euler-Darboux equation.
Darboux (\cite[\S347]{D}) shows that
by setting
$f=\xi^{\lambda}\varphi(t)$,
$t=\eta/\xi$,
with a fixed constant $\lambda$,
the equation (\ref{eq:eulerdarboux}) is reduced to
the ordinary differential equation
\begin{displaymath}
t(1-t)\frac{d^2\varphi}{dt^2}
+\{1-\lambda-\beta-(1-\lambda+\alpha)t\}\frac{d\varphi}{dt}
+\lambda\alpha\varphi=0,
\end{displaymath}
which is the hypergeometric equation with the parameters
$a$, $b$, $c$ replaced by $\alpha$, $-\lambda$,
$1-\lambda-\beta$, respectively.
Set
\begin{displaymath}
L_2(\lambda)
=\xi\frac{\partial}{\partial\xi}
+\eta\frac{\partial}{\partial\eta}
-\lambda.
\end{displaymath}
Miller (\cite{M}) notices that
the space of particular solutions
of (\ref{eq:eulerdarboux}) of the form
$f=\xi^{\lambda}\varphi(\eta/\xi)$
coincides with
the space of solutions of
the system of partial differential equations
\begin{equation} \label{sys:miller}
\left\{
\begin{aligned}
E(\alpha,\beta,0)f &= 0,
\\
L_2(\lambda)f &= 0,
\end{aligned}\right.
\end{equation}
since the second equation implies
$f=\xi^{\lambda}\varphi(\eta/\xi)$.
Let us consider a $1$-dimensional section of this system
obtained by taking $\eta$ for a constant, and show that
the section is also reduced to the hypergeometric equation.
We first make the substitution
$f=(\xi-\eta)^{-\beta}g$.
Then, using the relation (\ref{rel:underlyingE})
and the relation
\begin{displaymath}
L_2(\lambda)(\xi-\eta)^{\delta}
=(\xi-\eta)^{\delta}L_2(\lambda-\delta),
\end{displaymath}
we obtain the system of partial differential equations
\begin{equation} \label{sys:millerdash}
\left\{
\begin{aligned}
E(\alpha-\beta,0,(1-\alpha)\beta)g &= 0,
\\
L_2(\lambda+\beta)g &= 0.
\end{aligned}\right.
\end{equation}
Eliminating the term of
$\partial^2 g/\partial\xi\partial\eta$
from this system by
\begin{equation} \label{form:eliminat}
(\xi-\eta)^2\frac{\partial}{\partial\xi} L_2(\lambda+\beta)g
-\eta E(\alpha-\beta,0,(1-\alpha)\beta)g,
\end{equation}
we obtain
\begin{equation} \label{eq:section}
\xi(\xi-\eta)^2\frac{\partial^2 g}{\partial\xi^2}
+\{(1-\lambda-\beta)\xi-(1-\lambda-\alpha)\eta\}
(\xi-\eta)\frac{\partial g}{\partial\xi}
+(1-\alpha)\beta\eta g=0.
\end{equation}
As a differential equation in one variable $\xi$,
this equation is Fuchsian and has
the Riemann scheme (the list of exponents)
\begin{displaymath}
\left\{\begin{array}{c@{\qquad}c@{\qquad}c}
\xi=0          & \xi=\infty     & \xi=\eta \\[\jot]
0              & 0              & 1-\alpha \\[\jot]
\lambda+\alpha & -\lambda-\beta & \beta
\end{array}\right\}.
\end{displaymath}
So, if we write
\begin{equation} \label{trans:changepar}
\alpha=1-a, \
\beta=b, \
\lambda=a-c
\qquad \text{or} \qquad
\alpha=1-b, \
\beta=a, \
\lambda=b-c
\end{equation}
and change the variable $\xi$ to $x$ by
\begin{equation} \label{trans:changevar}
x=1+\frac{\eta}{\xi-\eta}
\end{equation}
which carries the points $\xi=0,\infty,\eta$
into the points $x=0,1,\infty$, respectively,
then we can transform the equation (\ref{eq:section})
into the equation (\ref{eq:gauss}).
Note that by virtue of the substitution (\ref{trans:changepar}),
we can write the system (\ref{sys:millerdash})
in the form
\begin{equation} \label{sys:symmetricpareudar}
\left\{
\begin{aligned}
E(1-a-b,0,ab)g &= 0,
\\
L_2(a+b-c)g &= 0,
\end{aligned}\right.
\end{equation}
which is not altered if $a$ and $b$ are interchanged.

Darboux (\cite[\S354]{D}) furthermore shows that
an integral of the form
\begin{equation} \label{ftn:inteudar}
f(\xi,\eta)
=\int_{\eta}^{\xi}
(\xi-\zeta)^{-\beta}(\eta-\zeta)^{-\alpha} h(\zeta)\,d\zeta
\end{equation}
becomes a solution of the equation (\ref{eq:eulerdarboux})
under the condition $\Re\alpha<0$, $\Re\beta<0$.
Set
\begin{displaymath}
L_1(\lambda)
=\zeta\frac{d}{d\zeta}-\lambda.
\end{displaymath}
If $h(\zeta)$ is a solution of
the ordinary differential equation
\begin{equation} \label{eq:onevardash}
L_1(\lambda+\alpha+\beta-1)h=0,
\end{equation}
then the integral (\ref{ftn:inteudar}) satisfies
$L_2(\lambda)f=0$ and hence becomes
a solution of the system (\ref{sys:miller}).
Indeed, since
\begin{displaymath}
\begin{aligned}
f_{\xi}(\xi,\eta)
&=
\int_{\eta}^{\xi} (-\beta)
(\xi-\zeta)^{-\beta-1}(\eta-\zeta)^{-\alpha} h(\zeta)\,d\zeta,
\\
f_{\eta}(\xi,\eta)
&=
\int_{\eta}^{\xi} (-\alpha)
(\xi-\zeta)^{-\beta}(\eta-\zeta)^{-\alpha-1} h(\zeta)\,d\zeta,
\end{aligned}
\end{displaymath}
we have
\begin{equation} \label{pf:inteudartwo}
\begin{aligned}
&
{\xi f_{\xi}(\xi,\eta)+\eta f_{\eta}(\xi,\eta)
 +(\alpha+\beta)f(\xi,\eta)}
 \\
&=
\int_{\eta}^{\xi} \{
\beta(-\zeta)
(\xi-\zeta)^{-\beta-1}(\eta-\zeta)^{-\alpha}
+\alpha(-\zeta)
(\xi-\zeta)^{-\beta}(\eta-\zeta)^{-\alpha-1}
\} h(\zeta)\,d\zeta
\\
&=
-\int_{\eta}^{\xi}
\frac{\partial}{\partial \zeta}\{(\xi-\zeta)^{-\beta}(\eta-\zeta)^{-\alpha}\}
\zeta h(\zeta)\,d\zeta
\\
&=
\int_{\eta}^{\xi}
(\xi-\zeta)^{-\beta}(\eta-\zeta)^{-\alpha}
\frac{d}{d\zeta}\{\zeta h(\zeta)\}\,d\zeta
\\
&=
(\lambda+\alpha+\beta)f(\xi,\eta),
\end{aligned}
\end{equation}
which leads to $L_2(\lambda)f(\xi,\eta)=0$.
Here we have used
$\displaystyle
 \frac{d}{d\zeta}\{\zeta h(\zeta)\}
 =h(\zeta)+\zeta \frac{d}{d\zeta}h(\zeta)
 =(\lambda+\alpha+\beta)h(\zeta)$
in the last equality.

It should be noted
that the equation (\ref{eq:onevardash})
is a rank $1$ system of ONF
and the integral (\ref{ftn:inteudar}) is considered
as an extension in two variables of the Euler
transformation
\begin{displaymath}
f(\zeta)
=\int_{}^{\zeta}
(\zeta-\tau)^{-\alpha-\beta} h(\tau)\,d\tau
\end{displaymath}
which carries a solution of the equation (\ref{eq:onevardash})
into a solution of the equation $L_1(\lambda)f=0$.
In view of affinity of the system of ONF
for the Euler transformation
we may extend $L_1(\lambda)$, $L_2(\lambda)$ to rank $n$
differential operators of ONF as
\begin{displaymath}
\begin{aligned}
L_1(T',A;\rho)
&=
(\zeta I_n-T')\frac{d}{d\zeta}-\rho I_n-A,
\\
L_2(T',A;\rho)
&=
(\xi I_n-T')\frac{\partial}{\partial\xi}
+(\eta I_n-T')\frac{\partial}{\partial\eta}
-\rho I_n-A,
\end{aligned}
\end{displaymath}
where $A$ is an $n\times n$ constant matrix and
$T'$ is an $n\times n$ diagonal constant matrix.
For a solution $w(\zeta)$ of the system of ONF
\begin{equation} \label{sys:oneparokubo}
L_1(T',A;\rho+\alpha+\beta-1)w=0
\end{equation}
we define
\begin{equation} \label{ftn:ndiminteudar}
u(\xi,\eta)
=\int_{\eta}^{\xi}
(\xi-\zeta)^{-\beta}(\eta-\zeta)^{-\alpha} w(\zeta)\,d\zeta
\end{equation}
provided that $\Re\alpha<0$ and $\Re\beta<0$.
Then, similarly to the integral (\ref{ftn:inteudar}),
$u(\xi,\eta)$ becomes a solution of
the system of partial differential equations
\begin{equation} \label{sys:ndimmiller}
\left\{
\begin{aligned}
E(\alpha,\beta,0)u &= 0,
\\
L_2(T',A;\rho)u &= 0.
\end{aligned}\right.
\end{equation}
The operator $L_2(T',A;\rho)$ also satisfies the relation
\begin{displaymath}
L_2(T',A;\rho)(\xi-\eta)^{\delta}
=(\xi-\eta)^{\delta}L_2(T',A;\rho-\delta).
\end{displaymath}
So, similarly to (\ref{sys:millerdash})
and (\ref{sys:symmetricpareudar}),
setting
\begin{displaymath}
\alpha=1+\rho_{j'}, \quad
\beta=-\rho_{j}, \quad
\rho=-1-\rho_{j'}, \quad
\text{and} \quad
u=(\xi-\eta)^{\rho_{j}}v,
\end{displaymath}
where $j\in\{1,2\}$ and $j'$ denotes the complement of $j$
in $\{1,2\}$, namely, $(j,j')=(1,2)$ or $(2,1)$,
we can transform the system (\ref{sys:ndimmiller}) into the system
\begin{equation} \label{sys:yokext}
\left\{
\begin{aligned}
E(1+\rho_{1}+\rho_{2},0,\rho_{1}\rho_{2})v &= 0,
\\
L_2(T',A;-1-\rho_{1}-\rho_{2})v &= 0,
\end{aligned}\right.
\end{equation}
which is not altered if $\rho_{1}$ and $\rho_{2}$ are interchanged.
The quantities $\rho_{1}$, $\rho_{2}$ and $A$
in (\ref{sys:yokext})
correspond to $-a$, $-b$ and $1-c$
in (\ref{sys:symmetricpareudar}),
respectively.

Our system (\ref{sys:E2}) is obtained as
a $1$-dimensional section of the system (\ref{sys:yokext}).
The author introduced the system (\ref{sys:E2}) firstly
in \cite{Y1}, and then used it
to establish an algorithm for constructing
all irreducible semisimple systems
of ONF having rigid monodromy in \cite{Y2}.
Haraoka (\cite{Ha}) constructs integral representations
of solutions of the system (\ref{sys:E2})
using the integral equivalent to (\ref{ftn:ndiminteudar}),
and shows by following the author's algorithm
that solutions of
all irreducible semisimple systems of ONF
can be represented by the integral of Euler type.

In this paper we study connection problems between
local solutions of the system (\ref{sys:E2})
by using the integral
(\ref{ftn:ndiminteudar}).
We are especially concerned with
the relation between connection coefficients
of the system (\ref{sys:E2}) and ones
of the system (\ref{sys:oneparokubo}).


\section{Preliminary}
\label{sec:preliminary}
In this section we give transformations for obtaining
the system (\ref{sys:E2}) from the system (\ref{sys:yokext}),
and recall Haraoka's results on integral representations
of solutions of the system (\ref{sys:E2}).

\subsection{Transformation of equations}
Set
\begin{displaymath}
\begin{aligned}
M(T',A)&=
-(\xi-\eta)(\xi I_n-T')\frac{\partial}{\partial\xi}
-A(\eta I_n-T'),
\\
N(T',A;\rho)&=
(\xi-\eta)L_2(T',A;\rho)+M(T',A)
\\
&=
(\xi-\eta)(\eta I_n-T')\frac{\partial}{\partial\eta}
-\rho(\xi-\eta)I_n-A(\xi I_n-T'),
\end{aligned}
\end{displaymath}
and write
\begin{displaymath}
\begin{aligned}
E&=E(1+\rho_{1}+\rho_{2},0,\rho_{1}\rho_{2}),
\\
L_2&=L_2(T',A;-1-\rho_{1}-\rho_{2}),
\\
M&=M(T',A),
\\
N&=N(T',A;-1-\rho_{1}-\rho_{2})
\end{aligned}
\end{displaymath}
for short.

\begin{remark}
Applying the change of variable (\ref{trans:changevar})
to the differential operator defining $u_2$
in (\ref{trans:gausstookubo}),
we obtain
\begin{displaymath}
x\frac{d}{dx}-(1-c)
=\frac{1}{\eta}
\left(-(\xi-\eta)\xi\frac{d}{d\xi}-(1-c)\eta\right).
\end{displaymath}
The differential operator $M(T',A)$
corresponds to this operator.
\end{remark}

Similarly to (\ref{form:eliminat})
we have the following proposition.

\begin{proposition} \label{prop:separate}
We have
\begin{displaymath}
\begin{aligned}
(\xi-\eta)^2\frac{\partial}{\partial\xi}L_2-(\eta I_n-T')E
&=
-\left((\xi-\eta)\frac{\partial}{\partial\xi}
+(\rho_{1}+\rho_{2})I_n-A\right)M
+(A-\rho_{1}I_n)(A-\rho_{2}I_n)(\eta I_n-T'),
\\
\rlap{$\displaystyle
(\xi-\eta)^2\frac{\partial}{\partial\eta}L_2
-(\xi I_n-T')E-(\rho_{1}+\rho_{2}+1)(\xi-\eta)L_2$}
\phantom{
(\xi-\eta)^2\frac{\partial}{\partial\xi}L_2-(\eta I_n-T')E
}
\\
&=
\left((\xi-\eta)\frac{\partial}{\partial\eta}
-(\rho_{1}+\rho_{2})I_n+A\right)N
+(A-\rho_{1}I_n)(A-\rho_{2}I_n)(\xi I_n-T').
\end{aligned}
\end{displaymath}
\end{proposition}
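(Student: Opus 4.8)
Both identities are equalities of second-order differential operators with matrix-valued polynomial coefficients, so the plan is to verify each by expanding the two sides and comparing the coefficients of $\partial_\xi^2$, $\partial_\xi\partial_\eta$, $\partial_\eta^2$, $\partial_\xi$, $\partial_\eta$ and the zeroth-order part. Since $T'$ is diagonal but need not commute with $A$, I will keep all matrix factors in their given order throughout. The compositions $\partial_\xi L_2$, $\partial_\eta L_2$ and the products $(\cdots)M$, $(\cdots)N$ are evaluated by applying the operators to an arbitrary vector-valued test function and using the product rule, which produces the expected second-order terms together with first-order correction terms coming from differentiating the polynomial coefficients.

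For the first identity, I would first compute $\partial_\xi L_2$ and observe that its cross-derivative term is $(\eta I_n-T')\partial_\xi\partial_\eta$; after multiplying by $(\xi-\eta)^2$ and subtracting $(\eta I_n-T')E$, this cross term cancels against the $\partial_\xi\partial_\eta$ part of $E$, which is the design behind the combination (compare (\ref{form:eliminat})). The coefficient of $\partial_\xi^2$ on both sides is then seen directly to be $(\xi-\eta)^2(\xi I_n-T')$. For the coefficient of $\partial_\xi$, the $A$-dependent part on the right arises as $(\xi-\eta)A\{(\eta I_n-T')-(\xi I_n-T')\}=-(\xi-\eta)^2A$, which telescopes to match the $-(\xi-\eta)^2A$ on the left, while the remaining scalar and $T'$-terms are matched by a short polynomial computation. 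Finally, the zeroth-order terms are reconciled by the algebraic identity $\{(\rho_{1}+\rho_{2})I_n-A\}A+(A-\rho_{1}I_n)(A-\rho_{2}I_n)=\rho_{1}\rho_{2}I_n$, so that both sides reduce to $\rho_{1}\rho_{2}(\eta I_n-T')$.

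The second identity is proved in the same way, with the roles of $\xi$ and $\eta$ interchanged and $N$ playing the part of $M$. The one new feature is that, because the chosen operator $E$ is asymmetric in $\xi,\eta$ (its parameter $\beta$ equals $0$), the combination $(\xi-\eta)^2\partial_\eta L_2-(\xi I_n-T')E$ leaves a stray first-order term $(\rho_{1}+\rho_{2}+1)(\xi-\eta)(\xi I_n-T')\partial_\xi$ that has no counterpart on the right-hand side. This is exactly cancelled by the subtracted term $-(\rho_{1}+\rho_{2}+1)(\xi-\eta)L_2$, since $L_2$ contains $(\xi I_n-T')\partial_\xi$; after this cancellation the remaining comparison of coefficients mirrors the first identity, again using the telescoping of the $A$-terms and the same quadratic identity for the zeroth-order part.

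The calculation is elementary but bookkeeping-intensive, and the main obstacle is tracking the first-order correction terms generated by the product rule together with the noncommutativity of $A$ and $T'$. The two simplifications that make everything close up, namely the telescoping $A\{(\eta I_n-T')-(\xi I_n-T')\}=-(\xi-\eta)A$ together with its $\xi\leftrightarrow\eta$ analogue, and the factorization $(A-\rho_{1}I_n)(A-\rho_{2}I_n)=A^2-(\rho_{1}+\rho_{2})A+\rho_{1}\rho_{2}I_n$, are the points to isolate first, after which the scalar-coefficient matching is routine.
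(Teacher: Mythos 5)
Your proposal is correct and is exactly the route the paper takes: the paper's proof is simply ``by direct calculation,'' and your coefficient-by-coefficient expansion --- with the cross-derivative cancellation, the extra $-(\rho_{1}+\rho_{2}+1)(\xi-\eta)L_2$ term absorbing the stray $(\xi I_n-T')\partial_\xi$ contribution, the telescoping $A\{(\eta I_n-T')-(\xi I_n-T')\}=-(\xi-\eta)A$, and the identity $\{(\rho_{1}+\rho_{2})I_n-A\}A+(A-\rho_{1}I_n)(A-\rho_{2}I_n)=\rho_{1}\rho_{2}I_n$ --- is precisely that calculation carried out. I verified that each of the cancellations you isolate does occur and that both identities close up as you describe.
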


\begin{proof}
By direct calculation.
\end{proof}

\begin{proposition} \label{prop:secondtofirst}
Set $v_1=(\eta I_n-T')v$ and $v_2=Mv$.
Then the system of partial differential equations
$(\ref{sys:yokext})$
is transformed into a system of first-order partial
differential equations for a $2n$-vector
$\begin{pmatrix}v_1\\v_2\end{pmatrix}$ of the form
\begin{equation} \label{sys:firstorder}
\left\{\begin{aligned}
(\xi-\eta)\frac{\partial v_1}{\partial\xi}
&=
-(\xi I_n-T')^{-1}(\eta I_n-T')Av_1
-(\xi I_n-T')^{-1}(\eta I_n-T')v_2,
\\
(\xi-\eta)\frac{\partial v_2}{\partial\xi}
&=
(A-\rho_{1}I_n)(A-\rho_{2}I_n)v_1
+(A-(\rho_{1}+\rho_{2})I_n)v_2,
\\
(\xi-\eta)\frac{\partial v_1}{\partial\eta}
&=
\{A(\xi I_n-T')-(\rho_{1}+\rho_{2})(\xi-\eta)I_n\}
(\eta I_n-T')^{-1}v_1
+v_2,
\\
(\xi-\eta)\frac{\partial v_2}{\partial\eta}
&=
-(A-\rho_{1}I_n)(A-\rho_{2}I_n)(\xi I_n-T')(\eta I_n-T')^{-1}v_1
-(A-(\rho_{1}+\rho_{2})I_n)v_2.
\end{aligned}\right.
\end{equation}
\end{proposition}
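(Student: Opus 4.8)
The plan is to derive each of the four first-order relations in $(\ref{sys:firstorder})$ by computing $(\xi-\eta)\partial v_i/\partial\xi$ and $(\xi-\eta)\partial v_i/\partial\eta$ for $i=1,2$ and rewriting the result purely in terms of $v_1$ and $v_2$, using the definitions $v_1=(\eta I_n-T')v$, $v_2=Mv$ together with the two defining equations $Ev=0$ and $L_2v=0$ of $(\ref{sys:yokext})$. Throughout I would exploit that $T'$ is diagonal, so that $\xi I_n-T'$, $\eta I_n-T'$ and their inverses all commute; on the region where $\eta I_n-T'$ is invertible one has $v=(\eta I_n-T')^{-1}v_1$, which lets me eliminate $v$ at the end of each computation.

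The two equations for $\partial v_1/\partial\xi$ and $\partial v_1/\partial\eta$ should follow almost directly from the definitions. Differentiating $v_1=(\eta I_n-T')v$ in $\xi$ gives $(\xi-\eta)\partial v_1/\partial\xi=(\eta I_n-T')(\xi-\eta)v_\xi$, and solving $v_2=Mv=-(\xi-\eta)(\xi I_n-T')v_\xi-Av_1$ for $(\xi-\eta)v_\xi$ yields the first relation after commuting the diagonal factors into the displayed order. For the $\eta$-derivative I would differentiate $v_1=(\eta I_n-T')v$ to obtain $(\xi-\eta)\partial v_1/\partial\eta=(\xi-\eta)v+(\eta I_n-T')(\xi-\eta)v_\eta$, and then invoke $N=(\xi-\eta)L_2+M$: since $L_2v=0$ one has $Nv=Mv=v_2$, and expanding $Nv$ expresses $(\xi-\eta)(\eta I_n-T')v_\eta$ through $v$ and $v_2$. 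Substituting $v=(\eta I_n-T')^{-1}v_1$ then gives the third relation.

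The two equations for $\partial v_2/\partial\xi$ and $\partial v_2/\partial\eta$ are where Proposition \ref{prop:separate} does the real work. Applying the first identity of Proposition \ref{prop:separate} to $v$, the left-hand side $(\xi-\eta)^2\partial_\xi L_2v-(\eta I_n-T')Ev$ vanishes because $Ev=0$ and $L_2v\equiv0$ (so $\partial_\xi(L_2v)=0$); the right-hand side then reduces to $-((\xi-\eta)\partial_\xi+(\rho_1+\rho_2)I_n-A)v_2+(A-\rho_1 I_n)(A-\rho_2 I_n)v_1=0$, which rearranges to the second relation. The fourth relation should come identically from the second identity of Proposition \ref{prop:separate}: its left-hand side again annihilates $v$, while on the right $Nv=v_2$ turns it into $((\xi-\eta)\partial_\eta-(\rho_1+\rho_2)I_n+A)v_2+(A-\rho_1 I_n)(A-\rho_2 I_n)(\xi I_n-T')v=0$, and replacing $v$ by $(\eta I_n-T')^{-1}v_1$ produces the stated form.

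The one genuine obstacle is the appearance of second-order derivatives. Since $M$ and $N$ already contain $\partial_\xi$ and $\partial_\eta$, naively differentiating $v_2=Mv$ would generate $\partial^2 v/\partial\xi^2$ and mixed second derivatives, which cannot be written through $v_1,v_2$ alone. The resolution is precisely the design of Proposition \ref{prop:separate}: those second-order terms are packaged into $(\xi-\eta)^2\partial L_2$ and into $(\eta I_n-T')E$ (respectively $(\xi I_n-T')E$), each of which vanishes on any solution of $(\ref{sys:yokext})$. Thus the only nontrivial bookkeeping is to verify that, after imposing $Ev=0$ and $L_2v=0$, every remaining matrix coefficient collapses to the one displayed in $(\ref{sys:firstorder})$; the diagonality of $T'$ is exactly what allows $\xi I_n-T'$ and $(\eta I_n-T')^{-1}$ to be reordered to match the stated arrangement.
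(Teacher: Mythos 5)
Your proposal is correct and follows essentially the same route as the paper: the paper likewise replaces $(\ref{sys:yokext})$ by the augmented system $\bigl((\xi-\eta)^2\partial_\xi L_2-(\eta I_n-T')E\bigr)v=0$, $\bigl((\xi-\eta)^2\partial_\eta L_2-(\xi I_n-T')E-(\rho_1+\rho_2+1)(\xi-\eta)L_2\bigr)v=0$, $L_2v=0$, invokes Proposition \ref{prop:separate} to absorb the second-order terms, and reads off the four equations from $Mv=Nv=v_2$ and $(\eta I_n-T')v=v_1$. Your write-up merely makes explicit the bookkeeping that the paper compresses into the phrase ``are equivalent to the first two (resp.\ last two) equations.''
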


\begin{proof}
The system of partial differential equations
\begin{displaymath}
\left\{\begin{aligned}
&\left((\xi-\eta)^2\frac{\partial}{\partial\xi}L_2
-(\eta I_n-T')E\right)v=0,
\\
&\left((\xi-\eta)^2\frac{\partial}{\partial\eta}L_2
-(\xi I_n-T')E-(\rho_{1}+\rho_{2}+1)(\xi-\eta)L_2
\right)v=0,
\\
&L_2 v=0
\end{aligned}\right.
\end{displaymath}
is equivalent to
the system (\ref{sys:yokext}).
By virtue of Proposition \ref{prop:separate} and
the definition of the operators $M$ and $N$
we can write this system in the form
\begin{displaymath}
\left\{\begin{aligned}
&
\left\{\left((\xi-\eta)\frac{\partial}{\partial\xi}
+(\rho_{1}+\rho_{2})I_n-A\right)M
-(A-\rho_{1}I_n)(A-\rho_{2}I_n)(\eta I_n-T')\right\}v=0,
\\
&
\left\{\left((\xi-\eta)\frac{\partial}{\partial\eta}
-(\rho_{1}+\rho_{2})I_n+A\right)N
+(A-\rho_{1}I_n)(A-\rho_{2}I_n)(\xi I_n-T')\right\}v=0,
\\
&(M-N) v=0.
\end{aligned}\right.
\end{displaymath}
The first equation of this system and $Mv-v_2=0$ are equivalent to
the first two equations of the system (\ref{sys:firstorder}).
Besides,
the second equation and $Nv-v_2=0$ are equivalent to
the last two equations of the system (\ref{sys:firstorder}).
\end{proof}

Using the relations
\begin{displaymath}
\begin{aligned}
(\xi I_n-T')^{-1}(\eta I_n-T')
&=
(\xi-\eta)\left\{
\frac{1}{\xi-\eta}I_n-(\xi I_n-T')^{-1}\right\}, \\
(\xi I_n-T')(\eta I_n-T')^{-1}
&=
(\xi-\eta)\left\{
\frac{1}{\xi-\eta}I_n+(\eta I_n-T')^{-1}\right\}, \\
\end{aligned}
\end{displaymath}
we can write the system (\ref{sys:firstorder})
in a Pfaffian system of the form
\begin{equation} \label{sys:totalV}
dV=\left\{
\begin{pmatrix} (\xi I_n-T')^{-1} & \\ & O \end{pmatrix}
\mathcal{A}\,d\xi
+(\mathcal{A}-(\rho_{1}+\rho_{2})I_{2n})
\begin{pmatrix} (\eta I_n-T')^{-1} & \\ & O \end{pmatrix}
d\eta
-\mathcal{A}\,\frac{d(\xi-\eta)}{\xi-\eta}
\right\}V,
\end{equation}
where
\begin{displaymath}
V =\begin{pmatrix} v_1\\ v_2 \end{pmatrix}, \quad
\mathcal{A}
=\begin{pmatrix}
A & I_n \\
-(A-\rho_{1}I_n)(A-\rho_{2}I_n) & (\rho_{1}+\rho_{2})I_n-A
\end{pmatrix}.
\end{displaymath}
For a fixed constant $\eta_0$
the $2n$-vector function $V(\xi)=V(\xi, \eta_0)$ satisfies
\begin{equation} \label{sys:secoftotalV}
\frac{dV}{d\xi}
=\left(
\begin{pmatrix} (\xi I_n-T')^{-1} & \\ & O \end{pmatrix}
-\frac{1}{\xi-\eta_0}I_{2n}
\right) \mathcal{A} V.
\end{equation}

\begin{proposition}
Assume that $\eta_0$
satisfies $\det(T'-\eta_0 I_n)\ne0$.
The change of variables
\begin{equation} \label{change:xitox}
\xi=\eta_0+\frac{1}{x-t}
\quad\text{and}\quad
V=\begin{pmatrix}I_n & \\ & P\end{pmatrix}U,
\end{equation}
where $t$ is a constant,
transforms the system $(\ref{sys:secoftotalV})$
into the system of ONF
\begin{equation} \label{sys:secoftotalVtoONF}
\left(x I_{2n}
-\begin{pmatrix} T & \\ & t I_n \end{pmatrix}
\right)
\frac{dU}{dx}
=\mathcal{A}_{P} U,
\end{equation}
where
\begin{displaymath}
T=t I_n+(T'-\eta_0 I_n)^{-1}.
\end{displaymath}
\end{proposition}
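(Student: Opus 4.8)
The plan is to perform the change of variables (\ref{change:xitox}) explicitly and check that the transformed equation is literally (\ref{sys:secoftotalVtoONF}); every step is a direct computation, so the work is in organizing it so the two diagonal blocks come out with the correct entries. First I would record the scalar consequences of $\xi=\eta_0+1/(x-t)$: one has $\xi-\eta_0=1/(x-t)$, hence $1/(\xi-\eta_0)=x-t$ and $dx/d\xi=-(x-t)^2$, so that $dV/d\xi=-(x-t)^2\,dV/dx$. Inserting $V=\mathrm{diag}(I_n,P)\,U$ (a \emph{constant} matrix $S:=\mathrm{diag}(I_n,P)$) then turns the left-hand side of (\ref{sys:secoftotalV}) into $-(x-t)^2 S\,dU/dx$.

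Second, I would rewrite the $\xi$-dependent coefficient in terms of $x$. The definition $T=tI_n+(T'-\eta_0 I_n)^{-1}$ is equivalent to $\eta_0 I_n-T'=(tI_n-T)^{-1}$; writing $\xi I_n-T'=(\xi-\eta_0)I_n+(\eta_0 I_n-T')$ and factoring out $(tI_n-T)^{-1}$ on the left gives
\[
(\xi I_n-T')^{-1}=(x-t)(xI_n-T)^{-1}(tI_n-T).
\]
Substituting this and $1/(\xi-\eta_0)=x-t$ into the coefficient of (\ref{sys:secoftotalV}), and using the identity $(xI_n-T)^{-1}(tI_n-T)-I_n=-(x-t)(xI_n-T)^{-1}$, the upper-left block collapses to $-(x-t)^2(xI_n-T)^{-1}$ while the lower-right block is simply $-(x-t)I_n$. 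Thus the coefficient equals $B\mathcal{A}$ with $B=\mathrm{diag}\!\left(-(x-t)^2(xI_n-T)^{-1},\,-(x-t)I_n\right)$.

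Third comes the conjugation by $S$. After left-multiplying by $S^{-1}$ the coefficient becomes $S^{-1}B\mathcal{A}S=(S^{-1}BS)(S^{-1}\mathcal{A}S)$. Here $S^{-1}BS=B$, because $B$ is block-diagonal and its lower block $-(x-t)I_n$ is scalar, so $P^{-1}\!\left(-(x-t)I_n\right)\!P=-(x-t)I_n$. It then remains to verify $S^{-1}\mathcal{A}S=\mathcal{A}_P$. The upper-left block stays $A$, the upper-right becomes $P$, the lower-right becomes $(\rho_1+\rho_2)I_n-A'$, and the lower-left reduces to the identity
\[
P^{-1}(A-\rho_1 I_n)(A-\rho_2 I_n)=(A'-\rho_1 I_n)(A'-\rho_2 I_n)P^{-1},
\]
which follows from $A'=P^{-1}AP$ by inserting $PP^{-1}$ between the two factors. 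At this stage the equation reads $-(x-t)^2\,dU/dx=B\mathcal{A}_P\,U$, and left-multiplying by $-(x-t)^2B^{-1}=\mathrm{diag}(xI_n-T,\,(x-t)I_n)=xI_{2n}-\mathcal{T}$ yields precisely (\ref{sys:secoftotalVtoONF}).

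The only point demanding genuine care is the interplay of the two reductions: the factor $(x-t)$ enters the upper block of $B$ to the \emph{second} power but the lower block to the \emph{first} power, and it is exactly this mismatch that, under $-(x-t)^2B^{-1}$, restores the two distinct diagonal entries $xI_n-T$ and $(x-t)I_n=xI_n-tI_n$ of $\mathcal{T}$. Keeping the non-commutative factoring that produces $(\xi I_n-T')^{-1}=(x-t)(xI_n-T)^{-1}(tI_n-T)$ in the correct order, and matching these powers of $(x-t)$, is where a sign or ordering slip is most likely; everything else is bookkeeping.
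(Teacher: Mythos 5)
Your proposal is correct and follows essentially the same route as the paper: both rest on the identity $\bigl(\begin{smallmatrix}(\xi I_n-T')^{-1}&\\&O\end{smallmatrix}\bigr)-\frac{1}{\xi-\eta_0}I_{2n}=-(x-t)^2(xI_{2n}-\mathcal{T})^{-1}$ followed by conjugation with $\mathrm{diag}(I_n,P)$. You merely spell out the block computation that the paper states without detail, and your verification of $S^{-1}\mathcal{A}S=\mathcal{A}_P$ is accurate.
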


\begin{proof}
The change of variable (\ref{change:xitox})
leads to
\begin{displaymath}
\begin{pmatrix} (\xi I_n-T')^{-1} & \\ & O \end{pmatrix}
-\frac{1}{\xi-\eta_0}I_{2n}
=-(x-t)^2
\left(x I_{2n}
-\begin{pmatrix} T & \\ & t I_n \end{pmatrix}
\right)^{-1}.
\end{displaymath}
Substituting this formula into the right hand side of
\begin{displaymath}
\begin{aligned}
\frac{dV}{dx}
&=
\frac{d\xi}{dx} \frac{dV}{d\xi}
\\
&=
-\frac{1}{(x-t)^2}
\left(
\begin{pmatrix} (\xi I_n-T')^{-1} & \\ & O \end{pmatrix}
-\frac{1}{\xi-\eta_0}I_{2n}
\right) \mathcal{A} V,
\end{aligned}
\end{displaymath}
and then substituting $V=\begin{pmatrix}I_n & \\ & P\end{pmatrix}U$,
we obtain the system (\ref{sys:secoftotalVtoONF}).
\end{proof}

\subsection{Integral representation of solutions}
We denote by
$w(\rho;\zeta)$
a solution of the system of ONF
\begin{equation} \label{sys:associatedONF}
L_1(T',A;\rho)w=0.
\end{equation}
Recall the notation $j'$ for $j=1,2$:
\begin{displaymath}
j'=\left\{\begin{aligned}
&2 && \text{for\ } j=1, \\
&1 && \text{for\ } j=2.
\end{aligned}\right.
\end{displaymath}

\begin{proposition} \label{prop:integral}
For $j=1,2$, let
$w(-\rho_{j}-1;\zeta)$
be a solution of the system
$(\ref{sys:associatedONF})$ with $\rho=-\rho_{j}-1$.
Then the integral
\begin{displaymath}
v(\xi,\eta)
=(\xi-\eta)^{-\rho_{j}}
\int_{C}(\xi-\zeta)^{\rho_{j}}
(\eta-\zeta)^{-\rho_{j'}-1}
w(-\rho_{j}-1;\zeta)
\,d\zeta
\end{displaymath}
is a solution of the system $(\ref{sys:yokext})$,
provided that
\begin{equation} \label{condition:intC}
\Bigl[
(\xi-\zeta)^{\rho_{j}}
(\eta-\zeta)^{-\rho_{j'}-1}
(\zeta I_n-T')
w(-\rho_{j}-1;\zeta)
\Bigr]_{C}=0.
\end{equation}
\end{proposition}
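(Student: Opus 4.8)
The plan is to strip off the twist $(\xi-\eta)^{-\rho_{j}}$ and reduce to the integral (\ref{ftn:ndiminteudar}), for which the intertwining relations recorded in Section~\ref{sec:intro} apply. Following the parameter dictionary there, set $\alpha=1+\rho_{j'}$, $\beta=-\rho_{j}$, $\rho=-1-\rho_{j'}$ and write $u(\xi,\eta)=(\xi-\eta)^{\rho_{j}}v(\xi,\eta)$, so that
\[
u(\xi,\eta)=\int_{C}(\xi-\zeta)^{-\beta}(\eta-\zeta)^{-\alpha}w(-\rho_{j}-1;\zeta)\,d\zeta .
\]
With these values $\rho+\alpha+\beta-1=-\rho_{j}-1$, so the hypothesis that $w(-\rho_{j}-1;\zeta)$ solve (\ref{sys:associatedONF}) with $\rho=-\rho_{j}-1$ is exactly the requirement that it solve (\ref{sys:oneparokubo}). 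It therefore suffices to prove that $u$ solves the system (\ref{sys:ndimmiller}); applying (\ref{rel:underlyingE}) and the relation $L_2(T',A;\rho)(\xi-\eta)^{\delta}=(\xi-\eta)^{\delta}L_2(T',A;\rho-\delta)$ with $\delta=\rho_{j}$ then converts (\ref{sys:ndimmiller}) into (\ref{sys:yokext}), once one checks the shifts $\alpha+\rho_{j}=1+\rho_{1}+\rho_{2}$, $\beta+\rho_{j}=0$, $\rho_{j}(\rho_{j}+\alpha+\beta-1)=\rho_{1}\rho_{2}$ and $\rho-\rho_{j}=-1-\rho_{1}-\rho_{2}$.

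For the first equation of (\ref{sys:ndimmiller}) I would verify by direct differentiation that the scalar kernel $K=(\xi-\zeta)^{-\beta}(\eta-\zeta)^{-\alpha}$ is annihilated by $E(\alpha,\beta,0)$ as a function of $(\xi,\eta)$; this is the matrix-free analogue of Darboux's computation, the common prefactor of the surviving monomial collapsing to $(\xi-\eta)\bigl[(\xi-\eta)+(\eta-\zeta)-(\xi-\zeta)\bigr]=0$. Since $E(\alpha,\beta,0)$ differentiates only in $\xi$ and $\eta$, with respect to which $w(-\rho_{j}-1;\zeta)$ is constant, differentiating under the integral gives $E(\alpha,\beta,0)u=0$ with no boundary contribution and under no condition on $C$.

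The substantive step is the second equation $L_2(T',A;\rho)u=0$, and this is where (\ref{condition:intC}) enters. Here I would split $(\xi I_n-T')=(\xi-\zeta)I_n+(\zeta I_n-T')$ and $(\eta I_n-T')=(\eta-\zeta)I_n+(\zeta I_n-T')$, so that $(\xi I_n-T')\partial_{\xi}K+(\eta I_n-T')\partial_{\eta}K$ equals $-(\alpha+\beta)K\,I_n-(\partial_{\zeta}K)(\zeta I_n-T')$, using $\partial_{\zeta}K=\beta(\xi-\zeta)^{-\beta-1}(\eta-\zeta)^{-\alpha}+\alpha(\xi-\zeta)^{-\beta}(\eta-\zeta)^{-\alpha-1}$. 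Integrating the second piece by parts in $\zeta$ produces the boundary term $-\bigl[\,K(\zeta I_n-T')w\,\bigr]_{C}$, which is precisely the left-hand side of (\ref{condition:intC}) and so vanishes; the remaining integrand is $K\,\partial_{\zeta}\{(\zeta I_n-T')w\}$. Because $w$ solves $L_1(T',A;-\rho_{j}-1)w=0$ one may replace $(\zeta I_n-T')\,dw/d\zeta$ by $\{(-\rho_{j}-1)I_n+A\}w$, whence $\partial_{\zeta}\{(\zeta I_n-T')w\}=\{(\rho+\alpha+\beta)I_n+A\}w$. Collecting all contributions, $L_2(T',A;\rho)u$ reduces to $\{-(\alpha+\beta)+(\rho+\alpha+\beta)-\rho\}u+(A-A)u=0$.

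I expect the bookkeeping in this last step to be the main obstacle: one must split the matrices $(\xi I_n-T')$ and $(\eta I_n-T')$ so as to expose the total $\zeta$-derivative, recognize that integration by parts yields exactly the bracket in (\ref{condition:intC}), and use that the constant diagonal $T'$ and the constant $A$ commute past the scalar kernel $K$, so that the vector-valued argument mirrors the scalar identity (\ref{pf:inteudartwo}). Once the coefficients of $u$ and of $Au$ are tallied the cancellation is exact, and no hypothesis on $C$ beyond (\ref{condition:intC}) is required.
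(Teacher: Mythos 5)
Your proposal is correct and follows essentially the same route as the paper: strip off the factor $(\xi-\eta)^{-\rho_{j}}$ via the shift relations to reduce to the system for $u$, observe that the Euler--Darboux equation is satisfied because the scalar kernel is annihilated by $E(\alpha,\beta,0)$, and verify the $L_2$ equation by splitting $\xi I_n-T'$ and $\eta I_n-T'$ to expose a total $\zeta$-derivative, integrating by parts so that the boundary term is exactly the bracket in (\ref{condition:intC}), and invoking the ODE for $w$. Your parameter bookkeeping ($\rho+\alpha+\beta=-\rho_j$, the shifts by $\delta=\rho_j$) and the final cancellation all check out against the paper's computation.
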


\begin{proof}
The change of variables $v=(\xi-\eta)^{-\rho_{j}}u$
leads to a system
\begin{displaymath}
\left\{
\begin{aligned}
E(\rho_{j'}+1,-\rho_{j},0)u &= 0,
\\
L_2(T',A;-\rho_{j'}-1)u &= 0.
\end{aligned}\right.
\end{displaymath}
It is trivial that the integral
\begin{displaymath}
u(\xi,\eta)
=
\int_{C}(\xi-\zeta)^{\rho_{j}}
(\eta-\zeta)^{-\rho_{j'}-1}
w(-\rho_{j}-1;\zeta)
\,d\zeta
\end{displaymath}
satisfies the first equation of this system.
Similarly to (\ref{pf:inteudartwo}), we have
\begin{displaymath}
\begin{aligned}
&(\xi I_n-T')u_{\xi}(\xi,\eta)
+(\eta I_n-T')u_{\eta}(\xi,\eta)
+(-\rho_{j}+\rho_{j'}+1)u(\xi,\eta)
\\
&=
-\int_{C}
\frac{\partial}{\partial\zeta}\{
(\xi-\zeta)^{\rho_{j}} (\eta-\zeta)^{-\rho_{j'}-1}
\}
(\zeta I_n - T') w(-\rho_{j}-1;\zeta)
\,d\zeta
\\
&=
\int_{C}
(\xi-\zeta)^{\rho_{j}} (\eta-\zeta)^{-\rho_{j'}-1}
\frac{d}{d\zeta}\{
(\zeta I_n - T') w(-\rho_{j}-1;\zeta)
\}
\,d\zeta
\\
&
\phantom{{}={}}
{}-\Bigl[
(\xi-\zeta)^{\rho_{j}} (\eta-\zeta)^{-\rho_{j'}-1}
(\zeta I_n-T') w(-\rho_{j}-1;\zeta)
\Bigr]_{C}
\\
&=
(A-\rho_{j} I_n)u(\xi,\eta),
\end{aligned}
\end{displaymath}
which leads to $L_2(T',A;-\rho_{j'}-1)u(\xi,\eta)=0$.
Here in the last equality
we have used (\ref{condition:intC})
and
$\displaystyle
\frac{d}{d\zeta}\{
(\zeta I_n - T') w(-\rho_{j}-1;\zeta)
\}
=(A-\rho_{j} I_n)w(-\rho_{j}-1;\zeta)$.
\end{proof}

Combining Propositions
\ref{prop:integral}
and \ref{prop:secondtofirst},
we see that for $j=1,2$, the integral
\begin{displaymath}
V(\xi,\eta)
=
\begin{pmatrix}
\eta I_n-T' \\
M(T',A)
\end{pmatrix}
\int_{C}
\left(\frac{\xi-\zeta}{\xi-\eta}\right)^{\rho_{j}}
(\eta-\zeta)^{-\rho_{j'}-1}
w(-\rho_{j}-1;\zeta)
\,d\zeta
\end{displaymath}
becomes a solution of the Pfaffian system (\ref{sys:totalV})
under the condition (\ref{condition:intC}).
Substituting $\eta=\eta_0$
and then $\displaystyle \xi=\eta_0+\frac{1}{x-t}$,
we obtain the following proposition.

\begin{proposition} \label{prop:2n-integral}
For $j=1,2$, let
$w(-\rho_{j}-1;\zeta)$
be a solution of the system
$(\ref{sys:associatedONF})$ with
$\rho=-\rho_{j}-1$.
Then the integral
\begin{equation} \label{ftn:intCforVxi}
\begin{aligned}
V(\xi)&=V(\xi,\eta_0)
\\
&=
\begin{pmatrix}
\eta_0 I_n-T' \\
M_{\eta_0}(T',A)
\end{pmatrix}
\int_{C}
\left(\frac{\xi-\zeta}{\xi-\eta_0}\right)^{\rho_{j}}
(\eta_0-\zeta)^{-\rho_{j'}-1}
w(-\rho_{j}-1;\zeta)
\,d\zeta,
\end{aligned}
\end{equation}
where $M_{\eta_0}(T',A)$ denotes the operator $M(T',A)$
with $\eta$ replaced by $\eta_0$,
becomes a solution of the system
$(\ref{sys:secoftotalV})$.
Moreover, the integral
\begin{equation} \label{ftn:intCforVx}
\begin{aligned}
U(x)&=
\begin{pmatrix}I_n & \\ & P^{-1}\end{pmatrix}
V(\eta_0+\frac{1}{x-t},\eta_0)
\\
&=
\begin{pmatrix}
\eta_0 I_n-T' \\
P^{-1}M_{x,\eta_0}(T',A)
\end{pmatrix}
\int_{C}
\left\{
1+(x-t)(\eta_0-\zeta)
\right\}^{\rho_{j}}
(\eta_0-\zeta)^{-\rho_{j'}-1}
w(-\rho_{j}-1;\zeta)
\,d\zeta,
\end{aligned}
\end{equation}
where
\begin{displaymath}
M_{x,\eta_0}(T',A)
=
\left\{
I_n+(x-t)(\eta_0 I_n-T')
\right\}
\frac{\partial}{\partial x}
-A(\eta_0 I_n-T'),
\end{displaymath}
becomes a solution of the system of ONF
$(\ref{sys:secoftotalVtoONF})$.
\end{proposition}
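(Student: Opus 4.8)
The plan is to derive both assertions by specializing results already established, reducing the proof to tracking the two substitutions $\eta\mapsto\eta_0$ and $\xi\mapsto x$ through the explicit integral; no new differential-equation argument is needed.

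For the first assertion I would start from the observation, recorded in the paragraph preceding the proposition by combining Propositions \ref{prop:integral} and \ref{prop:secondtofirst}, that under the condition $(\ref{condition:intC})$ the two-variable integral $V(\xi,\eta)$ solves the Pfaffian system $(\ref{sys:totalV})$. Since the ordinary differential system $(\ref{sys:secoftotalV})$ is exactly the $d\xi$-component of $(\ref{sys:totalV})$ with $\eta$ frozen at the constant $\eta_0$---as in the derivation of $(\ref{sys:secoftotalV})$, where $-\mathcal{A}\,d(\xi-\eta)/(\xi-\eta)$ contributes $-\mathcal{A}/(\xi-\eta_0)$ to the $d\xi$ part---any solution of the Pfaffian system, restricted to $\eta=\eta_0$, solves $(\ref{sys:secoftotalV})$. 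Putting $\eta=\eta_0$ in the combined formula produces precisely $(\ref{ftn:intCforVxi})$, with $M_{\eta_0}(T',A)$ the specialization of $M(T',A)$, and the boundary condition $(\ref{condition:intC})$ is inherited at $\eta=\eta_0$.

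For the second assertion I would invoke the change-of-variables proposition immediately preceding this one, which shows that $\xi=\eta_0+1/(x-t)$ together with $V=\mathrm{diag}(I_n,P)U$ transforms $(\ref{sys:secoftotalV})$ into the ONF system $(\ref{sys:secoftotalVtoONF})$. Applying its inverse relation $U=\mathrm{diag}(I_n,P^{-1})V$ to the solution from the first part immediately gives that $U(x)=\mathrm{diag}(I_n,P^{-1})\,V(\eta_0+1/(x-t),\eta_0)$ solves $(\ref{sys:secoftotalVtoONF})$. What remains is to rewrite this composite in the explicit form $(\ref{ftn:intCforVx})$.

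The single genuine computation---and the step I expect to demand the most care---is checking that the integrand and the operator transform as stated. For the scalar factor I would use $\xi-\eta_0=1/(x-t)$ and $\xi-\zeta=(\xi-\eta_0)+(\eta_0-\zeta)$ to obtain $((\xi-\zeta)/(\xi-\eta_0))^{\rho_j}=\{1+(x-t)(\eta_0-\zeta)\}^{\rho_j}$, keeping the same branch. For the operator I would pass $M_{\eta_0}(T',A)$ under the integral sign, since it touches only the $\xi$-dependence, and compute using $\partial/\partial\xi=-(x-t)^2\,\partial/\partial x$ and $\xi I_n-T'=(\eta_0 I_n-T')+\tfrac{1}{x-t}I_n$ that its differential part $-(\xi-\eta_0)(\xi I_n-T')\,\partial/\partial\xi$ collapses to $\{I_n+(x-t)(\eta_0 I_n-T')\}\,\partial/\partial x$, while the zeroth-order term $-A(\eta_0 I_n-T')$ is unchanged; together these reproduce $M_{x,\eta_0}(T',A)$. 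Multiplying on the left by $\mathrm{diag}(I_n,P^{-1})$ then places $P^{-1}$ in front of the second block, yielding $(\ref{ftn:intCforVx})$. The only points requiring vigilance are the branch choices of the multivalued factors and the chain-rule bookkeeping; everything else is direct.
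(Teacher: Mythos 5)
Your proposal is correct and follows essentially the same route as the paper, which obtains the proposition by specializing the two-variable integral (a solution of the Pfaffian system $(\ref{sys:totalV})$ by Propositions \ref{prop:integral} and \ref{prop:secondtofirst}) at $\eta=\eta_0$ and then applying the change of variables $\xi=\eta_0+\frac{1}{x-t}$, $V=\mathrm{diag}(I_n,P)U$ from the preceding proposition. Your explicit verification that $\bigl(\frac{\xi-\zeta}{\xi-\eta_0}\bigr)^{\rho_j}$ becomes $\{1+(x-t)(\eta_0-\zeta)\}^{\rho_j}$ and that $M_{\eta_0}(T',A)$ collapses to $M_{x,\eta_0}(T',A)$ is exactly the bookkeeping the paper leaves implicit, and it checks out.
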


Even if the integrals (\ref{ftn:intCforVxi})
and (\ref{ftn:intCforVx}) are divergent,
they make sense in the sense of the finite part
of a divergent integral (see \cite[2.3.3]{IKSY}).
For example, in the case that $C$ is a segment
from $\eta_0$ to $\xi$ in (\ref{ftn:intCforVxi}),
the integral (\ref{ftn:intCforVxi}) is divergent
if $\Re\rho_{j}\leq -1$ or $\Re\rho_{j'}\geq 0$;
however, the integral makes sense in the sense of
the finite part if $\rho_{j}\not\in\mathbb{Z}_{<0}$
and $\rho_{j'}\not\in\mathbb{Z}_{\geq0}$.
We do not assume the conditions
for convergence of the integrals stated below,
and treat them as the finite part of a divergent
integral when they are divergent.


\section{Solutions characterized by local behavior}
\label{sec:localsol}
We reverse our way of investigation.
Namely, we start with the system (\ref{sys:E2})
and then determine the system (\ref{sys:associatedONF}).

\subsection{Assumptions}
Consider the system (\ref{sys:E2}). We write $T$ in the form
\begin{displaymath}
T=\begin{pmatrix}
t_1I_{n_1} & & &\\
& t_2I_{n_2} & &\\
& &\ddots &\\
& & & t_pI_{n_p}
\end{pmatrix}
\end{displaymath}
and set
\begin{displaymath}
t=t_{p+1},
\end{displaymath}
where $t_i\neq t_j$ ($1\leq i\neq j\leq p+1$) and
$n_1+n_2+\cdots+n_p=n$.
We assume that
\begin{displaymath}
\text{No three $t_i$ ($1\leq i\leq p+1$) lie on a straight line.}
\end{displaymath}
Renumbering the $t_i$'s if necessary,
we fix an assignment of the arguments
$\theta_i=\arg(t_i-t_{p+1})$ ($1\leq i\leq p$)
such that
\begin{displaymath}
\theta_1 <
\theta_2 <
\cdots <
\theta_p <
\theta_1+2\pi.
\end{displaymath}
Besides, we fix a real number $\theta_{p+1}$
such that
\begin{displaymath}
\theta_p <
\theta_{p+1} <
\theta_1+2\pi.
\end{displaymath}

When we write
\begin{displaymath}
A=\begin{pmatrix}
A_{11} & A_{12} & \cdots & A_{1p} \\
A_{21} & A_{22} & \cdots & A_{2p} \\
\vdots & \vdots & \ddots & \vdots \\
A_{p1} & A_{p2} & \cdots & A_{pp}
\end{pmatrix}
\end{displaymath}
in the same partition as $T$,
namely, $A_{ij}\in\mathrm{M}(n_i,n_j;\mathbb{C})$,
we may assume that the diagonal blocks
$A_{ii}$ ($1\leq i\leq p$) are of Jordan canonical form,
since a transformation of the form
$U=\begin{pmatrix}
    Q & \\
      & I_n
   \end{pmatrix}\bar U$,
where
\begin{displaymath}
Q=\begin{pmatrix}
Q_1 & & & \\
& Q_2 & & \\
& & \ddots & \\
& & & Q_p
\end{pmatrix},
\quad
Q_i\in\mathrm{GL}(n_i;\mathbb{C}),
\end{displaymath}
changes the system (\ref{sys:E2}) into the same system
with $A_{ij}$ and $P$ replaced by
${Q_i}^{-1}A_{ij}Q_j$ and $Q^{-1}P$, respectively.
We assume that
\begin{displaymath}
\text{$A_{11}$, $A_{22}$, $\dots$, $A_{pp}$
are diagonal,}
\end{displaymath}
and set
for $1\leq i\leq p$
\begin{displaymath}
A_{ii}=\begin{pmatrix}
\lambda_{i,1}I_{\ell_{i,1}} & & &\\
&\lambda_{i,2}I_{\ell_{i,2}} & &\\
& &\ddots &\\
& & &\lambda_{i,r_i}I_{\ell_{i,r_i}}
\end{pmatrix},
\end{displaymath}
where $\lambda_{i,k}\neq\lambda_{i,h}$ ($k\neq h$) and
$\ell_{i,1}+\ell_{i,2}+\cdots+\ell_{i,r_i}=n_i$.
We write $A'=P^{-1}AP$ in the form
\begin{displaymath}
A'=\begin{pmatrix}
\mu_1I_{m_1} & & &\\
&\mu_2I_{m_2} & &\\
& &\ddots &\\
& & &\mu_qI_{m_q}
\end{pmatrix},
\end{displaymath}
where $\mu_{k}\neq\mu_h$ ($k\neq h$) and
$m_1+m_2+\cdots+m_q=n$.
Throughout this paper
we assume that
\begin{align}
\label{assumption:E2_1}
\lambda_{i,k}, \
\lambda_{i,k}-\lambda_{i,h}
\not\in \mathbb{Z}
&\text{\quad for\ }
1\leq i\leq p,\
1\leq k\neq h\leq r_i,
\\
\label{assumption:E2_2}
\mu_{k}-\mu_{h}
\not\in \mathbb{Z}
&\text{\quad for\ }
1\leq k\neq h\leq q,
\\
\label{assumption:E2_3}
\rho_{j}, \
\rho_{1}-\rho_{2}
\not\in \mathbb{Z}
&\text{\quad for\ }
1\leq j\leq 2,
\\
\label{assumption:E2_4}
\rho_{j}-\lambda_{i,k}
\not\in \mathbb{Z}
&\text{\quad for\ }
1\leq j\leq 2,\
1\leq i\leq p,\
1\leq k\leq r_i.
\end{align}

\subsection{Nonholomorphic solutions near singular points: generic case}
First, we treat the case that none of the $\rho_{j}$'s
is an eigenvalue of the matrix $A$.
In this case we assume that
\begin{equation}
\label{assumption:E2_0}
\rho_{j}-\mu_{k}
\not\in \mathbb{Z}
\quad\text{for }
1\leq j\leq 2,\
1\leq k\leq q
\end{equation}
in addition to (\ref{assumption:E2_1})--(\ref{assumption:E2_4}).
The system (\ref{sys:E2}) is a Fuchsian system
with singularities
$x=t_1,\ldots,t_{p+1}$ and $\infty$.
Note that
\begin{displaymath}
\mathcal{A}_{P}
\sim
\begin{pmatrix}
\rho_{1} I_n &            \\
& \rho_{2} I_n
\end{pmatrix}.
\end{displaymath}
The Riemann scheme of the system (\ref{sys:E2}) is
\begin{displaymath}
\left\{\ \begin{aligned}
&x=t_1 && \cdots &&
x=t_p &&
x=t_{p+1} &&
x=\infty \\
&0\,(\bar{n}_1) && \cdots &&
0\,(\bar{n}_p) &&
0\,(n) &&
-\rho_{1}\,(n) \\
&\lambda_{1,1}\,(\ell_{1,1}) && \cdots &&
\lambda_{p,1}\,(\ell_{p,1}) &&
\rho_{1}+\rho_{2}-\mu_{1}\,(m_{1}) &&
-\rho_{2}\,(n) \\
&\,\quad\vdots && &&
\,\quad\vdots &&
\,\quad\vdots &&
\\
&\lambda_{1,r_1}\,(\ell_{1,r_1}) && \cdots &&
\lambda_{p,r_p}\,(\ell_{p,r_p}) &&
\rho_{1}+\rho_{2}-\mu_{q}\,(m_{q}) &&
\end{aligned}\ \right\},
\end{displaymath}
where $\lambda\,(\ell)$ denotes an exponent $\lambda$
with its multiplicity $\ell$,
and $\bar{n}_i=2n-n_i$ $(1\leq i\leq p)$.

Applying the general theory of local solutions near a regular
singular point (e.g.\ \cite[Chapter 1]{IKSY}),
we have the following theorems.
We use the notation
\begin{displaymath}
\varepsilon_m(k)
=\text{the $k$-th unit $m$-vector}.
\end{displaymath}

\begin{theorem}
For
$1\leq i\leq p$,
$1\leq k\leq r_i$,
$1\leq h\leq \ell_{i,k}$
there exists a unique solution
of the system $(\ref{sys:E2})$
of the form
\begin{displaymath}
U_{t_i,k,h}(x)
=(x-t_i)^{\lambda_{i,k}}\sum_{m=0}^{\infty}
G_{t_i,k,h}(m)(x-t_i)^{m}
\end{displaymath}
with
\begin{displaymath}
G_{t_i,k,h}(0)
=\varepsilon_{2n}
(n_1+\cdots+n_{i-1}+\ell_{i,1}+\cdots+\ell_{i,k-1}+h)
\end{displaymath}
which is convergent for
$|x-t_i|<\min_{1\leq k\leq p+1, k\ne i}|t_k-t_i|$.
Besides, for
$1\leq k\leq q$,
$1\leq h\leq m_{k}$
there exists a unique solution
of the system $(\ref{sys:E2})$
of the form
\begin{displaymath}
U_{t_{p+1},k,h}(x)
=(x-t_{p+1})^{\rho_{1}+\rho_{2}-\mu_{k}}\sum_{m=0}^{\infty}
G_{t_{p+1},k,h}(m)(x-t_{p+1})^{m}
\end{displaymath}
with
\begin{displaymath}
G_{t_{p+1},k,h}(0)
=\varepsilon_{2n}
(n+m_1+\cdots+m_{k-1}+h)
\end{displaymath}
which is convergent for
$|x-t_{p+1}|<\min_{1\leq k\leq p}|t_k-t_{p+1}|$.
\end{theorem}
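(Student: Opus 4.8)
The plan is to prove both assertions at once by the Frobenius method for the Okubo system \eqref{sys:E2} at each of its finite regular singular points, using the non-integrality hypotheses to supply exactly the non-resonance that is needed; convergence will then follow from the general theory cited just before the statement. Fix a singular point $c\in\{t_1,\dots,t_{p+1}\}$ and set $\mathcal{S}=cI_{2n}-\mathcal{T}$, a diagonal matrix whose kernel is the coordinate block on which $\mathcal{T}$ equals $c$ (of size $n_i$ when $c=t_i$, and of size $n$ when $c=t_{p+1}$). Let $\Pi$ denote the coordinate projection onto $\ker\mathcal{S}$, so that $\mathcal{S}$ is invertible on the range of $I-\Pi$. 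Rewriting \eqref{sys:E2} near $x=c$ shows it is regular singular there with residue $\Pi\mathcal{A}_{P}$, whose eigenvalues are $0$ together with those of the diagonal block of $\mathcal{A}_{P}$ picked out by $\Pi$: this block is $A_{ii}$ for $c=t_i$ (eigenvalues $\lambda_{i,k}$) and $(\rho_{1}+\rho_{2})I_n-A'$ for $c=t_{p+1}$ (eigenvalues $\rho_{1}+\rho_{2}-\mu_k$). These are the nonzero exponents of the Riemann scheme, and the prescribed leading coefficients $\varepsilon_{2n}(\cdots)$ are unit eigenvectors for them.

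First I would substitute $U=\sum_{m\ge0}G(m)(x-c)^{\lambda+m}$ into \eqref{sys:E2} and use $xI_{2n}-\mathcal{T}=(x-c)I_{2n}+\mathcal{S}$ to compare coefficients. The lowest power $(x-c)^{\lambda-1}$ gives $\lambda\,\mathcal{S}G(0)=0$, consistent with the prescribed leading term lying in $\ker\mathcal{S}$, and for $m\ge0$ one obtains the recurrence
\begin{displaymath}
(\lambda+m+1)\,\mathcal{S}\,G(m+1)=\bigl(\mathcal{A}_{P}-(\lambda+m)I_{2n}\bigr)G(m).
\end{displaymath}
Applying $\Pi$ kills the left-hand side: at $m=0$ this is the indicial equation (so $\lambda$ is a block eigenvalue with eigenvector $G(0)$), and at each $m\ge1$ it reads $\bigl(B-(\lambda+m)I\bigr)\Pi G(m)=-\Pi\mathcal{A}_{P}(I-\Pi)G(m)$, where $B$ is the diagonal block above; this fixes the $\ker\mathcal{S}$-part of $G(m)$. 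Applying $I-\Pi$ and inverting $\mathcal{S}$ on its range fixes the complementary part of $G(m+1)$ from $G(m)$. Interleaving these two relations determines every $G(m)$ uniquely from $G(0)$, which yields existence and uniqueness of the formal series.

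The hard part will be the resonance bookkeeping that makes the two inverses exist at every step $m\ge1$: one needs the scalar factor $\lambda+m+1\neq0$ and the block shift $B-(\lambda+m)I$ invertible. For $c=t_i$ both reduce to $\lambda_{i,k}\notin\mathbb{Z}$ and $\lambda_{i,k}-\lambda_{i,h}\notin\mathbb{Z}$, i.e. to \eqref{assumption:E2_1}. For $c=t_{p+1}$ the eigenvalue differences of $B$ are the $\mu_h-\mu_k$, excluded by \eqref{assumption:E2_2}; the remaining danger is the coincidence $\lambda+m=0$, i.e. $\rho_{1}+\rho_{2}-\mu_k\in\mathbb{Z}_{<0}$, where the exponent $\rho_{1}+\rho_{2}-\mu_k$ meets the exponent $0$. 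Ruling this out is the one genuinely delicate point, and it is here that the generic-case hypothesis \eqref{assumption:E2_0}, together with \eqref{assumption:E2_3}, must be brought to bear; in its absence a logarithmic term would generically appear and the pure power-series solution would fail to exist.

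Finally, convergence is routine. Since \eqref{sys:E2} is Fuchsian with finite singularities only at $t_1,\dots,t_{p+1}$, the general theory of regular singular points (\cite[Chapter 1]{IKSY}) applied to the uniquely determined formal series gives convergence on the largest disc about $c$ containing no other singular point: for $c=t_i$ this is $|x-t_i|<\min_{1\le k\le p+1,\,k\ne i}|t_k-t_i|$ and for $c=t_{p+1}$ it is $|x-t_{p+1}|<\min_{1\le k\le p}|t_k-t_{p+1}|$. I would present the two cases $c=t_i$ and $c=t_{p+1}$ in parallel throughout, since they differ only in the identity of the block $B$ and hence of the exponents.
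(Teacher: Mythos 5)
Your argument is the explicit Frobenius computation that the paper leaves to a citation: the paper's entire proof of this theorem is the sentence preceding it, which invokes the general theory of regular singular points from \cite[Chapter 1]{IKSY}. Your recurrence $(\lambda+m+1)\,\mathcal{S}\,G(m+1)=\bigl(\mathcal{A}_{P}-(\lambda+m)I_{2n}\bigr)G(m)$, the splitting by the projection $\Pi$ onto $\ker\mathcal{S}$, the identification of the residue matrix $\Pi\mathcal{A}_{P}$ and of the relevant diagonal blocks $A_{ii}$ and $(\rho_{1}+\rho_{2})I_n-A'$, and the reduction of all resonance conditions at $x=t_i$ to $(\ref{assumption:E2_1})$ are correct, and the interleaved determination of $\Pi G(m)$ and $(I-\Pi)G(m+1)$ does give existence and uniqueness of the formal series whenever the two inverses exist at every step.

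The gap is at the point you yourself single out and then do not close. At $c=t_{p+1}$ you must exclude $\lambda+m+1=0$, i.e.\ $\rho_{1}+\rho_{2}-\mu_{k}\in\mathbb{Z}_{<0}$, and you assert that $(\ref{assumption:E2_0})$ together with $(\ref{assumption:E2_3})$ accomplish this. They do not: with $\rho_{1}=1/5$, $\rho_{2}=1/2$, $\mu_{k}=17/10$, all of $\rho_{1}$, $\rho_{2}$, $\rho_{1}-\rho_{2}$, $\rho_{1}-\mu_{k}$, $\rho_{2}-\mu_{k}$ are non-integral, yet $\rho_{1}+\rho_{2}-\mu_{k}=-1$. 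In that situation the exponent $\rho_{1}+\rho_{2}-\mu_{k}$ sits a positive integer below the exponent $0$ of multiplicity $n$; the scalar factor in your recurrence vanishes at one step, so existence of a pure power series with the prescribed leading vector requires the compatibility condition $(I-\Pi)\bigl(\mathcal{A}_{P}-(\lambda+m)I_{2n}\bigr)G(m)=0$ at that step, and uniqueness fails outright because any of the $n$ holomorphic solutions at $t_{p+1}$ can be added without disturbing $G_{t_{p+1},k,h}(0)$. You must therefore either verify that compatibility condition from the special structure of $\mathcal{A}_{P}$ (it is not automatic), or record $\rho_{1}+\rho_{2}-\mu_{k}\notin\mathbb{Z}_{<0}$ $(1\leq k\leq q)$ as an additional standing hypothesis; the assumptions $(\ref{assumption:E2_1})$--$(\ref{assumption:E2_4})$ and $(\ref{assumption:E2_0})$ listed in the paper do not contain it, so it cannot simply be ``brought to bear.'' Since your proof, unlike the paper's one-line citation, makes the mechanism explicit, it cannot leave this claim unproved when it is false as stated.
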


\begin{theorem}
For
$1\leq j\leq 2$,
$1\leq h\leq n$
there exists a unique solution
of the system $(\ref{sys:E2})$
of the form
\begin{displaymath}
U_{\infty,j,h}(x)
=\left(\frac{1}{x-t_{p+1}}\right)^{-\rho_{j}}\sum_{m=0}^{\infty}
G_{\infty,j,h}(m)
\left(\frac{1}{x-t_{p+1}}\right)^m
\end{displaymath}
with
\begin{displaymath}
\begin{aligned}
G_{\infty,j,h}(0)
&=(\mathcal{A}_{P}-\rho_{j'}I_{2n})
\varepsilon_{2n}(n+h)
\\
&=\begin{pmatrix}P\\\rho_{j}I_n-A'\end{pmatrix}
\varepsilon_n(h)
\end{aligned}
\end{displaymath}
which is convergent for
$|x-t_{p+1}|>\max_{1\leq k\leq p}|t_k-t_{p+1}|$.
\end{theorem}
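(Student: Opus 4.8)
The plan is to treat the point $x=\infty$ as an ordinary regular singular point and apply the general Frobenius theory for systems (as in \cite[Chapter 1]{IKSY}, which already underlies the preceding two theorems), the only real work being to make the indicial data explicit and to confirm the prescribed leading coefficients. First I would pass to the local coordinate $z=1/(x-t_{p+1})$. Writing $\mathcal{T}_0=\mathcal{T}-t_{p+1}I_{2n}$ (whose lower-right $n\times n$ block vanishes, since $t=t_{p+1}$), the chain rule $dU/dx=-z^2\,dU/dz$ turns the system (\ref{sys:E2}) into $(zI_{2n}-z^2\mathcal{T}_0)\,dU/dz=-\mathcal{A}_PU$, a regular singular point at $z=0$ with leading coefficient $-\mathcal{A}_P$. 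Because $\mathcal{A}_P\sim\mathrm{diag}(\rho_1I_n,\rho_2I_n)$ is semisimple, the exponents at $z=0$ are $-\rho_1$ and $-\rho_2$, each of multiplicity $n$.

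Next I would substitute the formal series $U=z^{-\rho_j}\sum_{m\ge0}G(m)z^m$ and collect powers of $z$. The coefficient of $z^{-\rho_j}$ gives the indicial equation $(\mathcal{A}_P-\rho_jI_{2n})G(0)=0$, and for $m\ge1$ one obtains the recursion $(\mathcal{A}_P-(\rho_j-m)I_{2n})G(m)=(m-1-\rho_j)\mathcal{T}_0\,G(m-1)$. The decisive point is that $\mathcal{A}_P-(\rho_j-m)I_{2n}$ is invertible for every $m\ge1$: singularity would force $\rho_j-m\in\{\rho_1,\rho_2\}$, hence either $m=0$ or $\rho_1-\rho_2\in\mathbb{Z}$, both excluded, the latter by (\ref{assumption:E2_3}). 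Thus, once $G(0)$ is fixed in the $\rho_j$-eigenspace, the recursion determines every $G(m)$ uniquely, which yields existence and uniqueness of the formal solution attached to each admissible leading vector.

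It then remains to check that the prescribed $G_{\infty,j,h}(0)$ is admissible. A direct computation, using $A=PA'P^{-1}$, the diagonality of $A'$, and $\rho_1+\rho_2-\rho_{j'}=\rho_j$, shows $(\mathcal{A}_P-\rho_{j'}I_{2n})\varepsilon_{2n}(n+h)=\begin{pmatrix}P\\\rho_jI_n-A'\end{pmatrix}\varepsilon_n(h)$, which disposes of the two displayed expressions for $G_{\infty,j,h}(0)$. I would then verify that this vector is killed by $\mathcal{A}_P-\rho_jI_{2n}$: the top block collapses via $APx=PA'x$, and the bottom block reduces to the scalar identity $(\rho_1+\rho_2)\rho_j-\rho_1\rho_2=\rho_j^2$, valid for $j=1,2$. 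Since $P$ is invertible, the top blocks $P\varepsilon_n(h)$ are linearly independent, so the $n$ vectors $G_{\infty,j,h}(0)$ ($1\le h\le n$) span the necessarily $n$-dimensional $\rho_j$-eigenspace; hence each choice yields exactly one of the asserted solutions.

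Finally, convergence in the annulus $|x-t_{p+1}|>\max_{1\le k\le p}|t_k-t_{p+1}|$ follows from the general theory: the Frobenius series at $z=0$ converges up to the nearest other singularity, which in the $z$-plane sits at $z=1/(t_k-t_{p+1})$, the closest having $|z|=1/\max_k|t_k-t_{p+1}|$; translating back gives the stated region. I expect the only genuinely delicate step to be the non-resonance verification guaranteeing invertibility of $\mathcal{A}_P-(\rho_j-m)I_{2n}$ for all $m\ge1$ (equivalently, the absence of logarithmic terms), while the remaining steps are the explicit bookkeeping of the indicial data that the Okubo normal form makes transparent.
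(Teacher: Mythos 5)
Your proposal is correct and follows exactly the route the paper intends: the paper proves this theorem (and its companions) simply by invoking the general theory of local solutions at a regular singular point from \cite[Chapter 1]{IKSY}, and your argument is a faithful, correctly executed instantiation of that theory — the change of variable to $z=1/(x-t_{p+1})$, the indicial equation, the non-resonance via $\rho_{1}-\rho_{2}\not\in\mathbb{Z}$ from (\ref{assumption:E2_3}), the verification that $(\mathcal{A}_{P}-\rho_{j'}I_{2n})\varepsilon_{2n}(n+h)$ equals $\begin{pmatrix}P\\\rho_{j}I_n-A'\end{pmatrix}\varepsilon_n(h)$ and lies in the $\rho_{j}$-eigenspace, and the convergence radius determined by the nearest singularity in the $z$-plane all check out.
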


\subsection{Reducible case (i)}
Next, we treat the case that one of the $\rho_{j}$'s
is an eigenvalue of the matrix $A$.
Put
\begin{displaymath}
\rho_{2}=\mu_{q}
\end{displaymath}
and assume that
\begin{equation}
\label{assumption:E1_0}
\rho_{1}-\mu_{k}
\not\in \mathbb{Z}
\quad\text{for }
1\leq k\leq q
\end{equation}
in addition to (\ref{assumption:E2_1})--(\ref{assumption:E2_4}).
In this case the coefficient $\mathcal{A}_{P}$ has the form
\begin{displaymath}
\mathcal{A}_{P}
=
\left(\begin{array}{@{}c@{\ }|@{\ }c@{}}
\mathcal{A}'_{P} & \ast \\[\jot]
\hline\\[-3\jot]
O & \rho_{1} I_{m_{q}}
\end{array}\right),
\quad
\mathcal{A}'_{P}\in \mathrm{M}(2n-m_{q};\mathbb{C}),
\end{displaymath}
and the system (\ref{sys:E2}) has a solution of the form
$ 
U
=
\begin{pmatrix}
U' \\
0_{m_{q}}
\end{pmatrix}
$. 
Here $U'$ satisfies the system of ONF of rank $2n-m_{q}$
\begin{equation} \label{sys:E1}
\left(x I_{2n-m_{q}}-\mathcal{T}'\right)
\frac{dU'}{dx}
=\mathcal{A}'_{P}U',
\quad
\mathcal{T}'=\begin{pmatrix}T&\\&t I_{n-m_{q}}\end{pmatrix}.
\end{equation}
Note that
\begin{displaymath}
\mathcal{A}'_{P}
\sim
\begin{pmatrix}
\rho_{1} I_{n-m_{q}} &            \\
& \mu_{q} I_n
\end{pmatrix}.
\end{displaymath}
The Riemann scheme of the system (\ref{sys:E1}) is
\begin{displaymath}
\left\{\ \begin{aligned}
&x=t_1 && \cdots &&
x=t_p &&
x=t_{p+1} &&
x=\infty \\
&0\,(\bar{n}'_1) && \cdots &&
0\,(\bar{n}'_p) &&
0\,(n) &&
-\rho_{1}\,(n-m_{q}) \\
&\lambda_{1,1}\,(\ell_{1,1}) && \cdots &&
\lambda_{p,1}\,(\ell_{p,1}) &&
\rho_{1}+\mu_{q}-\mu_{1}\,(m_{1}) &&
-\mu_{q}\,(n) \\
&\,\quad\vdots && &&
\,\quad\vdots &&
\,\quad\vdots &&
\\
&\lambda_{1,r_1}\,(\ell_{1,r_1}) && \cdots &&
\lambda_{p,r_p}\,(\ell_{p,r_p}) &&
\rho_{1}+\mu_{q}-\mu_{q-1}\,(m_{q-1}) &&
\end{aligned}\ \right\},
\end{displaymath}
where $\bar{n}'_i=2n-m_{q}-n_i$ $(1\leq i\leq p)$.

\begin{theorem}
For
$1\leq i\leq p$,
$1\leq k\leq r_i$,
$1\leq h\leq \ell_{i,k}$
there exists a unique solution
of the system $(\ref{sys:E1})$
of the form
\begin{displaymath}
U'_{t_i,k,h}(x)
=(x-t_i)^{\lambda_{i,k}}\sum_{m=0}^{\infty}
G'_{t_i,k,h}(m)(x-t_i)^{m}
\end{displaymath}
with
\begin{displaymath}
G'_{t_i,k,h}(0)
=\varepsilon_{2n-m_{q}}
(n_1+\cdots+n_{i-1}+\ell_{i,1}+\cdots+\ell_{i,k-1}+h)
\end{displaymath}
which is convergent for
$|x-t_i|<\min_{1\leq k\leq p+1, k\ne i}|t_k-t_i|$.
Besides, for
$1\leq k\leq q-1$,
$1\leq h\leq m_{k}$
there exists a unique solution
of the system $(\ref{sys:E1})$
of the form
\begin{displaymath}
U'_{t_{p+1},k,h}(x)
=(x-t_{p+1})^{\rho_{1}+\mu_{q}-\mu_{k}}\sum_{m=0}^{\infty}
G'_{t_{p+1},k,h}(m)(x-t_{p+1})^{m}
\end{displaymath}
with
\begin{displaymath}
G'_{t_{p+1},k,h}(0)
=\varepsilon_{2n-m_{q}}
(n+m_1+\cdots+m_{k-1}+h)
\end{displaymath}
which is convergent for
$|x-t_{p+1}|<\min_{1\leq k\leq p}|t_k-t_{p+1}|$.
\end{theorem}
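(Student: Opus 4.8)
The plan is to prove this theorem exactly as the corresponding statement for the system (\ref{sys:E2}) was obtained, namely by applying the general theory of local solutions at a regular singular point (\cite[Chapter 1]{IKSY}) to the Fuchsian system (\ref{sys:E1}), whose singular points are $t_1,\dots,t_{p+1}$ and $\infty$ and whose local exponents with their multiplicities have already been recorded in the Riemann scheme above. Since (\ref{sys:E1}) is again a system of Okubo normal form, of rank $2n-m_q$, I would exploit the special shape of the Frobenius recursion for such systems at a point $x=t_i$ where the diagonal matrix $\mathcal{T}'$ takes the value $t_i$.

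First I would put $s=x-t_i$ and split the coordinates into the block on which $\mathcal{T}'=t_iI$ (the singular block) and its complement, on which $\mathcal{T}'-t_iI$ is invertible. Substituting $U'=(x-t_i)^{\lambda}\sum_{m\ge0}G(m)s^{m}$ into $(xI_{2n-m_q}-\mathcal{T}')\frac{dU'}{dx}=\mathcal{A}'_P U'$, the equation separates into an algebraic relation arising from the singular block and a genuine recursion arising from the complementary block, in which the leading diagonal factor $\mathcal{T}'-t_iI$ is invertible. Matching the lowest-order terms shows that the complementary part of $G(0)$ vanishes and that $G(0)$ must lie in the $\lambda$-eigenspace of the diagonal sub-block of $\mathcal{A}'_P$ sitting at the positions where $\mathcal{T}'=t_iI$; for $1\le i\le p$ this sub-block is $A_{ii}$, which is diagonal with eigenvalue $\lambda_{i,k}$ of multiplicity $\ell_{i,k}$, so the indicial equation forces $\lambda=\lambda_{i,k}$ and lets $G'_{t_i,k,h}(0)$ be any unit vector in that eigenspace. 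The same computation at $x=t_{p+1}$, where the relevant sub-block is $(\rho_1+\mu_q)I-\mathrm{diag}(\mu_1I_{m_1},\dots,\mu_{q-1}I_{m_{q-1}})$, produces the exponents $\rho_1+\mu_q-\mu_k$ and the stated leading vectors $G'_{t_{p+1},k,h}(0)$.

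Next I would run the recursion for $m\ge1$. Each step requires dividing by $\lambda_{i,k}+m$ (from the complementary block) and inverting $[(\lambda_{i,k}+m)I-A_{ii}]$ (from the singular block), whose eigenvalues are $\lambda_{i,k}+m-\lambda_{i,h}$. The non-resonance hypotheses guarantee that none of these quantities vanishes for any $m\ge1$: $\lambda_{i,k}\notin\mathbb{Z}$ and $\lambda_{i,k}-\lambda_{i,h}\notin\mathbb{Z}$ from (\ref{assumption:E2_1}), and at $t_{p+1}$ the analogous conditions coming from (\ref{assumption:E2_2}) and (\ref{assumption:E1_0}). Hence every $G'(m)$ is uniquely determined, no logarithmic term can intervene, and the solution of the prescribed form is unique. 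Convergence in the disc $|x-t_i|<\min_{k\ne i}|t_k-t_i|$ (respectively $|x-t_{p+1}|<\min_{k\le p}|t_k-t_{p+1}|$) is then the standard conclusion of Fuchs's theorem, the radius being the distance from the base point to the nearest other singular point of (\ref{sys:E1}).

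The main obstacle I anticipate is precisely the degenerate shape of the Okubo recursion: because $xI_{2n-m_q}-\mathcal{T}'$ is singular at $t_i$ only on one block, the recursion is not the naive shift-by-the-residue recursion, so one must check carefully that the split into an algebraic relation plus an invertible recursion still fits the hypotheses of the general existence-and-uniqueness theorem and that the integer-difference conditions translate into exactly the eigenvalue conditions used above. Once this translation is made, the remaining point — that the diagonality of $A_{ii}$ (and of the bottom-right sub-block at $t_{p+1}$) makes the residue semisimple on each eigenspace, so that a unit eigenvector is an admissible leading coefficient — is routine.
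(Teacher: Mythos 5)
Your proposal is correct and follows essentially the same route as the paper, which states this theorem (like its analogues for the systems (\ref{sys:E2}) and (\ref{sys:E0})) without an explicit proof, merely invoking the general theory of local solutions near a regular singular point together with the Riemann scheme of (\ref{sys:E1}). Your block-splitting Frobenius computation — an algebraic relation on the block where $xI-\mathcal{T}'$ degenerates plus an invertible recursion on the complement, with the indicial roots read off from the diagonal sub-blocks $A_{ii}$ and $(\rho_1+\mu_q)I-\mathrm{diag}(\mu_1I_{m_1},\dots,\mu_{q-1}I_{m_{q-1}})$ and non-resonance supplied by (\ref{assumption:E2_1}), (\ref{assumption:E2_2}) and (\ref{assumption:E1_0}) — is precisely the standard argument that citation encapsulates.
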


\begin{theorem}
For
$1\leq h\leq n-m_{q}$
there exists a unique solution
of the system $(\ref{sys:E1})$
of the form
\begin{displaymath}
U'_{\infty,1,h}(x)
=\left(\frac{1}{x-t_{p+1}}\right)^{-\rho_{1}}\sum_{m=0}^{\infty}
G'_{\infty,1,h}(m)
\left(\frac{1}{x-t_{p+1}}\right)^m
\end{displaymath}
with
\begin{displaymath}
G'_{\infty,1,h}(0)
=(\mathcal{A}'_{P}-\mu_{q}I_{2n-m_{q}})
\varepsilon_{2n-m_{q}}(n+h).
\end{displaymath}
Besides, for
$1\leq h\leq n$
there exists a unique solution
of the system $(\ref{sys:E1})$
of the form
\begin{displaymath}
U'_{\infty,\mu_{q},h}(x)
=\left(\frac{1}{x-t_{p+1}}\right)^{-\mu_{q}}\sum_{m=0}^{\infty}
G'_{\infty,\mu_{q},h}(m)
\left(\frac{1}{x-t_{p+1}}\right)^m
\end{displaymath}
with
\begin{displaymath}
G'_{\infty,\mu_{q},h}(0)
=
\left\{\begin{aligned}
&(\mathcal{A}'_{P}-\rho_{1}I_{2n-m_{q}}) \varepsilon_{2n-m_{q}}(n+h)
&& \text{for } 1\leq h\leq n-m_{q},
\\[\jot]
&\begin{pmatrix}P\varepsilon_n(h)\\0_{n-m_{q}}\end{pmatrix}
&& \text{for } n-m_{q}+1\leq h\leq n.
\end{aligned}\right.
\end{displaymath}
The series are convergent for
$|x-t_{p+1}|>\max_{1\leq k\leq p}|t_k-t_{p+1}|$.
\end{theorem}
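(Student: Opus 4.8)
The plan is to apply the general theory of local solutions at a regular singular point to the rank $2n-m_q$ system (\ref{sys:E1}), exactly as in the two preceding theorems, and then to identify the leading coefficients $G'_{\infty,1,h}(0)$ and $G'_{\infty,\mu_q,h}(0)$ with eigenvectors (or generalized leading data) of the coefficient matrix $\mathcal{A}'_P$ at $x=\infty$. Reading off the Riemann scheme of (\ref{sys:E1}), the exponents at $x=\infty$ are $-\rho_1$ with multiplicity $n-m_q$ and $-\mu_q$ with multiplicity $n$. Since $\mathcal{A}'_P \sim \operatorname{diag}(\rho_1 I_{n-m_q}, \mu_q I_n)$, and the general theory guarantees, for each exponent, a basis of formal power-series solutions whose leading coefficients span the corresponding eigenspace of $\mathcal{A}'_P$, the entire content of the theorem is the explicit choice of these leading vectors together with verification that they are well defined and that the resulting series converge.

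First I would treat the exponent $-\rho_1$. The eigenspace of $\mathcal{A}'_P$ for the eigenvalue $\rho_1$ is $(n-m_q)$-dimensional, and a convenient spanning set is obtained by applying $\mathcal{A}'_P-\mu_q I_{2n-m_q}$ to the standard basis vectors $\varepsilon_{2n-m_q}(n+h)$ for $1\le h\le n-m_q$; indeed, under assumption (\ref{assumption:E1_0}) one checks that $(\mathcal{A}'_P-\mu_q I)\varepsilon_{2n-m_q}(n+h)$ lies in $\ker(\mathcal{A}'_P-\rho_1 I)$ and that these images are linearly independent, which is precisely the form asserted for $G'_{\infty,1,h}(0)$. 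The key nondegeneracy input is that $\rho_1-\mu_q\notin\mathbb{Z}$ (hence $\ne 0$), so that $\mathcal{A}'_P-\mu_q I$ acts invertibly on the $\rho_1$-eigenspace and the projected vectors remain independent.

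Next I would treat the exponent $-\mu_q$, whose eigenspace is $n$-dimensional and which is the more delicate case because it requires a basis split into two families. For $1\le h\le n-m_q$ the vectors $(\mathcal{A}'_P-\rho_1 I)\varepsilon_{2n-m_q}(n+h)$ lie in $\ker(\mathcal{A}'_P-\mu_q I)$ by the analogous computation; for $n-m_q+1\le h\le n$ the explicit vectors $\begin{pmatrix}P\varepsilon_n(h)\\0_{n-m_q}\end{pmatrix}$ account for the remaining $m_q$ dimensions coming from the original $\mu_q$-eigenspace of $A'$ that survives in the block structure of $\mathcal{A}'_P$. The main obstacle, and the step I expect to require genuine care, is verifying that these two families together constitute a basis of the full $n$-dimensional $\mu_q$-eigenspace and are mutually independent — this is where the block form of $\mathcal{A}'_P$ inherited from (\ref{sys:E1}) and the invertibility of $P$ must be used, and where one must confirm that no spurious linear dependence arises between the projected standard vectors and the $P$-column vectors.

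Finally, convergence of all three series for $|x-t_{p+1}|>\max_{1\le k\le p}|t_k-t_{p+1}|$ follows from the general local theory at $x=\infty$, since the nearest other singularities of (\ref{sys:E1}) are at $x=t_1,\dots,t_p$; the separation conditions (\ref{assumption:E2_1})--(\ref{assumption:E2_4}) together with (\ref{assumption:E1_0}) guarantee that no two exponents at $\infty$ differ by a nonzero integer, so the recursive determination of the higher coefficients $G'_{\infty,\cdot,h}(m)$ proceeds without logarithmic obstruction and the formal solutions are genuine holomorphic solutions on the stated annulus. Thus the proof reduces to the linear-algebra verification of the eigenvector formulas, with the invocation of the standard existence-uniqueness-convergence theorem for Fuchsian systems supplying the rest.
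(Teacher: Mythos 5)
Your proposal is correct and follows the same route as the paper, which states this theorem (like the other local-solution theorems in Section 3) without proof, simply by invoking the general theory of local solutions near a regular singular point together with the Riemann scheme and the block structure of $\mathcal{A}'_P$. The linear-algebra verifications you outline (that the stated leading vectors are eigenvectors of $\mathcal{A}'_P$ for $\rho_1$ resp.\ $\mu_q$ and are independent — most cleanly seen from the fact that each has top block $P\varepsilon_n(h)$ with $P$ invertible) are exactly what is needed and do go through under assumptions (\ref{assumption:E2_1})--(\ref{assumption:E2_4}) and (\ref{assumption:E1_0}).
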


\subsection{Reducible case (ii)}
Lastly, we treat the case that both of the $\rho_{j}$'s
are an eigenvalue of the matrix $A$.
Put
\begin{displaymath}
\rho_{1}=\mu_{q-1}
\quad\text{and}\quad
\rho_{2}=\mu_{q}
\end{displaymath}
with the conditions
(\ref{assumption:E2_1})--(\ref{assumption:E2_4}).
In this case the coefficient $\mathcal{A}_{P}$ has the form
\begin{displaymath}
\mathcal{A}_{P}
=
\left(\begin{array}{@{}c@{\ }|@{\ }c@{}}
\mathcal{A}''_{P} & \ast \\[\jot]
\hline\\[-3\jot]
O & \begin{matrix}\mu_{q} I_{m_{q-1}}&\\&\mu_{q-1} I_{m_{q}}\end{matrix}
\end{array}\right),
\quad
\mathcal{A}''_{P}\in \mathrm{M}(2n-m_{q-1}-m_{q};\mathbb{C}),
\end{displaymath}
and the system (\ref{sys:E2}) has a solution of the form
$ 
U
=
\begin{pmatrix}
U'' \\
0_{m_{q-1}+m_{q}}
\end{pmatrix}
$. 
Here $U''$ satisfies the system of ONF of rank $2n-m_{q-1}-m_{q}$
\begin{equation} \label{sys:E0}
\left(x I_{2n-m_{q-1}-m_{q}}-\mathcal{T}''\right)
\frac{dU''}{dx}
=\mathcal{A}''_{P}U'',
\quad
\mathcal{T}''=\begin{pmatrix}T&\\&t I_{n-m_{q-1}-m_{q}}\end{pmatrix}.
\end{equation}
Note that
\begin{displaymath}
\mathcal{A}''_{P}
\sim
\begin{pmatrix}
\mu_{q-1} I_{n-m_{q}} &            \\
& \mu_{q} I_{n-m_{q-1}}
\end{pmatrix}.
\end{displaymath}
The Riemann scheme of the system (\ref{sys:E0}) is
\begin{displaymath}
\left\{\ \begin{aligned}
&x=t_1 && \cdots &&
x=t_p &&
x=t_{p+1} &&
x=\infty \\
&0\,(\bar{n}''_1) && \cdots &&
0\,(\bar{n}''_p) &&
0\,(n) &&
-\mu_{q-1}\,(n-m_{q}) \\
&\lambda_{1,1}\,(\ell_{1,1}) && \cdots &&
\lambda_{p,1}\,(\ell_{p,1}) &&
\mu_{q-1}+\mu_{q}-\mu_{1}\,(m_{1}) &&
-\mu_{q}\,(n-m_{q-1}) \\
&\,\quad\vdots && &&
\,\quad\vdots &&
\,\quad\vdots &&
\\
&\lambda_{1,r_1}\,(\ell_{1,r_1}) && \cdots &&
\lambda_{p,r_p}\,(\ell_{p,r_p}) &&
\mu_{q-1}+\mu_{q}-\mu_{q-2}\,(m_{q-2}) &&
\end{aligned}\ \right\},
\end{displaymath}
where $\bar{n}''_i=2n-m_{q-1}-m_{q}-n_i$ $(1\leq i\leq p)$.

\begin{theorem}
For
$1\leq i\leq p$,
$1\leq k\leq r_i$,
$1\leq h\leq \ell_{i,k}$
there exists a unique solution
of the system $(\ref{sys:E0})$
of the form
\begin{displaymath}
U''_{t_i,k,h}(x)
=(x-t_i)^{\lambda_{i,k}}\sum_{m=0}^{\infty}
G''_{t_i,k,h}(m)(x-t_i)^{m}
\end{displaymath}
with
\begin{displaymath}
G''_{t_i,k,h}(0)
=\varepsilon_{2n-m_{q-1}-m_{q}}
(n_1+\cdots+n_{i-1}+\ell_{i,1}+\cdots+\ell_{i,k-1}+h)
\end{displaymath}
which is convergent for
$|x-t_i|<\min_{1\leq k\leq p+1, k\ne i}|t_k-t_i|$.
Besides, for
$1\leq k\leq q-2$,
$1\leq h\leq m_{k}$
there exists a unique solution
of the system $(\ref{sys:E0})$
of the form
\begin{displaymath}
U''_{t_{p+1},k,h}(x)
=(x-t_{p+1})^{\mu_{q-1}+\mu_{q}-\mu_{k}}\sum_{m=0}^{\infty}
G''_{t_{p+1},k,h}(m)(x-t_{p+1})^{m}
\end{displaymath}
with
\begin{displaymath}
G''_{t_{p+1},k,h}(0)
=\varepsilon_{2n-m_{q-1}-m_{q}}
(n+m_1+\cdots+m_{k-1}+h)
\end{displaymath}
which is convergent for
$|x-t_{p+1}|<\min_{1\leq k\leq p}|t_k-t_{p+1}|$.
\end{theorem}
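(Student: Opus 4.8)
The plan is to treat (\ref{sys:E0}) as what it is, namely a Fuchsian system of ONF of rank $2n-m_{q-1}-m_q$ with regular singular points at $x=t_1,\dots,t_{p+1}$ and $x=\infty$, and to construct each asserted solution by the classical Frobenius procedure at a regular singular point, exactly in parallel with the generic and reducible-(i) theorems proved above. Because the block-triangular shape of $\mathcal{A}_{P}$ makes the subspace of vectors whose last $m_{q-1}+m_q$ coordinates vanish invariant, a solution $U''$ of (\ref{sys:E0}) is just the nonvanishing part of a solution $\left(\begin{smallmatrix}U''\\0\end{smallmatrix}\right)$ of (\ref{sys:E2}); however, since reducible case (ii) is precisely the situation in which the genericity hypothesis (\ref{assumption:E2_0}) fails, I cannot merely quote the first theorem and instead rerun the construction for the smaller coefficient $\mathcal{A}''_{P}$. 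The substance of the proof is therefore the verification, at each finite singular point, that (a) the leading coefficient $G''(0)$ named in the statement is a genuine eigenvector of the residue matrix for the claimed exponent, and (b) the Frobenius recursion for the higher coefficients $G''(m)$, $m\ge 1$, is uniquely solvable.

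First I would rewrite (\ref{sys:E0}) as $\frac{dU''}{dx}=(xI_{2n-m_{q-1}-m_q}-\mathcal{T}'')^{-1}\mathcal{A}''_{P}U''$ and read off the residue at $x=t_i$ ($1\le i\le p$). Since $(xI-\mathcal{T}'')^{-1}$ has there a simple pole with residue the projection $E_i$ onto the $t_i$-block, the residue matrix is $R_i=E_i\mathcal{A}''_{P}$, whose nonzero eigenvalues are exactly those of the diagonal block $A_{ii}$ while $0$ occurs with multiplicity $\bar n''_i=2n-m_{q-1}-m_q-n_i$. As $A_{ii}$ is diagonal with entries $\lambda_{i,k}$, the unit vector $\varepsilon_{2n-m_{q-1}-m_q}(n_1+\cdots+n_{i-1}+\ell_{i,1}+\cdots+\ell_{i,k-1}+h)$ is an eigenvector for $\lambda_{i,k}$, which is the claimed $G''_{t_i,k,h}(0)$. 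Substituting the ansatz yields the standard recursion $(R_i-(\lambda_{i,k}+m)I)G''(m)=(\text{a combination of }G''(0),\dots,G''(m-1))$, uniquely solvable for every $m\ge 1$ provided $\lambda_{i,k}+m$ is not an exponent, i.e.\ $\lambda_{i,k}+m\ne 0$ and $\lambda_{i,k}+m\ne\lambda_{i,h}$; these hold because $\lambda_{i,k}\notin\mathbb{Z}$ and $\lambda_{i,k}-\lambda_{i,h}\notin\mathbb{Z}$ by (\ref{assumption:E2_1}).

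The point $x=t_{p+1}$ is handled the same way, now with $\rho_1=\mu_{q-1}$ and $\rho_2=\mu_q$. The bottom-right diagonal block of $\mathcal{A}''_{P}$ is $(\rho_1+\rho_2)I-A'$ with the two reducible exponents $\mu_q$, $\mu_{q-1}$ removed, so the residue matrix $E_{p+1}\mathcal{A}''_{P}$ has eigenvalue $0$ with multiplicity $n$ together with $\mu_{q-1}+\mu_q-\mu_k$ of multiplicity $m_k$ for $1\le k\le q-2$, and the unit vector $\varepsilon_{2n-m_{q-1}-m_q}(n+m_1+\cdots+m_{k-1}+h)$ is an eigenvector for $\mu_{q-1}+\mu_q-\mu_k$, supplying $G''_{t_{p+1},k,h}(0)$. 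Unique solvability of the recursion again amounts to $(\mu_{q-1}+\mu_q-\mu_k)+m$ not being an exponent for $m\ge 1$: comparison with the other nonzero exponents reduces to $\mu_k-\mu_{k'}\notin\mathbb{Z}_{>0}$, covered by (\ref{assumption:E2_2}), while comparison with the exponent $0$ requires $\mu_{q-1}+\mu_q-\mu_k\notin\mathbb{Z}_{<0}$. I expect this last inequality to be the main obstacle, since it is the only non-resonance condition that is not immediately one of the standing hypotheses; it has to be drawn out of (\ref{assumption:E2_2})--(\ref{assumption:E2_4}) under the substitution $\rho_1=\mu_{q-1}$, $\rho_2=\mu_q$, in exactly the way the analogous quantities $\rho_1+\rho_2-\mu_k$ and $\rho_1+\mu_q-\mu_k$ were treated in the generic and reducible-(i) cases, and this is where care is needed.

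Finally, convergence on the stated discs is the classical majorant estimate for Fuchsian systems: away from the competing singular points $t_{k}$ ($k\ne i$) and $\infty$ the coefficient matrix is holomorphic, so the recursively defined $G''(m)$ grow at most geometrically with rate governed by the nearest competing singularity, giving radius $\min_{1\le k\le p+1,\,k\ne i}|t_k-t_i|$ at $t_i$ and $\min_{1\le k\le p}|t_k-t_{p+1}|$ at $t_{p+1}$. Uniqueness is immediate, since the recursion shows that $G''(0)$ determines every $G''(m)$.
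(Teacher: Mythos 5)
Your overall route is the paper's route: the paper offers no written proof for this theorem (or its generic and reducible-(i) counterparts), merely invoking the general theory of local solutions at a regular singular point, and your proposal is exactly that theory carried out by hand. The structural parts are correct: the residue matrix at $t_i$ is $E_i\mathcal{A}''_P$ with nonzero spectrum that of the diagonal block $A_{ii}$, the residue at $t_{p+1}$ is lower block-triangular with diagonal block the restriction of $(\mu_{q-1}+\mu_q)I_n-A'$, the named unit vectors are eigenvectors for the claimed exponents, and the non-resonance checks you list at $t_i$ (namely $\lambda_{i,k}\notin\mathbb{Z}$ and $\lambda_{i,k}-\lambda_{i,h}\notin\mathbb{Z}$) and the comparison $\mu_k-\mu_{k'}\notin\mathbb{Z}_{>0}$ at $t_{p+1}$ are indeed covered by (\ref{assumption:E2_1}) and (\ref{assumption:E2_2}).

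The genuine gap is the one you yourself flag and defer: the condition $\mu_{q-1}+\mu_q-\mu_k\notin\mathbb{Z}_{<0}$ for $1\leq k\leq q-2$. You say it ``has to be drawn out of (\ref{assumption:E2_2})--(\ref{assumption:E2_4})'', but it cannot be: those hypotheses only forbid integrality of the individual differences $\mu_{q-1}-\mu_k$, $\mu_q-\mu_k$, $\mu_{q-1}-\mu_q$ and of $\mu_{q-1}$, $\mu_q$ themselves, and a sum of two non-integers can be a negative integer. Concretely, $\mu_{q-1}=\tfrac13$, $\mu_q=\tfrac12$, $\mu_k=\tfrac{11}{6}$ satisfies every one of (\ref{assumption:E2_1})--(\ref{assumption:E2_4}) and (\ref{assumption:underlying}) as far as the $\mu$'s are concerned, yet $\mu_{q-1}+\mu_q-\mu_k=-1$. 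In that resonant situation the Frobenius recursion at $t_{p+1}$ hits the eigenvalue $0$ at step $m=1$ and uniqueness of $U''_{t_{p+1},k,h}$ genuinely fails (one may add holomorphic solutions without disturbing the leading term), so no amount of care will extract the inequality from the stated assumptions. The honest repair is to record $\rho_1+\rho_2-\mu_k\notin\mathbb{Z}_{<0}$ (here $\mu_{q-1}+\mu_q-\mu_k\notin\mathbb{Z}_{<0}$) as an additional non-resonance hypothesis --- one that is equally implicit in the generic-case and reducible-(i) theorems, since the paper's appeal to the general theory presupposes it --- rather than to claim it as a consequence of (\ref{assumption:E2_2})--(\ref{assumption:E2_4}).
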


\begin{theorem}
For
$1\leq h\leq n-m_{q}$
there exists a unique solution
of the system $(\ref{sys:E0})$
of the form
\begin{displaymath}
U''_{\infty,\mu_{q-1},h}(x)
=\left(\frac{1}{x-t_{p+1}}\right)^{-\mu_{q-1}}\sum_{m=0}^{\infty}
G''_{\infty,\mu_{q-1},h}(m)
\left(\frac{1}{x-t_{p+1}}\right)^m
\end{displaymath}
with
\begin{displaymath}
G''_{\infty,\mu_{q-1},h}(0)
=
\left\{\begin{aligned}
&(\mathcal{A}''_{P}-\mu_{q}I_{2n-m_{q-1}-m_{q}})
\varepsilon_{2n-m_{q-1}-m_{q}}(n+h)
&& \text{for } 1\leq h\leq n-m_{q-1}-m_{q},
\\[\jot]
&\begin{pmatrix}P\varepsilon_n(h)\\0_{n-m_{q-1}-m_{q}}\end{pmatrix}
&& \text{for } n-m_{q-1}-m_{q}+1\leq h\leq n-m_{q}.
\end{aligned}\right.
\end{displaymath}
Besides, for
$1\leq h\leq n-m_{q-1}-m_{q}$
and
$n-m_{q}+1\leq h\leq n$
there exists a unique solution
of the system $(\ref{sys:E0})$
of the form
\begin{displaymath}
U''_{\infty,\mu_{q},h}(x)
=\left(\frac{1}{x-t_{p+1}}\right)^{-\mu_{q}}\sum_{m=0}^{\infty}
G''_{\infty,\mu_{q},h}(m)
\left(\frac{1}{x-t_{p+1}}\right)^m
\end{displaymath}
with
\begin{displaymath}
G''_{\infty,\mu_{q},h}(0)
=
\left\{\begin{aligned}
&(\mathcal{A}''_{P}-\mu_{q-1}I_{2n-m_{q-1}-m_{q}})
\varepsilon_{2n-m_{q-1}-m_{q}}(n+h)
&& \text{for } 1\leq h\leq n-m_{q-1}-m_{q},
\\[\jot]
&\begin{pmatrix}P\varepsilon_n(h)\\0_{n-m_{q-1}-m_{q}}\end{pmatrix}
&& \text{for } n-m_{q}+1\leq h\leq n.
\end{aligned}\right.
\end{displaymath}
The series are convergent for
$|x-t_{p+1}|>\max_{1\leq k\leq p}|t_k-t_{p+1}|$.
\end{theorem}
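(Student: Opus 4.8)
The plan is to read the statement off from the general theory of local solutions of a Fuchsian system near a regular singular point (\cite[Chapter 1]{IKSY}), applied at $x=\infty$ to the reduced system $(\ref{sys:E0})$. The structural input is the eigendata of $\mathcal{A}''_P$ recorded just above: it is similar to $\mathrm{diag}(\mu_{q-1}I_{n-m_q},\mu_q I_{n-m_{q-1}})$, hence semisimple with the two eigenvalues $\mu_{q-1}=\rho_1$ and $\mu_q=\rho_2$ of multiplicities $n-m_q$ and $n-m_{q-1}$. Setting $s=1/(x-t_{p+1})$, so that $xI_{2n-m_{q-1}-m_q}-\mathcal{T}''=s^{-1}I+D$ with $D=\mathrm{diag}(t_{p+1}I_n-T,O)$ constant, and feeding the ansatz $s^{-\nu}\sum_{m\ge0}G(m)s^m$ into $(\ref{sys:E0})$, I would read off the two-term recursion $(\mathcal{A}''_P-(\nu-m)I)G(m)=-(m-1-\nu)D\,G(m-1)$; in particular the $m=0$ relation is $(\mathcal{A}''_P-\nu I)G(0)=0$.

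First I would settle existence and uniqueness of the formal series. The $m=0$ relation forces $G(0)\in\ker(\mathcal{A}''_P-\nu I)$, and for $m\ge1$ the matrix $\mathcal{A}''_P-(\nu-m)I$ is invertible: a kernel would require $\nu-m\in\{\mu_{q-1},\mu_q\}$, i.e.\ $m=0$ or $m=\pm(\mu_{q-1}-\mu_q)=\pm(\rho_1-\rho_2)$, and the latter is excluded by $(\ref{assumption:E2_3})$ since $\rho_1-\rho_2\notin\mathbb{Z}$. Thus each choice of $G(0)$ in the pertinent eigenspace extends uniquely to a formal solution, and convergence in the annulus $|x-t_{p+1}|>\max_{1\le k\le p}|t_k-t_{p+1}|$ is the standard conclusion, the radius being governed by the distance to the nearest remaining singularity among $t_1,\dots,t_p$.

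Next I would verify that the listed leading vectors exhaust the two eigenspaces, so that the enumeration by $h$ is complete. For $1\le h\le n-m_{q-1}-m_q$ the vectors $(\mathcal{A}''_P-\mu_q I)\varepsilon_{2n-m_{q-1}-m_q}(n+h)$ and $(\mathcal{A}''_P-\mu_{q-1}I)\varepsilon_{2n-m_{q-1}-m_q}(n+h)$ lie in $\ker(\mathcal{A}''_P-\mu_{q-1}I)$ and $\ker(\mathcal{A}''_P-\mu_q I)$ respectively, because the minimal polynomial of $\mathcal{A}''_P$ is $(X-\mu_{q-1})(X-\mu_q)$. For the remaining indices the vector $\begin{pmatrix}P\varepsilon_n(h)\\0\end{pmatrix}$ is an eigenvector by a direct check: from $AP=PA'$ one gets $A\,P\varepsilon_n(h)=\mu\,P\varepsilon_n(h)$, where $\mu$ is the eigenvalue of $A'$ carried by the $h$-th column, while the lower-left block $-(A'-\rho_1 I)(A'-\rho_2 I)P^{-1}$ sends $P\varepsilon_n(h)$ to $-(\mu-\rho_1)(\mu-\rho_2)\varepsilon_n(h)$, which vanishes precisely when $\mu\in\{\rho_1,\rho_2\}=\{\mu_{q-1},\mu_q\}$. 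This singles out the ranges $n-m_{q-1}-m_q+1\le h\le n-m_q$ and $n-m_q+1\le h\le n$ as the columns of $P$ attached to $\mu_{q-1}$ and to $\mu_q$, producing bona fide $\mu_{q-1}$- and $\mu_q$-eigenvectors once the decoupled coordinates are deleted.

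The main obstacle is the bookkeeping showing that, for each eigenvalue, the two families together form a basis. The dimension count matches the multiplicities, $(n-m_{q-1}-m_q)+m_{q-1}=n-m_q$ for $\mu_{q-1}$ and $(n-m_{q-1}-m_q)+m_q=n-m_{q-1}$ for $\mu_q$. For independence I would project onto the lower $n$-block: the first family has lower block $(\mu_{q-1}-\mu)\varepsilon_n(h)$ (resp.\ $(\mu_q-\mu)\varepsilon_n(h)$) supported on the distinct coordinates $h\le m_1+\dots+m_{q-2}$ and nonzero there, whereas the second family has vanishing lower block; independence within each family is clear, from distinct unit vectors and from distinct columns of the invertible $P$ respectively. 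Finally I would observe that no finite-part subtlety arises here: these local solutions are honest convergent series in $s$, the divergence conventions following Proposition $\ref{prop:2n-integral}$ being needed only later, when the same solutions are realized through the integral $(\ref{ftn:intCforVx})$.
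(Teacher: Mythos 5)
Your proposal is correct and follows exactly the route the paper intends: the paper states this theorem with no written proof, appealing only to ``the general theory of local solutions near a regular singular point'' together with the displayed eigendata of $\mathcal{A}''_{P}$, and your writeup supplies precisely those details (the two-term recursion at $s=1/(x-t_{p+1})$, invertibility of $\mathcal{A}''_{P}-(\nu-m)I$ for $m\geq1$ via $\mu_{q-1}-\mu_{q}\notin\mathbb{Z}$, the eigenvector check for $\bigl(\begin{smallmatrix}P\varepsilon_n(h)\\0\end{smallmatrix}\bigr)$ using $AP=PA'$, and the dimension/independence count via the lower block). No gaps; the verification that the listed initial vectors form bases of the two eigenspaces is exactly the point the paper leaves implicit, and you have it right.
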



\section{Solutions of the underlying system}
\label{sec:underlyingeq}
Let us fix a complex number $\eta_0$.
We define $t'_i$ ($1\leq i\leq p$) by
\begin{displaymath}
t'_i=\eta_0+\frac{1}{t_i-t_{p+1}},
\end{displaymath}
and define
\begin{equation} \label{mat:Tdash}
T'=\begin{pmatrix}
t'_1I_{n_1} & & &\\
& t'_2I_{n_2} & &\\
& &\ddots &\\
& & & t'_pI_{n_p}
\end{pmatrix}.
\end{equation}
Using this $T'$ and the matrix $A$ in the system (\ref{sys:E2}),
we set up the system of ONF
\begin{equation} \label{sys:underlying}
(\zeta I_n-T')\frac{dw}{d\zeta}=(\rho I_n+A)w,
\end{equation}
which we call
the {\em underlying system} associated with the system
(\ref{sys:E2}).

We define the assignment of the arguments
$\theta'_i=\arg(t'_i-\eta_0)$ ($1\leq i\leq p$) by
\begin{displaymath}
\begin{aligned}
\theta'_i&=- \arg(t_i-t_{p+1}) \\
&=- \theta_i.
\end{aligned}
\end{displaymath}
Moreover, we set
\begin{displaymath}
\theta'_{\infty}=- \theta_{p+1}.
\end{displaymath}
Note that the $\theta'_i$'s satisfy
\begin{displaymath}
\theta'_{\infty} <
\theta'_p <
\theta'_{p-1} <
\cdots <
\theta'_2 <
\theta'_1 <
\theta'_{\infty}+2\pi.
\end{displaymath}

In addition to the assumptions
(\ref{assumption:E2_1})--(\ref{assumption:E2_4})
we assume that
\begin{equation} \label{assumption:underlying}
\mu_{j}, \
\lambda_{i,k}-\mu_{j}
\not\in \mathbb{Z}
\quad \text{for }
1\leq j\leq q, \
1\leq i\leq p, \
1\leq k\leq r_i.
\end{equation}
Moreover, we assume
that the parameter $\rho$ satisfies
\begin{equation} \label{assumption:rho}
\rho+\lambda_{i,k}
\not\in \mathbb{Z}
\quad \text{for }
1\leq i\leq p, \
1\leq k\leq r_i.
\end{equation}

\subsection{Nonholomorphic solutions near singular points}
The Riemann scheme of the system (\ref{sys:underlying}) is
\begin{displaymath}
\left\{\ \begin{aligned}
&\zeta=t'_1 && \cdots &&
\zeta=t'_p &&
\zeta=\infty \\
&0\,(n-n_1) && \cdots &&
0\,(n-n_p) &&
-\rho-\mu_1\,(m_1) \\
&\rho+\lambda_{1,1}\,(\ell_{1,1}) && \cdots &&
\rho+\lambda_{p,1}\,(\ell_{p,1}) &&
-\rho-\mu_2\,(m_2) \\
&\,\quad\vdots && &&
\,\quad\vdots &&
\,\quad\vdots \\
&\rho+\lambda_{1,r_1}\,(\ell_{1,r_1}) && \cdots &&
\rho+\lambda_{p,r_p}\,(\ell_{p,r_p}) &&
-\rho-\mu_q\,(m_q)
\end{aligned}\ \right\}.
\end{displaymath}

\begin{theorem}
For
$1\leq i\leq p$,
$1\leq k\leq r_i$,
$1\leq h\leq \ell_{i,k}$,
there exists a unique solution
of the system $(\ref{sys:underlying})$
of the form
\begin{equation} \label{def:w_ti}
w_{t'_i,k,h}(\rho;\zeta)
=(\zeta-t'_i)^{\rho+\lambda_{i,k}}
\sum_{m=0}^{\infty}
\frac{\Gamma(\rho+\lambda_{i,k}+1)}{\Gamma(\rho+\lambda_{i,k}+1+m)}
g_{t'_i,k,h}(m)(\zeta-t'_i)^{m}
\end{equation}
with
\begin{displaymath}
g_{t'_i,k,h}(0)
=\varepsilon_{n}
(n_1+\cdots+n_{i-1}+\ell_{i,1}+\cdots+\ell_{i,k-1}+h),
\end{displaymath}
which is convergent for
$|\zeta-t'_i|<R'_i$,
$R'_i$ denoting $\min_{1\leq k\leq p, k\ne i}|t'_k-t'_i|$.
\end{theorem}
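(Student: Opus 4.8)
The plan is to substitute a Frobenius series into the underlying system $(\ref{sys:underlying})$, extract the indicial equation and the coefficient recursion, and exploit the non-integrality assumptions to show the recursion is uniquely solvable; the stated radius of convergence then comes from the general theory of regular singular points. First I would localize at $\zeta=t'_i$ by setting $s=\zeta-t'_i$ and writing $\zeta I_n-T'=sI_n+(t'_iI_n-T')$, where $t'_iI_n-T'$ is block diagonal with vanishing $i$-th block and invertible blocks $(t'_i-t'_j)I_{n_j}$ for $j\ne i$. Abbreviating $\sigma=\rho+\lambda_{i,k}$ and inserting
\begin{displaymath}
w=\sum_{m\geq0}\frac{\Gamma(\sigma+1)}{\Gamma(\sigma+1+m)}\,g(m)\,s^{\sigma+m}
\end{displaymath}
into the system, the factor $\sigma+m+1$ produced by differentiation is exactly cancelled by the Gamma normalization, leaving the clean recursion
\begin{displaymath}
(t'_iI_n-T')\,g(m+1)=(\rho I_n+A-(\sigma+m)I_n)\,g(m)\qquad(m\geq0),
\end{displaymath}
together with the indicial relation $\sigma(t'_iI_n-T')g(0)=0$ coming from the coefficient of $s^{\sigma-1}$.

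I would then solve this recursion block by block. Because $\sigma\notin\mathbb{Z}$ by $(\ref{assumption:rho})$, the indicial relation forces $g(0)$ into the kernel of $t'_iI_n-T'$, namely the $i$-th block; the prescribed leading vector $g_{t'_i,k,h}(0)$ lies there, and its $i$-th block is a unit vector indexing into the $k$-th diagonal sub-block of $A_{ii}$, hence a $\lambda_{i,k}$-eigenvector. For $j\ne i$ the recursion determines $(g(m+1))_j$ by dividing by $(t'_i-t'_j)\ne0$, whereas the $i$-th block of $g(m+1)$ is a kernel direction left undetermined at this step; it is instead fixed by the $i$-th block of the recursion one level later, which reads
\begin{displaymath}
(A_{ii}-(\lambda_{i,k}+m+1)I_{n_i})(g(m+1))_i=-\sum_{j\ne i}A_{ij}(g(m+1))_j.
\end{displaymath}
Here I would check that $A_{ii}-(\lambda_{i,k}+m+1)I_{n_i}$ is invertible for every $m\geq0$: being diagonal, its entries are $\lambda_{i,k'}-\lambda_{i,k}-(m+1)$, which are nonzero by $(\ref{assumption:E2_1})$ when $k'\ne k$ and by $m+1\geq1$ when $k'=k$. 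One also verifies that the $i$-th block relation at level $0$, namely $(A_{ii}-\lambda_{i,k}I_{n_i})(g(0))_i=0$, holds automatically since $(g_{t'_i,k,h}(0))_i$ is a $\lambda_{i,k}$-eigenvector. Thus all $g(m)$ are determined uniquely, yielding both existence and uniqueness of the formal solution with the prescribed leading coefficient.

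The step I expect to be the main obstacle is precisely this Okubo-normal-form feature: since $t'_iI_n-T'$ is singular, the recursion cannot be solved for $g(m+1)$ in a single step, and the kernel (i.e.\ $i$-th block) component of each coefficient must be recovered from the constraint imposed at the following level. Establishing that this interleaved determination is consistent --- that the leading constraint is met by the initial eigenvector and that the auxiliary matrices $A_{ii}-(\lambda_{i,k}+m+1)I_{n_i}$ never degenerate --- is exactly where $(\ref{assumption:E2_1})$ and $(\ref{assumption:rho})$ are used, and care is needed to order the determination of the blocks correctly. Finally, rather than estimate the series directly, I would obtain convergence for $|\zeta-t'_i|<R'_i$ from the general theory of Fuchsian systems (cf.\ \cite[Chapter 1]{IKSY}): $\zeta=t'_i$ is a regular singular point of $(\ref{sys:underlying})$ whose only other finite singularities are the $t'_j$ with $j\ne i$, so the Frobenius solution converges in the disk reaching the nearest of them, which is of radius $R'_i=\min_{1\leq k\leq p,\,k\ne i}|t'_k-t'_i|$.
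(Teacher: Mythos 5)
Your proof is correct. Note that the paper itself gives no argument for this theorem: it is one of the local-solution statements covered by the blanket appeal to the general theory of regular singular points (\cite[Chapter 1]{IKSY}) made in Section 3, under which the residue matrix of $(\ref{sys:underlying})$ at $\zeta=t'_i$ is the $i$-th block projection applied to $\rho I_n+A$, whose nonzero spectrum $\{\rho+\lambda_{i,k}\}$ is non-resonant with itself and with the eigenvalue $0$ precisely by $(\ref{assumption:E2_1})$ and $(\ref{assumption:rho})$. Your explicit Frobenius computation is a valid instantiation of that theory adapted to Okubo normal form, and it correctly isolates the one feature that the general-theory citation hides: since $t'_iI_n-T'$ annihilates the $i$-th block, the recursion $(t'_iI_n-T')g(m+1)=(A-(\lambda_{i,k}+m)I_n)g(m)$ only determines the off-$i$ blocks of $g(m+1)$, while the $i$-th block of $g(m)$ is forced by the solvability (range) condition of the same recursion, with $A_{ii}-(\lambda_{i,k}+m)I_{n_i}$ invertible for $m\geq1$ and the $m=0$ condition met by the prescribed eigenvector $g_{t'_i,k,h}(0)$. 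The normalization by $\Gamma(\rho+\lambda_{i,k}+1)/\Gamma(\rho+\lambda_{i,k}+1+m)$ does cancel the factor $\rho+\lambda_{i,k}+m+1$ exactly as you claim, which is what makes $g_{t'_i,k,h}(m)$ independent of $\rho$ (the content of the Remark following the theorem), so your derivation in fact also proves that remark. The only thing worth making fully explicit is the order of determination within each step --- off-$i$ blocks of $g(m+1)$ from level $m$, then $(g(m+1))_i$ from the constraint at level $m+1$ --- which you flag correctly; with that, existence, uniqueness, and the stated radius of convergence all follow as you describe.
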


\begin{theorem}
For
$1\leq k\leq q$,
$1\leq h\leq m_k$,
there exists a unique solution
of the system $(\ref{sys:underlying})$
of the form
\begin{equation} \label{def:w_inf}
w_{\infty,k,h}(\rho;\zeta)
=\left(\frac{1}{\zeta-\eta_0}\right)^{-\rho-\mu_{k}}
\sum_{m=0}^{\infty}
\frac{\Gamma(-\rho-\mu_{k}+m)}{\Gamma(-\rho-\mu_{k})}
g_{\infty,k,h}(m)
\left(\frac{1}{\zeta-\eta_0}\right)^{m}
\end{equation}
with
\begin{displaymath}
g_{\infty,k,h}(0)
=P \varepsilon_n(m_1+\cdots+m_{k-1}+h),
\end{displaymath}
which is convergent for
$|\zeta-\eta_0|>R'_{\infty}$,
$R'_{\infty}$ denoting $\max_{1\leq k\leq p}|t'_k-\eta_0|$.
\end{theorem}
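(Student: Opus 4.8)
The plan is to construct the solutions by the standard method of undetermined coefficients at the regular singular point $\zeta=\infty$, exactly in parallel with the construction at the finite points $t_i'$ in the preceding theorem. First I would move $\infty$ to the origin through $s=1/(\zeta-\eta_0)$, under which $d/d\zeta=-s^2\,d/ds$ and $\zeta I_n-T'=s^{-1}\{I_n+s(\eta_0 I_n-T')\}$, so that the underlying system $(\ref{sys:underlying})$ becomes
\[
s\{I_n+s(\eta_0 I_n-T')\}\frac{dw}{ds}=-(\rho I_n+A)w .
\]
This has a regular singular point at $s=0$ with residue matrix $-(\rho I_n+A)$. Since $A=PA'P^{-1}$ and $A'$ is diagonal with entries $\mu_k$, the eigenvalues of this residue matrix are $-\rho-\mu_k$ $(1\le k\le q)$, which are precisely the exponents at $\zeta=\infty$ listed in the Riemann scheme.

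Next I would insert the ansatz $w=s^{-\rho-\mu_k}\sum_{m\ge0}(\sigma)_m\,g(m)\,s^m$, where $\sigma=-\rho-\mu_k$ and $(\sigma)_m=\Gamma(\sigma+m)/\Gamma(\sigma)$ is exactly the Gamma factor appearing in $(\ref{def:w_inf})$, and equate coefficients of each power $s^{\sigma+m}$. The coefficient of $s^{\sigma}$ yields the indicial relation $(A-\mu_k I_n)g(0)=0$, while for $m\ge1$ the coefficient relation is $(A+(m-\mu_k)I_n)(\sigma)_m g(m)=-(\sigma)_m(\eta_0 I_n-T')g(m-1)$, where I have used $(\sigma+m-1)(\sigma)_{m-1}=(\sigma)_m$. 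The role of the normalization $(\sigma)_m$ is precisely to cancel as a common scalar on both sides, leaving the exponent-free recursion
\[
(A+(m-\mu_k)I_n)\,g(m)=-(\eta_0 I_n-T')\,g(m-1)\qquad(m\ge1).
\]

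Then I would solve this system. The indicial relation says $g(0)$ lies in the $\mu_k$-eigenspace of $A$; since that eigenspace is spanned by the columns $P\varepsilon_n(m_1+\cdots+m_{k-1}+h)$ $(1\le h\le m_k)$, the prescribed value $g(0)=P\varepsilon_n(m_1+\cdots+m_{k-1}+h)$ fixes a unique starting vector. For $m\ge1$ the matrix $A+(m-\mu_k)I_n$ has eigenvalues $\mu_j-\mu_k+m$; these are nonzero, since for $j=k$ one gets $m\ne0$ and, for $j\ne k$, the assumption $(\ref{assumption:E2_2})$ gives $\mu_k-\mu_j\notin\mathbb{Z}$, hence $\mu_j-\mu_k+m\ne0$. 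Thus $A+(m-\mu_k)I_n$ is invertible, the recursion determines $g(m)$ uniquely from $g(m-1)$, and existence and uniqueness of the formal series follow.

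Finally, convergence for $|\zeta-\eta_0|>R'_\infty$ follows from the general theory of solutions at a regular singular point (\cite[Chapter 1]{IKSY}): in the $s$-variable the series about $s=0$ converges up to the nearest other singularity $s=1/(t'_k-\eta_0)$, that is, for $|s|<1/\max_{1\le k\le p}|t'_k-\eta_0|$, which is exactly $|\zeta-\eta_0|>R'_\infty$. The only step that actually invokes the standing hypotheses is the invertibility of $A+(m-\mu_k)I_n$, supplied by $(\ref{assumption:E2_2})$; everything else is the same bookkeeping already carried out at the finite singular points, so that the whole argument is the mirror image of the proof of the preceding theorem with $t'_i$ replaced by $\infty$.
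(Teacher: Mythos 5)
Your proposal is correct and is essentially the same argument the paper relies on: the paper states this theorem without a written proof, appealing to the general theory of local solutions at a regular singular point, and your Frobenius-type computation (move $\infty$ to $s=0$, indicial relation $(A-\mu_k I_n)g(0)=0$, recursion $(A+(m-\mu_k)I_n)g(m)=-(\eta_0 I_n-T')g(m-1)$ made invertible by assumption (\ref{assumption:E2_2})) is exactly that theory worked out, and it also explains the $\rho$-independence of the $g_{\infty,k,h}(m)$ noted in the subsequent Remark. The only cosmetic caveat is that dividing by the Pochhammer factor $\Gamma(-\rho-\mu_k+m)/\Gamma(-\rho-\mu_k)$ tacitly assumes $-\rho-\mu_k\notin\mathbb{Z}_{\leq 0}$, which is harmless under the paper's standing genericity hypotheses.
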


\begin{remark}
The coefficients $g_{t'_i,k,h}(m)$ and $g_{\infty,k,h}(m)$
for $m=0,1,2,\ldots$
do not depend on $\rho$.
\end{remark}

Set
\begin{displaymath}
\mathcal{P}'
=
\mathbb{C}\setminus
\bigcup_{i=1}^{p}\{\eta_0+(t'_i-\eta_0)s\mid 1\leq s< \infty\}.
\end{displaymath}
In $\mathcal{P}'$ we specify the branch of $w_{t'_i,k,h}(\rho;\zeta)$
by the assignment of argument 
\begin{equation} \label{branch:wt'ihk}
\arg(\zeta-t'_i)\in(\theta'_i-2\pi,\,\theta'_i)
\end{equation}
for $1\leq i\leq p$.

\begin{theorem} \label{thm:eulertr_z}
Assume that $\rho'-\rho\not\in\mathbb{Z}_{\geq0}$
in addition to $(\ref{assumption:rho})$ for $\rho$ and $\rho'$.
For $\zeta\in\mathcal{P}'$ we have
\begin{equation} \label{eutra:ti}
w_{t'_i,k,h}(\rho;\zeta)
=\frac{\Gamma(\rho+\lambda_{i,k}+1)}
{\Gamma(\rho-\rho')
\Gamma(\rho'+\lambda_{i,k}+1)}
\int_{t'_i}^{\zeta}
(\zeta-\tau)^{\rho-\rho'-1}
w_{t'_i,k,h}(\rho';\tau)
\,d\tau,
\end{equation}
where the path of integration is a segment or a curve
in $\mathcal{P}'$ with initial point $t'_i$ and terminal point $\zeta$,
and the branch of the integrand is determined by the following
assignment of the arguments
\begin{displaymath}
  \arg(\zeta-\tau)
 =\arg(\tau-t'_i)
 =\arg(\zeta-t'_i)
\end{displaymath}
for $\zeta$ sufficiently close to $t'_i$.
\end{theorem}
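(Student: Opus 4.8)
The goal is to prove the Euler-type integral relation
\begin{equation*}
w_{t'_i,k,h}(\rho;\zeta)
=\frac{\Gamma(\rho+\lambda_{i,k}+1)}
{\Gamma(\rho-\rho')\Gamma(\rho'+\lambda_{i,k}+1)}
\int_{t'_i}^{\zeta}
(\zeta-\tau)^{\rho-\rho'-1}
w_{t'_i,k,h}(\rho';\tau)\,d\tau,
\end{equation*}
expressing the solution at the exponent parameter $\rho$ in terms of the solution at $\rho'$.

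Let me think about how I'd approach this.

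The solution $w_{t'_i,k,h}(\rho;\zeta)$ is defined as a convergent power series at $\zeta = t'_i$ with a specific normalization involving Gamma factors. The key observation is that this is fundamentally the classical Euler transformation for hypergeometric-type functions, extended to the ONF system. Let me figure out the mechanism.
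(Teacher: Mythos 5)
Your proposal contains no proof: after restating the claim and remarking that it ``is fundamentally the classical Euler transformation,'' it stops at ``Let me figure out the mechanism.'' Nothing is verified. Invoking the classical Euler transformation by analogy is not sufficient here, because the statement asserts an exact identity between two specifically normalized local solutions of a matrix ONF system, including the precise $\Gamma$-factor constant, and that constant depends on the normalization $\Gamma(\rho+\lambda_{i,k}+1)/\Gamma(\rho+\lambda_{i,k}+1+m)$ built into the definition of $w_{t'_i,k,h}$.

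The missing content is the following. Set $u(\zeta)=\int_{t'_i}^{\zeta}(\zeta-\tau)^{\rho-\rho'-1}w_{t'_i,k,h}(\rho';\tau)\,d\tau$. One must first show $u$ solves the underlying system $(\zeta I_n-T')u'=(\rho I_n+A)u$; this follows by the integration-by-parts computation used for the two-variable Euler integral earlier in the paper (the boundary term vanishes because $\Re(\rho'+\lambda_{i,k})$ can be taken so that the integrand vanishes at $\tau=t'_i$, or one works with the finite part). Second, one substitutes the defining series of $w_{t'_i,k,h}(\rho';\tau)$ and integrates term by term via the Beta integral
\begin{displaymath}
\int_{t'_i}^{\zeta}(\zeta-\tau)^{\rho-\rho'-1}(\tau-t'_i)^{\rho'+\lambda_{i,k}+m}\,d\tau
=(\zeta-t'_i)^{\rho+\lambda_{i,k}+m}\,
\frac{\Gamma(\rho-\rho')\Gamma(\rho'+\lambda_{i,k}+m+1)}{\Gamma(\rho+\lambda_{i,k}+m+1)},
\end{displaymath}
which converts the $\rho'$-normalized coefficients into the $\rho$-normalized ones and produces exactly the constant $\Gamma(\rho-\rho')\Gamma(\rho'+\lambda_{i,k}+1)/\Gamma(\rho+\lambda_{i,k}+1)$ in front of the series for $w_{t'_i,k,h}(\rho;\zeta)$. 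The identity for $\zeta$ near $t'_i$ then extends to all of $\mathcal{P}'$ by analytic continuation. Without these steps (in particular the term-by-term Beta evaluation, which is where the hypotheses $\rho'-\rho\notin\mathbb{Z}_{\geq0}$ and $\rho+\lambda_{i,k},\rho'+\lambda_{i,k}\notin\mathbb{Z}$ enter), the claim is unproven.
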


\begin{proof}
It suffices to prove (\ref{eutra:ti})
for $\zeta$ near $t'_i$.
Set
\begin{equation} \label{tmp:int_z}
u(\zeta)
=\int_{t'_i}^{\zeta}
(\zeta-\tau)^{\rho-\rho'-1}
w_{t'_i,k,h}(\rho';\tau)
\,d\tau.
\end{equation}
By the way similar to (\ref{pf:inteudartwo})
we can easily see that $u(\zeta)$ becomes a solution of
$(\ref{sys:underlying})$.
Substituting the expansion (\ref{def:w_ti}) with $\rho$
replaced by $\rho'$,
we have
\begin{displaymath}
u(\zeta)
=\sum_{m=0}^{\infty}
\frac{\Gamma(\rho'+\lambda_{i,k}+1)}{\Gamma(\rho'+\lambda_{i,k}+1+m)}
g_{t'_i,k,h}(m)
\int_{t'_i}^{\zeta}
(\zeta-\tau)^{\rho-\rho'-1}
(\tau-t'_i)^{\rho'+\lambda_{i,k}+m}
\,d{\tau}.
\end{displaymath}
Changing the variable of integration $\tau$
to $s$ by $\tau=t'_i+(\zeta-t'_i)s$,
we obtain
\begin{displaymath}
\begin{aligned}
\int_{t'_i}^{\zeta}
(\zeta-\tau)^{\rho-\rho'-1}
(\tau-t'_i)^{\rho'+\lambda_{i,k}+m}
\,d{\tau}
&=
(\zeta-t'_i)^{\rho+\lambda_{i,k}+m}
\int_{0}^{1}
(1-s)^{\rho-\rho'-1}
s^{\rho'+\lambda_{i,k}+m}
\,d{s}
\\
&=
(\zeta-t'_i)^{\rho+\lambda_{i,k}+m}
\frac{\Gamma(\rho-\rho')
\Gamma(\rho'+\lambda_{i,k}+m+1)}
{\Gamma(\rho+\lambda_{i,k}+m+1)}
\end{aligned}
\end{displaymath}
and hence
\begin{displaymath}
u(\zeta)
=
\frac{\Gamma(\rho-\rho')
\Gamma(\rho'+\lambda_{i,k}+1)}
{\Gamma(\rho+\lambda_{i,k}+1)}
\sum_{m=0}^{\infty}
\frac{\Gamma(\rho+\lambda_{i,k}+1)}{\Gamma(\rho+\lambda_{i,k}+1+m)}
g_{t'_i,k,h}(m)
(\zeta-t'_i)^{\rho+\lambda_{i,k}+m}.
\end{displaymath}
This implies (\ref{eutra:ti}).
\end{proof}

\subsection{Holomorphic solutions in the plane cut from one singular point}
Set
\begin{displaymath}
\mathcal{P}'_i
=\mathbb{C}\setminus
\{\eta_0+(t'_i-\eta_0)s\mid 1\leq s< \infty\}
\end{displaymath}
for $1\leq i\leq p$.

\begin{theorem} \label{thm:underlying_1cut}
For
$1\leq i\leq p$,
$1\leq k\leq r_i$,
$1\leq h\leq \ell_{i,k}$
there exists a unique solution
$\tilde{w}_{t'_i,k,h}(\rho;\zeta)$
of the system $(\ref{sys:underlying})$
such that
\begin{displaymath}
\begin{aligned}
&\tilde{w}_{t'_i,k,h}(\rho;\zeta)
\ \text{is holomorphic in}\ \mathcal{P}'_i,
\\
&\tilde{w}_{t'_i,k,h}(\rho;\zeta)
=w_{t'_i,k,h}(\rho;\zeta)
+\mathrm{hol}(\zeta-t'_i)
\quad \text{near}\ \zeta=t'_i.
\end{aligned}
\end{displaymath}
\end{theorem}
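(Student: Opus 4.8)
The plan is to reduce the assertion to a finite-dimensional linear-algebra statement about the $n$-dimensional solution space of $(\ref{sys:underlying})$, together with one global input. Recall that $\mathcal{P}'_i$ is the plane slit along the single ray issuing from $t'_i$ to infinity; by the assumption that no three of the $t_i$ are collinear, this ray meets none of the other $t'_j$, so $\mathcal{P}'_i$ is simply connected and contains every finite singular point $t'_j$ with $j\neq i$. A solution is holomorphic on $\mathcal{P}'_i$ if and only if it is single valued and free of logarithms at each $t'_j$, $j\neq i$: a loop about $t'_i$ or about $\infty$ must cross the slit, the monodromy about any $t'_j$ with $j\neq i$ is trivial on such a solution, and $\mathcal{P}'_i$ carries no further loops. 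Let $\mathcal{V}$ be the space of solutions holomorphic on $\mathcal{P}'_i$, and let $V_i$ be the $n_i$-dimensional space spanned by the ramified germs $w_{t'_i,k,h}(\rho;\zeta)$ ($1\le k\le r_i$, $1\le h\le\ell_{i,k}$). By $(\ref{assumption:E2_1})$ and $(\ref{assumption:rho})$ the exponents $0$ and $\rho+\lambda_{i,k}$ at $t'_i$ are pairwise non-congruent modulo $\mathbb{Z}$, so every solution germ decomposes near $t'_i$ uniquely as a holomorphic part plus an element of $V_i$; let $r_i\colon\mathcal{V}\to V_i$ be the resulting linear ``ramified part'' map.

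I would then show that $r_i$ is an isomorphism, which produces existence and uniqueness simultaneously: the required $\tilde{w}_{t'_i,k,h}$ is the unique preimage $r_i^{-1}(w_{t'_i,k,h})$, and near $t'_i$ it then reads $w_{t'_i,k,h}+\mathrm{hol}(\zeta-t'_i)$. For the dimension bookkeeping, the standard local theory at $t'_j$ (using the non-resonance $(\ref{assumption:E2_1})$ and $(\ref{assumption:rho})$, under which the residue at $t'_j$ has the eigenvalue $0$ with equal algebraic and geometric multiplicity $n-n_j$, the nonzero eigenvalues being the non-integers $\rho+\lambda_{j,k}$) shows that the solutions holomorphic at $t'_j$ form a subspace of codimension $n_j$ in the solution space. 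Subadditivity of codimension gives $\dim\mathcal{V}\ge n-\sum_{j\neq i}n_j=n_i=\dim V_i$. On the other hand $r_i$ is injective: if $r_i(v)=0$ then $v$ is holomorphic at $t'_i$ as well, hence holomorphic at every finite singular point, i.e.\ entire. Thus the entire theorem rests on the single claim that $(\ref{sys:underlying})$ has no nonzero entire solution; granting it, $\ker r_i=0$, so $\dim\mathcal{V}\le n_i$, forcing $\dim\mathcal{V}=n_i$ and $r_i$ bijective.

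The main obstacle is therefore this global input. I would argue at infinity. A nonzero entire solution $w$ is single valued near $\infty$, hence fixed by the monodromy there; since the exponents $-\rho-\mu_k$ are non-congruent modulo $\mathbb{Z}$ by $(\ref{assumption:E2_2})$, the local monodromy at $\infty$ is semisimple with eigenvalues $e^{2\pi i(\rho+\mu_k)}$, and invariance forces $\rho+\mu_k\in\mathbb{Z}$ for the exponent carried by $w$. As $\infty$ is a regular singular point with these exponents, $w$ grows at most polynomially, and being entire it is a polynomial; comparing top-degree terms in $(\ref{sys:underlying})$ shows its leading coefficient is an eigenvector of $\rho I_n+A$ whose eigenvalue $\rho+\mu_k$ equals the degree, so in fact $\rho+\mu_k\in\mathbb{Z}_{\ge0}$. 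This is excluded by the non-resonance between $\rho$ and the $\mu_k$ underlying the standing hypotheses, whence $w\equiv0$. Pinning down exactly this non-resonance — that $\rho+\mu_k\notin\mathbb{Z}_{\ge0}$ under $(\ref{assumption:underlying})$ and $(\ref{assumption:rho})$ (in the applications $\rho=-\rho_j-1$, where it follows from $\rho_j-\mu_k\notin\mathbb{Z}$) — is the delicate point on which the whole argument turns.
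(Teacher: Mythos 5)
The paper states Theorem \ref{thm:underlying_1cut} without any proof, so there is no argument of the author's to match yours against; the closest thing is the theorem immediately following it, which \emph{constructs} $\tilde w_{t'_i,k,h}$ as the explicit Euler-transform integral (\ref{eutra:t'i_inf}) along the ray from $t'_i$ to $\infty$ (holomorphy on $\mathcal{P}'_i$ is then manifest, and the ramified part at $t'_i$ is extracted by deforming the contour around $t'_i$), and which quietly uses the same ``no nonzero entire solution'' fact for uniqueness that you isolate. Your route is the abstract one: identify the solutions holomorphic on the slit plane with $\bigcap_{j\ne i}H_j$, where $H_j$ is the codimension-$n_j$ subspace of germs holomorphic at $t'_j$ (the codimension count is right --- under (\ref{assumption:rho}) the residue at $t'_j$ has rank exactly $n_j$ and is semisimple at the eigenvalue $0$, and triviality of the generating loops on $\bigcap_{j\ne i}H_j$ gives single-valuedness because $\mathcal{P}'_i$ is simply connected); then $\dim\mathcal{V}\ge n_i$ by subadditivity, while injectivity of the ramified-part map forces $\dim\mathcal{V}\le n_i$, so the map is bijective and existence and uniqueness drop out together. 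This is sound and arguably cleaner than the contour construction; what the paper's implicit approach buys is the explicit integral representation, which it needs in the sequel anyway.

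The one point to tighten is the global input. You are right that everything reduces to $\rho+\mu_k\notin\mathbb{Z}_{\geq0}$, and your derivation (entire $\Rightarrow$ polynomial $\Rightarrow$ leading coefficient an eigenvector of $\rho I_n+A$ with eigenvalue equal to the degree) is correct. But this condition does \emph{not} follow from (\ref{assumption:underlying}) and (\ref{assumption:rho}): $\rho+\mu_k=(\rho+\lambda_{i,1})-(\lambda_{i,1}-\mu_k)$ is a difference of two non-integers and can be a non-negative integer. The paper itself concedes the point by adding the hypothesis $\rho+\mu_k\notin\mathbb{Z}_{\geq0}$ to the very next theorem; as printed, Theorem \ref{thm:underlying_1cut} is missing a hypothesis that the uniqueness assertion genuinely requires. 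Your parenthetical repair is the correct one --- in every application $\rho=-\rho_j-1$ (or $-\nu_j-1$), so $\rho+\mu_k\notin\mathbb{Z}$ follows from (\ref{assumption:E2_0}) in the generic case and from (\ref{assumption:E2_2}) together with $\rho_2=\mu_q$ (resp.\ $\rho_1=\mu_{q-1}$) in the reducible cases --- but it should be stated as an added hypothesis of the theorem rather than attributed to the standing assumptions. With that made explicit, your proof is complete.
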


\begin{figure}
 \centering
 \includegraphics{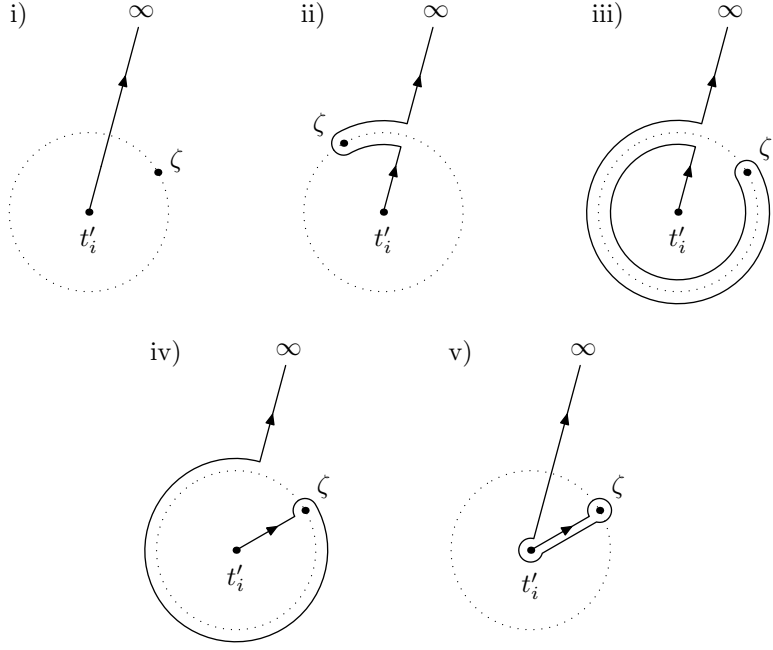}
 \caption{Deformation of the path $\overline{t'_i \infty}$}
 \label{fig:deformation_zeta}
\end{figure}

\begin{theorem}
Assume that $\rho+\mu_{k}\not\in\mathbb{Z}_{\geq0}$
for $1\leq k\leq q$.
Under the specification $(\ref{branch:wt'ihk})$ we have
\begin{equation} \label{eutra:t'i_inf}
\tilde{w}_{t'_i,k,h}(\rho;\zeta)
=-\frac{e^{-\pi \sqrt{-1}(\rho'+\lambda_{i,k})}
\Gamma(1-\rho+\rho')}
{\Gamma(-\rho-\lambda_{i,k})
\Gamma(\rho'+\lambda_{i,k}+1)}
\int_{t'_i}^{\infty}
(\zeta-\tau)^{\rho-\rho'-1}
w_{t'_i,k,h}(\rho';\tau)
\,d\tau
\end{equation}
for $\zeta \in \mathcal{P}'_i$,
where the path of integration is the ray from $t'_i$
to $\infty$ along the right-hand side of the cut
$\arg(\zeta-t'_i)=\theta'_i$,
and the branch of the integrand is determined
by the following assignment of the arguments
\begin{displaymath}
\left\{\begin{aligned}
&\arg(\tau-t'_i)=\theta'_i,
\\
&\arg(\zeta-\tau)\in[\arg(\zeta-t'_i),\,\theta'_i-\pi]^*,
\end{aligned}\right.
\end{displaymath}
$[\alpha,\,\beta]^*$ denoting the interval
$[\min(\alpha,\beta),\,\max(\alpha,\beta)]$.
\end{theorem}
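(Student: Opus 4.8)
The plan is to realize $\tilde{w}_{t'_i,k,h}(\rho;\zeta)$ as a constant multiple of the ray integral and to pin the constant down by matching the singular part at $\zeta=t'_i$. First I would set
\[
u_\infty(\zeta)=\int_{t'_i}^{\infty}(\zeta-\tau)^{\rho-\rho'-1}w_{t'_i,k,h}(\rho';\tau)\,d\tau
\]
along the right-hand side of the cut with the branch prescribed in the statement, reading it as a finite part when divergent. Exactly as in the computation (\ref{pf:inteudartwo}) and in the proof of Theorem \ref{thm:eulertr_z}, integration by parts together with the fact that the underlying equation (\ref{sys:underlying}) lets one rewrite $\frac{d}{d\tau}\{(\tau I_n-T')w_{t'_i,k,h}(\rho';\tau)\}$ as a constant matrix times $w_{t'_i,k,h}(\rho';\tau)$ shows that $u_\infty$ solves (\ref{sys:underlying}). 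The endpoint term at $\tau=t'_i$ vanishes (in the finite-part sense) because of the exponent $\rho'+\lambda_{i,k}+1$, while the term at $\tau=\infty$ is governed by the exponents $\rho'+\mu_k$ of $w_{t'_i,k,h}(\rho';\tau)$ at infinity: the integrand there behaves like $\tau^{\rho+\mu_k-1}$, so it is precisely the hypothesis $\rho+\mu_k\notin\mathbb{Z}_{\geq0}$ that makes the finite part well defined and free of logarithmic resonance.

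Next I would check that $u_\infty$ is holomorphic on $\mathcal{P}'_i$. Since $\mathcal{P}'_i$ is the plane minus a ray, it is simply connected, and $\zeta$ never meets the ray carrying $\tau$; the branch of $(\zeta-\tau)^{\rho-\rho'-1}$ fixed by the assignment $\arg(\zeta-\tau)\in[\arg(\zeta-t'_i),\theta'_i-\pi]^*$ therefore varies continuously with $\zeta$, and differentiation under the integral sign (justified on compacta, with the finite-part regularization) yields holomorphy; single-valuedness is automatic from simple connectivity.

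The heart of the argument is the local analysis at $\zeta=t'_i$. Because the exponents there are $0$ and $\rho+\lambda_{i,k}$ and differ by a non-integer by (\ref{assumption:rho}), the solution $u_\infty$ splits uniquely as a constant times $w_{t'_i,k,h}(\rho;\zeta)$ plus a solution holomorphic at $t'_i$; by the uniqueness of $w_{t'_i,k,h}(\rho;\zeta)$ it suffices to compute the coefficient of the leading singular vector $(\zeta-t'_i)^{\rho+\lambda_{i,k}}g_{t'_i,k,h}(0)$. I would insert the leading term $(\tau-t'_i)^{\rho'+\lambda_{i,k}}g_{t'_i,k,h}(0)$ of (\ref{def:w_ti}) and rescale by $\tau-t'_i=(\zeta-t'_i)s$; the higher terms of the expansion contribute only to higher-order singular terms, and the part of the ray bounded away from $t'_i$ is holomorphic in $\zeta$ near $t'_i$, so both are absorbed into the holomorphic part. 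This reduces the leading coefficient to the single rotated Beta integral
\[
K=\int_{0}^{\infty e^{\sqrt{-1}\psi}} s^{\rho'+\lambda_{i,k}}(1-s)^{\rho-\rho'-1}\,ds,\qquad \psi=\theta'_i-\arg(\zeta-t'_i)\in(0,2\pi),
\]
whose path avoids $s=1$ precisely because $\zeta$ lies off the cut, and whose branches are inherited from the prescribed assignment.

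Finally I would evaluate $K$. By Cauchy's theorem the ray may be rotated within $(0,2\pi)$ without crossing $s=0$ or $s=1$, so $K$ is independent of $\psi$ and hence of $\zeta$; choosing $\psi=\pi$ turns it into $e^{\pi\sqrt{-1}(\rho'+\lambda_{i,k}+1)}\int_0^\infty r^{\rho'+\lambda_{i,k}}(1+r)^{\rho-\rho'-1}\,dr$, and the standard evaluation $\int_0^\infty r^{b-1}(1+r)^{a-1}\,dr=\Gamma(b)\Gamma(1-a-b)/\Gamma(1-a)$ with $a=\rho-\rho'$, $b=\rho'+\lambda_{i,k}+1$ gives
\[
K=-e^{\pi\sqrt{-1}(\rho'+\lambda_{i,k})}\,
\frac{\Gamma(\rho'+\lambda_{i,k}+1)\,\Gamma(-\rho-\lambda_{i,k})}{\Gamma(1-\rho+\rho')}.
\]
Since $\tilde{w}_{t'_i,k,h}(\rho;\zeta)$ is by definition the unique solution holomorphic on $\mathcal{P}'_i$ whose singular part equals $w_{t'_i,k,h}(\rho;\zeta)$ itself, I conclude $\tilde{w}_{t'_i,k,h}(\rho;\zeta)=K^{-1}u_\infty(\zeta)$, which is exactly formula (\ref{eutra:t'i_inf}). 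I expect the main obstacle to be the branch bookkeeping: keeping $\arg(\tau-t'_i)=\theta'_i$ and $\arg(\zeta-\tau)\in[\arg(\zeta-t'_i),\theta'_i-\pi]^*$ consistent through the rescaling and the contour rotation so that the phase $e^{\pi\sqrt{-1}(\rho'+\lambda_{i,k}+1)}=-e^{\pi\sqrt{-1}(\rho'+\lambda_{i,k})}$ emerges with the correct sign, together with justifying the finite-part manipulations in the divergent range.
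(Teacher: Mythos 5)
Your argument is correct and arrives at exactly the right constant, but it takes a genuinely different route from the paper at the decisive step. The paper never evaluates the singular coefficient of $\tilde{u}(\zeta)=\int_{t'_i}^{\infty}(\zeta-\tau)^{\rho-\rho'-1}w_{t'_i,k,h}(\rho';\tau)\,d\tau$ directly: instead it computes the monodromy of $\tilde{u}$ around a small circle at $t'_i$ by deforming the path of integration (Figure \ref{fig:deformation_zeta}), finds that the continuation picks up $(e^{2\pi\sqrt{-1}(\rho-\rho')}-1)e^{2\pi\sqrt{-1}(\rho'+\lambda_{i,k})}u(\zeta)$ with $u$ the finite integral already identified with a multiple of $w_{t'_i,k,h}(\rho;\zeta)$ in Theorem \ref{thm:eulertr_z}, matches this against the monodromy of $\tilde{w}_{t'_i,k,h}(\rho;\zeta)$, and converts the resulting ratio of exponentials into Gamma factors via the reflection formula. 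That route buys something concrete: it entirely sidesteps the fact that the local expansion (\ref{def:w_ti}) of $w_{t'_i,k,h}(\rho';\tau)$ converges only for $|\tau-t'_i|<R'_i$ and not along the whole ray of integration. Your route is more self-contained and explicit (a single rotated Beta integral, no reflection formula), but it inherits precisely that convergence issue: after splitting the ray into a piece where the expansion is valid and a piece bounded away from $t'_i$, the rescaling $\tau-t'_i=(\zeta-t'_i)s$ sends the near piece to $[0,T]$ with $T=Re^{\sqrt{-1}\theta'_i}/(\zeta-t'_i)$, not to the full ray $[0,\infty e^{\sqrt{-1}\psi})$ over which you define $K$. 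The discrepancy is harmless --- the tail $\int_{T}^{\infty e^{\sqrt{-1}\psi}}$ of each Beta integrand expands in nonnegative integer powers of $\zeta-t'_i$ and is absorbed into $\mathrm{hol}(\zeta-t'_i)$ --- but this is the one step you gloss over and should make explicit, since as written the appearance of the complete Beta integral is unjustified. Your remaining bookkeeping is sound: the branch assignment does give $\arg s=\psi\in(0,2\pi)$ and $\arg(1-s)\in[0,\psi-\pi]^*$, the rotation to $\psi=\pi$ crosses neither $s=0$ nor $s=1$, and the phase $-e^{\pi\sqrt{-1}(\rho'+\lambda_{i,k})}$ comes out with the correct sign; one can check that the paper's constant $\dfrac{e^{2\pi\sqrt{-1}(\rho+\lambda_{i,k})}-1}{(e^{2\pi\sqrt{-1}(\rho-\rho')}-1)e^{2\pi\sqrt{-1}(\rho'+\lambda_{i,k})}}\cdot\dfrac{\Gamma(\rho+\lambda_{i,k}+1)}{\Gamma(\rho-\rho')\Gamma(\rho'+\lambda_{i,k}+1)}$ reduces to your $K^{-1}$ by the reflection formula. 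Finally, note that the hypothesis $\rho+\mu_{k}\notin\mathbb{Z}_{\geq0}$ is doing double duty: you invoke it for the finite part at $\tau=\infty$, but it is also what makes the uniqueness statement of Theorem \ref{thm:underlying_1cut} (equivalently, the vanishing of the entire solution $K^{-1}u_\infty-\tilde{w}_{t'_i,k,h}$) available, since otherwise a nonzero polynomial solution could exist at $\zeta=\infty$; it is worth saying that both uses are needed.
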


\begin{proof}
Set
\begin{displaymath}
\tilde{u}(\zeta)
=\int_{t'_i}^{\infty}
(\zeta-\tau)^{\rho-\rho'-1}
w_{t'_i,k,h}(\rho';\tau)
\,d\tau.
\end{displaymath}
Similarly to the integral (\ref{tmp:int_z})
in the proof of Theorem \ref{thm:eulertr_z}
we see that $\tilde{u}(\zeta)$ becomes a solution of
$(\ref{sys:underlying})$.
It is trivial that $\tilde{u}(\zeta)$ is holomorphic in
$\mathcal{P}'_i$.
In order to find the behavior of $\tilde{u}(\zeta)$
near $\zeta=t'_i$ we shall analytically continue it
along a circle of center $t'_i$ with sufficiently small radius
in the positive direction.
The analytic continuation is found by deforming the path of
integration as $\zeta$ travels along the circle.
Figure \ref{fig:deformation_zeta} illustrates the deformation
of the path of integration.
From the last picture of Figure \ref{fig:deformation_zeta}
we see that the analytic continuation of $\tilde{u}(\zeta)$
leads to
\begin{displaymath}
\tilde{u}(\zeta)
+\left(e^{2\pi\sqrt{-1}(\rho-\rho')}-1\right)
e^{2\pi\sqrt{-1}(\rho'+\lambda_{i,k})}
u(\zeta),
\end{displaymath}
where $u(\zeta)$ denotes the integral
(\ref{tmp:int_z}) in the proof of Theorem \ref{thm:eulertr_z}.

On the other hand, when we analytically continue
$\tilde{w}_{t'_i,k,h}(\rho;\zeta)$
along the circle, we obtain
\begin{displaymath}
\tilde{w}_{t'_i,k,h}(\rho;\zeta)
+\left(e^{2\pi\sqrt{-1}(\rho+\lambda_{i,k})}-1\right)
w_{t'_i,k,h}(\rho;\zeta).
\end{displaymath}
So the solution defined by
\begin{displaymath}
\tilde{w}_{t'_i,k,h}(\rho;\zeta)
-
\frac{e^{2\pi\sqrt{-1}(\rho+\lambda_{i,k})}-1}
{(e^{2\pi\sqrt{-1}(\rho-\rho')}-1)
e^{2\pi\sqrt{-1}(\rho'+\lambda_{i,k})}}
\frac{\Gamma(\rho+\lambda_{i,k}+1)}
{\Gamma(\rho-\rho')\Gamma(\rho'+\lambda_{i,k}+1)}
\tilde{u}(\zeta)
\end{displaymath}
is not only holomorphic in $\mathcal{P}'_i$
but also single-valued near $\zeta=t'_i$,
and then must be $0$ under the assumption (\ref{assumption:rho}).
Lastly, using the formula of $\Gamma$-function
$\displaystyle
\Gamma(z)\Gamma(1-z)=\frac{\pi}{\sin \pi z}$,
we obtain (\ref{eutra:t'i_inf}).
\end{proof}

\subsection{Connection coefficients for the underlying system}
In this subsection we explain the dependence on $\rho$
of connection coefficients
for the system $(\ref{sys:underlying})$.
The following results are due to R.~Sch\"afke.
See his papers \cite{S1} and \cite{S2} in detail.

First, we consider the connection coefficients between
solutions near finite singular points.
In $\mathcal{P}'$,
under the specification $(\ref{branch:wt'ihk})$,
let us express $w_{t'_i,k,h}(\rho;\zeta)$
by a linear combination of solutions near $\zeta=t'_{\nu}$
($\nu\ne i$) as
\begin{equation} \label{conn_formula:t'i_t'nu}
w_{t'_i,k,h}(\rho;\zeta)
=\sum_{\tilde{k}=1}^{r_{\nu}} \sum_{\tilde{h}=1}^{\ell_{\nu,\tilde{k}}}
c_{t'_{\nu},\tilde{k},\tilde{h};t'_i,k,h}(\rho)
w_{t'_{\nu},\tilde{k},\tilde{h}}(\rho;\zeta)
+\mathrm{hol}(\zeta-t'_{\nu}).
\end{equation}

\begin{theorem}[{\cite[(3.6) Satz]{S2}}]
We have
\begin{equation} \label{conn_coef:t'i_t'nu}
c_{t'_{\nu},\tilde{k},\tilde{h};t'_i,k,h}(\rho)
=
\frac{e^{\pi\sqrt{-1}\rho}
      \Gamma(\rho+\lambda_{i,k}+1)
      \Gamma(-\rho-\lambda_{\nu,\tilde{k}})}
     {\Gamma(\lambda_{i,k}+1)
      \Gamma(-\lambda_{\nu,\tilde{k}})}
c_{t'_{\nu},\tilde{k},\tilde{h};t'_i,k,h}
\end{equation}
for $
1\leq i\ne \nu\leq q,\
1\leq k\leq r_i,\
1\leq h\leq \ell_{i,k},\
1\leq \tilde{k}\leq r_{\nu},\
1\leq \tilde{h}\leq \ell_{\nu,\tilde{k}}
$,
where
$c_{t'_{\nu},\tilde{k},\tilde{h};t'_i,k,h}
 =c_{t'_{\nu},\tilde{k},\tilde{h};t'_i,k,h}(0)$.
\end{theorem}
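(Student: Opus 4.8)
The plan is to read off the $\rho$-dependence directly from the Euler transformation of Theorem~\ref{thm:eulertr_z}, taking the base point $\rho'=0$ so that the right-hand side of (\ref{conn_coef:t'i_t'nu}) naturally involves $c_{t'_\nu,\tilde k,\tilde h;t'_i,k,h}=c_{t'_\nu,\tilde k,\tilde h;t'_i,k,h}(0)$. Setting $\rho'=0$ in (\ref{eutra:ti}) gives
\begin{equation*}
w_{t'_i,k,h}(\rho;\zeta)
=\frac{\Gamma(\rho+\lambda_{i,k}+1)}{\Gamma(\rho)\Gamma(\lambda_{i,k}+1)}
\int_{t'_i}^{\zeta}(\zeta-\tau)^{\rho-1}w_{t'_i,k,h}(0;\tau)\,d\tau,
\end{equation*}
valid for $\rho\notin\mathbb{Z}_{\leq0}$ (the choice $\rho'=0$ is legitimate since $\lambda_{i,k}\notin\mathbb{Z}$); the remaining $\rho$ follow by analytic continuation, as both sides of (\ref{conn_coef:t'i_t'nu}) are meromorphic in $\rho$. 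First I would substitute the $\rho=0$ connection formula (\ref{conn_formula:t'i_t'nu}) into the integrand. Because the Euler transformation carries solutions of (\ref{sys:underlying}) into solutions (the computation behind (\ref{pf:inteudartwo})), the transform is again a solution at parameter $\rho$, so its expansion at $\zeta=t'_\nu$ is exactly $\sum c(\rho)\,w_{t'_\nu,\tilde k,\tilde h}(\rho;\zeta)+\mathrm{hol}$; matching the (by non-resonance (\ref{assumption:rho}), unique) coefficient of $w_{t'_\nu,\tilde k,\tilde h}(\rho;\zeta)$ yields $c(\rho)$ in terms of $c(0)$.

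The core step is to extract, for each summand, the singular part near $\zeta=t'_\nu$ of
\begin{equation*}
\int_{t'_i}^{\zeta}(\zeta-\tau)^{\rho-1}\,w_{t'_\nu,\tilde k,\tilde h}(0;\tau)\,d\tau.
\end{equation*}
I would split the path at a point close to $t'_\nu$: the portion bounded away from $t'_\nu$ contributes a function holomorphic in $\zeta$ at $t'_\nu$ (the transform of the $\mathrm{hol}(\tau-t'_\nu)$ remainder likewise stays holomorphic there, since no branch point of the integrand sits at $t'_\nu$), while the portion near $t'_\nu$ is controlled by the local factor $(\tau-t'_\nu)^{\lambda_{\nu,\tilde k}}$. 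The pinch of the two branch points $\tau=t'_\nu$ and $\tau=\zeta$ produces a term $(\zeta-t'_\nu)^{\rho+\lambda_{\nu,\tilde k}}$ whose coefficient is, up to a phase, the Beta integral $\Gamma(\rho)\Gamma(\lambda_{\nu,\tilde k}+1)/\Gamma(\rho+\lambda_{\nu,\tilde k}+1)$ — precisely the factor that, via (\ref{eutra:ti}) read at $t'_\nu$ with $\rho'=0$, rebuilds $w_{t'_\nu,\tilde k,\tilde h}(\rho;\zeta)$ term by term (consistent with the Remark that the coefficients $g$ are $\rho$-independent). Multiplying the prefactor at $t'_i$ by this factor cancels $\Gamma(\rho)$ and leaves $\Gamma(\rho+\lambda_{i,k}+1)\Gamma(\lambda_{\nu,\tilde k}+1)/\bigl(\Gamma(\lambda_{i,k}+1)\Gamma(\rho+\lambda_{\nu,\tilde k}+1)\bigr)$ times $c(0)$ times the phase.

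The hard part is that phase. Since the contour reaches $t'_\nu$ from the direction of $t'_i$ rather than along the ray fixing the branch of $w_{t'_\nu,\tilde k,\tilde h}$ through (\ref{branch:wt'ihk}), the arguments of $(\tau-t'_\nu)$ and $(\zeta-\tau)$ on the two sub-paths differ from the clean Euler transform by controlled multiples of $\pi$, dictated by the cyclic ordering $\theta'_\infty<\theta'_p<\cdots<\theta'_1<\theta'_\infty+2\pi$. Tracking these arguments — the same deformation-around-$t'_\nu$ bookkeeping used in the proof of (\ref{eutra:t'i_inf}) — produces the phase $e^{\pi\sqrt{-1}\rho}$ together with a ratio $\sin\pi\lambda_{\nu,\tilde k}/\sin\pi(\rho+\lambda_{\nu,\tilde k})$ reflecting the change of local exponent from $\lambda_{\nu,\tilde k}$ to $\rho+\lambda_{\nu,\tilde k}$. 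I expect pinning down the exact integer multiple of $\pi$ — hence the sign in the exponential and the correct sine arguments — to be the only genuinely delicate point, and it is where the hypotheses on the $\theta'$ and the assumption (\ref{assumption:rho}) enter.

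Finally I would tidy the Gamma factors: applying the reflection formula $\Gamma(z)\Gamma(1-z)=\pi/\sin\pi z$ turns $\Gamma(\lambda_{\nu,\tilde k}+1)/\Gamma(\rho+\lambda_{\nu,\tilde k}+1)$ together with the sine ratio into $\Gamma(-\rho-\lambda_{\nu,\tilde k})/\Gamma(-\lambda_{\nu,\tilde k})$, leaving exactly
\begin{equation*}
c_{t'_\nu,\tilde k,\tilde h;t'_i,k,h}(\rho)
=\frac{e^{\pi\sqrt{-1}\rho}\,\Gamma(\rho+\lambda_{i,k}+1)\,\Gamma(-\rho-\lambda_{\nu,\tilde k})}
{\Gamma(\lambda_{i,k}+1)\,\Gamma(-\lambda_{\nu,\tilde k})}\,
c_{t'_\nu,\tilde k,\tilde h;t'_i,k,h},
\end{equation*}
which is (\ref{conn_coef:t'i_t'nu}); analytic continuation in $\rho$ then removes the restriction $\rho\notin\mathbb{Z}_{\leq0}$ incurred by the choice $\rho'=0$.
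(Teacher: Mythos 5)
Your strategy is genuinely different from the paper's. The paper follows Sch\"afke: it forms the Laplace transforms $y_{t'_i,k,h}(L_{i,\nu}^{\pm};z)$ and $y_{t'_{\nu},\tilde k,\tilde h}(H_{\nu};z)$ with $\rho$-dependent normalizing factors, shows by a Fubini argument applied to (\ref{eutra:ti}) that these transforms are themselves independent of $\rho$, and then reads off from $y(L^{-})-y(L^{+})=\int_{H_{\nu}}$ that the combination $\bigl(1-e^{-2\pi\sqrt{-1}(\rho+\lambda_{\nu,\tilde k})}\bigr)\Gamma(\rho+\lambda_{\nu,\tilde k}+1)\Gamma(\rho+\lambda_{i,k}+1)^{-1}c_{t'_{\nu},\tilde k,\tilde h;t'_i,k,h}(\rho)$ is constant in $\rho$; equating it with its value at $\rho=0$ and applying the reflection formula gives (\ref{conn_coef:t'i_t'nu}). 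All of the delicate argument-tracking is thereby packaged once and for all into the normalizations of the $y$'s, whose $\rho$-independence is a clean Beta-integral computation. Your route --- apply (\ref{eutra:ti}) with $\rho'=0$, push the $\rho=0$ connection formula through the integral, and extract the singular part at $\zeta=t'_{\nu}$ from the pinch of the branch points $\tau=t'_{\nu}$ and $\tau=\zeta$ --- is the classical direct approach and is viable in principle; the splitting of the path, the holomorphy of the transform of the $\mathrm{hol}(\tau-t'_{\nu})$ remainder, and the final reflection-formula algebra are all fine.

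The gap is exactly where you flag it. Beyond the elementary $\Gamma$-ratio, the entire content of the theorem is the phase $e^{\pi\sqrt{-1}\rho}$ and the factor $\sin\pi\lambda_{\nu,\tilde k}/\sin\pi(\rho+\lambda_{\nu,\tilde k})$ produced by the pinch, and your proposal asserts these rather than derives them; as written they are back-fitted from the statement being proved, so the argument is circular at its decisive step. To close it you would have to carry out the monodromy bookkeeping in the style of the proof of (\ref{eutra:t'i_inf}): continue $\zeta$ once around $t'_{\nu}$, observe that the integral changes by $(e^{2\pi\sqrt{-1}\rho}-1)$ times a suitably rotated copy of $\int_{t'_{\nu}}^{\zeta}(\zeta-\tau)^{\rho-1}w_{t'_{\nu},\tilde k,\tilde h}(0;\tau)\,d\tau$, compare with the multiplier $e^{2\pi\sqrt{-1}(\rho+\lambda_{\nu,\tilde k})}$ of $w_{t'_{\nu},\tilde k,\tilde h}(\rho;\zeta)$, and pin down the rotation factor from the branch assignment (\ref{branch:wt'ihk}) and the ordering of the $\theta'_i$. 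Note in particular that the path from $t'_i$ approaches the cut at $t'_{\nu}$ from different sides according as $\nu<i$ or $\nu>i$, while the theorem claims the single phase $e^{\pi\sqrt{-1}\rho}$ uniformly in $i\neq\nu$; that uniformity is precisely the kind of fact your outline does not yet establish, and it is what the paper's Laplace-transform normalization buys for free.
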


\begin{figure}
 \centering
 \includegraphics{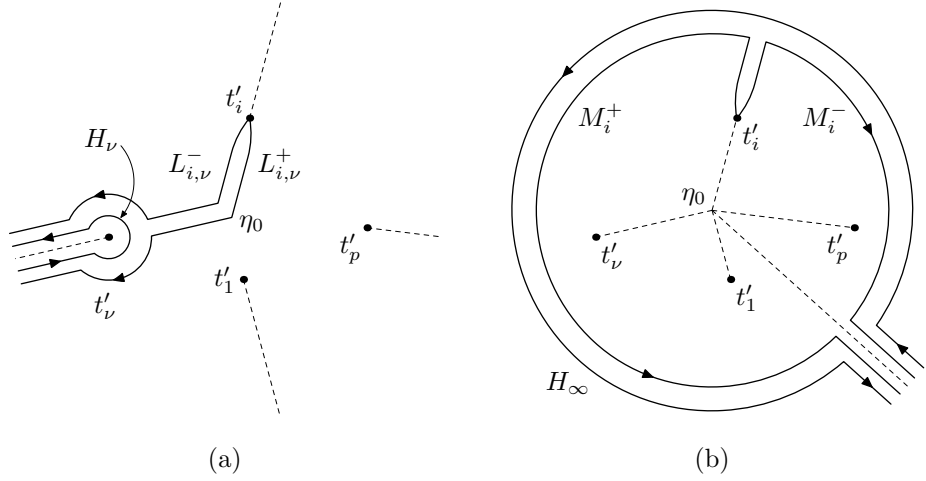}
 \caption{Paths of the Laplace transforms}
 \label{fig:Laplace_path}
\end{figure}

\begin{proof}
Let $L_{i,\nu}^{+}$
(resp.\ $L_{i,\nu}^{-}$)
be a curve in $\mathcal{P}'$ starting from $t'_i$
and going to $\infty$ along the left-hand
(resp.\ right-hand)
side of the cut $\arg(\zeta-t'_{\nu})=\theta'_{\nu}$,
and $H_{\nu}$
the Hankel loop in $\mathcal{P}'$
surrounding the cut $\arg(\zeta-t'_{\nu})=\theta'_{\nu}$
with sufficiently small radius
(see Figure \ref{fig:Laplace_path} (a)).
For $ |\arg z+\theta'_{\nu}|<\pi/2$
we define
\begin{displaymath}
\begin{aligned}
y_{t'_i,k,h}(L_{i,\nu}^{\pm};z)
&=
\frac{z^{\rho+1}}{\Gamma(\rho+\lambda_{i,k}+1)}
\int_{L_{i,\nu}^{\pm}}
e^{-z\zeta}
w_{t'_i,k,h}(\rho;\zeta)
\,d\zeta,
\\
y_{t'_{\nu},k,h}(H_{\nu};z)
&=
\frac{z^{\rho+1}}
     {\bigl(1-e^{-2\pi\sqrt{-1}(\rho+\lambda_{\nu,k})}\bigr)
      \Gamma(\rho+\lambda_{\nu,k}+1)}
\int_{H_{\nu}}
e^{-z\zeta}
w_{t'_{\nu},k,h}(\rho;\zeta)
\,d\zeta.
\end{aligned}
\end{displaymath}
It is well known (\cite{B}, \cite{I}, \cite{BJL}) that
these satisfy the system of Poincar\'e rank one
\begin{equation} \label{sys:poincarerankone}
z\frac{dy}{dz}=-\left(A+zT'\right)y,
\end{equation}
which does not depend on $\rho$.
Moreover, $y_{t'_i,k,h}(L_{i,\nu}^{\pm};z)$ and
$y_{t'_{\nu},k,h}(H_{\nu};z)$ themselves do not depend on $\rho$.
Indeed, substituting $(\ref{eutra:ti})$
and then reversing the order of integration,
we have
\begin{displaymath}
y_{t'_i,k,h}(L_{i,\nu}^{\pm};z)
=
\frac{z^{\rho+1}}{\Gamma(\rho-\rho')\Gamma(\rho'+\lambda_{i,k}+1)}
\int_{L_{i,\nu}^{\pm}}
\left\{
\int_{\smash{L_{i,\nu}^{\pm}(\tau)}}
e^{-z\zeta}
(\zeta-\tau)^{\rho-\rho'-1}
\,d\zeta
\right\}
w_{t'_i,k,h}(\rho';\tau)
\,d\tau,
\end{displaymath}
where $L_{i,\nu}^{\pm}(\tau)$ denotes the subpath of
$L_{i,\nu}^{\pm}$ from $\tau$ to $\infty$.
Since
\begin{displaymath}
\int_{L_{i,\nu}^{\pm}(\tau)}
e^{-z\zeta}
(\zeta-\tau)^{\rho-\rho'-1}
\,d\zeta
=
\Gamma(\rho-\rho')
z^{\rho'-\rho}
e^{-z \tau},
\end{displaymath}
we have
\begin{displaymath}
y_{t'_i,k,h}(L_{i,\nu}^{\pm};z)
=
\frac{z^{\rho'+1}}{\Gamma(\rho'+\lambda_{i,k}+1)}
\int_{L_{i,\nu}^{\pm}}
e^{-z\tau}
w_{t'_i,k,h}(\rho';\tau)
\,d\tau,
\end{displaymath}
which means that $y_{t'_i,k,h}(L_{i,\nu}^{\pm};z)$
does not depend on $\rho$.
As for $y_{t'_{\nu},k,h}(H_{\nu};z)$,
using the representation
\begin{displaymath}
y_{t'_{\nu},k,h}(H_{\nu};z)
=
\frac{z^{\rho+1}}
     {\Gamma(\rho+\lambda_{\nu,k}+1)}
\int_{t'_{\nu}}^{\infty e^{\sqrt{-1}\theta'_{\nu}}}
e^{-z\zeta}
w_{t'_{\nu},k,h}(\rho;\zeta)
\,d\zeta,
\end{displaymath}
we can prove its independence from $\rho$
similarly to $y_{t'_i,k,h}(L_{i,\nu}^{\pm};z)$.

Now let us consider the difference of
$y_{t'_i,k,h}(L_{i,\nu}^{+};z)$ and $y_{t'_i,k,h}(L_{i,\nu}^{-};z)$.
Substituting $(\ref{conn_formula:t'i_t'nu})$ into
\begin{displaymath}
y_{t'_i,k,h}(L_{i,\nu}^{-};z)
-
y_{t'_i,k,h}(L_{i,\nu}^{+};z)
=
\frac{z^{\rho+1}}{\Gamma(\rho+\lambda_{i,k}+1)}
\int_{H_{\nu}}
e^{-z\zeta}
w_{t'_i,k,h}(\rho;\zeta)
\,d\zeta,
\end{displaymath}
we have
\begin{displaymath}
y_{t'_i,k,h}(L_{i,\nu}^{-};z)
-
y_{t'_i,k,h}(L_{i,\nu}^{+};z)
=
\sum_{\tilde{k}=1}^{r_{\nu}} \sum_{\tilde{h}=1}^{\ell_{\nu,\tilde{k}}}
\frac{\bigl(1-e^{-2\pi\sqrt{-1}(\rho+\lambda_{\nu,\tilde{k}})}\bigr)
      \Gamma(\rho+\lambda_{\nu,\tilde{k}}+1)}
     {\Gamma(\rho+\lambda_{i,k}+1)}
c_{t'_{\nu},\tilde{k},\tilde{h};t'_i,k,h}(\rho)
y_{t'_{\nu},\tilde{k},\tilde{h}}(H_{\nu};z).
\end{displaymath}
Since the $y_{t'_{\nu},k,h}(H_{\nu};z)$'s
are linearly independent,
the coefficients must be uniquely determined and hence 
not depend on $\rho$.
The equality of the coefficients and those for $\rho=0$
leads to $(\ref{conn_coef:t'i_t'nu})$.
\end{proof}

Next, we consider the connection coefficients between
solutions near a finite singular point
and ones near infinity.
Set
\begin{displaymath}
\check{\mathcal{P}}'
=\mathbb{C}\setminus\left(\,
\bigcup_{i=1}^{p}\{\eta_0+(t'_i-\eta_0)s\mid 0\leq s\leq 1\}
\bigcup\{\eta_0+se^{\sqrt{-1}\theta'_{\infty}}\mid 0\leq s< \infty\}
\right).
\end{displaymath}
For $\zeta\in\check{\mathcal{P}}'$ we determine
the assignment of argument of $\zeta-\eta_0$ and
$\zeta-t'_i$ ($1\leq i\leq p$) as
\begin{displaymath}
\left\{
\begin{aligned}
\arg(\zeta-\eta_0)&\in (\theta'_{\infty},\,\theta'_{\infty}+2\pi),
\\
\arg(\rlap{$\zeta-t'_i)$}\phantom{\zeta-\eta_0)}
                  &\in (\theta'_i-\pi,\,\theta'_i+\pi)
\ \text{near}\ \zeta=t'_i.
\end{aligned}
\right.
\end{displaymath}
In $\check{\mathcal{P}}'$ let us express $w_{t'_i,k,h}(\rho;\zeta)$
by a linear combination of solutions near $\zeta=\infty$ as
\begin{equation} \label{conn_formula:t'i_infty}
w_{t'_i,k,h}(\rho;\zeta)
=\sum_{\tilde{k}=1}^{q} \sum_{\tilde{h}=1}^{m_{\tilde{k}}}
c_{\infty,\tilde{k},\tilde{h};t'_i,k,h}(\rho)
w_{\infty,\tilde{k},\tilde{h}}(\rho;\zeta).
\end{equation}

\begin{theorem} \label{thm:zero_conn.coef}
We have
\begin{equation} \label{conn_coef:t'i_infty-redcase}
c_{\infty,l,\tilde{h};t'_i,k,h}(-\mu_{l}-1)
=0
\end{equation}
for $
1\leq l\leq q,\
1\leq \tilde{h}\leq m_{l},\
1\leq i\leq p,\
1\leq k\leq r_i,\
1\leq h\leq \ell_{i,k}
$.
\end{theorem}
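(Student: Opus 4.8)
The plan is to determine the full dependence of the connection coefficient $c_{\infty,\tilde k,\tilde h;t'_i,k,h}(\rho)$ on $\rho$ and then simply read off its value at $\rho=-\mu_l-1$. I expect this dependence to be
\[
c_{\infty,\tilde k,\tilde h;t'_i,k,h}(\rho)
=\frac{\Gamma(\rho+\lambda_{i,k}+1)\,\Gamma(\mu_{\tilde k}+1)}
       {\Gamma(\rho+\mu_{\tilde k}+1)\,\Gamma(\lambda_{i,k}+1)}\,
 c_{\infty,\tilde k,\tilde h;t'_i,k,h}(0),
\]
the decisive feature being the factor $1/\Gamma(\rho+\mu_{\tilde k}+1)$ in the denominator. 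Granting this, set $\tilde k=l$ and $\rho=-\mu_l-1$: the denominator factor becomes $\Gamma(0)$, so $1/\Gamma(\rho+\mu_l+1)=0$, while the numerator $\Gamma(\rho+\lambda_{i,k}+1)=\Gamma(\lambda_{i,k}-\mu_l)$ stays finite because $\lambda_{i,k}-\mu_l\notin\mathbb{Z}$ by $(\ref{assumption:underlying})$. Hence $c_{\infty,l,\tilde h;t'_i,k,h}(-\mu_l-1)=0$, which is the assertion. Any additional entire and nonvanishing prefactor (such as the $e^{\pi\sqrt{-1}\rho}$ appearing in the finite-target formula $(\ref{conn_coef:t'i_t'nu})$) is irrelevant, since it cannot affect the location of the zero.

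To establish the displayed formula I would mimic the Laplace-transform argument used for the finite-to-finite coefficients, i.e.\ the proof of $(\ref{conn_coef:t'i_t'nu})$. The Laplace transform carries the underlying system $(\ref{sys:underlying})$ into the $\rho$-independent system $(\ref{sys:poincarerankone})$, sending the finite singular points $t'_\nu$ to the Stokes directions at the irregular point $z=\infty$ and the singular point $\zeta=\infty$ to the regular singular point $z=0$. Recall that $y_{t'_i,k,h}(L;z)=\frac{z^{\rho+1}}{\Gamma(\rho+\lambda_{i,k}+1)}\int_L e^{-z\zeta}w_{t'_i,k,h}(\rho;\zeta)\,d\zeta$ is independent of $\rho$. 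The new ingredient is a companion transform for the solutions at infinity,
\[
y_{\infty,\tilde k,\tilde h}(z)
=\frac{z^{\rho+1}}{\Gamma(\rho+\mu_{\tilde k}+1)}
 \int_{L'} e^{-z\zeta}\, w_{\infty,\tilde k,\tilde h}(\rho;\zeta)\,d\zeta,
\]
taken over a contour $L'$ adapted to $\zeta=\infty$. I would prove it is $\rho$-independent by substituting an Euler-type integral representation in $\rho$ for $w_{\infty,\tilde k,\tilde h}$, the analogue of $(\ref{eutra:ti})$, and reversing the order of integration exactly as in the cited proof. Since the leading behavior $w_{\infty,\tilde k,\tilde h}\sim(\zeta-\eta_0)^{\rho+\mu_{\tilde k}}$ gives $\int_{L'} e^{-z\zeta}(\zeta-\eta_0)^{\rho+\mu_{\tilde k}}\,d\zeta\sim\Gamma(\rho+\mu_{\tilde k}+1)\,z^{-\rho-\mu_{\tilde k}-1}$, the normalization $\Gamma(\rho+\mu_{\tilde k}+1)$ is precisely what makes the leading $z^{-\mu_{\tilde k}}$ behavior near $z=0$ free of $\rho$, and the $y_{\infty,\tilde k,\tilde h}(z)$ span the solutions of $(\ref{sys:poincarerankone})$ with exponent $-\mu_{\tilde k}$ at $z=0$.

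With both transforms in hand I would apply the Laplace transform to the connection formula $(\ref{conn_formula:t'i_infty})$. Expressing each side through the $\rho$-independent solutions $y_{t'_i,k,h}(z)$ and $y_{\infty,\tilde k,\tilde h}(z)$ yields
\[
\Gamma(\rho+\lambda_{i,k}+1)\,y_{t'_i,k,h}(z)
=\sum_{\tilde k,\tilde h}
 c_{\infty,\tilde k,\tilde h;t'_i,k,h}(\rho)\,
 \Gamma(\rho+\mu_{\tilde k}+1)\,y_{\infty,\tilde k,\tilde h}(z).
\]
Because the $y_{\infty,\tilde k,\tilde h}(z)$ are $\rho$-independent and linearly independent, the quantity $c_{\infty,\tilde k,\tilde h;t'_i,k,h}(\rho)\,\Gamma(\rho+\mu_{\tilde k}+1)/\Gamma(\rho+\lambda_{i,k}+1)$ must be independent of $\rho$; equating it with its value at $\rho=0$ gives the displayed formula, and the specialization above completes the proof.

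I expect the main obstacle to lie in the infinity transform: choosing the contour $L'$ correctly and carrying out the Laplace transform of $(\ref{conn_formula:t'i_infty})$ over contours compatible with those on the two sides. Unlike the finite-to-finite case, where the target is reached through a Hankel loop around a finite cut and produces a factor $\Gamma(-\rho-\lambda_{\nu,\tilde k})$ in the numerator, here the target $\zeta=\infty$ corresponds to the \emph{regular} singular point $z=0$, so the relevant integral is of Laplace (ray) type and produces the reciprocal factor $1/\Gamma(\rho+\mu_{\tilde k}+1)$ instead. Getting this contour bookkeeping right, managing the finite-part interpretation of the integrals near $\zeta=\infty$, and verifying the $\rho$-independence together with the linear independence of the family $y_{\infty,\tilde k,\tilde h}(z)$, are the delicate points on which the argument turns.
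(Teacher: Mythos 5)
Your final step is sound: granted the formula
\begin{displaymath}
c_{\infty,\tilde k,\tilde h;t'_i,k,h}(\rho)
=\frac{\Gamma(\rho+\lambda_{i,k}+1)\,\Gamma(\mu_{\tilde k}+1)}
      {\Gamma(\rho+\mu_{\tilde k}+1)\,\Gamma(\lambda_{i,k}+1)}\,
 c_{\infty,\tilde k,\tilde h;t'_i,k,h}(0),
\end{displaymath}
the factor $1/\Gamma(\rho+\mu_l+1)$ vanishes at $\rho=-\mu_l-1$ while the numerator $\Gamma(\lambda_{i,k}-\mu_l)$ stays finite by (\ref{assumption:underlying}), and the conclusion follows. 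The problem is that in this paper the formula you are granting is exactly (\ref{conn_coef:t'i_infty}), the theorem \emph{following} the one you are asked to prove, and its proof \emph{uses} Theorem \ref{thm:zero_conn.coef} as an essential input. Your sketch of the Laplace-transform derivation hides the circularity at the step where you write $\Gamma(\rho+\lambda_{i,k}+1)\,y_{t'_i,k,h}(z)=\sum c(\rho)\,\Gamma(\rho+\mu_{\tilde k}+1)\,y_{\infty,\tilde k,\tilde h}(z)$. No single contour produces such an identity: a ray from $t'_i$ to $\infty$ applied to the right-hand side of (\ref{conn_formula:t'i_infty}) does not yield the $\rho$-independent solutions $y_{\infty,\tilde k,\tilde h}$, because the transform of $w_{\infty,\tilde k,\tilde h}$ over a path with finite initial point $t'_i$ picks up $\rho$-dependent contributions from the local behavior there. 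The combination that \emph{can} be matched against the $y_{\infty}$'s is the Hankel-loop difference, which on the left-hand side is $e^{2\pi\sqrt{-1}\rho}y_{t'_i,k,h}(M_i^{+};z)-y_{t'_i,k,h}(M_i^{-};z)$; this is \emph{not} $\rho$-independent because of the explicit factor $e^{2\pi\sqrt{-1}\rho}$, and the paper eliminates that obstruction precisely by substituting $\rho=-\mu_l-1$ and invoking $c_{\infty,l,\tilde h;t'_i,k,h}(-\mu_l-1)=0$. So the "delicate contour bookkeeping" you defer is not a technicality: resolving it along your lines requires the statement you are trying to prove.

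The paper's actual argument is entirely different and much more elementary: it is a reducibility/monodromy argument. Setting $\rho=-\mu_l-1$, the vector $u(\zeta)=P^{-1}(\zeta I_n-T')\,w_{t'_i,k,h}(-\mu_l-1;\zeta)$ satisfies a Fuchsian system with residue matrices $(A'-\mu_l I_n)B_i$, whose rows indexed by $m_1+\cdots+m_{l-1}+1,\dots,m_1+\cdots+m_l$ vanish. By Lemma \ref{lem:reducible}, those components of $u$ are constant, hence zero because $w_{t'_i,k,h}$ has nontrivial scalar monodromy about $t'_i$ (by (\ref{assumption:rho})); the same holds for $w_{\infty,\tilde k,\tilde h}$ with $\tilde k\ne l$, whereas for $w_{\infty,l,\tilde h}$ the corresponding component equals $1$. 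Comparing these components in (\ref{conn_formula:t'i_infty}) forces the coefficient of $w_{\infty,l,\tilde h}$ to vanish. If you want to keep your strategy, you must either supply an independent proof of (\ref{conn_coef:t'i_infty}) that does not pass through Theorem \ref{thm:zero_conn.coef} (e.g.\ by citing Sch\"afke's results directly), or switch to the reducibility argument above.
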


To prove this theorem we prepare a lemma.
We denote by $[\quad]_{m}$ the $m$-th row of a matrix
or the $m$-th component of a column vector.  

\begin{lemma}[{cf.~\cite[Proposition 3.1]{Ha}}] \label{lem:reducible}
Suppose that all of the residue matrices $A_i$ $(1\leq i\leq p)$
of a rank $n$ Fuchsian system
\begin{displaymath}
\frac{du}{d\zeta}
=\left(\sum_{i=1}^{p}\frac{1}{\zeta-t_i} A_i \right)u
\end{displaymath}
satisfy
\begin{displaymath}
[A_i]_{m}=0
\quad \text{for }n'+1\leq m\leq n'+n'',
\end{displaymath}
where $n'+n''\leq n$.
Then for $n'+1\leq m\leq n'+n''$ the $m$-th component of any solution
of this system is a constant.
Moreover, if the analytic continuation of a solution $u(\zeta)$
along a closed curve encircling a singular point
coincides with itself multiplied by a constant different from $1$,
then the solution $u(\zeta)$ satisfies
\begin{displaymath}
[u(\zeta)]_{m}=0
\quad \text{for }n'+1\leq m\leq n'+n''.
\end{displaymath}
\end{lemma}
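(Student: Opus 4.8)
The plan is to prove the two assertions in turn, each by extracting the scalar equation governing a distinguished component $[u(\zeta)]_m$ with $n'+1\le m\le n'+n''$ and reading off the consequences. No integral representation or monodromy deformation is needed; everything follows from the block structure of the residues together with the single-valuedness of constants.

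First I would write the system in components. Taking the $m$-th row of both sides of the Fuchsian system gives
\[
\frac{d}{d\zeta}[u(\zeta)]_m
=\sum_{i=1}^{p}\frac{1}{\zeta-t_i}[A_i]_m\,u(\zeta),
\]
where $[A_i]_m\,u(\zeta)$ denotes the product of the $m$-th row of $A_i$ with the column vector $u(\zeta)$. By hypothesis $[A_i]_m=0$ for every $i$ when $n'+1\le m\le n'+n''$, so the right-hand side vanishes identically and $[u(\zeta)]_m$ has zero derivative; being a component of a holomorphic solution, it is a (single-valued) constant on any simply connected domain avoiding the $t_i$. This establishes the first assertion.

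For the second assertion I would invoke single-valuedness of constants against the assumed monodromy. Suppose the analytic continuation of $u(\zeta)$ along a closed loop encircling a singular point returns $\lambda\,u(\zeta)$ with $\lambda\ne1$. Taking the $m$-th component, the continuation of $[u(\zeta)]_m$ equals $\lambda\,[u(\zeta)]_m$. On the other hand, by the first assertion $[u(\zeta)]_m$ is a constant, and the analytic continuation of a constant is the same constant. Comparing the two expressions yields $(\lambda-1)[u(\zeta)]_m=0$, and since $\lambda\ne1$ we conclude $[u(\zeta)]_m=0$ for $n'+1\le m\le n'+n''$.

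There is essentially no analytic difficulty here; the only point demanding care is the bookkeeping of the block structure, namely verifying that the vanishing of the $m$-th \emph{rows} of all the $A_i$ is what decouples the scalar equation for $[u(\zeta)]_m$ from the remaining unknowns (whereas vanishing of columns would not), and that the word ``constant'' is interpreted as a globally single-valued function, so that the comparison with $\lambda\,[u(\zeta)]_m$ under monodromy is legitimate. I expect the subsequent application to Theorem \ref{thm:zero_conn.coef} to follow by recognizing the relevant solution $w_{t'_i,k,h}(\rho;\zeta)$ as having such nontrivial multiplicative monodromy at $\zeta=t'_i$, so that the forced vanishing of its distinguished components forces the corresponding connection coefficients to vanish.
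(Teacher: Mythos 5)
Your proof is correct and is precisely the evident argument the paper has in mind: the paper's own proof of this lemma consists of the single word ``Trivial,'' and the row-vanishing computation showing $\frac{d}{d\zeta}[u(\zeta)]_m=0$ followed by the monodromy comparison $(\lambda-1)[u(\zeta)]_m=0$ is the intended filling-in of that step. No discrepancy to report.
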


\begin{proof}
Trivial.
\end{proof}

\begin{proof}[Proof of Theorem $\ref{thm:zero_conn.coef}$]
By direct calculation we see that
$u(\zeta)=P^{-1}(\zeta I_n-T')w_{t'_i,k,h}(-\mu_{l}-1;\zeta)$
satisfies the system
\begin{displaymath}
\frac{du}{d\zeta}
=
\left(\sum_{i=1}^{p}\frac{1}{\zeta-t'_i}
\left(A'-\mu_{l}I_n\right)B_i
\right)
u,
\end{displaymath}
where the matrices $B_i$ are defined by
$ 
P^{-1}(\zeta I_n-T')^{-1}P
=
\sum_{i=1}^{p}\frac{1}{\zeta-t'_i} B_i.
$ 
Hence by Lemma \ref{lem:reducible}
we have
\begin{displaymath}
\left[P^{-1}(\zeta I_n-T')w_{t'_i,k,h}(-\mu_{l}-1;\zeta)\right]_{m}=0
\end{displaymath}
for $m_1+\cdots+m_{l-1}+1\leq m\leq m_1+\cdots+m_{l}$.
Similarly, for $\tilde{k}\ne l$ we have
\begin{displaymath}
\left[P^{-1}(\zeta I_n-T')
w_{\infty,\tilde{k},\tilde{h}}(-\mu_{l}-1;\zeta)\right]_{m}=0
\end{displaymath}
for $m_1+\cdots+m_{l-1}+1\leq m\leq m_1+\cdots+m_{l}$.
On the other hand, we have
\begin{displaymath}
\left[P^{-1}(\zeta I_n-T')
w_{\infty,l,\tilde{h}}(-\mu_{l}-1;\zeta)\right]_{m}=1
\end{displaymath}
for $m=m_1+\cdots+m_{l-1}+\tilde{h}$.
These facts and the $(m_1+\cdots+m_{l-1}+\tilde{h})$-th
component of the relation
(\ref{conn_formula:t'i_infty}) with $\rho=-\mu_{l}-1$
multiplied by $P^{-1}(\zeta I_n-T')$
lead to (\ref{conn_coef:t'i_infty-redcase}).
\end{proof}

\begin{theorem}
Assume that $\rho+\mu_{k}\not\in\mathbb{Z}_{\geq0}$ for $1\leq k\leq q$.
We have
\begin{equation} \label{conn_coef:t'i_infty}
c_{\infty,\tilde{k},\tilde{h};t'_i,k,h}(\rho)
=
\frac{\Gamma(\rho+\lambda_{i,k}+1)
      \Gamma(\mu_{\tilde{k}}+1)}
     {\Gamma(\rho+\mu_{\tilde{k}}+1)
      \Gamma(\lambda_{i,k}+1)}
c_{\infty,\tilde{k},\tilde{h};t'_i,k,h}
\end{equation}
for $
1\leq \tilde{k}\leq q,\
1\leq \tilde{h}\leq m_{\tilde{k}},\
1\leq i\leq p,\
1\leq k\leq r_i,\
1\leq h\leq \ell_{i,k}
$,
where
$c_{\infty,\tilde{k},\tilde{h};t'_i,k,h}
 =c_{\infty,\tilde{k},\tilde{h};t'_i,k,h}(0)$.
\end{theorem}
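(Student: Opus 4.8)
The plan is to run the same Laplace-transform argument as in the theorem yielding (\ref{conn_coef:t'i_t'nu}), but now to extract the connection data at the regular singular point $z=0$ of the transformed system (\ref{sys:poincarerankone}), which is the image under the Laplace transform of the point $\zeta=\infty$ of the underlying system (\ref{sys:underlying}). First I would fix an admissible direction and define, along a ray from $t'_i$ to $\infty$ inside $\check{\mathcal{P}}'$, the normalized Laplace transform
\begin{displaymath}
y_{t'_i,k,h}(z)
=\frac{z^{\rho+1}}{\Gamma(\rho+\lambda_{i,k}+1)}
\int_{t'_i}^{\infty} e^{-z\zeta}\,w_{t'_i,k,h}(\rho;\zeta)\,d\zeta.
\end{displaymath}
Exactly as in the proof of that theorem, substituting the Euler transform (\ref{eutra:ti}) and reversing the order of integration shows that $y_{t'_i,k,h}(z)$ is independent of $\rho$, and it solves the $\rho$-independent system (\ref{sys:poincarerankone}).

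Next I would analyze $y_{t'_i,k,h}$ near $z=0$. Since (\ref{sys:poincarerankone}) has at $z=0$ a regular singular point with exponents $-\mu_1,\dots,-\mu_q$, and since assumption (\ref{assumption:E2_2}) rules out resonances, there is a $\rho$-independent basis $\{y_{\infty,\tilde k,\tilde h}(z)\}$ of local solutions normalized by $y_{\infty,\tilde k,\tilde h}(z)=z^{-\mu_{\tilde k}}\bigl(g_{\infty,\tilde k,\tilde h}(0)+O(z)\bigr)$. Writing $y_{t'_i,k,h}(z)=\sum_{\tilde k,\tilde h} d_{\tilde k,\tilde h}\,y_{\infty,\tilde k,\tilde h}(z)$, the coefficients $d_{\tilde k,\tilde h}$ are forced to be independent of $\rho$. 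To evaluate them I would insert the connection formula (\ref{conn_formula:t'i_infty}) into the integral, expand each $w_{\infty,\tilde k,\tilde h}(\rho;\zeta)$ by (\ref{def:w_inf}), and use the elementary Laplace integrals
\begin{displaymath}
\int^{\infty} e^{-z\zeta}(\zeta-\eta_0)^{\rho+\mu_{\tilde k}+m}\,d\zeta
=\Gamma(\rho+\mu_{\tilde k}+m+1)\,z^{-\rho-\mu_{\tilde k}-m-1}e^{-z\eta_0}
\end{displaymath}
term by term. Matching the leading $z^{-\mu_{\tilde k}}$ behavior (where $e^{-z\eta_0}\to1$) identifies
\begin{displaymath}
d_{\tilde k,\tilde h}
=\frac{\Gamma(\rho+\mu_{\tilde k}+1)}{\Gamma(\rho+\lambda_{i,k}+1)}\,
 c_{\infty,\tilde k,\tilde h;t'_i,k,h}(\rho).
\end{displaymath}

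Finally, the $\rho$-independence of $d_{\tilde k,\tilde h}$, inherited from that of $y_{t'_i,k,h}$ and of the basis, shows that $\Gamma(\rho+\mu_{\tilde k}+1)\,c_{\infty,\tilde k,\tilde h;t'_i,k,h}(\rho)/\Gamma(\rho+\lambda_{i,k}+1)$ is constant in $\rho$; evaluating at $\rho=0$ and solving yields precisely (\ref{conn_coef:t'i_infty}), the assumption $\rho+\mu_k\not\in\mathbb{Z}_{\geq0}$ guaranteeing that no $\Gamma$-factor has a pole. The main obstacle I anticipate is the rigorous justification of the last step of the second paragraph, namely the term-by-term Laplace transform across the connection formula and the identification of the resulting $z\to0$ series with $\sum_{\tilde k,\tilde h} d_{\tilde k,\tilde h}\,y_{\infty,\tilde k,\tilde h}(z)$: this is the standard but delicate correspondence between the $\zeta=\infty$ asymptotics of $w_{t'_i,k,h}$ and the singularity of its transform at $z=0$, and it requires controlling convergence of the ray integral together with the finite-part interpretation used throughout, as well as the vanishing of the boundary contribution at $\zeta=t'_i$ secured by (\ref{assumption:rho}).
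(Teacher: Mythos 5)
Your strategy is sound and, with the details you flag filled in, reaches the stated formula; but it takes a genuinely different route from the paper in the final extraction step, so a comparison is worthwhile. The paper also works with Laplace transforms along rays from $t'_i$ to $\infty$ beside the cut $\arg(\zeta-\eta_0)=\theta'_{\infty}$, but it forms the \emph{difference} of the two ray integrals over $M_i^{\pm}$, i.e.\ a Hankel-loop integral, and substitutes (\ref{conn_formula:t'i_infty}) there; because that difference carries a residual factor $e^{2\pi\sqrt{-1}\rho}$ in front of one of the two $\rho$-independent ray transforms, the resulting coefficients are not immediately $\rho$-free, and the paper must invoke the vanishing result (\ref{conn_coef:t'i_infty-redcase}) of Theorem \ref{thm:zero_conn.coef} (specializing $\rho=-\mu_l-1$ and subtracting) to isolate a genuinely $\rho$-independent quantity. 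You instead read the coefficients directly off the local expansion at the regular singular point $z=0$ of a \emph{single} ray transform, whose $\rho$-independence follows from (\ref{eutra:ti}) exactly as in the paper; since the normalized local basis at $z=0$ is also $\rho$-independent (non-resonance of the exponents $-\mu_{\tilde k}$ being guaranteed by (\ref{assumption:E2_2}) and (\ref{assumption:underlying})), the decomposition coefficients are $\rho$-free with no further argument, and Theorem \ref{thm:zero_conn.coef} is not needed at all --- a real simplification. The price is concentrated in the step you yourself identify as the obstacle: the individual integrals $\int_{t'_i}^{\infty}e^{-z\zeta}w_{\infty,\tilde k,\tilde h}(\rho;\zeta)\,d\zeta$ are not solutions of (\ref{sys:poincarerankone}) (the boundary term at $\zeta=t'_i$ does not vanish for them) and may diverge at $\zeta=t'_i$, so the term-by-term computation only identifies the non-integral-power part of the expansion near $z=0$; one must then argue that the leftover, being a solution of the form $z^{\rho+1}\times(\text{entire})$ while all local exponents are $-\mu_{\tilde k}$ with $\rho+\mu_{\tilde k}\notin\mathbb{Z}$, is zero. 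You must also fix the ray (say the one corresponding to $M_i^{-}$) and the branch of $z^{-\rho-\mu_{\tilde k}+m-1}$ consistently with $\arg(\zeta-\eta_0)=\theta'_{\infty}$, since a mismatch introduces a spurious phase $e^{2\pi\sqrt{-1}\rho}$ into $d_{\tilde k,\tilde h}$ and hence into the final constant; with that choice the reflection formula turns $\Gamma(-\rho-\mu_{\tilde k}+m)\Gamma(\rho+\mu_{\tilde k}-m+1)/\Gamma(-\rho-\mu_{\tilde k})$ into $(-1)^{m}\Gamma(\rho+\mu_{\tilde k}+1)$, which yields exactly your $d_{\tilde k,\tilde h}$ and then (\ref{conn_coef:t'i_infty}).
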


\begin{proof}
Let $M_{i}^{-}$
(resp.\ $M_{i}^{+}$)
be a curve in $\check{\mathcal{P}}'$ starting from $t'_i$
and going to $\infty$ along the left-hand
(resp.\ right-hand)
side of the cut $\arg(\zeta-\eta_0)=\theta'_{\infty}$,
and $H_{\infty}$
the Hankel loop in $\check{\mathcal{P}}'$
surrounding the cut $\arg(\zeta-\eta_0)=\theta'_{\infty}$
with sufficiently large radius
(see Figure \ref{fig:Laplace_path} (b)).
For $ |\arg z+\theta'_{\infty}|<\pi/2$
we define
\begin{displaymath}
\begin{aligned}
y_{t'_i,k,h}(M_{i}^{-};z)
&=
\frac{z^{\rho+1}}{\Gamma(\rho+\lambda_{i,k}+1)}
\int_{M_{i}^{-}}
e^{-z\zeta}
w_{t'_i,k,h}(\rho;\zeta)
\,d\zeta,
\\
y_{t'_i,k,h}(M_{i}^{+};z)
&=
\frac{e^{-2\pi\sqrt{-1}\rho}z^{\rho+1}}
     {\Gamma(\rho+\lambda_{i,k}+1)}
\int_{M_{i}^{+}}
e^{-z\zeta}
w_{t'_i,k,h}(\rho;\zeta)
\,d\zeta,
\\
y_{\infty,k,h}(H_{\infty};z)
&=
\frac{\Gamma(-\rho-\mu_k)z^{\rho+1}}
     {2\pi\sqrt{-1} e^{\pi\sqrt{-1}(\rho+\mu_k+1)}}
\int_{H_{\infty}}
e^{-z\zeta}
w_{\infty,k,h}(\rho;\zeta)
\,d\zeta.
\end{aligned}
\end{displaymath}
These are not only solutions of the system
$(\ref{sys:poincarerankone})$
but also independent from $\rho$.
The independence from $\rho$ of $y_{t'_i,k,h}(M_{i}^{\mp};z)$
can be proved by the same way as
that of $y_{t'_i,k,h}(L_{i,\nu}^{\pm};z)$
thanks to
\begin{displaymath}
\begin{aligned}
\int_{M_{i}^{-}(\tau)}
e^{-z\zeta}
(\zeta-\tau)^{\rho-\rho'-1}
\,d\zeta
&=
\Gamma(\rho-\rho')
z^{\rho'-\rho}
e^{-z \tau},
\\
\int_{M_{i}^{+}(\tau)}
e^{-z\zeta}
(\zeta-\tau)^{\rho-\rho'-1}
\,d\zeta
&=
e^{2\pi\sqrt{-1}(\rho-\rho')}
\Gamma(\rho-\rho')
z^{\rho'-\rho}
e^{-z \tau},
\end{aligned}
\end{displaymath}
where $M_{i}^{\mp}(\tau)$ denotes the subpath of
$M_{i}^{\mp}$ from $\tau$ to $\infty$.
As for $y_{\infty,k,h}(H_{\infty};z)$,
substituting $(\ref{def:w_ti})$ and then integrating term by term,
we have
\begin{displaymath}
y_{\infty,k,h}(H_{\infty};z)
=
z^{\rho+1}
\sum_{m=0}^{\infty}
\frac{\Gamma(-\rho-\mu_k+m)}
     {2\pi\sqrt{-1} e^{\pi\sqrt{-1}(\rho+\mu_k+1)}}
 g_{\infty,k,h}(m)
\int_{H_{\infty}}
e^{-z\zeta}
 (\zeta-\eta_0)^{\rho+\mu_k-m}
\,d\zeta.
\end{displaymath}
Since
\begin{displaymath}
\begin{aligned}
\int_{H_{\infty}}
e^{-z\zeta}
 (\zeta-\eta_0)^{\rho+\mu_k-m}
\,d\zeta
&=
\left(e^{2\pi\sqrt{-1}(\rho+\mu_k)}-1\right)
\Gamma(\rho+\mu_k-m+1)
e^{-\eta_0 z}
z^{-\rho-\mu_k+m-1}
\\
&=
\frac{2\pi\sqrt{-1}e^{\pi\sqrt{-1}(\rho+\mu_k-m+1)}}
     {\Gamma(-\rho-\mu_k+m)}
e^{-\eta_0 z}
z^{-\rho-\mu_k+m-1},
\end{aligned}
\end{displaymath}
we obtain
\begin{displaymath}
y_{\infty,k,h}(H_{\infty};z)
=
e^{-\eta_0 z}
z^{-\mu_k}
\sum_{m=0}^{\infty}
(-1)^{m}
g_{\infty,k,h}(m)
z^{m},
\end{displaymath}
which does not depend on $\rho$.

Now let us consider the difference of
$e^{2\pi\sqrt{-1}\rho}y_{t'_i,k,h}(M_{i}^{+};z)$ and
$y_{t'_i,k,h}(M_{i}^{-};z)$.
Substituting $(\ref{conn_formula:t'i_infty})$ into
\begin{displaymath}
e^{2\pi\sqrt{-1}\rho}y_{t'_i,k,h}(M_{i}^{+};z)
-
y_{t'_i,k,h}(M_{i}^{-};z)
=
\frac{z^{\rho+1}}{\Gamma(\rho+\lambda_{i,k}+1)}
\int_{H_{\infty}}
e^{-z\zeta}
w_{t'_i,k,h}(\rho;\zeta)
\,d\zeta,
\end{displaymath}
we have
\begin{displaymath}
e^{2\pi\sqrt{-1}\rho}y_{t'_i,k,h}(M_{i}^{+};z)
-
y_{t'_i,k,h}(M_{i}^{-};z)
=
\sum_{\tilde{k}=1}^{q} \sum_{\tilde{h}=1}^{m_{\tilde{k}}}
\frac{2\pi\sqrt{-1} e^{\pi\sqrt{-1}(\rho+\mu_{\tilde{k}}+1)}}
     {\Gamma(-\rho-\mu_{\tilde{k}})\Gamma(\rho+\lambda_{i,k}+1)}
c_{\infty,\tilde{k},\tilde{h};t'_i,k,h}(\rho)
y_{\infty,\tilde{k},\tilde{h}}(H_{\infty};z).
\end{displaymath}
For any $l$, $1\leq l\leq q$,
replacing $\rho$ by $-\mu_{l}-1$ in this formula
and subtracting the resulting formula from this formula, we find
\begin{displaymath}
\begin{aligned}
\left(e^{2\pi\sqrt{-1}\rho}-e^{-2\pi\sqrt{-1}\mu_{l}}\right)
y_{t'_i,k,h}(M_{i}^{+};z)
&=
\sum_{\tilde{h}=1}^{m_{l}}
\frac{2\pi\sqrt{-1} e^{\pi\sqrt{-1}(\rho+\mu_{l}+1)}}
     {\Gamma(-\rho-\mu_{l})\Gamma(\rho+\lambda_{i,k}+1)}
c_{\infty,l,\tilde{h};t'_i,k,h}(\rho)
y_{\infty,l,\tilde{h}}(H_{\infty};z)
\\
&\phantom{{}={}}
+ \text{a linear combination of
$y_{\infty,\tilde{k},\tilde{h}}(H_{\infty};z)$ for $\tilde{k}\ne l$}
\end{aligned}
\end{displaymath}
by virtue of (\ref{conn_coef:t'i_infty-redcase})
in Theorem \ref{thm:zero_conn.coef}.
Since the $y_{\infty,k,h}(H_{\infty};z)$'s
are linearly independent, the quantity
\begin{displaymath}
\frac{2\pi\sqrt{-1} e^{\pi\sqrt{-1}(\rho+\mu_{l}+1)}}
     {\bigl(e^{2\pi\sqrt{-1}\rho}-e^{-2\pi\sqrt{-1}\mu_{l}}\bigr)
      \Gamma(-\rho-\mu_{l})\Gamma(\rho+\lambda_{i,k}+1)}
c_{\infty,l,\tilde{h};t'_i,k,h}(\rho)
=
\frac{e^{2\pi\sqrt{-1}\mu_{l}}\Gamma(\rho+\mu_{l}+1)}
     {\Gamma(\rho+\lambda_{i,k}+1)}
c_{\infty,l,\tilde{h};t'_i,k,h}(\rho)
\end{displaymath}
must be uniquely determined
and hence not depend on $\rho$,
which leads to $(\ref{conn_coef:t'i_infty})$.
\end{proof}


\section{Integrals associated with the solutions of the underlying system}
\label{sec:integrals}
For $1\leq i\leq p$ we set
\begin{displaymath}
\begin{aligned}
\phi^{-}_i
&=\max(\theta'_{i+1}+\delta,\theta'_i-\pi),
\\
\phi^{+}_i
&=\min(\theta'_{i-1}-\delta,\theta'_i+\pi),
\end{aligned}
\end{displaymath}
where $\theta'_0=\theta'_{\infty}+2\pi$,
$\theta'_{p+1}=\theta'_{\infty}$,
and $\delta$ is a sufficiently small positive number,
and set
\begin{displaymath}
\begin{aligned}
\mathcal{S}^{\prime-}_i
&=\{\zeta\mid\arg(\zeta-\eta_0)\in(\phi^{-}_i,\,\theta'_i)\},
\\
\mathcal{S}^{\prime+}_i
&=\{\zeta\mid\arg(\zeta-\eta_0)\in(\theta'_i,\,\phi^{+}_i)\}.
\end{aligned}
\end{displaymath}

\begin{figure}
 \centering
 \includegraphics{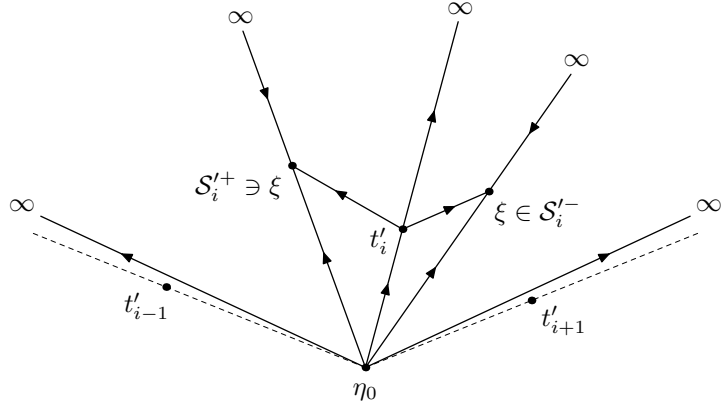}
 \caption{Paths of integration}
 \label{fig:path}
\end{figure}

For $\xi\in\mathcal{S}^{\prime\pm}_i$
we consider the integrals
\begin{equation} \label{def:W_ab_general}
W^{\langle\mathcal{S}^{\prime\pm}_i;ab\rangle}
_{\mathit{sing},k,h}(\nu_{1},\nu_{2};\xi)
=
\begin{pmatrix}
\eta_0 I_n-T' \\
P^{-1}M_{\eta_0}(T',A)
\end{pmatrix}
\int_{a}^{b}
\left(\frac{\xi-\zeta}{\xi-\eta_0}\right)^{\nu_{1}}
(\zeta-\eta_0)^{-\nu_{2}-1}
w_{\mathit{sing},k,h}(-\nu_{1}-1;\zeta)
\,d\zeta,
\end{equation}
where
$\nu_{1}, \nu_{2}$ are complex parameters,
$a,b \in \{\xi, \eta_0, t'_i, \infty\}$, $a\ne b$,
$\mathit{sing}=t'_i$ or $\infty$.
The path of integration for
$W^{\langle\mathcal{S}^{\prime\pm}_i;ab\rangle}_{\mathit{sing},k,h}$
is the segment 
or the ray $\overline{ab}$ from $a$ to $b$
indicated in Figure \ref{fig:path}.
Among the twelve possible choice of combinations
of $\mathit{sing}$ and $\overline{ab}$
we investigate the following seven cases:
\begin{displaymath}
\begin{aligned}
&W^{\langle\mathcal{S}^{\prime\pm}_i;\eta_0\xi\rangle}_{t'_i,k,h},&&
W^{\langle\mathcal{S}^{\prime\pm}_i;t'_i\xi\rangle}_{t'_i,k,h},&&
W^{\langle\mathcal{S}^{\prime\pm}_i;\eta_0t'_i\rangle}_{t'_i,k,h},&&
W^{\langle\mathcal{S}^{\prime\pm}_i;t'_i\infty\rangle}_{t'_i,k,h},
\\
&W^{\langle\mathcal{S}^{\prime\pm}_i;\eta_0\xi\rangle}_{\infty,k,h},&&
W^{\langle\mathcal{S}^{\prime\pm}_i;\infty\xi\rangle}_{\infty,k,h},&&
W^{\langle\mathcal{S}^{\prime\pm}_i;\eta_0\infty\rangle}_{\infty,k,h}. &&
\end{aligned}
\end{displaymath}
For $\xi\in\mathcal{S}^{\prime\pm}_i$
we determine the assignment of argument of
$\xi-t'_i$ as
\begin{equation} \label{arg:xi-t'i}
\begin{aligned}
  &\arg(\xi-t'_i)\in(\theta'_i-\pi,\,\theta'_i)
  && \text{for\ }\xi\in\mathcal{S}^{\prime-}_i,
  \\
  &\arg(\xi-t'_i)\in(\theta'_i-2\pi,\,\theta'_i-\pi)
  && \text{for\ }\xi\in\mathcal{S}^{\prime+}_i.
  \end{aligned}
\end{equation}
The branches of integrand of
$W^{\langle\mathcal{S}^{\prime\pm}_i;ab\rangle}_{\mathit{sing},k,h}$
along $\overline{ab}$ are determined by the assignment of
$\arg(\xi-\zeta)$,
$\arg(\zeta-\eta_0)$ and
$\arg(\zeta-t'_i)$
tabulated in Table \ref{tab:arg_t'i_minus} or \ref{tab:arg_t'i_plus}.
Similarly to the integral (\ref{ftn:intCforVxi})
in Proposition \ref{prop:2n-integral},
the integrals (\ref{def:W_ab_general}) satisfy
the system
\begin{equation} \label{sys:forW}
\frac{dW}{d\xi}
=
\left(
\begin{pmatrix} (\xi I_n-T')^{-1} & \\ & O \end{pmatrix}
-\frac{1}{\xi-\eta_0}I_{2n}
\right)
\begin{pmatrix}
A & P \\
-(A'-\nu_{1}I_n)(A'-\nu_{2}I_n)P^{-1} & (\nu_{1}+\nu_{2})I_n-A'
\end{pmatrix}
W.
\end{equation}

\begin{table}
\caption{Assignment of branches in case $\xi\in\mathcal{S}^{\prime-}_i$}
\begin{center}
$\begin{array}{c@{\qquad}c@{\qquad}c@{\qquad}c}
\hline
 & & & \\[-3\jot]
 & \arg(\xi-\zeta) & \arg(\zeta-\eta_0)
                   & \arg(\zeta-t'_i) \\[\jot]
\hline\\[-2\jot]
\overline{\eta_0 \xi}
 & \psi & \psi & [\theta'_i-\pi,\chi_i] \\[2\jot]
\overline{t'_i \xi}
 & \chi_i & [\psi,\theta'_i] & \chi_i \\[2\jot]
\overline{\eta_0 t'_i}
 & [\chi_i,\psi] & \theta'_i & \theta'_i-\pi \\[2\jot]
\overline{t'_i \infty}
 & (\theta'_i-\pi,\chi_i] & \theta'_i & \theta'_i \\[2\jot]
\overline{\infty \xi}
 & \psi+\pi & \psi &  \\[2\jot]
\overline{\eta_0 \infty}
 & [\psi,\phi_i^{-}+\pi) & \phi_i^{-} &  \\[2\jot]
\hline
\end{array}$ \\[.75\baselineskip]
$\begin{array}{r@{\;}c@{\;}l@{\;}l}
\psi &=& \arg(\xi-\eta_0) & \in (\phi^{-}_i,\,\theta'_i), \\[2\jot]
\chi_i &=& \arg(\xi-t'_i) & \in (\theta'_i-\pi,\,\theta'_i).
\end{array}$
\end{center}
\label{tab:arg_t'i_minus}
\end{table}

\begin{table}
\caption{Assignment of branches in case $\xi\in\mathcal{S}^{\prime+}_i$}
\begin{center}
$\begin{array}{c@{\qquad}c@{\qquad}c@{\qquad}c}
\hline
 & & & \\[-3\jot]
 & \arg(\xi-\zeta) & \arg(\zeta-\eta_0)
                   & \arg(\zeta-t'_i) \\[\jot]
\hline\\[-2\jot]
\overline{\eta_0 \xi}
 & \psi & \psi & [\chi_i,\theta'_i-\pi]  \\[2\jot]
\overline{t'_i \xi}
 & \chi_i & [\theta'_i,\psi] & \chi_i \\[2\jot]
\overline{\eta_0 t'_i}
 & [\psi-2\pi,\chi_i] & \theta'_i & \theta'_i-\pi \\[2\jot]
\overline{t'_i \infty}
 & [\chi_i,\theta'_i-\pi) & \theta'_i & \theta'_i \\[2\jot]
\overline{\infty \xi}
 & \psi-\pi & \psi &  \\[2\jot]
\overline{\eta_0 \infty}
 & (\phi_i^{+}-\pi,\psi] & \phi_i^{+} &  \\[2\jot]
\hline
\end{array}$ \\[.75\baselineskip]
$\begin{array}{r@{\;}c@{\;}l@{\;}l}
\psi &=& \arg(\xi-\eta_0) & \in (\theta'_i,\,\phi^{+}_i), \\[2\jot]
\chi_i &=& \arg(\xi-t'_i) & \in (\theta'_i-2\pi,\,\theta'_i-\pi).
\end{array}$
\end{center}
\label{tab:arg_t'i_plus}
\end{table}

\subsection{Integrals associated with the solutions
near finite singular points}
We use the notation
\begin{displaymath}
\Sigma^{J}_i=\{\eta_0+(t'_i-\eta_0)s\mid s\in J\}
\end{displaymath}
for $J=[0,1]$, $[1,\infty)$, and so on.


\begin{theorem} \label{thm:asymp_W_eta0_xi}
Assume that $\nu_{1}\notin \mathbb{Z}_{<0}$
and $\nu_{2}\notin \mathbb{Z}_{\geq0}$.
The analytic continuation of the integral
$W^{\langle\mathcal{S}^{\prime-}_i;\eta_0\xi\rangle}
_{t'_i,k,h}(\nu_{1},\nu_{2};\xi)$
across the open segment
$\Sigma^{(0,1)}_i$
into $\mathcal{S}^{\prime+}_i$
coincides with the integral
$W^{\langle\mathcal{S}^{\prime+}_i;\eta_0\xi\rangle}
_{t'_i,k,h}(\nu_{1},\nu_{2};\xi)$.
Moreover, as $\xi\longrightarrow\eta_0$,
$\xi\in
\mathcal{S}^{\prime-}_i\cup\Sigma^{(0,1)}_i
                       \cup\mathcal{S}^{\prime+}_i$,
we have
\begin{equation} \label{asymp:W_eta0xi_t'i}
\begin{aligned}
&
W^{\langle\mathcal{S}^{\prime\pm}_i;\eta_0\xi\rangle}
_{t'_i,k,h}(\nu_{1},\nu_{2};\xi)
\\
&=
(\xi-\eta_0)^{-\nu_{2}}
\left\{
\frac{\Gamma(\nu_{1}+1)\Gamma(-\nu_{2})}
{\Gamma(\nu_{1}-\nu_{2}+1)}
\begin{pmatrix}
P \\
\nu_{2}I_n-A'
\end{pmatrix}
P^{-1}(\eta_0 I_n-T')
w_{t'_i,k,h}(-\nu_{1}-1;0)
+O\left(\xi-\eta_0\right)
\right\}.
\end{aligned}
\end{equation}
\end{theorem}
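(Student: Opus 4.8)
The plan is to treat the two assertions---the analytic continuation and the asymptotics---separately. Write the kernel $n$-vector integral as
\[
I(\xi)=\int_{\eta_0}^{\xi}\left(\frac{\xi-\zeta}{\xi-\eta_0}\right)^{\nu_{1}}(\zeta-\eta_0)^{-\nu_{2}-1}w_{t'_i,k,h}(-\nu_{1}-1;\zeta)\,d\zeta,
\]
so that $W^{\langle\mathcal{S}^{\prime\pm}_i;\eta_0\xi\rangle}_{t'_i,k,h}=\bigl(\begin{smallmatrix}(\eta_0 I_n-T')\,I(\xi)\\ P^{-1}M_{\eta_0}(T',A)\,I(\xi)\end{smallmatrix}\bigr)$. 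Recall from Section \ref{sec:underlyingeq} that the branch cut of $w_{t'_i,k,h}(\rho;\cdot)$ is the ray $\Sigma^{[1,\infty)}_i$ lying beyond $t'_i$; in particular $w_{t'_i,k,h}(-\nu_{1}-1;\cdot)$ is holomorphic across the open segment $\Sigma^{(0,1)}_i$ and near the base point $\eta_0$. These two facts drive both parts of the argument.

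First I would settle the analytic continuation. For $\xi\in\Sigma^{(0,1)}_i$ the segment $\overline{\eta_0\xi}$ runs along the ray $\arg(\zeta-\eta_0)=\theta'_i$ but stops strictly short of $t'_i$, so it avoids every branch point $t'_\nu$ and $\infty$ of the integrand; hence $I(\xi)$ is holomorphic in a full neighborhood of $\Sigma^{(0,1)}_i$. It then suffices to check that the row $\overline{\eta_0\xi}$ of Tables \ref{tab:arg_t'i_minus} and \ref{tab:arg_t'i_plus} specifies the same branch in the limit $\xi\to\Sigma^{(0,1)}_i$. On that segment one computes $\psi=\arg(\xi-\eta_0)\to\theta'_i$ and $\chi_i=\arg(\xi-t'_i)\to\theta'_i-\pi$ from each side---$\chi_i\in(\theta'_i-\pi,\theta'_i)$ in the minus case and $\chi_i\in(\theta'_i-2\pi,\theta'_i-\pi)$ in the plus case---while both intervals $[\theta'_i-\pi,\chi_i]$ and $[\chi_i,\theta'_i-\pi]$ prescribed for $\arg(\zeta-t'_i)$ collapse to $\{\theta'_i-\pi\}$. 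Thus the two branch data agree, and $W^{\langle\mathcal{S}^{\prime-}_i;\eta_0\xi\rangle}_{t'_i,k,h}$, $W^{\langle\mathcal{S}^{\prime+}_i;\eta_0\xi\rangle}_{t'_i,k,h}$ are the restrictions to the two sides of $\Sigma^{(0,1)}_i$ of one function holomorphic across it, which is precisely the claimed continuation.

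For the asymptotics I would rescale by $\zeta=\eta_0+(\xi-\eta_0)s$, $s\in[0,1]$, obtaining
\[
I(\xi)=(\xi-\eta_0)^{-\nu_{2}}\int_0^1(1-s)^{\nu_{1}}s^{-\nu_{2}-1}w_{t'_i,k,h}(-\nu_{1}-1;\eta_0+(\xi-\eta_0)s)\,ds.
\]
Since $w_{t'_i,k,h}(-\nu_{1}-1;\cdot)$ is holomorphic near $\eta_0$, I would expand $w_{t'_i,k,h}(-\nu_{1}-1;\eta_0+(\xi-\eta_0)s)=w_{t'_i,k,h}(-\nu_{1}-1;\eta_0)+O(\xi-\eta_0)$ uniformly in $s$ and evaluate the leading Euler integral as the Beta value $\int_0^1(1-s)^{\nu_{1}}s^{-\nu_{2}-1}\,ds=\Gamma(\nu_{1}+1)\Gamma(-\nu_{2})/\Gamma(\nu_{1}-\nu_{2}+1)$, read as a finite part when $\Re\nu_{2}\ge0$; the hypotheses $\nu_{1}\notin\mathbb{Z}_{<0}$ and $\nu_{2}\notin\mathbb{Z}_{\ge0}$ ensure these $\Gamma$-factors make sense. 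Writing $J_0=\tfrac{\Gamma(\nu_{1}+1)\Gamma(-\nu_{2})}{\Gamma(\nu_{1}-\nu_{2}+1)}w_{t'_i,k,h}(-\nu_{1}-1;\eta_0)$ (the vector written with base point $0$ in (\ref{asymp:W_eta0xi_t'i})), we get $I(\xi)=(\xi-\eta_0)^{-\nu_{2}}\bigl(J_0+O(\xi-\eta_0)\bigr)$, which immediately produces the top block $(\eta_0 I_n-T')\,I(\xi)$ of (\ref{asymp:W_eta0xi_t'i}).

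The last and most delicate step is to carry the leading term through the bottom-block operator $M_{\eta_0}(T',A)=-(\xi-\eta_0)(\xi I_n-T')\partial_\xi-A(\eta_0 I_n-T')$. Setting $I(\xi)=(\xi-\eta_0)^{-\nu_{2}}J(\xi)$ with $J(\xi)=J_0+O(\xi-\eta_0)$, the key observation is that $\partial_\xi$ hitting the power $(\xi-\eta_0)^{-\nu_{2}}$ yields $-\nu_{2}(\xi-\eta_0)^{-\nu_{2}-1}J$, whose order is restored to $(\xi-\eta_0)^{-\nu_{2}}$ by the prefactor $(\xi-\eta_0)$ in $M_{\eta_0}$, whereas the term in which $\partial_\xi$ falls on $J$ carries $(\xi-\eta_0)^{-\nu_{2}+1}$ and is subleading. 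This order bookkeeping, together with justifying differentiation under the finite-part integral, is where the care lies, and isolating the surviving coefficient is the main obstacle. Letting $\xi I_n-T'\to\eta_0 I_n-T'$ then gives $M_{\eta_0}(T',A)\,I(\xi)=(\xi-\eta_0)^{-\nu_{2}}(\nu_{2}I_n-A)(\eta_0 I_n-T')J_0+O\bigl((\xi-\eta_0)^{-\nu_{2}+1}\bigr)$, and applying $P^{-1}$ with the identity $P^{-1}(\nu_{2}I_n-A)=(\nu_{2}I_n-A')P^{-1}$ (from $A'=P^{-1}AP$) turns the bottom block into $(\nu_{2}I_n-A')P^{-1}(\eta_0 I_n-T')J_0$. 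Assembling the two blocks as $\bigl(\begin{smallmatrix}P\\\nu_{2}I_n-A'\end{smallmatrix}\bigr)P^{-1}(\eta_0 I_n-T')J_0$ and substituting the value of $J_0$ reproduces (\ref{asymp:W_eta0xi_t'i}) exactly.
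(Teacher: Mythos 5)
Your proof is correct and follows essentially the same route as the paper's: the same branch-continuity check across $\Sigma^{(0,1)}_i$ (with $\psi\to\theta'_i$, $\chi_i\to\theta'_i-\pi$ and the $\arg(\zeta-t'_i)$ intervals collapsing to $\theta'_i-\pi$), the same rescaling $\zeta=\eta_0+(\xi-\eta_0)s$ producing the Beta factor $\Gamma(\nu_1+1)\Gamma(-\nu_2)/\Gamma(\nu_1-\nu_2+1)$, and the same evaluation of $M_{\eta_0}(T',A)$ on the leading power $(\xi-\eta_0)^{-\nu_2}$ combined with $P^{-1}(\nu_2 I_n-A)=(\nu_2 I_n-A')P^{-1}$. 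The only cosmetic difference is that the paper expands $w_{t'_i,k,h}(-\nu_1-1;\cdot)$ into its full Taylor series at $\eta_0$ and integrates term by term, whereas you keep only the leading coefficient plus a uniform remainder.
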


\begin{proof}
The equality of the analytic continuation of
$W^{\langle\mathcal{S}^{\prime-}_i;\eta_0\xi\rangle}_{t'_i,k,h}$
and the integral
$W^{\langle\mathcal{S}^{\prime+}_i;\eta_0\xi\rangle}_{t'_i,k,h}$
follows from the continuity of the arguments
$\arg(\xi-\zeta)$,
$\arg(\zeta-\eta_0)$
and
$\arg(\zeta-t'_i)$
in $\mathcal{S}^{\prime-}_i\cup\Sigma^{(0,1)}_i
                       \cup\mathcal{S}^{\prime+}_i$,
where we define
$\arg(\xi-\zeta)=\theta'_i$,
$\arg(\zeta-\eta_0)=\theta'_i$
and
$\arg(\zeta-t'_i)=\theta'_i-\pi$
for $\xi\in\Sigma^{(0,1)}_i$.

Note that
$w_{t'_i,k,h}(-\nu_{1}-1;\zeta)$
is holomorphic for
$\zeta\in\mathbb{C}\setminus
\bigcup_{\ell=1}^{p}\Sigma^{[1,\infty)}_{\ell}$,
where we specify $\theta'_i-2\pi<\arg(\zeta-t'_i)<\theta'_i$.
Provided that
$|\zeta-\eta_0|<\min_{1\leq k\leq p}|t'_k-\eta_0|$,
we expand
$w_{t'_i,k,h}(-\nu_{1}-1;\zeta)$
in powers of $\zeta-\eta_0$:
\begin{displaymath}
w_{t'_i,k,h}(-\nu_{1}-1;\zeta)
=\sum_{m=0}^{\infty}
g^{\eta_0}_{t'_i,k,h}(-\nu_{1}-1;m)
(\zeta-\eta_0)^{m}.
\end{displaymath}
Substituting this expansion
into $W^{\langle\mathcal{S}^{\prime\pm}_i;\eta_0 \xi\rangle}
_{t'_i,k,h}(\nu_{1},\nu_{2};\xi)$,
we obtain
\begin{displaymath}
W^{\langle\mathcal{S}^{\prime\pm}_i;\eta_0\xi\rangle}
_{t'_i,k,h}(\nu_{1},\nu_{2};\xi)
=
\begin{pmatrix}
\eta_0 I_n-T' \\
P^{-1}M_{\eta_0}(T',A)
\end{pmatrix}
\sum_{m=0}^{\infty}
g^{\eta_0}_{t'_i,k,h}(-\nu_{1}-1;m)
\int_{\eta_0}^{\xi}
\left(\frac{\xi-\zeta}{\xi-\eta_0}\right)^{\nu_{1}}
(\zeta-\eta_0)^{m-\nu_{2}-1}
\,d\zeta.
\end{displaymath}
Changing the variable of integration $\zeta$
to $s$ by $\zeta=\eta_0+(\xi-\eta_0)s$,
we obtain
\begin{displaymath}
\begin{aligned}
\int_{\eta_0}^{\xi}
\left(\frac{\xi-\zeta}{\xi-\eta_0}\right)^{\nu_{1}}
(\zeta-\eta_0)^{m-\nu_{2}-1}
\,d\zeta
&=
(\xi-\eta_0)^{m-\nu_{2}}
\int_{0}^{1}
(1-s)^{\nu_{1}} s^{m-\nu_{2}-1}
\,ds
\\
&=
\frac{\Gamma(\nu_{1}+1)\Gamma(m-\nu_{2})}
{\Gamma(m+\nu_{1}-\nu_{2}+1)}
(\xi-\eta_0)^{m-\nu_{2}}.
\end{aligned}
\end{displaymath}
Then we have
\begin{displaymath}
\begin{aligned}
&
W^{\langle\mathcal{S}^{\prime\pm}_i;\eta_0\xi\rangle}
_{t'_i,k,h}(\nu_{1},\nu_{2};\xi)
\\
&=
\begin{pmatrix}
\eta_0 I_n-T' \\
P^{-1}M_{\eta_0}(T',A)
\end{pmatrix}
\sum_{m=0}^{\infty}
\frac{\Gamma(\nu_{1}+1)\Gamma(m-\nu_{2})}
{\Gamma(m+\nu_{1}-\nu_{2}+1)}
g^{\eta_0}_{t'_i,k,h}(-\nu_{1}-1;m)
(\xi-\eta_0)^{m-\nu_{2}}
\\
&=
(\xi-\eta_0)^{-\nu_{2}}
\left\{
\frac{\Gamma(\nu_{1}+1)\Gamma(-\nu_{2})}
{\Gamma(\nu_{1}-\nu_{2}+1)}
\begin{pmatrix}
\eta_0 I_n-T' \\
P^{-1}(\nu_{2}I_n-A)(\eta_0 I_n-T')
\end{pmatrix}
g^{\eta_0}_{t'_i,k,h}(-\nu_{1}-1;0)
+O\left(\xi-\eta_0\right)
\right\},
\end{aligned}
\end{displaymath}
which leads to (\ref{asymp:W_eta0xi_t'i}).
\end{proof}


\begin{theorem} \label{thm:asymp_W_t'i_xi}
Assume that $\nu_{1}\notin \mathbb{Z}_{<0}$
and $\nu_{1}-\lambda_{i,k}\notin \mathbb{Z}_{\geq0}$.
The analytic continuation of the integral
$W^{\langle\mathcal{S}^{\prime-}_i;t'_i \xi\rangle}
_{t'_i,k,h}(\nu_{1},\nu_{2};\xi)$
across the open segment
$\Sigma^{(0,1)}_i$
into $\mathcal{S}^{\prime+}_i$
coincides with the integral
$W^{\langle\mathcal{S}^{\prime+}_i;t'_i \xi\rangle}
_{t'_i,k,h}(\nu_{1},\nu_{2};\xi)$.
Moreover, as $\xi\longrightarrow t'_i$,
$\xi\in
\mathcal{S}^{\prime-}_i\cup\Sigma^{(0,1)}_i
                       \cup\mathcal{S}^{\prime+}_i$,
we have
\begin{equation} \label{asymp:W_xi_t'i}
\begin{aligned}
W^{\langle\mathcal{S}^{\prime\pm}_i;t'_i \xi\rangle}
_{t'_i,k,h}(\nu_{1},\nu_{2};\xi)
&=
-(t'_i-\eta_0)^{-\nu_{1}-\nu_{2}}
\frac{\Gamma(\nu_{1}+1)
\Gamma(\lambda_{i,k}-\nu_{1})}
{\Gamma(\lambda_{i,k}+1)}
\\
&
\phantom{{}={}}
{}\times
(\xi-t'_i)^{\lambda_{i,k}}
\left\{\varepsilon_{2n}
(n_1+\cdots+n_{i-1}+\ell_{i,1}+\cdots+\ell_{i,k-1}+h)
+O(\xi-t'_i)\right\}.
\end{aligned}
\end{equation}
\end{theorem}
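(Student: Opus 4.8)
The plan is to imitate the proof of Theorem~\ref{thm:asymp_W_eta0_xi}, the only genuinely new ingredient being that the local expansion of the integrand is now taken at the branch point $\zeta=t'_i$ of $w_{t'_i,k,h}(-\nu_1-1;\zeta)$ instead of at the regular point $\zeta=\eta_0$. For the first assertion I would argue, exactly as there, by continuity of the three arguments. As $\xi$ crosses $\Sigma^{(0,1)}_i$ from $\mathcal{S}^{\prime-}_i$ into $\mathcal{S}^{\prime+}_i$, the prescriptions of $\arg(\xi-\zeta)$, $\arg(\zeta-\eta_0)$, $\arg(\zeta-t'_i)$ along $\overline{t'_i\xi}$ recorded in Tables~\ref{tab:arg_t'i_minus} and \ref{tab:arg_t'i_plus} pass continuously through the common limiting values $\psi=\theta'_i$ and $\chi_i=\theta'_i-\pi$ on $\Sigma^{(0,1)}_i$, so the two integrals are analytic continuations of one another.

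For the asymptotics I would first isolate the scalar integral $I(\xi)=\int_{t'_i}^{\xi}\left(\frac{\xi-\zeta}{\xi-\eta_0}\right)^{\nu_1}(\zeta-\eta_0)^{-\nu_2-1}w_{t'_i,k,h}(-\nu_1-1;\zeta)\,d\zeta$, so that $W=\begin{pmatrix}(\eta_0 I_n-T')I\\ P^{-1}M_{\eta_0}(T',A)I\end{pmatrix}$. Substituting the expansion (\ref{def:w_ti}) with $\rho=-\nu_1-1$ (whose leading power is $(\zeta-t'_i)^{\lambda_{i,k}-\nu_1-1}$) together with the binomial expansion of $(\zeta-\eta_0)^{-\nu_2-1}$ about $t'_i$, and setting $\zeta=t'_i+(\xi-t'_i)s$, each term becomes a Beta integral $\int_0^1(1-s)^{\nu_1}s^{\lambda_{i,k}-\nu_1-1+m+j}\,ds$; the lowest one gives
\[ I(\xi)=(t'_i-\eta_0)^{-\nu_1-\nu_2-1}\frac{\Gamma(\nu_1+1)\Gamma(\lambda_{i,k}-\nu_1)}{\Gamma(\lambda_{i,k}+1)}(\xi-t'_i)^{\lambda_{i,k}}\bigl\{g_{t'_i,k,h}(0)+O(\xi-t'_i)\bigr\}, \]
the two Gamma factors being finite by the hypotheses $\nu_1\notin\mathbb{Z}_{<0}$ and $\nu_1-\lambda_{i,k}\notin\mathbb{Z}_{\geq0}$. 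Since $g_{t'_i,k,h}(0)=\varepsilon_n(n_1+\cdots+n_{i-1}+\ell_{i,1}+\cdots+\ell_{i,k-1}+h)$ lies in the $i$-th block, the top block $\eta_0 I_n-T'$ of the prefactor multiplies it by $\eta_0-t'_i=-(t'_i-\eta_0)$, converting $(t'_i-\eta_0)^{-\nu_1-\nu_2-1}$ into $-(t'_i-\eta_0)^{-\nu_1-\nu_2}$ and producing exactly the top $n$ components of the right-hand side of (\ref{asymp:W_xi_t'i}).

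The crux is to show that the bottom block $P^{-1}M_{\eta_0}(T',A)I$ contributes nothing at order $(\xi-t'_i)^{\lambda_{i,k}}$, so that the leading coefficient is supported in the top block. A term-by-term differentiation is unpleasant, because $M_{\eta_0}$ lowers the exponent by one and the subleading terms of $I$, which do have components outside the $i$-th block, feed back into order $(\xi-t'_i)^{\lambda_{i,k}}$ and cancel only in aggregate. I would therefore argue through the differential system. Writing $W=\begin{pmatrix}w_1\\ w_2\end{pmatrix}$, the integral solves (\ref{sys:forW}), whose lower block reads
\[ \frac{dw_2}{d\xi}=-\frac{1}{\xi-\eta_0}\Bigl\{-(A'-\nu_1 I_n)(A'-\nu_2 I_n)P^{-1}w_1+((\nu_1+\nu_2)I_n-A')w_2\Bigr\} \]
and is \emph{regular} at $\xi=t'_i$. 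Since $I=(\xi-t'_i)^{\lambda_{i,k}}\times(\text{holomorphic})$ with $\lambda_{i,k}\notin\mathbb{Z}$ by (\ref{assumption:E2_1}), and $M_{\eta_0}(T',A)$ is a first-order operator with coefficients holomorphic at $t'_i$, the vector $w_2=P^{-1}M_{\eta_0}(T',A)I$ is a single fractional series $(\xi-t'_i)^{\lambda_{i,k}-1}\sum_{N\geq0}\tilde b_N(\xi-t'_i)^N$. Comparing the most singular powers in the displayed equation, the left side contributes $(\lambda_{i,k}-1)\tilde b_0(\xi-t'_i)^{\lambda_{i,k}-2}$ while the right side is $O((\xi-t'_i)^{\lambda_{i,k}-1})$; this forces $\tilde b_0=0$ because $\lambda_{i,k}\neq1$, and one more step at the next order, now using $\lambda_{i,k}\neq0$, kills the $(\xi-t'_i)^{\lambda_{i,k}}$-coefficient as well. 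Hence $w_2=O((\xi-t'_i)^{\lambda_{i,k}+1})$, the bottom block drops out of the leading term, and (\ref{asymp:W_xi_t'i}) follows. The main obstacle is precisely this cancellation in the lower block, and the point of the argument is that it is enforced by the regularity of the $w_2$-equation at $t'_i$ together with $\lambda_{i,k}\notin\mathbb{Z}$, rather than by any explicit computation of the subleading terms of $I$.
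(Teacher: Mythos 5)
Your proposal is correct, but it handles the crucial point --- the vanishing of the lower $n$ components of the leading coefficient --- by a genuinely different mechanism than the paper. The paper first applies Lemma~\ref{lem:Mint}, which rewrites the lower block $P^{-1}M_{\eta_0}(T',A)I$ as the integral of $\left(\frac{\xi-\zeta}{\xi-\eta_0}\right)^{\nu_1}(\zeta-\eta_0)^{-\nu_2-1}P^{-1}(\nu_2 I_n-A)(\zeta I_n-T')w_{t'_i,k,h}(-\nu_1-1;\zeta)$; since $g_{t'_i,k,h}(0)$ is supported in the $i$-th block, $(t'_i I_n-T')g_{t'_i,k,h}(0)=0$ and the cancellation is manifest in the integrand itself, so the whole $2n$-vector expansion (the paper writes each term as a ${}_2F_1$, equivalent to your Beta-integral expansion) comes out in one stroke with leading coefficient $-(t'_i-\eta_0)\varepsilon_{2n}(\cdots)$. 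You instead establish the same fact a posteriori, from the indicial structure of the lower block of the system (\ref{sys:forW}): that block is nonsingular at $\xi=t'_i$, so the fractional series $w_2=(\xi-t'_i)^{\lambda_{i,k}-1}\sum\tilde b_N(\xi-t'_i)^N$ must satisfy $(\lambda_{i,k}-1)\tilde b_0=0$ and then $\lambda_{i,k}\tilde b_1=0$, and $\lambda_{i,k}\notin\mathbb{Z}$ (assumption (\ref{assumption:E2_1})) kills both coefficients. This is a legitimate and rather elegant substitute; its costs are that it invokes $\lambda_{i,k}\notin\mathbb{Z}$ (which the paper's route does not need at this point) and that it leans on the fact that the integral satisfies (\ref{sys:forW}), which itself rests on the same boundary-term vanishing (\ref{condition:MintC}) that underlies Lemma~\ref{lem:Mint} --- so the integration by parts is not really avoided, only relocated. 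What it buys is that no explicit computation of, or cancellation among, the subleading coefficients is required. Your treatment of the first assertion (continuity of the three arguments across $\Sigma^{(0,1)}_i$ with common limits $\psi=\theta'_i$, $\chi_i=\theta'_i-\pi$) and of the top block coincides with the paper's.
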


To prove this theorem we prepare a lemma.

\begin{lemma}[{\cite[Proposition 2.5]{Ha}}] \label{lem:Mint}
The integral
\begin{displaymath}
v(\xi)
=\int_{a}^{b}
\left(\frac{\xi-\zeta}{\xi-\eta_0}\right)^{\nu_{1}}
(\zeta-\eta_0)^{-\nu_{2}-1}
w(-\nu_{1}-1;\zeta)\,d\zeta,
\end{displaymath}
where $w(-\nu_{1}-1;\zeta)$ is a solution of $(\ref{sys:underlying})$,
satisfies
\begin{equation} \label{ftn:MintC}
M_{\eta_0}(T',A)v(\xi)
=
(\nu_{2}I_n-A)
\int_{a}^{b}
\left(\frac{\xi-\zeta}{\xi-\eta_0}\right)^{\nu_{1}}
(\zeta-\eta_0)^{-\nu_{2}-1}
(\zeta I_n-T')
w(-\nu_{1}-1;\zeta)
\,d\zeta,
\end{equation}
provided that 
\begin{equation} \label{condition:MintC}
\Bigl[
(\xi-\zeta)^{\nu_{1}}
(\zeta-\eta_0)^{-\nu_{2}}
(\zeta I_n-T')
w(-\nu_{1}-1;\zeta)
\Bigr]_{a}^{b}=0.
\end{equation}
\end{lemma}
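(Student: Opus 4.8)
The plan is to verify (\ref{ftn:MintC}) by direct computation, following the pattern of (\ref{pf:inteudartwo}) and the proof of Proposition \ref{prop:integral}: apply $M_{\eta_0}(T',A)$ to $v(\xi)$, trade the $\xi$-derivative it produces for a $\zeta$-derivative acting on the integration kernel, and then integrate by parts in $\zeta$. Write $K=K(\xi,\zeta)=\left(\frac{\xi-\zeta}{\xi-\eta_0}\right)^{\nu_1}(\zeta-\eta_0)^{-\nu_2-1}$ for the kernel and abbreviate $w=w(-\nu_1-1;\zeta)$. Everything rests on two elementary identities. Differentiating $K$ gives the kernel relation
\[
(\xi-\eta_0)\frac{\partial K}{\partial\xi}
=-(\zeta-\eta_0)\frac{\partial K}{\partial\zeta}-(\nu_2+1)K,
\]
which is exactly what converts the $\partial_\xi$ inside $M_{\eta_0}$ into an integrable-by-parts $\partial_\zeta$. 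Since $w$ solves (\ref{sys:underlying}) with $\rho=-\nu_1-1$, so that $(\zeta I_n-T')\frac{dw}{d\zeta}=(A-(\nu_1+1)I_n)w$, one also has
\[
\frac{d}{d\zeta}\bigl[(\zeta I_n-T')w\bigr]=(A-\nu_1 I_n)w,
\]
the same relation used in Proposition \ref{prop:integral}.

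Starting from $M_{\eta_0}(T',A)v=-(\xi I_n-T')\int_a^b(\xi-\eta_0)\frac{\partial K}{\partial\xi}w\,d\zeta-A(\eta_0 I_n-T')\int_a^b Kw\,d\zeta$, the central manoeuvre is to avoid the inverse $(\xi I_n-T')^{-1}$ that would appear if one solved (\ref{sys:underlying}) for $\frac{dw}{d\zeta}$. I would instead use the splitting $\xi I_n-T'=(\xi-\zeta)I_n+(\zeta I_n-T')$ to bring the matrix factor inside the integral and pair $(\zeta I_n-T')$ directly with $w$. After substituting the kernel relation, the $(\zeta I_n-T')$-part of this splitting contributes $\int_a^b(\zeta-\eta_0)\frac{\partial K}{\partial\zeta}(\zeta I_n-T')w\,d\zeta$ together with lower-order terms; integrating this by parts in $\zeta$ and invoking the second identity above replaces the integrand by $-K(\zeta I_n-T')w-(\zeta-\eta_0)K(A-\nu_1 I_n)w$. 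The boundary term produced is $\bigl[(\zeta-\eta_0)K(\zeta I_n-T')w\bigr]_a^b$, and since $(\zeta-\eta_0)K=(\xi-\eta_0)^{-\nu_1}(\xi-\zeta)^{\nu_1}(\zeta-\eta_0)^{-\nu_2}$ this is merely a $\zeta$-independent multiple of the bracket appearing in (\ref{condition:MintC}); it therefore vanishes under the stated hypothesis.

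Collecting the pieces, the $(\xi-\zeta)I_n$-part of the splitting yields $-\nu_1\int_a^b(\zeta-\eta_0)Kw\,d\zeta$, which combines with the $-(A-\nu_1 I_n)\int_a^b(\zeta-\eta_0)Kw\,d\zeta$ coming from the integration by parts to give $-A\int_a^b(\zeta-\eta_0)Kw\,d\zeta$; adding the remaining term $-A(\eta_0 I_n-T')v=-A\int_a^b K(\eta_0 I_n-T')w\,d\zeta$ and using $(\zeta-\eta_0)I_n+(\eta_0 I_n-T')=\zeta I_n-T'$ recombines these into $-A\int_a^b K(\zeta I_n-T')w\,d\zeta$, while the surviving $\nu_2\int_a^b K(\zeta I_n-T')w\,d\zeta$ supplies the $\nu_2 I_n$. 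The total is exactly $(\nu_2 I_n-A)\int_a^b K(\zeta I_n-T')w\,d\zeta$, which is (\ref{ftn:MintC}). I expect no analytic difficulty, since the divergent integrals are handled as finite parts; the only real care needed is bookkeeping---keeping the noncommuting factors $A$, $\zeta I_n-T'$ and $\eta_0 I_n-T'$ in the correct order, and checking that the boundary term reduces precisely to (\ref{condition:MintC}) and not to some neighbouring bracket.
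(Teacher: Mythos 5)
Your computation is correct and follows essentially the same route as the paper's proof: the splitting $\xi I_n-T'=(\xi-\zeta)I_n+(\zeta I_n-T')$, the exchange of $\partial_\xi$ for $\partial_\zeta$ on the kernel, the integration by parts producing exactly the bracket in (\ref{condition:MintC}), and the identity $\frac{d}{d\zeta}\{(\zeta I_n-T')w\}=(A-\nu_{1}I_n)w$ are all the same steps the paper uses, merely regrouped (you differentiate $(\zeta-\eta_0)(\zeta I_n-T')w$ where the paper differentiates $(\zeta-\eta_0)^{-\nu_{2}}(\zeta I_n-T')w$). No gap; the bookkeeping you outline closes as claimed.
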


\begin{proof}
Since
\begin{displaymath}
\frac{d}{d\xi}v(\xi)
=\int_{a}^{b}
\frac{\nu_{1}}{(\xi-\eta_0)^2}
\left(\frac{\xi-\zeta}{\xi-\eta_0}\right)^{\nu_{1}-1}
(\zeta-\eta_0)^{-\nu_{2}}
w(-\nu_{1}-1;\zeta)\,d\zeta,
\end{displaymath}
we have
\begin{displaymath}
\begin{aligned}
&-(\xi-\eta_0)(\xi I_n-T')
\frac{d}{d\xi}v(\xi)
\\
&=
-
\int_{a}^{b}
\frac{\nu_{1}}{\xi-\eta_0}
\{(\xi-\zeta)I_n+(\zeta I_n-T')\}
\left(\frac{\xi-\zeta}{\xi-\eta_0}\right)^{\nu_{1}-1}
(\zeta-\eta_0)^{-\nu_{2}}
w(-\nu_{1}-1;\zeta)\,d\zeta
\\
&=
-
\int_{a}^{b}\left[\nu_{1}
\left(\frac{\xi-\zeta}{\xi-\eta_0}\right)^{\nu_{1}}
-(\zeta I_n-T')
\frac{\partial}{\partial \zeta}\smash{\biggl\{
\left(\frac{\xi-\zeta}{\xi-\eta_0}\right)^{\nu_{1}}
\biggr\}}
\,\right]
(\zeta-\eta_0)^{-\nu_{2}}
w(-\nu_{1}-1;\zeta)\,d\zeta
\\
&=
-
\int_{a}^{b}
\left(\frac{\xi-\zeta}{\xi-\eta_0}\right)^{\nu_{1}}
\left[
\nu_{1} (\zeta-\eta_0)^{-\nu_{2}} w(-\nu_{1}-1;\zeta)
+\frac{d}{d\zeta}\left\{
(\zeta-\eta_0)^{-\nu_{2}}(\zeta I_n-T')w(-\nu_{1}-1;\zeta)
\right\}
\right]\,d\zeta
\\
& \phantom{{}={}}
+(\xi-\eta_0)^{-\nu_{1}}
\Bigl[
(\xi-\zeta)^{\nu_{1}} (\zeta-\eta_0)^{-\nu_{2}}
(\zeta I_n-T') w(-\nu_{1}-1;\zeta)
\Bigr]_{a}^{b}
\\
&=
A(\eta_0 I_n-T')v(\xi)
+(\nu_{2}I_n-A)
\int_{a}^{b}
\left(\frac{\xi-\zeta}{\xi-\eta_0}\right)^{\nu_{1}}
(\zeta-\eta_0)^{-\nu_{2}-1}
(\zeta I_n-T')w(-\nu_{1}-1;\zeta)\,d\zeta,
\end{aligned}
\end{displaymath}
which leads to (\ref{ftn:MintC}).
Here we have used (\ref{condition:MintC})
and
\begin{displaymath}
\begin{aligned}
&\frac{d}{d\zeta}\left\{
(\zeta-\eta_0)^{-\nu_{2}}(\zeta I_n-T')w(-\nu_{1}-1;\zeta)
\right\}
\\
&=
-\nu_{2}(\zeta-\eta_0)^{-\nu_{2}-1}(\zeta I_n-T')w(-\nu_{1}-1;\zeta)
+(\zeta-\eta_0)^{-\nu_{2}}(A-\nu_{1}I_n)w(-\nu_{1}-1;\zeta)
\\
&=
-(\zeta-\eta_0)^{-\nu_{2}-1}
\left\{
A(\eta_0 I_n-T')+(\nu_{2}I_n-A)(\zeta I_n-T')
\right\}
w(-\nu_{1}-1;\zeta)
-\nu_{1} (\zeta-\eta_0)^{-\nu_{2}} w(-\nu_{1}-1;\zeta)
\end{aligned}
\end{displaymath}
in the last equality.
\end{proof}

\begin{proof}[Proof of Theorem $\ref{thm:asymp_W_t'i_xi}$]
The equality of the analytic continuation of
$W^{\langle\mathcal{S}^{\prime-}_i;t'_i \xi\rangle}_{t'_i,k,h}$
and the integral
$W^{\langle\mathcal{S}^{\prime+}_i;t'_i \xi\rangle}_{t'_i,k,h}$
follows from the continuity of the arguments
$\arg(\xi-\zeta)$,
$\arg(\zeta-\eta_0)$
and
$\arg(\zeta-t'_i)$
in $\mathcal{S}^{\prime-}_i\cup\Sigma^{(0,1)}_i
                           \cup\mathcal{S}^{\prime+}_i$,
where we define
$\arg(\xi-\zeta)=\theta'_i-\pi$,
$\arg(\zeta-\eta_0)=\theta'_i$
and
$\arg(\zeta-t'_i)=\theta'_i-\pi$
for $\xi\in\Sigma^{(0,1)}_i$.

Applying Lemma \ref{lem:Mint} we have
\begin{displaymath}
W^{\langle\mathcal{S}^{\prime\pm}_i;t'_i \xi\rangle}
_{t'_i,k,h}(\nu_{1},\nu_{2};\xi)
=
\int_{t'_i}^{\xi}
\left(\frac{\xi-\zeta}{\xi-\eta_0}\right)^{\nu_{1}}
(\zeta-\eta_0)^{-\nu_{2}-1}
\begin{pmatrix}
\eta_0 I_n-T' \\
P^{-1}(\nu_{2} I_n-A)(\zeta I_n-T')
\end{pmatrix}
w_{t'_i,k,h}(-\nu_{1}-1;\zeta)
\,d\zeta.
\end{displaymath}
Provided that $|\xi-t'_i|<\min_{k\ne i}|t'_k-t'_i|$,
substituting (\ref{def:w_ti})
with $\rho=-\nu_{1}-1$ and
\begin{displaymath}
\begin{pmatrix}
\eta_0 I_n-T' \\
P^{-1}(\nu_{2} I_n-A)(\zeta I_n-T')
\end{pmatrix}
=
\begin{pmatrix}
\eta_0 I_n-T' \\
P^{-1}(\nu_{2} I_n-A)(t'_i I_n-T')
\end{pmatrix}
+(\zeta-t'_i)
\begin{pmatrix}
O \\
P^{-1}(\nu_{2} I_n-A)
\end{pmatrix},
\end{displaymath}
we obtain
\begin{displaymath}
W^{\langle\mathcal{S}^{\prime\pm}_i;t'_i \xi\rangle}
_{t'_i,k,h}(\nu_{1},\nu_{2};\xi)
=
\sum_{m=0}^{\infty}
F_{t'_i,k,h}(m)
\int_{t'_i}^{\xi}
\left(\frac{\xi-\zeta}{\xi-\eta_0}\right)^{\nu_{1}}
(\zeta-\eta_0)^{-\nu_{2}-1}
(\zeta-t'_i)^{m+\lambda_{i,k}-\nu_{1}-1}
\,d\zeta,
\end{displaymath}
where the initial coefficient is given by
\begin{displaymath}
\begin{aligned}
F_{t'_i,k,h}(0)
&=
\begin{pmatrix}
\eta_0 I_n-T' \\
P^{-1}(\nu_{2} I_n-A)(t'_i I_n-T')
\end{pmatrix}
g_{t'_i,k,h}(0)
\\
&=-(t'_i-\eta_0)
\varepsilon_{2n}
(n_1+\cdots+n_{i-1}+\ell_{i,1}+\cdots+\ell_{i,k-1}+h).
\end{aligned}
\end{displaymath}
Changing the variable of integration $\zeta$
to $s$ by $\zeta=t'_i+(\xi-t'_i)s$,
we obtain
\begin{displaymath}
\begin{aligned}
&
\int_{t'_i}^{\xi}
\left(\frac{\xi-\zeta}{\xi-\eta_0}\right)^{\nu_{1}}
(\zeta-\eta_0)^{-\nu_{2}-1}
(\zeta-t'_i)^{m+\lambda_{i,k}-\nu_{1}-1}
\,d\zeta
\\
&= (\xi-\eta_0)^{-\nu_{1}}
(\xi-t'_i)^{m+\lambda_{i,k}}
(t'_i-\eta_0)^{-\nu_{2}-1}
\int_{0}^{1}
(1-s)^{\nu_{1}}
\left(1-\frac{\xi-t'_i}{\eta_0-t'_i} s \right)^{-\nu_{2}-1}
s^{m+\lambda_{i,k}-\nu_{1}-1}
\,ds,
\end{aligned}
\end{displaymath}
where $\arg(t'_i-\eta_0)=\theta'_i$
and hence
$\displaystyle
\arg\left(1-\frac{\xi-t'_i}{\eta_0-t'_i} s \right)
\in[0,\psi-\theta'_i]\subset(-\pi,\pi)$.
So the integral on the right-hand side is
expressed by the Gauss hypergeometric function as
\begin{displaymath}
\frac{\Gamma(\nu_{1}+1)
\Gamma(m+\lambda_{i,k}-\nu_{1})}
{\Gamma(m+\lambda_{i,k}+1)}
{}_2F_1\biggl(
\genfrac{}{}{0pt}{0}
{m+\lambda_{i,k}-\nu_{1}, \ \  \nu_{2}+1}
{m+\lambda_{i,k}+1}
\,;\,
\frac{\xi-t'_i}{\eta_0-t'_i}\biggr).
\end{displaymath}
Then we have
\begin{displaymath}
\begin{aligned}
W^{\langle\mathcal{S}^{\prime\pm}_i;t'_i \xi\rangle}
_{t'_i,k,h}(\nu_{1},\nu_{2};\xi)
&=
\Gamma(\nu_{1}+1)
(\xi-\eta_0)^{-\nu_{1}}
(t'_i-\eta_0)^{-\nu_{2}-1}
(\xi-t'_i)^{\lambda_{i,k}}
\\
&
\phantom{{}={}}
{}\times
\sum_{m=0}^{\infty}
\frac{\Gamma(m+\lambda_{i,k}-\nu_{1})}
{\Gamma(m+\lambda_{i,k}+1)}
F_{t'_i,k,h}(m)
{}_2F_1\biggl(
\genfrac{}{}{0pt}{0}
{m+\lambda_{i,k}-\nu_{1}, \ \  \nu_{2}+1}
{m+\lambda_{i,k}+1}
\,;\,
\frac{\xi-t'_i}{\eta_0-t'_i}\biggr)
(\xi-t'_i)^{m}.
\end{aligned}
\end{displaymath}
Combining this with
$\displaystyle
(\xi-\eta_0)^{-\nu_{1}}
=(t'_i-\eta_0)^{-\nu_{1}}
 \left(1-\frac{\xi-t'_i}{\eta_0-t'_i}\right)^{-\nu_{1}}$,
we have (\ref{asymp:W_xi_t'i}).
\end{proof}


We next consider the integrals
$W^{\langle\mathcal{S}^{\prime\pm}_i;\eta_0 t'_i\rangle}
_{t'_i,k,h}(\nu_{1},\nu_{2};\xi)$,
which make sense if $\xi$ is in
$\mathbb{C}\setminus\Sigma^{[0,1]}_i$.

\begin{theorem} \label{thm:asymp_W_t'i_eta0}
Assume that $\nu_{2}\notin \mathbb{Z}_{\geq0}$
and $\nu_{1}-\lambda_{i,k}\notin \mathbb{Z}_{\geq0}$.
The integrals
$W^{\langle\mathcal{S}^{\prime\pm}_i;\eta_0 t'_i\rangle}
_{t'_i,k,h}(\nu_{1},\nu_{2};\xi)$
are analytically continued to a $2n$-vector function
that is holomorphic and single-valued in
$\mathbb{C}\setminus\Sigma^{[0,1]}_i$.
Moreover, the integral
$W^{\langle\mathcal{S}^{\prime-}_i;\eta_0 t'_i\rangle}
_{t'_i,k,h}(\nu_{1},\nu_{2};\xi)$
coincides with the integral
$W^{\langle\mathcal{S}^{\prime+}_i;\eta_0 t'_i\rangle}
_{t'_i,k,h}(\nu_{1},\nu_{2};\xi)$
multiplied by $e^{2\pi\sqrt{-1}\nu_{1}}$.
Besides, near $\xi=t'_i$,
$\xi\in\mathcal{S}^{\prime\pm}_i$,
we have
\begin{equation} \label{asymp:W_eta0_t'i}
W^{\langle\mathcal{S}^{\prime\pm}_i;\eta_0 t'_i\rangle}
_{t'_i,k,h}(\nu_{1},\nu_{2};\xi)
=
-\frac{e^{\pm\pi\sqrt{-1}(\lambda_{i,k}-\nu_{1})}
      \sin\pi\nu_{1}}
     {\sin\pi\lambda_{i,k}}
W^{\langle\mathcal{S}^{\prime\pm}_i;t'_i \xi\rangle}
_{t'_i,k,h}(\nu_{1},\nu_{2};\xi)
+\mathrm{hol}(\xi-t'_i),
\end{equation}
where the double-signs correspond.
\end{theorem}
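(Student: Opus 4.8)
The plan is to prove the three assertions---single-valued holomorphy in $\mathbb{C}\setminus\Sigma^{[0,1]}_i$, the relation $W^{\langle\mathcal{S}^{\prime-}_i;\eta_0 t'_i\rangle}_{t'_i,k,h}=e^{2\pi\sqrt{-1}\nu_{1}}W^{\langle\mathcal{S}^{\prime+}_i;\eta_0 t'_i\rangle}_{t'_i,k,h}$, and the local formula \eqref{asymp:W_eta0_t'i}---in that order, the first two by tracking the branch of the integrand and the last by rescaling the integration variable near $t'_i$. First I would observe that along the fixed path $\overline{\eta_0 t'_i}$ the only $\xi$-dependent factor of the integrand in \eqref{def:W_ab_general} is $(\xi-\zeta)^{\nu_{1}}(\xi-\eta_0)^{-\nu_{1}}$, the factors $(\zeta-\eta_0)^{-\nu_{2}-1}$ and $w_{t'_i,k,h}(-\nu_{1}-1;\zeta)$ being independent of $\xi$. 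The path meets $\xi$ exactly when $\xi\in\Sigma^{[0,1]}_i$, so the integral is holomorphic off that segment, and the hypotheses $\nu_{2}\notin\mathbb{Z}_{\geq0}$, $\nu_{1}-\lambda_{i,k}\notin\mathbb{Z}_{\geq0}$ make the finite part well defined at the endpoints, where the integrand behaves like $(\zeta-\eta_0)^{-\nu_{2}-1}$ and $(\zeta-t'_i)^{\lambda_{i,k}-\nu_{1}-1}$. Letting $\xi$ traverse a loop encircling the whole segment, each $(\xi-\zeta)^{\nu_{1}}$ acquires $e^{2\pi\sqrt{-1}\nu_{1}}$ while the common factor $(\xi-\eta_0)^{-\nu_{1}}$ acquires $e^{-2\pi\sqrt{-1}\nu_{1}}$; these cancel, so the integrand and the integral return to their initial values, giving single-valued holomorphy in $\mathbb{C}\setminus\Sigma^{[0,1]}_i$.

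For the second assertion I would compare the branch of $(\xi-\zeta)^{\nu_{1}}$ prescribed along $\overline{\eta_0 t'_i}$ by Table~\ref{tab:arg_t'i_minus} with the one prescribed by Table~\ref{tab:arg_t'i_plus}. Continuing $W^{\langle\mathcal{S}^{\prime-}_i;\eta_0 t'_i\rangle}_{t'_i,k,h}$ into $\mathcal{S}^{\prime+}_i$ around the free endpoint $t'_i$ (where the ray $\arg(\zeta-\eta_0)=\theta'_i$ carries no cut beyond $t'_i$), the value assigned to $\arg(\xi-\zeta)$ along the path by the $\mathcal{S}^{\prime+}_i$ table is shifted relative to the continued branch, and the conventions \eqref{arg:xi-t'i} are arranged so that the net phase picked up by $(\xi-\zeta)^{\nu_{1}}$ is $e^{2\pi\sqrt{-1}\nu_{1}}$; this is precisely $W^{\langle\mathcal{S}^{\prime-}_i;\eta_0 t'_i\rangle}_{t'_i,k,h}=e^{2\pi\sqrt{-1}\nu_{1}}W^{\langle\mathcal{S}^{\prime+}_i;\eta_0 t'_i\rangle}_{t'_i,k,h}$.

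For the local formula I would first apply Lemma~\ref{lem:Mint} to carry $M_{\eta_0}(T',A)$ under the integral sign, exactly as in the proof of Theorem~\ref{thm:asymp_W_t'i_xi}, so that $W^{\langle\mathcal{S}^{\prime\pm}_i;\eta_0 t'_i\rangle}_{t'_i,k,h}$ and $W^{\langle\mathcal{S}^{\prime\pm}_i;t'_i\xi\rangle}_{t'_i,k,h}$ carry the same matrix prefactor in the integrand (the boundary term \eqref{condition:MintC} vanishes under the same hypotheses). The part of the $\zeta$-integral with $\zeta$ bounded away from $t'_i$ is holomorphic at $\xi=t'_i$ and goes into the $\mathrm{hol}(\xi-t'_i)$ term, so it suffices to study the contribution of $\zeta$ near $t'_i$, where I insert the leading term $w_{t'_i,k,h}(-\nu_{1}-1;\zeta)\sim(\zeta-t'_i)^{\lambda_{i,k}-\nu_{1}-1}\varepsilon_{2n}(\cdots)$ from \eqref{def:w_ti}. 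Rescaling $\zeta=t'_i+(\xi-t'_i)s$, both integrands factor as $(\xi-t'_i)^{\lambda_{i,k}}$ times $(1-s)^{\nu_{1}}s^{\lambda_{i,k}-\nu_{1}-1}$: for $W^{\langle t'_i\xi\rangle}$ the variable $s$ runs over $[0,1]$, reproducing the Beta integral $B(\lambda_{i,k}-\nu_{1},\nu_{1}+1)=\Gamma(\nu_{1}+1)\Gamma(\lambda_{i,k}-\nu_{1})/\Gamma(\lambda_{i,k}+1)$ of \eqref{asymp:W_xi_t'i}, while for $W^{\langle\eta_0 t'_i\rangle}$, after the orientation reversal $\int_{\eta_0}^{t'_i}=-\int_{\infty e^{\sqrt{-1}\alpha}}^{0}$, it runs over a ray from $0$ to $\infty$ at the angle $\alpha=\arg(\eta_0-t'_i)-\arg(\xi-t'_i)$ fixed by Tables~\ref{tab:arg_t'i_minus}/\ref{tab:arg_t'i_plus}.

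Finally I would compute the quotient of the two $s$-integrals. Deforming the ray onto $[0,1]\cup[1,\infty)$ and passing the branch point of $(1-s)^{\nu_{1}}$ at $s=1$ on the side dictated by $\xi\in\mathcal{S}^{\prime\pm}_i$ contributes a phase $e^{\mp\pi\sqrt{-1}\nu_{1}}$, and the two pieces evaluate to $B(\lambda_{i,k}-\nu_{1},\nu_{1}+1)$ and $e^{\mp\pi\sqrt{-1}\nu_{1}}B(\nu_{1}+1,-\lambda_{i,k})$ respectively; the reflection formula $\Gamma(z)\Gamma(1-z)=\pi/\sin\pi z$ then collapses the ratio $-\int_{\text{ray}}/\int_{0}^{1}$ to $-e^{\pm\pi\sqrt{-1}(\lambda_{i,k}-\nu_{1})}\sin\pi\nu_{1}/\sin\pi\lambda_{i,k}$, which is \eqref{asymp:W_eta0_t'i}. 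The main obstacle throughout is the branch bookkeeping: one must read off from Tables~\ref{tab:arg_t'i_minus} and \ref{tab:arg_t'i_plus} the exact values of $\arg(\xi-\zeta)$ and $\arg(\zeta-t'_i)$ along $\overline{\eta_0 t'_i}$, propagate them to the rescaled variable $s$ to fix both the direction $\alpha$ of the ray and the side on which it passes $s=1$, and thereby pin the sign in $e^{\pm\pi\sqrt{-1}(\lambda_{i,k}-\nu_{1})}$ so that the double signs correspond and the outcome is consistent with the relation $W^{\langle\mathcal{S}^{\prime-}_i;\eta_0 t'_i\rangle}_{t'_i,k,h}=e^{2\pi\sqrt{-1}\nu_{1}}W^{\langle\mathcal{S}^{\prime+}_i;\eta_0 t'_i\rangle}_{t'_i,k,h}$ of the second step.
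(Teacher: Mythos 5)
Your first two assertions are handled essentially as in the paper: single-valuedness in $\mathbb{C}\setminus\Sigma^{[0,1]}_i$ because the only $\xi$-dependence of the integrand along the fixed path is through $\bigl((\xi-\zeta)/(\xi-\eta_0)\bigr)^{\nu_{1}}$, and the factor $e^{2\pi\sqrt{-1}\nu_{1}}$ by comparing the continuously continued value of $\arg(\xi-\zeta)$ with the assignment prescribed in Table~\ref{tab:arg_t'i_plus} (the paper phrases the continuation as crossing the ray $\Sigma^{(1,\infty)}_i$ rather than going ``around'' $t'_i$, but the bookkeeping is the same). For the local formula \eqref{asymp:W_eta0_t'i} you take a genuinely different route. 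The paper never evaluates an integral: it continues $W^{\langle\mathcal{S}^{\prime\pm}_i;\eta_0 t'_i\rangle}_{t'_i,k,h}$ once around $t'_i$, reads off from the deformed contour (Figure~\ref{fig:deformation_small}) that the monodromy adds $\epsilon^{\pm}\bigl(1-e^{2\pi\sqrt{-1}\nu_{1}}\bigr)W^{\langle\mathcal{S}^{\prime\pm}_i;t'_i\xi\rangle}_{t'_i,k,h}$, and then pins the coefficient of the exponent-$\lambda_{i,k}$ part by matching monodromies. You instead compute the singular part directly: insert the local expansion \eqref{def:w_ti}, rescale $\zeta=t'_i+(\xi-t'_i)s$, and compare Beta-type integrals over $[0,1]$ and over a ray to $\infty$. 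Your arithmetic does check out: the combination $-\bigl(1-e^{\mp\pi\sqrt{-1}\nu_{1}}\sin\pi(\lambda_{i,k}-\nu_{1})/\sin\pi\lambda_{i,k}\bigr)$ collapses by the reflection formula to $-e^{\pm\pi\sqrt{-1}(\lambda_{i,k}-\nu_{1})}\sin\pi\nu_{1}/\sin\pi\lambda_{i,k}$, and this ratio is unchanged when $\lambda_{i,k}$ is shifted by a nonnegative integer, so the higher-order terms of \eqref{def:w_ti} --- which also feed the singular part, not only the leading term you insert --- all carry the same constant. The monodromy argument buys economy and robustness (it needs nothing beyond $\lambda_{i,k}\notin\mathbb{Z}$ and never touches divergent tails); your computation buys an explicit, self-contained identification of the constant without appealing to the local solution structure of \eqref{sys:forW} at $t'_i$.

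Two points in your third step would need to be firmed up. First, after rescaling, the contribution of the portion of $\overline{\eta_0 t'_i}$ near $t'_i$ is $\int_0^{S}$ with $S=(\zeta_0-t'_i)/(\xi-t'_i)\to\infty$, not a bona fide ray integral, and the tail $\int_1^{\infty}(1-s)^{\nu_{1}}s^{\lambda_{i,k}+m-\nu_{1}-1}\,ds$ that you evaluate as a Beta function diverges for all large $m$ (convergence requires $\Re\lambda_{i,k}+m<0$); you must either work with finite parts throughout, as the paper licenses in Section~\ref{sec:preliminary}, or establish the identity in a parameter region of convergence and continue analytically in $\lambda_{i,k}$ and $\nu_{1}$. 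Second, the determination of which side of $s=1$ the ray passes --- hence whether the phase is $e^{-\pi\sqrt{-1}\nu_{1}}$ or $e^{+\pi\sqrt{-1}\nu_{1}}$ --- is precisely the content of the theorem's double sign, and you defer it rather than carry it out; this is the same tracing of arguments that the paper performs via Figure~\ref{fig:deformation_small}, so it cannot be omitted from a complete write-up.
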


\begin{figure}
 \centering
 \includegraphics{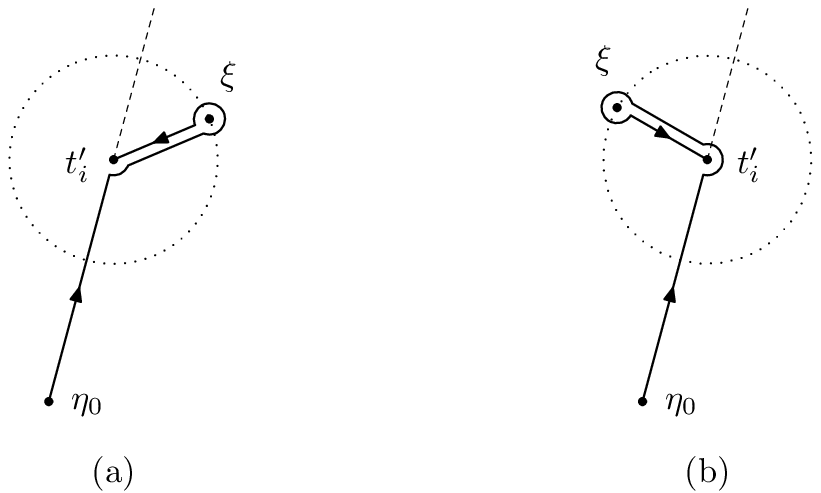}
 \caption{Deformation of the path $\overline{\eta_0 t'_i}$}
 \label{fig:deformation_small}
\end{figure}

\begin{proof}
It is trivial that $W^{\langle\mathcal{S}^{\prime\pm}_i;\eta_0 t'_i\rangle}
_{t'_i,k,h}(\nu_{1},\nu_{2};\xi)$
is holomorphic in $\xi\in\mathbb{C}\setminus\Sigma^{[0,1]}_i$.
Since the factor
\begin{displaymath}
\left(\frac{\xi-\zeta}{\xi-\eta_0}\right)^{\nu_{1}}
\end{displaymath}
is single-valued in $\xi\in\mathbb{C}\setminus\Sigma^{[0,1]}_i$
for $\zeta\in\Sigma^{[0,1]}_i$,
$W^{\langle\mathcal{S}^{\prime\pm}_i;\eta_0 t'_i\rangle}
_{t'_i,k,h}(\nu_{1},\nu_{2};\xi)$
is also single-valued in $\xi\in\mathbb{C}\setminus\Sigma^{[0,1]}_i$.

If we extend the assignment of the arguments
$\arg(\xi-t'_i)$ and $\arg(\xi-\zeta)$ for
$\xi\in\mathcal{S}^{\prime-}_i$
into $\mathcal{S}^{\prime-}_i\cup\Sigma^{(1,\infty)}_i
\cup\mathcal{S}^{\prime+}_i$ continuously,
then for $\xi\in\mathcal{S}^{\prime+}_i$ we have
\begin{displaymath}
\left\{\begin{aligned}
\arg(\xi-t'_i)&\in (\theta'_i,\theta'_i+\pi), \\
\arg(\rlap{$\xi-\zeta)$}\phantom{\xi-t'_i)}
  &\in [\arg(\xi-\eta_0),\arg(\xi-t'_i)]
\subset(\theta'_i,\theta'_i+\pi).
\end{aligned}\right.
\end{displaymath}
On the other hand,
the assignment of $\arg(\xi-\zeta)$
for $W^{\langle\mathcal{S}^{\prime+}_i;\eta_0 t'_i\rangle}_{t'_i,k,h}$
prescribed in Table \ref{tab:arg_t'i_plus} satisfies
\begin{displaymath}
\arg(\xi-\zeta)\in[\psi-2\pi,\chi_i]
\subset(\theta'_i-2\pi,\theta'_i-\pi).
\end{displaymath}
This means that
the analytic continuation of
$W^{\langle\mathcal{S}^{\prime-}_i;\eta_0 t'_i\rangle}_{t'_i,k,h}$
across the ray $\Sigma^{(1,\infty)}_i$
into $\mathcal{S}^{\prime+}_i$
differs from
$W^{\langle\mathcal{S}^{\prime+}_i;\eta_0 t'_i\rangle}_{t'_i,k,h}$
in the multiplier factor $e^{2\pi\sqrt{-1}\nu_{1}}$.

To find the behavior of
$W^{\langle\mathcal{S}^{\prime\pm}_i;\eta_0 t'_i\rangle}
_{t'_i,k,h}(\nu_{1},\nu_{2};\xi)$
near $\zeta=t'_i$
we analytically continue it
along a circle of center $t'_i$ with sufficiently small radius
in the positive direction (see Figure \ref{fig:deformation_small}
(a) for $W^{\langle\mathcal{S}^{\prime-}_i;\eta_0 t'_i\rangle}_{t'_i,k,h}$ or
(b) for $W^{\langle\mathcal{S}^{\prime+}_i;\eta_0 t'_i\rangle}_{t'_i,k,h}$).
Carefully tracing how the arguments change along the path,
we obtain
\begin{displaymath}
W^{\langle\mathcal{S}^{\prime\pm}_i;\eta_0 t'_i\rangle}
_{t'_i,k,h}(\nu_{1},\nu_{2};\xi)
+\epsilon^{\pm}
\left(1-e^{2\pi\sqrt{-1}\nu_{1}}\right)
W^{\langle\mathcal{S}^{\prime\pm}_i;t'_i \xi\rangle}
_{t'_i,k,h}(\nu_{1},\nu_{2};\xi),
\end{displaymath}
where $\epsilon^{-}=1$ for
$W^{\langle\mathcal{S}^{\prime-}_i;\eta_0 t'_i\rangle}_{t'_i,k,h}$
while
$\epsilon^{+}=e^{2\pi\sqrt{-1}(\lambda_{i,k}-\nu_{1})}$
for $W^{\langle\mathcal{S}^{\prime+}_i;\eta_0 t'_i\rangle}_{t'_i,k,h}$.
This means that
\begin{displaymath}
W^{\langle\mathcal{S}^{\prime\pm}_i;\eta_0 t'_i\rangle}
_{t'_i,k,h}(\nu_{1},\nu_{2};\xi)
=-\frac{\epsilon^{\pm}
\bigl(e^{2\pi\sqrt{-1}\nu_{1}}-1\bigr)}
{e^{2\pi\sqrt{-1}\lambda_{i,k}}-1}
W^{\langle\mathcal{S}^{\prime\pm}_i;t'_i \xi\rangle}
_{t'_i,k,h}(\nu_{1},\nu_{2};\xi)
+\mathrm{hol}(\xi-t'_i)
\end{displaymath}
near $\xi=t'_i$,
which leads to (\ref{asymp:W_eta0_t'i}).
\end{proof}

Now we shall investigate the integral
$W^{\langle\mathcal{S}^{\prime\pm}_i;t'_i \infty\rangle}
_{t'_i,k,h}(\nu_{1},\nu_{2};\xi)$.
Define the integral
\begin{displaymath}
\tilde{W}^{\langle\mathcal{S}^{\prime\pm}_i;\eta_0\xi\rangle}
_{t'_i,k,h}(\nu_{1},\nu_{2};\xi)
=
\begin{pmatrix}
\eta_0 I_n-T' \\
P^{-1}M_{\eta_0}(T',A)
\end{pmatrix}
\int_{\eta_0}^{\xi}
\left(\frac{\xi-\zeta}{\xi-\eta_0}\right)^{\nu_{1}}
(\zeta-\eta_0)^{-\nu_{2}-1}
\tilde{w}_{t'_i,k,h}(-\nu_{1}-1;\zeta)
\,d\zeta,
\end{displaymath}
where $\tilde{w}_{t'_i,k,h}(-\nu_{1}-1;\zeta)$
is the solution of the underlying system $(\ref{sys:underlying})$
with $\rho=-\nu_{1}-1$
stated in Theorem \ref{thm:underlying_1cut}.

\begin{theorem} \label{thm:asymp_W_t'i_inf}
Assume that
$\nu_{1}\notin \mathbb{Z}_{\geq0}$,
$\nu_{1}-\lambda_{i,k}\notin \mathbb{Z}_{\geq0}$,
$\nu_{2}\notin \mathbb{Z}_{<0}$,
and
$\nu_{2}-\mu_{l}\notin \mathbb{Z}_{<0}$ for $1\leq l\leq q$.
The analytic continuation of the integral
$W^{\langle\mathcal{S}^{\prime-}_i;t'_i \infty\rangle}
_{t'_i,k,h}(\nu_{1},\nu_{2};\xi)$
across the open segment $\Sigma^{(0,1)}_i$
into $\mathcal{S}^{\prime+}_i$ coincides with the integral
$W^{\langle\mathcal{S}^{\prime+}_i;t'_i \infty\rangle}
_{t'_i,k,h}(\nu_{1},\nu_{2};\xi)$.
Moreover,
the integral $W^{\langle\mathcal{S}^{\prime\pm}_i;t'_i \infty\rangle}
_{t'_i,k,h}(\nu_{1},\nu_{2};\xi)$
multiplied by $(\xi-\eta_0)^{\nu_{2}}$
is holomorphic
for $\xi\in\mathcal{P}'_i$, where
$\mathcal{P}'_i=\mathbb{C}\setminus\Sigma^{[1,\infty)}_i$.
Besides, we have
\begin{equation} \label{int:W_t'iinf_t'i}
W^{\langle\mathcal{S}^{\prime\pm}_i;t'_i\infty\rangle}
_{t'_i,k,h}(\nu_{1},\nu_{2};\xi)
=
-\frac{e^{\pi\sqrt{-1}(\lambda_{i,k}-\nu_{1}-\nu_{2})}
       \Gamma(\nu_{2}-\lambda_{i,k}+1)\Gamma(\lambda_{i,k}-\nu_{1})}
      {\Gamma(\nu_{2}+1)\Gamma(-\nu_{1})}
\tilde{W}^{\langle\mathcal{S}^{\prime\pm}_i;\eta_0\xi\rangle}
_{t'_i,k,h}(\nu_{2},\nu_{1};\xi).
\end{equation}
\end{theorem}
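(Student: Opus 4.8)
The plan is to derive (\ref{int:W_t'iinf_t'i}) from the Euler transformation (\ref{eutra:t'i_inf}) by an interchange of the order of integration, treating the continuation and single-valuedness statements as bookkeeping of branches. I would first settle the continuation across $\Sigma^{(0,1)}_i$ exactly as in the proofs of Theorems \ref{thm:asymp_W_eta0_xi} and \ref{thm:asymp_W_t'i_xi}: along the fixed ray $\overline{t'_i\infty}=\Sigma^{[1,\infty)}_i$ the arguments $\arg(\xi-\zeta)$, $\arg(\zeta-\eta_0)$, $\arg(\zeta-t'_i)$ prescribed in Tables \ref{tab:arg_t'i_minus} and \ref{tab:arg_t'i_plus} join continuously as $\xi$ passes through $\Sigma^{(0,1)}_i$ (where one sets $\arg(\xi-\zeta)=\theta'_i-\pi$, $\arg(\zeta-\eta_0)=\theta'_i$, $\arg(\zeta-t'_i)=\theta'_i$), so the $\mathcal{S}^{\prime-}_i$-branch continues analytically into the $\mathcal{S}^{\prime+}_i$-branch.

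For the main identity I would substitute (\ref{eutra:t'i_inf}) with $\rho=-\nu_{2}-1$ and $\rho'=-\nu_{1}-1$ into the defining integral of $\tilde W^{\langle\mathcal{S}^{\prime\pm}_i;\eta_0\xi\rangle}_{t'_i,k,h}(\nu_{2},\nu_{1};\xi)$, writing $\tilde w_{t'_i,k,h}(-\nu_{2}-1;\zeta)$ as a constant multiple of $\int_{t'_i}^{\infty}(\zeta-\tau)^{\nu_{1}-\nu_{2}-1}w_{t'_i,k,h}(-\nu_{1}-1;\tau)\,d\tau$. After interchanging the order of integration, the inner integral over $\zeta\in\overline{\eta_0\xi}$ is computed by the substitution $\zeta=\eta_0+(\xi-\eta_0)s$, which turns it into
\[
(\xi-\eta_0)^{-\nu_{1}}\bigl(-(\tau-\eta_0)\bigr)^{\nu_{1}-\nu_{2}-1}\int_{0}^{1}s^{-\nu_{1}-1}(1-s)^{\nu_{2}}(1-zs)^{\nu_{1}-\nu_{2}-1}\,ds,\qquad z=\frac{\xi-\eta_0}{\tau-\eta_0}.
\]
The key point is that this Euler integral equals $B(-\nu_{1},\nu_{2}+1)\,{}_2F_1(\nu_{2}-\nu_{1}+1,-\nu_{1};\nu_{2}-\nu_{1}+1;z)$, and since its first and third parameters coincide it collapses to the single power $B(-\nu_{1},\nu_{2}+1)(1-z)^{\nu_{1}}$. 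Restoring the factored-out powers, the remaining $\tau$-integrand is a constant multiple of $\bigl(\tfrac{\xi-\tau}{\xi-\eta_0}\bigr)^{\nu_{1}}(\tau-\eta_0)^{-\nu_{2}-1}w_{t'_i,k,h}(-\nu_{1}-1;\tau)$, i.e.\ exactly the integrand of $W^{\langle\mathcal{S}^{\prime\pm}_i;t'_i\infty\rangle}_{t'_i,k,h}(\nu_{1},\nu_{2};\xi)$. Collecting the $\Gamma$-prefactor of (\ref{eutra:t'i_inf}) with $B(-\nu_{1},\nu_{2}+1)=\Gamma(-\nu_{1})\Gamma(\nu_{2}+1)/\Gamma(\nu_{2}-\nu_{1}+1)$ and inverting produces the stated coefficient.

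The single-valuedness assertion I would read off the same substitution applied directly to $\tilde W^{\langle\mathcal{S}^{\prime\pm}_i;\eta_0\xi\rangle}_{t'_i,k,h}(\nu_{2},\nu_{1};\xi)$: it exhibits this integral as an explicit power of $(\xi-\eta_0)$ times $\int_{0}^{1}s^{-\nu_{1}-1}(1-s)^{\nu_{2}}\tilde w_{t'_i,k,h}(-\nu_{2}-1;\eta_0+(\xi-\eta_0)s)\,ds$, and the latter is holomorphic and single-valued on $\mathcal{P}'_i$ because $\tilde w_{t'_i,k,h}$ is holomorphic there by Theorem \ref{thm:underlying_1cut}; transporting this through the identity gives the claim.

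The main obstacle I anticipate is twofold: justifying the interchange of the two integrations in the sense of finite parts (the $\zeta$- and $\tau$-integrals are in general divergent under the stated hypotheses $\nu_{1}\notin\mathbb{Z}_{\geq0}$, $\nu_{2}\notin\mathbb{Z}_{<0}$, $\nu_{2}-\mu_{l}\notin\mathbb{Z}_{<0}$, which are precisely the conditions making (\ref{eutra:t'i_inf}) and the Euler integral applicable), and the careful tracking of the argument branches through the substitution so that the accumulated phase $(-1)^{\nu_{1}-\nu_{2}-1}(\tau-\xi)^{\nu_{1}}/(\xi-\tau)^{\nu_{1}}$ collapses to the single factor $e^{\pi\sqrt{-1}(\lambda_{i,k}-\nu_{1}-\nu_{2})}$ recorded in (\ref{int:W_t'iinf_t'i}). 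Everything else is the routine evaluation above.
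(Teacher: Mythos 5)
Your proposal is correct and follows essentially the same route as the paper: substitute the Euler transformation (\ref{eutra:t'i_inf}) with $\rho=-\nu_{2}-1$, $\rho'=-\nu_{1}-1$ into $\tilde{W}^{\langle\mathcal{S}^{\prime\pm}_i;\eta_0\xi\rangle}_{t'_i,k,h}(\nu_{2},\nu_{1};\xi)$, interchange the integrations, and collapse the inner Euler integral via the ${}_2F_1$ with coincident first and third parameters to recover the integrand of $W^{\langle\mathcal{S}^{\prime\pm}_i;t'_i\infty\rangle}_{t'_i,k,h}(\nu_{1},\nu_{2};\xi)$, with the continuation across $\Sigma^{(0,1)}_i$ handled by continuity of the tabulated arguments. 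The only cosmetic difference is that the paper treats the holomorphy of $(\xi-\eta_0)^{\nu_{2}}W^{\langle\mathcal{S}^{\prime\pm}_i;t'_i\infty\rangle}_{t'_i,k,h}$ on $\mathcal{P}'_i$ as immediate from the definition rather than transporting it through the identity.
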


\begin{proof}
The equality of the analytic continuation of
$W^{\langle\mathcal{S}^{\prime-}_i;t'_i \infty\rangle}_{t'_i,k,h}$
and the integral
$W^{\langle\mathcal{S}^{\prime+}_i;t'_i \infty\rangle}_{t'_i,k,h}$
follows from the continuity of the arguments
$\arg(\xi-\zeta)$,
$\arg(\zeta-\eta_0)$
and
$\arg(\zeta-t'_i)$
in $\mathcal{S}^{\prime-}_i\cup\Sigma^{(0,1)}_i
                       \cup\mathcal{S}^{\prime+}_i$,
where we define
$\arg(\xi-\zeta)=\theta'_i-\pi$,
$\arg(\zeta-\eta_0)=\theta'_i$
and
$\arg(\zeta-t'_i)=\theta'_i$
for $\xi\in\Sigma^{(0,1)}_i$.

It is trivial that
$W^{\langle\mathcal{S}^{\prime\pm}_i;t'_i \infty\rangle}
_{t'_i,k,h}(\nu_{1},\nu_{2};\xi)$
multiplied by $(\xi-\eta_0)^{\nu_{2}}$
is holomorphic in $\mathbb{C}\setminus\Sigma^{[1,\infty)}_i$.

Set
\begin{displaymath}
\begin{aligned}
v(\xi)
&=
\int_{t'_i}^{\infty}
\left(\frac{\xi-\zeta}{\xi-\eta_0}\right)^{\nu_{1}}
(\zeta-\eta_0)^{-\nu_{2}-1}
w_{t'_i,k,h}(-\nu_{1}-1;\zeta)
\,d\zeta,
\\
\tilde{v}(\xi)
&=
\int_{\eta_0}^{\xi}
\left(\frac{\xi-\zeta}{\xi-\eta_0}\right)^{\nu_{2}}
(\zeta-\eta_0)^{-\nu_{1}-1}
\tilde{w}_{t'_i,k,h}(-\nu_{2}-1;\zeta)
\,d\zeta.
\end{aligned}
\end{displaymath}
Substituting the formula (\ref{eutra:t'i_inf}) with
$\rho=-\nu_{2}-1$ and $\rho'=-\nu_{1}-1$
into $\tilde{v}(\xi)$
and then reversing the order of integration,
we obtain
\begin{equation} \label{int:tilde_v}
\begin{aligned}
\tilde{v}(\xi)
&=
\frac{e^{\pi\sqrt{-1}(\nu_{1}-\lambda_{i,k})}
      \Gamma(\nu_{2}-\nu_{1}+1)}
     {\Gamma(\nu_{2}-\lambda_{i,k}+1)\Gamma(\lambda_{i,k}-\nu_{1})}
\\
&\phantom{{}={}}
{}\times
\int_{t'_i}^{\infty}\biggl\{\int_{\eta_0}^{\xi}
\left(\frac{\xi-\zeta}{\xi-\eta_0}\right)^{\nu_{2}}
(\zeta-\eta_0)^{-\nu_{1}-1}
(\zeta-\tau)^{\nu_{1}-\nu_{2}-1}
\,d\zeta\biggr\}
w_{t'_i,k,h}(-\nu_{1}-1;\tau)
\,d\tau,
\end{aligned}
\end{equation}
where $\arg(\zeta-\tau)\in[\chi_i,\theta'_i-\pi]^{*}
\subset(\theta'_i-2\pi,\theta'_i)$.
Setting $\zeta-\tau=e^{-\pi\sqrt{-1}}(\tau-\zeta)$
and $\zeta=\eta_0+(\xi-\eta_0)s$,
we find
\begin{displaymath}
\begin{aligned}
&
\int_{\eta_0}^{\xi}
\left(\frac{\xi-\zeta}{\xi-\eta_0}\right)^{\nu_{2}}
(\zeta-\eta_0)^{-\nu_{1}-1}
(\zeta-\tau)^{\nu_{1}-\nu_{2}-1}
\,d\zeta
\\
&=
(\xi-\eta_0)^{-\nu_{1}}
\left(e^{-\pi\sqrt{-1}}
(\tau-\eta_0)\right)^{\nu_{1}-\nu_{2}-1}
\int_{0}^{1}
(1-s)^{\nu_{2}}
s^{-\nu_{1}-1}
\left(1-\frac{\xi-\eta_0}{\tau-\eta_0}s
\right)^{\nu_{1}-\nu_{2}-1}
\,ds,
\end{aligned}
\end{displaymath}
where $\arg(\tau-\eta_0)=\theta'_i$
and $\displaystyle
\arg\left(1-\frac{\xi-\eta_0}{\tau-\eta_0}s\right)
\in(-\pi,\pi)$.
So the integral on the right-hand side is
expressed by the Gauss hypergeometric function as
\begin{displaymath}
\frac{\Gamma(\nu_{2}+1)\Gamma(-\nu_{1})}
{\Gamma(\nu_{2}-\nu_{1}+1)}
{}_2F_1\biggl(
\genfrac{}{}{0pt}{0}
{\nu_{2}-\nu_{1}+1, \ \  -\nu_{1}}
{\nu_{2}-\nu_{1}+1}
\,;\,
\frac{\xi-\eta_0}{\tau-\eta_0}\biggr),
\end{displaymath}
which is reduced to
\begin{displaymath}
\frac{\Gamma(\nu_{2}+1)\Gamma(-\nu_{1})}
{\Gamma(\nu_{2}-\nu_{1}+1)}
\left(\frac{e^{\pi\sqrt{-1}}(\xi-\tau)}
{\tau-\eta_0}\right)^{\nu_{1}},
\end{displaymath}
where $\arg(\xi-\tau)\in[\chi_i,\theta'_i-\pi]^{*}
\subset(\theta'_i-2\pi,\theta'_i)$.
Thus we have
\begin{displaymath}
\int_{\eta_0}^{\xi}
\left(\frac{\xi-\zeta}{\xi-\eta_0}\right)^{\nu_{2}}
(\eta_0-\zeta)^{-\nu_{1}-1}
(\zeta-\tau)^{\nu_{1}-\nu_{2}-1}
\,d\zeta
=
e^{\pi\sqrt{-1}(\nu_{2}+1)}
\frac{\Gamma(\nu_{2}+1)\Gamma(-\nu_{1})}
{\Gamma(\nu_{2}-\nu_{1}+1)}
\left(\frac{\xi-\tau}{\xi-\eta_0}\right)^{\nu_{1}}
(\tau-\eta_0)^{-\nu_{2}-1}.
\end{displaymath}
Substituting this formula into (\ref{int:tilde_v}),
we have
\begin{displaymath}
\begin{aligned}
\tilde{v}(\xi)
&=
-\frac{e^{\pi\sqrt{-1}(\nu_{1}+\nu_{2}-\lambda_{i,k})}
\Gamma(\nu_{2}+1)\Gamma(-\nu_{1})}
{\Gamma(\nu_{2}-\lambda_{i,k}+1)
\Gamma(\lambda_{i,k}-\nu_{1})}
v(\xi).
\end{aligned}
\end{displaymath}
This implies (\ref{int:W_t'iinf_t'i}).
\end{proof}

\subsection{Integrals associated with the solutions near infinity}
Similarly to Theorem \ref{thm:asymp_W_eta0_xi},
we can prove the following theorem.
\begin{theorem} \label{thm:asymp_Winf_eta0_xi}
Assume that $\nu_{1}\notin \mathbb{Z}_{<0}$
and $\nu_{2}\notin \mathbb{Z}_{\geq0}$.
As $\xi\longrightarrow\eta_0$,
$\xi\in\mathcal{S}^{\prime\pm}_{i}$,
we have
\begin{displaymath}
\begin{aligned}
&
W^{\langle\mathcal{S}^{\prime\pm}_i;\eta_0\xi\rangle}
_{\infty,k,h}(\nu_{1},\nu_{2};\xi)
\\
&=
(\xi-\eta_0)^{-\nu_{2}}
\left\{
\frac{\Gamma(\nu_{1}+1)\Gamma(-\nu_{2})}
{\Gamma(\nu_{1}-\nu_{2}+1)}
\begin{pmatrix}
P \\
\nu_{2}I_n-A'
\end{pmatrix}
P^{-1}(\eta_0 I_n-T')
w_{\infty,k,h}|_{\mathcal{S}^{\prime\pm}_{i}}(-\nu_{1}-1;\eta_0)
+O\left(\xi-\eta_0\right)
\right\},
\end{aligned}
\end{displaymath}
where
$w_{\infty,k,h}|_{\mathcal{S}^{\prime\pm}_{i}}(-\nu_{1}-1;\zeta)$
denotes the analytic continuation of
$w_{\infty,k,h}(-\nu_{1}-1;\zeta)$
into a neighborhood of $\zeta=\eta_0$
through the sector $\mathcal{S}^{\prime\pm}_{i}$
(the double-signs correspond).
\end{theorem}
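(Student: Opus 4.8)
The plan is to follow the proof of Theorem \ref{thm:asymp_W_eta0_xi} almost verbatim; the one genuinely new ingredient is that the solution $w_{\infty,k,h}$, which is a priori defined only near $\zeta=\infty$, must first be carried down to a neighborhood of $\zeta=\eta_0$. Since $\eta_0$ is an ordinary point of the underlying system $(\ref{sys:underlying})$ (recall $t'_i=\eta_0+1/(t_i-t_{p+1})\ne\eta_0$ for all $i$), the analytic continuation of $w_{\infty,k,h}(-\nu_{1}-1;\zeta)$ from near infinity into a neighborhood of $\eta_0$ through the sector $\mathcal{S}^{\prime\pm}_i$, denoted $w_{\infty,k,h}|_{\mathcal{S}^{\prime\pm}_i}(-\nu_{1}-1;\zeta)$, is holomorphic in the full disk $|\zeta-\eta_0|<\min_{\ell}|t'_{\ell}-\eta_0|$, the radius being the distance to the nearest singularity. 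The continuations through $\mathcal{S}^{\prime-}_i$ and $\mathcal{S}^{\prime+}_i$ generally differ by the monodromy of $(\ref{sys:underlying})$ around the finite singular points, which is exactly why the two sectors must be kept separate and, in contrast to Theorem \ref{thm:asymp_W_eta0_xi}, no coincidence across $\Sigma^{(0,1)}_i$ is asserted.

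First I would expand this holomorphic continuation in powers of $\zeta-\eta_0$,
\[
w_{\infty,k,h}|_{\mathcal{S}^{\prime\pm}_i}(-\nu_{1}-1;\zeta)
=\sum_{m=0}^{\infty}g^{\eta_0,\pm}_{\infty,k,h}(-\nu_{1}-1;m)(\zeta-\eta_0)^{m},
\]
substitute it into $W^{\langle\mathcal{S}^{\prime\pm}_i;\eta_0\xi\rangle}_{\infty,k,h}(\nu_{1},\nu_{2};\xi)$, and interchange summation and integration (legitimate for $\xi$ close to $\eta_0$). The change of variable $\zeta=\eta_0+(\xi-\eta_0)s$ reduces the $m$-th term to the Beta integral $\int_{0}^{1}(1-s)^{\nu_{1}}s^{m-\nu_{2}-1}\,ds=\Gamma(\nu_{1}+1)\Gamma(m-\nu_{2})/\Gamma(m+\nu_{1}-\nu_{2}+1)$, exactly as in Theorem \ref{thm:asymp_W_eta0_xi}, yielding a series whose $m$-th term carries the factor $(\xi-\eta_0)^{m-\nu_{2}}$. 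The hypotheses $\nu_{1}\notin\mathbb{Z}_{<0}$ and $\nu_{2}\notin\mathbb{Z}_{\geq0}$ guarantee that $\Gamma(\nu_{1}+1)$ and $\Gamma(-\nu_{2})$ are finite and nonzero, so the $m=0$ term dominates and contributes the leading factor $(\xi-\eta_0)^{-\nu_{2}}$, all higher terms being absorbed into $O(\xi-\eta_0)$.

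It then remains to compute the leading coefficient, i.e.\ the action of the prefactor $\begin{pmatrix}\eta_0 I_n-T'\\P^{-1}M_{\eta_0}(T',A)\end{pmatrix}$ on the $m=0$ contribution. Applying $M_{\eta_0}(T',A)=-(\xi-\eta_0)(\xi I_n-T')\partial/\partial\xi-A(\eta_0 I_n-T')$ to the leading power $(\xi-\eta_0)^{-\nu_{2}}$ produces, as $\xi\to\eta_0$, the matrix $(\nu_{2}I_n-A)(\eta_0 I_n-T')$ times the coefficient, so the prefactor reduces to $\begin{pmatrix}\eta_0 I_n-T'\\P^{-1}(\nu_{2}I_n-A)(\eta_0 I_n-T')\end{pmatrix}$; using $A'=P^{-1}AP$ this is rewritten as $\begin{pmatrix}P\\\nu_{2}I_n-A'\end{pmatrix}P^{-1}(\eta_0 I_n-T')$, and with $g^{\eta_0,\pm}_{\infty,k,h}(-\nu_{1}-1;0)=w_{\infty,k,h}|_{\mathcal{S}^{\prime\pm}_i}(-\nu_{1}-1;\eta_0)$ this gives precisely the claimed asymptotic. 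The only place where care is required is the very first step: verifying that the continuation through $\mathcal{S}^{\prime\pm}_i$ reaches a full disk about $\eta_0$ without crossing any cut $\Sigma^{[1,\infty)}_{\ell}$, so that the power-series expansion is valid and its branch agrees with the one prescribed for the integrand along $\overline{\eta_0\xi}$ in Table \ref{tab:arg_t'i_minus}/\ref{tab:arg_t'i_plus}; everything downstream is the computation of Theorem \ref{thm:asymp_W_eta0_xi} repeated verbatim.
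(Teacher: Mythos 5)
Your proposal is correct and is precisely the argument the paper intends: the paper states this theorem is proved ``similarly to Theorem \ref{thm:asymp_W_eta0_xi}'' and omits the details, and your write-up carries out that same expansion-and-Beta-integral computation, with the one genuinely new point (continuing $w_{\infty,k,h}$ through the sector $\mathcal{S}^{\prime\pm}_i$ to a holomorphic germ at $\eta_0$, and noting the two sectors may give different germs) handled correctly.
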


\begin{proof}
Omitted.
\end{proof}


\begin{theorem} \label{thm:asymp_Winf_xi_inf}
Assume that $\nu_{1}\notin \mathbb{Z}_{<0}$
and $\nu_{2}-\mu_{k}\notin \mathbb{Z}_{<0}$.
The analytic continuation of the integral
$W^{\langle\mathcal{S}^{\prime-}_i;\infty\xi\rangle}
_{\infty,k,h}(\nu_{1},\nu_{2};\xi)$
across the open ray
$\Sigma^{(1,\infty)}_i$
into $\mathcal{S}^{\prime+}_i$
is equal to the integral
$W^{\langle\mathcal{S}^{\prime+}_i;\infty\xi\rangle}
_{\infty,k,h}(\nu_{1},\nu_{2};\xi)$
multiplied by $e^{2\pi\sqrt{-1}\nu_{1}}$.
Moreover, as $\xi\longrightarrow\infty$,
$\xi\in
\mathcal{S}^{\prime-}_i\cup\Sigma^{(1,\infty)}_i
                       \cup\mathcal{S}^{\prime+}_i$,
we have
\begin{equation} \label{asymp:W_xi_inf}
\begin{aligned}
W^{\langle\mathcal{S}^{\prime\pm}_i;\infty\xi\rangle}
_{\infty,k,h}(\nu_{1},\nu_{2};\xi)
&=
-\frac{e^{\mp\pi\sqrt{-1}\nu_{1}}
\Gamma(\nu_{1}+1)
\Gamma(\nu_{2}-\mu_{k}+1)}
{\Gamma(\nu_{1}+\nu_{2}-\mu_{k}+1)}
\\
&\phantom{{}={}}
{}\times
(\xi-\eta_0)^{\mu_{k}-\nu_{1}-\nu_{2}}
\left\{
\varepsilon_{2n}(n+m_1+\cdots+m_{k-1}+h)
+O \left(\frac{1}{\xi-\eta_0}\right)
\right\},
\end{aligned}
\end{equation}
where the double-signs correspond.
\end{theorem}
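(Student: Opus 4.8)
The plan is to run the same machine already used for the finite singular point in Theorem \ref{thm:asymp_W_t'i_xi}, replacing the local expansion at $t'_i$ by the expansion (\ref{def:w_inf}) at infinity, with $\rho=-\nu_{1}-1$, and adapting the branch bookkeeping to the ray $\overline{\infty\xi}$. Throughout, divergent integrals are read as finite parts, as in the rest of the section.

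First I would settle the transition across $\Sigma^{(1,\infty)}_i$ exactly as in the proof of Theorem \ref{thm:asymp_W_t'i_eta0}. Comparing the rows for $\overline{\infty\xi}$ in Tables \ref{tab:arg_t'i_minus} and \ref{tab:arg_t'i_plus}, the prescribed value of $\arg(\xi-\zeta)$ is $\psi+\pi$ for $\xi\in\mathcal{S}^{\prime-}_i$ and $\psi-\pi$ for $\xi\in\mathcal{S}^{\prime+}_i$, while $\arg(\zeta-\eta_0)=\psi$ in both. Hence continuing $W^{\langle\mathcal{S}^{\prime-}_i;\infty\xi\rangle}_{\infty,k,h}$ continuously through $\Sigma^{(1,\infty)}_i$ (along which $\psi$ increases through $\theta'_i$) yields an assignment that exceeds the one prescribed for $\mathcal{S}^{\prime+}_i$ by $2\pi$ in $\arg(\xi-\zeta)$; since only the factor $(\xi-\zeta)^{\nu_{1}}$ feels this, the continuation equals $e^{2\pi\sqrt{-1}\nu_{1}}$ times $W^{\langle\mathcal{S}^{\prime+}_i;\infty\xi\rangle}_{\infty,k,h}$, as claimed.

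For the asymptotics I would apply Lemma \ref{lem:Mint} with $(a,b)=(\infty,\xi)$ to replace the operator row $P^{-1}M_{\eta_0}(T',A)$ by multiplication by $P^{-1}(\nu_{2}I_n-A)(\zeta I_n-T')$ inside the integrand; the boundary condition (\ref{condition:MintC}) is checked in the finite-part sense, the contribution at $\zeta=\xi$ vanishing because $\nu_{1}\notin\mathbb{Z}_{<0}$ and that at $\zeta=\infty$ vanishing because $\nu_{2}-\mu_{k}\notin\mathbb{Z}_{<0}$ (there the integrand is $O(\zeta^{\mu_{k}-\nu_{2}})$). Substituting (\ref{def:w_inf}) with $\rho=-\nu_{1}-1$ and writing $\zeta I_n-T'=(\zeta-\eta_0)I_n+(\eta_0 I_n-T')$, the dominant part as $\xi\to\infty$ comes from the bottom block paired with the $(\zeta-\eta_0)I_n$ piece and the $m=0$ term, since this carries the largest power of $\zeta$; the top block and the terms $m\ge1$ carry one fewer power and feed only the $O(1/(\xi-\eta_0))$ remainder. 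Each resulting scalar integral $\int_{\infty}^{\xi}\bigl((\xi-\zeta)/(\xi-\eta_0)\bigr)^{\nu_{1}}(\zeta-\eta_0)^{p}\,d\zeta$ I would evaluate by the substitution $\zeta=\eta_0+(\xi-\eta_0)/s$, which sends the ray to $s\in(0,1)$ and gives a Beta integral, here $(\xi-\zeta)/(\xi-\eta_0)=(s-1)/s=e^{\mp\pi\sqrt{-1}}(1-s)/s$, the sign being fixed by the prescribed $\arg(\xi-\zeta)-\arg(\xi-\eta_0)=\mp\pi$ on $\mathcal{S}^{\prime\pm}_i$; this is precisely where the factor $e^{\mp\pi\sqrt{-1}\nu_{1}}$ arises, and its ratio across the sectors is the $e^{2\pi\sqrt{-1}\nu_{1}}$ of the first part. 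The $m=0$ contribution then carries $\Gamma(\nu_{1}+1)\Gamma(\nu_{2}-\mu_{k})/\Gamma(\nu_{1}+\nu_{2}-\mu_{k}+1)$, and since $A'=P^{-1}AP$ is diagonal with relevant block $\mu_{k}I_{m_{k}}$ one has $P^{-1}(\nu_{2}I_n-A)\,g_{\infty,k,h}(0)=(\nu_{2}-\mu_{k})\,\varepsilon_n(m_1+\cdots+m_{k-1}+h)$, so the bottom-block leading vector is $(\nu_{2}-\mu_{k})\,\varepsilon_{2n}(n+m_1+\cdots+m_{k-1}+h)$; the scalar $\nu_{2}-\mu_{k}$ upgrades $\Gamma(\nu_{2}-\mu_{k})$ to $\Gamma(\nu_{2}-\mu_{k}+1)$, reproducing (\ref{asymp:W_xi_inf}).

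The main obstacle will be the branch and argument bookkeeping: one must verify that the branch of $(s-1)^{\nu_{1}}$ produced by the substitution agrees on each sector with Tables \ref{tab:arg_t'i_minus}--\ref{tab:arg_t'i_plus} and that the two sign choices are mutually consistent with the $e^{2\pi\sqrt{-1}\nu_{1}}$ transition, together with justifying, in the finite-part sense and under the stated non-integrality hypotheses, both the vanishing of the boundary term at $\infty$ in Lemma \ref{lem:Mint} and the Beta-integral evaluations at $s=0$.
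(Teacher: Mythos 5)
Your proposal follows the paper's proof almost step for step: the first assertion is obtained, exactly as in the paper, from the $2\pi$ discrepancy between the continuously extended $\arg(\xi-\zeta)=\psi+\pi$ and the value $\psi-\pi$ prescribed in Table \ref{tab:arg_t'i_plus}; and the asymptotics are obtained by substituting (\ref{def:w_inf}) with $\rho=-\nu_{1}-1$, evaluating the resulting Beta integrals via $\zeta=\eta_0+(\xi-\eta_0)/s$, and tracking the branch factor $e^{\mp\pi\sqrt{-1}\nu_{1}}$, which is precisely what the paper does. The only real divergence is in how the operator row is handled: the paper keeps $M_{\eta_0}(T',A)$ outside the integral, computes the series in powers of $(\xi-\eta_0)^{-1}$ first, and only then applies the operator termwise, the leading term coming from the $-(\xi-\eta_0)(\xi I_n-T')\frac{\partial}{\partial\xi}$ part acting on $(\xi-\eta_0)^{-\nu_{1}-\nu_{2}+\mu_{k}-1}$ (which supplies the factor $\nu_{1}+\nu_{2}-\mu_{k}+1$ upgrading $\Gamma(\nu_{2}-\mu_{k}+1)/\Gamma(\nu_{1}+\nu_{2}-\mu_{k}+2)$ to the stated ratio), whereas you first convert the operator into multiplication by $(\nu_{2}I_n-A)(\zeta I_n-T')$ via Lemma \ref{lem:Mint}. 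Your subsequent bookkeeping (dominant block, $P^{-1}(\nu_{2}I_n-A)g_{\infty,k,h}(0)=(\nu_{2}-\mu_{k})\varepsilon_n(m_1+\cdots+m_{k-1}+h)$, promotion of $\Gamma(\nu_{2}-\mu_{k})$ to $\Gamma(\nu_{2}-\mu_{k}+1)$) is consistent and does reproduce (\ref{asymp:W_xi_inf}).

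The detour through Lemma \ref{lem:Mint} has, however, a concrete flaw. The boundary term (\ref{condition:MintC}) at $\zeta=\infty$ behaves like a constant times $\zeta^{\mu_{k}-\nu_{2}}$ (plus lower-order terms $\zeta^{\mu_{k}-\nu_{2}-m}$), and in the finite-part convention such a term is discarded only when its exponent is nonzero. Your justification ``$\nu_{2}-\mu_{k}\notin\mathbb{Z}_{<0}$'' does not rule out $\nu_{2}=\mu_{k}$, where the boundary term is a genuinely nonvanishing constant and the lemma fails; this is exactly why the paper, in Proposition \ref{prop:Wvanish}, lists $W^{\langle\mathcal{S}^{\prime\pm}_i;\infty\xi\rangle}_{\infty,k,h}(\nu_{1},\mu_{k};\xi)$ among the integrals to which Lemma \ref{lem:Mint} is \emph{not} applied. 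Since the hypotheses of the theorem (and the standing assumptions of Section \ref{sec:relation}) permit $\nu_{2}=\mu_{k}$, your proof as written does not cover the full stated range. The gap is repairable: either argue as the paper does, applying $M_{\eta_0}(T',A)$ after the Beta-integral evaluation so that no boundary condition is needed, or prove (\ref{asymp:W_xi_inf}) for $\nu_{2}$ off the exceptional set $\mu_{k}+\mathbb{Z}_{\leq0}$ and extend by analyticity in $\nu_{2}$; but one of these steps must be supplied.
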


\begin{proof}
If we extend the assignment of $\arg(\xi-\zeta)$ for
$\xi\in\mathcal{S}^{\prime-}_i$
into $\mathcal{S}^{\prime-}_i\cup\Sigma^{(1,\infty)}_i
\cup\mathcal{S}^{\prime+}_i$ continuously,
then for $\xi\in\mathcal{S}^{\prime+}_i$ we have
\begin{displaymath}
\arg(\xi-\zeta)=\psi+\pi.
\end{displaymath}
On the other hand,
the assignment of $\arg(\xi-\zeta)$
for $W^{\langle\mathcal{S}^{\prime+}_i;\infty\xi\rangle}_{\infty,k,h}$
prescribed in Table \ref{tab:arg_t'i_plus} is
\begin{displaymath}
\arg(\xi-\zeta)=\psi-\pi.
\end{displaymath}
This means that
the analytic continuation of
$W^{\langle\mathcal{S}^{\prime-}_i;\infty\xi\rangle}_{\infty,k,h}$
across the ray $\Sigma^{(1,\infty)}_i$
into $\mathcal{S}^{\prime+}_i$
differs from $W^{\langle\mathcal{S}^{\prime+}_i;\infty\xi\rangle}_{\infty,k,h}$
in the multiplier factor $e^{2\pi\sqrt{-1}\nu_{1}}$.

Assume that $|\xi-\eta_0|>\max_{1\leq k\leq p}|t'_k-\eta_0|$.
Substituting (\ref{def:w_inf}) with $\rho=-\nu_{1}-1$
into $W^{\langle\mathcal{S}^{\prime\pm}_i;\infty\xi\rangle}_{\infty,k,h}$,
we obtain
\begin{displaymath}
\begin{aligned}
&
W^{\langle\mathcal{S}^{\prime\pm}_i;\infty\xi\rangle}
_{\infty,k,h}(\nu_{1},\nu_{2};\xi)
\\
&=
\begin{pmatrix}
\eta_0 I_n-T' \\
P^{-1}M_{\eta_0}(T',A)
\end{pmatrix}
\sum_{m=0}^{\infty}
\frac{\Gamma(\nu_{1}-\mu_{k}+1+m)}{\Gamma(\nu_{1}-\mu_{k}+1)}
g_{\infty,k,h}(m)
\int_{\infty}^{\xi}
\left(\frac{\xi-\zeta}{\xi-\eta_0}\right)^{\nu_{1}}
(\zeta-\eta_0)^{-\nu_{2}-\nu_{1}+\mu_{k}-m-2}
\,d\zeta.
\end{aligned}
\end{displaymath}
Allowing for the assignment of the arguments of
$\xi-\zeta$,
we have
\begin{displaymath}
\int_{\infty}^{\xi}
\left(\frac{\xi-\zeta}{\xi-\eta_0}\right)^{\nu_{1}}
(\zeta-\eta_0)^{-\nu_{2}-\nu_{1}+\mu_{k}-m-2}
\,d\zeta
=
e^{\mp\pi\sqrt{-1}\nu_{1}}
(\xi-\eta_0)^{-\nu_{1}}
\int_{\infty}^{\xi}
(\zeta-\xi)^{\nu_{1}}
(\zeta-\eta_0)^{-\nu_{1}-\nu_{2}+\mu_{k}-m-2}
\,d\zeta,
\end{displaymath}
where $\arg(\zeta-\xi)=\arg(\zeta-\eta_0)=\psi$
and the double-signs correspond.
Changing the variable of integration $\zeta$ to $s$ by
$\displaystyle
 \zeta=\eta_0+\frac{\xi-\eta_0}{s}$,
we can easily see that the integral on the right-hand side is equal to
\begin{displaymath}
-(\xi-\eta_0)^{-\nu_{2}+\mu_{k}-m-1}
\frac{\Gamma(\nu_{1}+1)
\Gamma(\nu_{2}-\mu_{k}+m+1)}
{\Gamma(\nu_{1}+\nu_{2}-\mu_{k}+m+2)}.
\end{displaymath}
Hence we have
\begin{displaymath}
\begin{aligned}
&
W^{\langle\mathcal{S}^{\prime\pm}_i;\infty\xi\rangle}
_{\infty,k,h}(\nu_{1},\nu_{2};\xi)
\\
&=-
e^{\mp\pi\sqrt{-1}\nu_{1}}
\Gamma(\nu_{1}+1)
\\
&\phantom{{}={}}{}\times
\begin{pmatrix}
\eta_0 I_n-T' \\
P^{-1}M_{\eta_0}(T',A)
\end{pmatrix}
\sum_{m=0}^{\infty}
\frac{\Gamma(\nu_{1}-\mu_{k}+m+1)\Gamma(\nu_{2}-\mu_{k}+m+1)}
     {\Gamma(\nu_{1}-\mu_{k}+1)\Gamma(\nu_{1}+\nu_{2}-\mu_{k}+m+2)}
g_{\infty,k,h}(m)
(\xi-\eta_0)^{-\nu_{1}-\nu_{2}+\mu_{k}-m-1}
\\
&=-
e^{\mp\pi\sqrt{-1}\nu_{1}}
\Gamma(\nu_{1}+1)
\\
&\phantom{{}={}}{}\times
(\xi-\eta_0)^{-\nu_{1}-\nu_{2}+\mu_{k}}
\left\{
\frac{\Gamma(\nu_{2}-\mu_{k}+1)}
{\Gamma(\nu_{1}+\nu_{2}-\mu_{k}+2)}
\begin{pmatrix}
O \\
-(-\nu_{1}-\nu_{2}+\mu_{k}-1)P^{-1}
\end{pmatrix}
g_{\infty,k,h}(0)
+O\left(\frac{1}{\xi-\eta_0}\right)
\right\},
\end{aligned}
\end{displaymath}
which implies (\ref{asymp:W_xi_inf}).
\end{proof}


\begin{figure}
 \centering
 \includegraphics{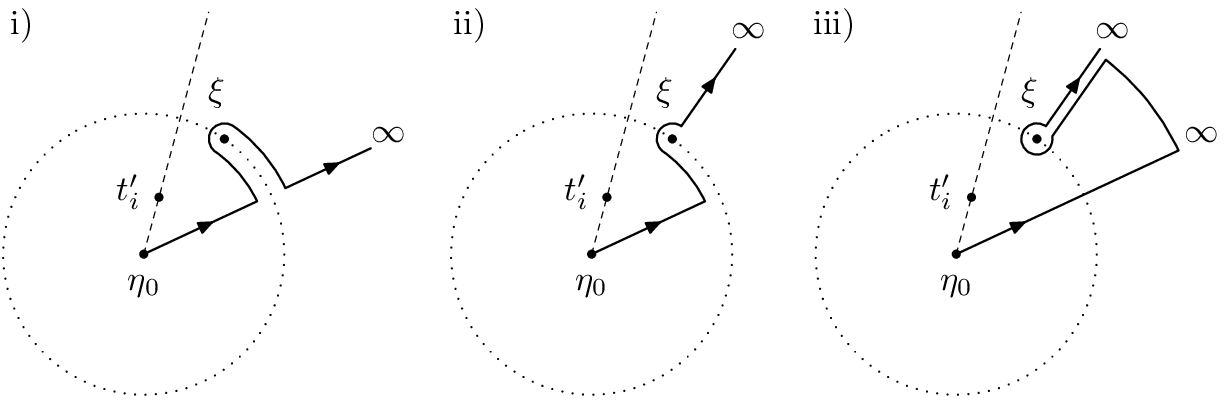}
 \caption{Deformation of the path $\overline{\eta_0 \infty}$
          for $\xi\in\mathcal{S}^{\prime-}_i$}
 \label{fig:deformation_large_S'minus}
\end{figure}

\begin{figure}
 \centering
 \includegraphics{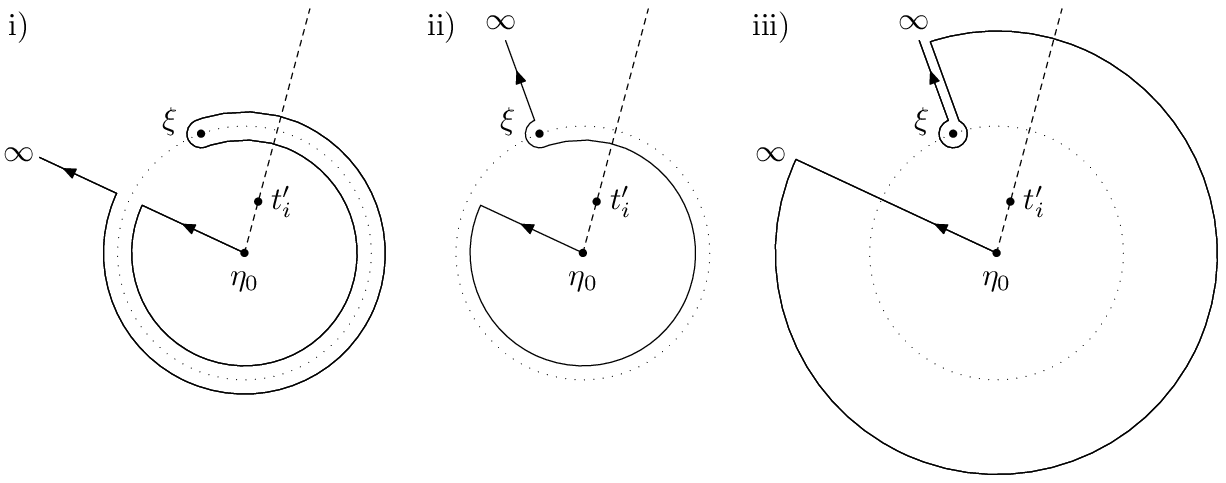}
 \caption{Deformation of the path $\overline{\eta_0 \infty}$
          for $\xi\in\mathcal{S}^{\prime+}_i$}
 \label{fig:deformation_large_S'plus}
\end{figure}

\begin{theorem} \label{thm:asymp_Winf_eta0_inf}
Assume that $\nu_{2}\notin \mathbb{Z}_{\geq0}$,
$\nu_{2}-\mu_{k}\notin \mathbb{Z}_{<0}$
and
$\nu_{1}+\nu_{2}-\mu_{k}\notin \mathbb{Z}$.
The integral $W^{\langle\mathcal{S}^{\prime\pm}_i;\eta_0\infty\rangle}
_{\infty,k,h}(\nu_{1},\nu_{2};\xi)$
is holomorphic for $\xi\in\mathbb{C}\setminus
\{\eta_0+se^{\sqrt{-1}\phi^{\pm}_i}\mid s\geq 0\}$.
Moreover, near $\xi=\infty$,
$\xi\in\mathcal{S}^{\prime\pm}_{i}$,
we have
\begin{equation} \label{asymp:W_eta0_inf}
W^{\langle\mathcal{S}^{\prime\pm}_i;\eta_0\infty\rangle}
_{\infty,k,h}(\nu_{1},\nu_{2};\xi)
=
\frac{e^{\pm\pi\sqrt{-1}(\mu_{k}-\nu_{2})}
       \sin\pi\nu_{1}}
      {\sin\pi(\mu_{k}-\nu_{1}-\nu_{2})}
W^{\langle\mathcal{S}^{\prime\pm}_i;\infty\xi\rangle}
_{\infty,k,h}(\nu_{1},\nu_{2};\xi)
+\mathrm{hol}\left(\frac{1}{\xi-\eta_0}\right).
\end{equation}
Here the double-signs correspond.
\end{theorem}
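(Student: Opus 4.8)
The plan is to treat this as the counterpart at $\xi=\infty$ of Theorem \ref{thm:asymp_W_t'i_eta0}, with the local analysis at the finite point $t'_i$ replaced by one at infinity and with $W^{\langle\mathcal{S}^{\prime\pm}_i;t'_i\xi\rangle}_{t'_i,k,h}$ replaced by $W^{\langle\mathcal{S}^{\prime\pm}_i;\infty\xi\rangle}_{\infty,k,h}$. The holomorphy statement is immediate: in $W^{\langle\mathcal{S}^{\prime\pm}_i;\eta_0\infty\rangle}_{\infty,k,h}(\nu_{1},\nu_{2};\xi)$ the path $\overline{\eta_0\infty}$ runs along the ray $\arg(\zeta-\eta_0)=\phi^{\pm}_i$, and the only factor of the integrand that is multivalued in $\xi$ is $(\xi-\zeta)^{\nu_{1}}$; for $\zeta$ on that ray this factor, with its prescribed branch, depends holomorphically on $\xi$ as long as $\xi$ avoids the ray, so the integral is holomorphic in $\mathbb{C}\setminus\{\eta_0+se^{\sqrt{-1}\phi^{\pm}_i}\mid s\geq0\}$. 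Thus the whole content lies in the local behavior at $\infty$.

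First I would analytically continue $W^{\langle\mathcal{S}^{\prime\pm}_i;\eta_0\infty\rangle}_{\infty,k,h}$ along a large circle centered at $\eta_0$, deforming the path $\overline{\eta_0\infty}$ as $\xi$ makes one turn around $\infty$; Figures \ref{fig:deformation_large_S'minus} and \ref{fig:deformation_large_S'plus} record this deformation for $\xi\in\mathcal{S}^{\prime-}_i$ and $\xi\in\mathcal{S}^{\prime+}_i$ respectively. Exactly as in the proof of Theorem \ref{thm:asymp_W_t'i_eta0}, the deformed contour splits into the original path together with a piece that reproduces the integral along $\overline{\infty\xi}$, so that the continuation returns
\begin{displaymath}
W^{\langle\mathcal{S}^{\prime\pm}_i;\eta_0\infty\rangle}_{\infty,k,h}(\nu_{1},\nu_{2};\xi)
+\epsilon^{\pm}\bigl(1-e^{2\pi\sqrt{-1}\nu_{1}}\bigr)
W^{\langle\mathcal{S}^{\prime\pm}_i;\infty\xi\rangle}_{\infty,k,h}(\nu_{1},\nu_{2};\xi),
\end{displaymath}
where $\epsilon^{\pm}$ is a phase factor produced by the accumulated variations of $\arg(\xi-\zeta)$, $\arg(\zeta-\eta_0)$ and $\arg(\zeta-t'_i)$ along the deformation. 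Tracking these three arguments through the two tables (Tables \ref{tab:arg_t'i_minus} and \ref{tab:arg_t'i_plus}) so as to pin down $\epsilon^{\pm}$ is the delicate step, just as in the earlier theorems of this section.

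On the other side, the leading behavior (\ref{asymp:W_xi_inf}) of Theorem \ref{thm:asymp_Winf_xi_inf} shows that near $\xi=\infty$ the integral $W^{\langle\mathcal{S}^{\prime\pm}_i;\infty\xi\rangle}_{\infty,k,h}$ carries the single nonzero exponent $\mu_{k}-\nu_{1}-\nu_{2}$, so its continuation around $\infty$ multiplies it by a factor $m=e^{2\pi\sqrt{-1}(\mu_{k}-\nu_{1}-\nu_{2})}$ (the direction being that of the chosen loop). A short computation with the residue of the system $(\ref{sys:forW})$ at $\infty$ confirms that its exponents there are only $\mu_{l}-\nu_{1}-\nu_{2}$ $(1\leq l\leq q)$ and $0$ of multiplicity $n$, the exponent-$0$ solutions being precisely the $\mathrm{hol}(1/(\xi-\eta_0))$ ones. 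Hence, subtracting the multiple $c\,W^{\langle\mathcal{S}^{\prime\pm}_i;\infty\xi\rangle}_{\infty,k,h}$ with
\begin{displaymath}
c=\frac{\epsilon^{\pm}\bigl(1-e^{2\pi\sqrt{-1}\nu_{1}}\bigr)}{m-1}
\end{displaymath}
makes the difference single-valued near $\infty$ and free of the exponent $\mu_{k}-\nu_{1}-\nu_{2}$, so it lies in the exponent-$0$ space, i.e.\ it is $\mathrm{hol}(1/(\xi-\eta_0))$. Finally I would simplify $c$ by the factorizations $e^{2\pi\sqrt{-1}\nu_{1}}-1=2\sqrt{-1}\,e^{\pi\sqrt{-1}\nu_{1}}\sin\pi\nu_{1}$ and $m-1=2\sqrt{-1}\,e^{\pi\sqrt{-1}(\mu_{k}-\nu_{1}-\nu_{2})}\sin\pi(\mu_{k}-\nu_{1}-\nu_{2})$, whereupon the value of $\epsilon^{\pm}$ extracted in the previous paragraph collapses $c$ to the coefficient $e^{\pm\pi\sqrt{-1}(\mu_{k}-\nu_{2})}\sin\pi\nu_{1}/\sin\pi(\mu_{k}-\nu_{1}-\nu_{2})$ appearing in $(\ref{asymp:W_eta0_inf})$.

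The main obstacle, as in every theorem of this section, is purely the branch bookkeeping: one must check that as $\overline{\eta_0\infty}$ sweeps across the large circle the arguments prescribed for $\overline{\eta_0\infty}$ evolve continuously into those for $\overline{\infty\xi}$, thereby fixing $\epsilon^{\pm}$ and, with it, the sign and phase of $c$; once that is settled, the remaining manipulation is the same reflection-formula algebra already carried out for $(\ref{asymp:W_eta0_t'i})$.
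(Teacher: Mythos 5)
Your proposal follows essentially the same route as the paper: holomorphy is immediate, the key step is the analytic continuation of $W^{\langle\mathcal{S}^{\prime\pm}_i;\eta_0\infty\rangle}_{\infty,k,h}$ around a large circle centered at $\eta_0$ (with the path deformation of Figures \ref{fig:deformation_large_S'minus} and \ref{fig:deformation_large_S'plus}), followed by subtracting the multiple of $W^{\langle\mathcal{S}^{\prime\pm}_i;\infty\xi\rangle}_{\infty,k,h}$ that kills the multivaluedness and the reflection-formula algebra; your extra remark identifying the single-valued remainder with the exponent-$0$ solutions of $(\ref{sys:forW})$ at $\infty$ just makes explicit what the paper leaves implicit. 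The only cosmetic difference is that you write the monodromy increment as $\epsilon^{\pm}\bigl(1-e^{2\pi\sqrt{-1}\nu_{1}}\bigr)$ where the paper has $\epsilon^{\pm}\bigl(1-e^{-2\pi\sqrt{-1}\nu_{1}}\bigr)$ with $\epsilon^{-}=1$, $\epsilon^{+}=e^{2\pi\sqrt{-1}(\mu_{k}-\nu_{2})}$; since you leave $\epsilon^{\pm}$ to be fixed by the branch bookkeeping, this is absorbed there, but it is exactly the delicate point you would need to settle to complete the argument.
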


\begin{proof}
It is trivial that
$W^{\langle\mathcal{S}^{\prime\pm}_i;\eta_0\infty\rangle}
_{\infty,k,h}(\nu_{1},\nu_{2};\xi)$
is holomorphic in $\mathbb{C}\setminus
\{\eta_0+se^{\sqrt{-1}\phi^{\pm}_i}\mid s\geq 0\}$.

To find the behavior of
$W^{\langle\mathcal{S}^{\prime\pm}_i;\eta_0\infty\rangle}
_{\infty,k,h}(\nu_{1},\nu_{2};\xi)$
near $\zeta=\infty$
we analytically continue
$W^{\langle\mathcal{S}^{\prime\pm}_i;\eta_0\infty\rangle}
_{\infty,k,h}(\nu_{1},\nu_{2};\xi)$
along a circle of center $\eta_0$ with sufficiently large radius
in the positive direction.
Then from
Figure \ref{fig:deformation_large_S'minus} for
$W^{\langle\mathcal{S}^{\prime-}_i;\eta_0\infty\rangle}_{\infty,k,h}$
or from
Figure \ref{fig:deformation_large_S'plus} for
$W^{\langle\mathcal{S}^{\prime+}_i;\eta_0\infty\rangle}_{\infty,k,h}$
we see that the analytic continuation of
$W^{\langle\mathcal{S}^{\prime\pm}_i;\eta_0\infty\rangle}
_{\infty,k,h}(\nu_{1},\nu_{2};\xi)$
is
\begin{displaymath}
W^{\langle\mathcal{S}^{\prime\pm}_i;\eta_0\infty\rangle}
_{\infty,k,h}(\nu_{1},\nu_{2};\xi)
+\epsilon^{\pm}
\left(1-e^{-2\pi\sqrt{-1}\nu_{1}}\right)
W^{\langle\mathcal{S}^{\prime\pm}_i;\infty\xi\rangle}
_{\infty,k,h}(\nu_{1},\nu_{2};\xi),
\end{displaymath}
where $\epsilon^{-}\!=1$ for
$W^{\langle\mathcal{S}^{\prime-}_i;\eta_0\infty\rangle}_{\infty,k,h}$
while
$\epsilon^{+}\!=e^{2\pi\sqrt{-1}(\mu_{k}-\nu_{2})}$
for $W^{\langle\mathcal{S}^{\prime+}_i;\eta_0\infty\rangle}_{\infty,k,h}$.
This means that
\begin{displaymath}
W^{\langle\mathcal{S}^{\prime\pm}_i;\eta_0\infty\rangle}
_{\infty,k,h}(\nu_{1},\nu_{2};\xi)
=
\frac{\epsilon^{\pm}\bigl(
1-e^{-2\pi\sqrt{-1}\nu_{1}}\bigr)}
{e^{2\pi\sqrt{-1}(\mu_{k}-\nu_{1}-\nu_{2})}-1}
W^{\langle\mathcal{S}^{\prime\pm}_i;\infty\xi\rangle}
_{\infty,k,h}(\nu_{1},\nu_{2};\xi)
+\mathrm{hol}\left(\frac{1}{\xi-\eta_0}\right)
\end{displaymath}
near $\xi=\infty$,
which leads to (\ref{asymp:W_eta0_inf}).
\end{proof}

\subsection{Reducible cases}
The following proposition sophisticates Haraoka's result
\cite[Proposition 3.3 and Corollary 3.2]{Ha}.
Recall the notation $[\quad]_{m}$
that denotes the $m$-th component of a column vector.

\begin{proposition} \label{prop:Wvanish}
The integrals
$W^{\langle\mathcal{S}^{\prime\pm}_i;ab\rangle}
_{\mathit{sing},k,h}(\nu_{1},\nu_{2};\xi)$
treated in Theorems
$\ref{thm:asymp_W_eta0_xi}$,
$\ref{thm:asymp_W_t'i_xi}$,
$\ref{thm:asymp_W_t'i_eta0}$,
$\ref{thm:asymp_Winf_eta0_xi}$,
$\ref{thm:asymp_Winf_xi_inf}$ and
$\ref{thm:asymp_Winf_eta0_inf}$
satisfy
\begin{equation} \label{vanish:Wnu1mul}
\left[
W^{\langle\mathcal{S}^{\prime\pm}_i;ab\rangle}
_{\mathit{sing},k,h}(\nu_{1},\mu_{l};\xi)
\right]_{m}=0
\quad \text{for } n+m_1+\cdots+m_{l-1}+1\leq m\leq n+m_1+\cdots+m_{l}
\end{equation}
except for
$W^{\langle\mathcal{S}^{\prime\pm}_i;\infty\xi\rangle}
_{\infty,k,h}(\nu_{1},\mu_{k};\xi)$
and
$W^{\langle\mathcal{S}^{\prime\pm}_i;\eta_0\infty\rangle}
_{\infty,k,h}(\nu_{1},\mu_{k};\xi)$,
and
\begin{equation} \label{vanish:Wmulnu2}
\left[
W^{\langle\mathcal{S}^{\prime\pm}_i;ab\rangle}
_{\mathit{sing},k,h}(\mu_{l},\nu_{2};\xi)
\right]_{m}=0
\quad \text{for } n+m_1+\cdots+m_{l-1}+1\leq m\leq n+m_1+\cdots+m_{l}
\end{equation}
except for
$W^{\langle\mathcal{S}^{\prime\pm}_i;ab\rangle}
_{\infty,k,h}(\mu_{k},\nu_{2};\xi)$,
$a,b\in\{\xi,\eta_0,\infty\}$.
\end{proposition}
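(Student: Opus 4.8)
The plan is to push everything down to the lower $n$ components of $W$, since the indices $m$ in both (\ref{vanish:Wnu1mul}) and (\ref{vanish:Wmulnu2}) satisfy $m>n$. Writing $v(\xi)=\int_a^b\bigl(\frac{\xi-\zeta}{\xi-\eta_0}\bigr)^{\nu_1}(\zeta-\eta_0)^{-\nu_2-1}w_{\mathit{sing},k,h}(-\nu_1-1;\zeta)\,d\zeta$, the lower block of $W$ is $P^{-1}M_{\eta_0}(T',A)v(\xi)$. The integration by parts performed in the proof of Lemma \ref{lem:Mint} yields, \emph{without} assuming (\ref{condition:MintC}), the identity
\[
P^{-1}M_{\eta_0}(T',A)v(\xi)=(\nu_2 I_n-A')\int_a^b\Big(\frac{\xi-\zeta}{\xi-\eta_0}\Big)^{\nu_1}(\zeta-\eta_0)^{-\nu_2-1}P^{-1}(\zeta I_n-T')w_{\mathit{sing},k,h}(-\nu_1-1;\zeta)\,d\zeta+\beta(\xi),
\]
where I have used $P^{-1}(\nu_2 I_n-A)=(\nu_2 I_n-A')P^{-1}$ and $\beta$ denotes the boundary contribution $(\xi-\eta_0)^{-\nu_1}\bigl[(\xi-\zeta)^{\nu_1}(\zeta-\eta_0)^{-\nu_2}P^{-1}(\zeta I_n-T')w_{\mathit{sing},k,h}(-\nu_1-1;\zeta)\bigr]_a^b$. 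As $A'$ is block diagonal with blocks $\mu_j I_{m_j}$, the $l$-th block of the first term is $(\nu_2-\mu_l)$ times the $l$-th block of the integrand, and the $l$-th block of $\beta$ involves only $[P^{-1}(\zeta I_n-T')w_{\mathit{sing},k,h}(-\nu_1-1;\zeta)]_l$. Everything therefore reduces to controlling these two $l$-th blocks.

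For (\ref{vanish:Wmulnu2}) I set $\nu_1=\mu_l$, so the integrand carries $w_{\mathit{sing},k,h}(-\mu_l-1;\zeta)$ and the underlying system is reducible. By exactly the computation in the proof of Theorem \ref{thm:zero_conn.coef} (an application of Lemma \ref{lem:reducible} to $P^{-1}(\zeta I_n-T')w_{\mathit{sing},k,h}(-\mu_l-1;\zeta)$), the $l$-th block of this vector vanishes identically for $\mathit{sing}=t'_i$ and for $\mathit{sing}=\infty$ with $k\neq l$, while for $\mathit{sing}=\infty$, $k=l$ it is the nonzero constant unit vector. Thus, off the exceptional case, the $l$-th blocks of both the integral and $\beta$ vanish, proving (\ref{vanish:Wmulnu2}); in the case $W^{\langle\mathcal{S}^{\prime\pm}_i;ab\rangle}_{\infty,k,h}(\mu_k,\nu_2;\xi)$ the integrand block is that nonzero constant and the conclusion fails, which is precisely the stated exception.

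For (\ref{vanish:Wnu1mul}) I set $\nu_2=\mu_l$, so $(\nu_2-\mu_l)=0$ kills the integral term and $[W]_l=[\beta]_l$ identically. At the finite endpoints $\xi,\eta_0,t'_i$ the factors $(\xi-\zeta)^{\nu_1}$, $(\zeta-\eta_0)^{-\mu_l}$, $(\zeta-t'_i)^{\lambda_{i,k}-\nu_1-1}$ carry non-integral exponents under the standing assumptions, so their finite-part boundary values vanish; this settles $W^{\langle;\eta_0\xi\rangle}_{t'_i}$, $W^{\langle;t'_i\xi\rangle}_{t'_i}$, $W^{\langle;\eta_0 t'_i\rangle}_{t'_i}$ and $W^{\langle;\eta_0\xi\rangle}_{\infty}$. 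For the endpoint $\infty$ the diagonal part of $P^{-1}(\zeta I_n-T')P=\zeta I_n-P^{-1}T'P$ enhances the $w_{\infty,l}$-component by one power of $\zeta$, so the boundary expression tends to a nonzero constant exactly when $\mathit{sing}=\infty$ and $k=l$ (the $w_{\infty,k}$-component occurs with coefficient $1$), and otherwise behaves like $\zeta^{\mu_k-\mu_l-1}$ with non-integral exponent, contributing nothing in the finite part. This produces the two exceptions $W^{\langle;\infty\xi\rangle}_{\infty,k,h}(\nu_1,\mu_k;\xi)$ and $W^{\langle;\eta_0\infty\rangle}_{\infty,k,h}(\nu_1,\mu_k;\xi)$ and the vanishing for $l\neq k$. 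The remaining integral $W^{\langle;t'_i\infty\rangle}_{t'_i,k,h}(\nu_1,\mu_l;\xi)$ I treat through the duality (\ref{int:W_t'iinf_t'i}), which rewrites it as a nonzero constant multiple of $\tilde W^{\langle;\eta_0\xi\rangle}_{t'_i,k,h}(\mu_l,\nu_1;\xi)$; here the \emph{first} parameter is $\mu_l$, so its integrand is $\tilde w_{t'_i,k,h}(-\mu_l-1;\zeta)$, again a solution of the reducible underlying system, and the $l$-th block of $P^{-1}(\zeta I_n-T')\tilde w_{t'_i,k,h}(-\mu_l-1;\zeta)$ — constant by Lemma \ref{lem:reducible} and equal to the connection coefficient onto $w_{\infty,l}$, which vanishes as in Theorem \ref{thm:zero_conn.coef} — is zero, giving the vanishing with no new exception.

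I expect this last step to be the main obstacle: passing from $w_{t'_i,k,h}$ to the single-cut solution $\tilde w_{t'_i,k,h}$ destroys scalar local monodromy, so Lemma \ref{lem:reducible} no longer forces the constant $l$-th block to be zero by its monodromy clause alone. The argument must instead combine the holomorphy of $\tilde w_{t'_i,k,h}(-\mu_l-1;\zeta)$ throughout $\mathcal{P}'_i$ (hence its regularity at every $t'_\nu$, $\nu\neq i$) with the reduced block-decoupling to identify that constant with the vanishing connection coefficient at $\infty$; equivalently, one must check that the boundary contribution at $\infty$ along the ray $\theta'_i$ carries no $w_{\infty,l}$-term, which is exactly what the duality consistency with Theorem \ref{thm:asymp_W_t'i_inf} is designed to guarantee.
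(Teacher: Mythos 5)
For the six integrals that the proposition actually covers, your argument is correct and is in substance the paper's own proof: the paper invokes Lemma \ref{lem:Mint} (whose boundary hypothesis (\ref{condition:MintC}) holds, in the finite-part sense, precisely outside the listed exceptions), uses $P^{-1}(\nu_{2}I_n-A)=(\nu_{2}I_n-A')P^{-1}$ together with the block-diagonal form of $A'$ so that the $l$-th block of the lower half carries the scalar factor $\nu_{2}-\mu_{l}$, and for $\nu_{1}=\mu_{l}$ quotes the vanishing of the $l$-th block of $P^{-1}(\zeta I_n-T')w_{\mathit{sing},k,h}(-\mu_{l}-1;\zeta)$ established in the proof of Theorem \ref{thm:zero_conn.coef}. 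Your variant, which keeps the boundary term $\beta$ explicit instead of assuming (\ref{condition:MintC}), is if anything more transparent about where the exceptional cases come from.

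The genuine problem is your final step. The proposition's list of theorems deliberately omits Theorem \ref{thm:asymp_W_t'i_inf}, so $W^{\langle\mathcal{S}^{\prime\pm}_i;t'_i\infty\rangle}_{t'_i,k,h}$ is \emph{not} within its scope, and your claim that (\ref{vanish:Wnu1mul}) extends to it ``with no new exception'' is false. The obstacle you yourself identify in the closing paragraph is real and is not repaired by appealing to ``duality consistency'': via (\ref{int:W_t'iinf_t'i}) the relevant components reduce to $(\nu_{1}-\mu_{l})$ times the integral of the \emph{constant} $l$-th block of $P^{-1}(\zeta I_n-T')\tilde{w}_{t'_i,k,h}(-\mu_{l}-1;\zeta)$, and that constant is the coefficient of $w_{\infty,l,\tilde{h}}(-\mu_{l}-1;\zeta)$ in the expansion at infinity of $\tilde{w}_{t'_i,k,h}$ --- not of $w_{t'_i,k,h}$ --- for which Theorem \ref{thm:zero_conn.coef} gives no vanishing. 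Equivalently, in your boundary-term picture the contribution of the endpoint $\zeta=\infty$ to the $l$-th block of $\beta$ contains a term proportional to $c_{\infty,l,\tilde{h};t'_i,k,h}(-\nu_{1}-1)\,\zeta^{\mu_{l}-\mu_{l}}$, a surviving nonzero constant in the finite part (all other terms carry the non-integral exponents $\mu_{\tilde{k}}-\mu_{l}-j$ and drop out). This is precisely why the author excludes that integral from the proposition; the correct move is to drop that case, not to prove it.
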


\begin{proof}
We can apply Lemma \ref{lem:Mint} to
the integrals
$W^{\langle\mathcal{S}^{\prime\pm}_i;ab\rangle}
_{\mathit{sing},k,h}(\nu_{1},\nu_{2};\xi)$
except for
$W^{\langle\mathcal{S}^{\prime\pm}_i;\infty\xi\rangle}
_{\infty,k,h}(\nu_{1},\mu_{k};\xi)$
and
$W^{\langle\mathcal{S}^{\prime\pm}_i;\eta_0\infty\rangle}
_{\infty,k,h}(\nu_{1},\mu_{k};\xi)$,
and obtain
\begin{displaymath}
W^{\langle\mathcal{S}^{\prime\pm}_i;ab\rangle}
_{\mathit{sing},k,h}(\nu_{1},\nu_{2};\xi)
=
\int_{a}^{b}
\left(\frac{\xi-\zeta}{\xi-\eta_0}\right)^{\nu_{1}}
(\zeta-\eta_0)^{-\nu_{2}-1}
\begin{pmatrix}
(\eta_0 I_n-T') w_{\mathit{sing},k,h}(-\nu_{1}-1;\zeta)
\\
(\nu_{2} I_n-A')
P^{-1}(\zeta I_n-T') w_{\mathit{sing},k,h}(-\nu_{1}-1;\zeta)
\end{pmatrix}
d\zeta.
\end{displaymath}
Combining this expression with the definition of $A'$
and
with the facts stated in the proof of Theorem \ref{thm:zero_conn.coef},
we obtain (\ref{vanish:Wnu1mul}) and (\ref{vanish:Wmulnu2}), respectively.
\end{proof}


\section{Relations of the integrals}
\label{sec:relation}
In this section we investigate relations of the integrals
defined in the preceding section, 
and then translate the relations to those for solutions of
the system $(\ref{sys:E2})$ in the next section.

We assume that
\begin{align}
\label{assumption:NUj_1}
\nu_{j}
&\not\in \mathbb{Z}
&&\text{\hskip-10em for\ }
1\leq j\leq 2,
\\
\label{assumption:NUj_2}
\nu_{j}-\lambda_{i,k}
&\not\in \mathbb{Z}_{\geq0}
&&\text{\hskip-10em for\ }
1\leq j\leq 2,\
1\leq i\leq p,\
1\leq k\leq r_i,
\\
\label{assumption:NUj_3}
\nu_{j}-\mu_{k}
&\not\in \mathbb{Z}_{<0}
&&\text{\hskip-10em for\ }
1\leq j\leq 2,\
1\leq k\leq q.
\end{align}

\begin{theorem} \label{thm:rel_cauchy}
For $\xi\in\mathcal{S}^{\prime-}_i$
we have
\begin{displaymath}
\begin{aligned}
W^{\langle\mathcal{S}^{\prime-}_i;\eta_0\xi\rangle}
_{t'_i,k,h}(\nu_{1},\nu_{2};\xi)
-W^{\langle\mathcal{S}^{\prime-}_i;t'_i \xi\rangle}
_{t'_i,k,h}(\nu_{1},\nu_{2};\xi)
-W^{\langle\mathcal{S}^{\prime-}_i;\eta_0 t'_i\rangle}
_{t'_i,k,h}(\nu_{1},\nu_{2};\xi)
&=0,
\\
W^{\langle\mathcal{S}^{\prime-}_i;\eta_0\xi\rangle}
_{\infty,k,h}(\nu_{1},\nu_{2};\xi)
-W^{\langle\mathcal{S}^{\prime-}_i;\infty\xi\rangle}
_{\infty,k,h}(\nu_{1},\nu_{2};\xi)
-W^{\langle\mathcal{S}^{\prime-}_i;\eta_0 \infty\rangle}
_{\infty,k,h}(\nu_{1},\nu_{2};\xi)
&=0,
\end{aligned}
\end{displaymath}
and for $\xi\in\mathcal{S}^{\prime+}_i$
\begin{displaymath}
\begin{aligned}
e^{-2\pi\sqrt{-1}\nu_{1}}
W^{\langle\mathcal{S}^{\prime+}_i;\eta_0\xi\rangle}
_{t'_i,k,h}(\nu_{1},\nu_{2};\xi)
-W^{\langle\mathcal{S}^{\prime+}_i;t'_i \xi\rangle}
_{t'_i,k,h}(\nu_{1},\nu_{2};\xi)
-W^{\langle\mathcal{S}^{\prime+}_i;\eta_0 t'_i\rangle}
_{t'_i,k,h}(\nu_{1},\nu_{2};\xi)
&=0,
\\
W^{\langle\mathcal{S}^{\prime+}_i;\eta_0\xi\rangle}
_{\infty,k,h}(\nu_{1},\nu_{2};\xi)
-W^{\langle\mathcal{S}^{\prime+}_i;\infty\xi\rangle}
_{\infty,k,h}(\nu_{1},\nu_{2};\xi)
-W^{\langle\mathcal{S}^{\prime+}_i;\eta_0 \infty\rangle}
_{\infty,k,h}(\nu_{1},\nu_{2};\xi)
&=0.
\end{aligned}
\end{displaymath}
\end{theorem}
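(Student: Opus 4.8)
The plan is to reduce all four identities to the single fact that the path integral of one fixed multivalued integrand is additive under concatenation of paths, i.e.\ to Cauchy's theorem. Write the common prefactor as
\[
\mathcal{D}=\begin{pmatrix}\eta_0 I_n-T'\\ P^{-1}M_{\eta_0}(T',A)\end{pmatrix},
\]
a linear operator (matrix multiplication followed by a first-order differential operator in $\xi$), and for fixed $\mathit{sing},k,h,\nu_{1},\nu_{2},\xi$ set
\[
\Phi(\zeta)=\left(\frac{\xi-\zeta}{\xi-\eta_0}\right)^{\nu_{1}}(\zeta-\eta_0)^{-\nu_{2}-1}w_{\mathit{sing},k,h}(-\nu_{1}-1;\zeta),
\]
so that $W^{\langle\mathcal{S}^{\prime\pm}_i;ab\rangle}_{\mathit{sing},k,h}=\mathcal{D}\int_{a}^{b}\Phi(\zeta)\,d\zeta$. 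Since $\mathcal{D}$ is linear, it suffices to prove the corresponding scalar relations among the vector integrals $\int_{a}^{b}\Phi\,d\zeta$, viewed as identities of functions of $\xi$, and then apply $\mathcal{D}$ to each term. The three integrals occurring in any one relation use the \emph{same} $\Phi$; only the path $\overline{ab}$ and the branch prescribed in Table~\ref{tab:arg_t'i_minus} or~\ref{tab:arg_t'i_plus} differ.

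First I would treat $\xi\in\mathcal{S}^{\prime-}_i$. For fixed such $\xi$, the integrand $\Phi$ is holomorphic in $\zeta$ away from the three points $\eta_0$, $t'_i$ (a branch point of $w_{t'_i,k,h}$, with cut along $\Sigma^{[1,\infty)}_i$) and $\xi$; these are exactly the vertices of the triangle bounded by $\overline{\eta_0\xi}$, $\overline{\eta_0 t'_i}$ and $\overline{t'_i\xi}$ drawn in Figure~\ref{fig:path}. Reading the columns of Table~\ref{tab:arg_t'i_minus} I would verify vertex by vertex that the angles $\arg(\xi-\zeta)$, $\arg(\zeta-\eta_0)$, $\arg(\zeta-t'_i)$ prescribed on the two edges incident to a shared vertex have matching limits there, so that the three branch choices are the restrictions of one single-valued branch of $\Phi$ in a neighborhood of the triangle. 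Cauchy's theorem, applied after excising small arcs about the vertices whose contributions vanish under (\ref{assumption:NUj_1})--(\ref{assumption:NUj_3}), then gives $\int_{\eta_0}^{\xi}=\int_{\eta_0}^{t'_i}+\int_{t'_i}^{\xi}$, which is the first identity. The second identity ($\mathit{sing}=\infty$) is identical with $t'_i$ replaced by the point $\infty$ on the Riemann sphere and the triangle replaced by the unbounded region bounded by $\overline{\eta_0\xi}$, $\overline{\eta_0\infty}$, $\overline{\infty\xi}$; here the arc at infinity contributes nothing by the growth estimate for $w_{\infty,k,h}$ from (\ref{def:w_inf}) together with (\ref{assumption:NUj_3}).

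For $\xi\in\mathcal{S}^{\prime+}_i$ I would repeat the argument using Table~\ref{tab:arg_t'i_plus}. The only new phenomenon is visible in the row $\overline{\eta_0 t'_i}$: there $\arg(\xi-\zeta)$ runs in $[\psi-2\pi,\chi_i]$, so at $\zeta=\eta_0$ its value is $\psi-2\pi$, differing by $-2\pi$ from the value $\psi$ prescribed for $\overline{\eta_0\xi}$. Hence along the broken path $\eta_0\to t'_i\to\xi$ the branch of $(\xi-\zeta)^{\nu_{1}}$ is $e^{-2\pi\sqrt{-1}\nu_{1}}$ times the branch along the direct path $\overline{\eta_0\xi}$; carrying this phase through the Cauchy relation produces exactly the factor $e^{-2\pi\sqrt{-1}\nu_{1}}$ in front of $W^{\langle\mathcal{S}^{\prime+}_i;\eta_0\xi\rangle}_{t'_i,k,h}$ in the third identity. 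For the fourth identity the corresponding entries for $\overline{\eta_0\infty}$ and $\overline{\infty\xi}$ patch to the $\overline{\eta_0\xi}$ branch at $\zeta=\eta_0$ without any $2\pi$ jump, so no phase appears.

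The main obstacle I expect is not the homotopy idea but the bookkeeping: one must check, edge by edge and vertex by vertex, that the tabulated arguments genuinely assemble into one continuous branch (respectively, differ by precisely $-2\pi$ in $\arg(\xi-\zeta)$), and one must justify the contour deformation at the singular endpoints and at $\infty$ in the finite-part sense, since the integrals need not converge. I would handle the latter by first establishing each relation in the range of the parameters where (\ref{assumption:NUj_1})--(\ref{assumption:NUj_3}) place all exponents in the convergent region—so that the excised-arc contributions are $O(r^{\varepsilon})\to0$—and then extending to the finite-part regularization used throughout by analyticity in $\nu_{1},\nu_{2}$.
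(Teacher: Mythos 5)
Your proposal is correct and follows essentially the same route as the paper: the paper's proof also applies the Cauchy theorem to the triangles $\triangle(\eta_0,\xi,t'_i)$ and $\triangle(\eta_0,\xi,\infty)$ and then compares the branch assignments of Tables \ref{tab:arg_t'i_minus} and \ref{tab:arg_t'i_plus}, the only nontrivial point being the $-2\pi$ discrepancy in $\arg(\xi-\zeta)$ at $\zeta=\eta_0$ in the $\overline{\eta_0 t'_i}$ row of Table \ref{tab:arg_t'i_plus}, which you correctly identify as the source of the factor $e^{-2\pi\sqrt{-1}\nu_{1}}$. Your additional remarks on excising the vertices and extending to the finite-part regularization by analyticity in $\nu_{1},\nu_{2}$ are a sound way to fill in details the paper leaves implicit.
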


\begin{proof}
Applying the Cauchy theorem to the triangles
$\triangle(\eta_0, \xi, t'_i)$ and $\triangle(\eta_0, \xi, \infty)$,
we have
\begin{displaymath}
\int_{\partial\triangle(\eta_0, \xi, t'_i)}
\left(\frac{\xi-\zeta}{\xi-\eta_0}\right)^{\nu_{1}}
(\zeta-\eta_0)^{-\nu_{2}-1}
w_{t'_i,k,h}(-\nu_{1}-1;\zeta)
\,d\zeta
=0
\end{displaymath}
and
\begin{displaymath}
\int_{\partial\triangle(\eta_0, \xi, \infty)}
\left(\frac{\xi-\zeta}{\xi-\eta_0}\right)^{\nu_{1}}
(\zeta-\eta_0)^{-\nu_{2}-1}
w_{\infty,k,h}(-\nu_{1}-1;\zeta)
\,d\zeta
=0,
\end{displaymath}
respectively.
Comparing the branches of these integrals and those of
the integrals
$W^{\langle\mathcal{S}^{\prime\pm}_i;ab\rangle}
  _{t'_i,k,h}(\nu_{1},\nu_{2};\xi)$
and
$W^{\langle\mathcal{S}^{\prime\pm}_i;ab\rangle}
  _{\infty,k,h}(\nu_{1},\nu_{2};\xi)$,
we can easily see that the relations above hold.
\end{proof}

Set
\begin{displaymath}
\begin{aligned}
V^{\langle\mathcal{S}^{\prime\pm}_i;t'_i \xi\rangle}
_{t'_i,k,h}(\nu_{1},\nu_{2};\xi)
&=
-\frac{\Gamma(\lambda_{i,k}+1)}
     {\Gamma(\nu_{1}+1)\Gamma(\lambda_{i,k}-\nu_{1})}
W^{\langle\mathcal{S}^{\prime\pm}_i;t'_i \xi\rangle}
_{t'_i,k,h}(\nu_{1},\nu_{2};\xi),
\\
V^{\langle\mathcal{S}^{\prime\pm}_i;\eta_0 t'_i\rangle}
_{t'_i,k,h}(\nu_{1},\nu_{2};\xi)
&=
\frac{e^{\pm\pi\sqrt{-1}\nu_{1}}\Gamma(-\nu_{1})}
     {\Gamma(\lambda_{i,k}-\nu_{1})\Gamma(-\lambda_{i,k})}
W^{\langle\mathcal{S}^{\prime\pm}_i;\eta_0 t'_i\rangle}
_{t'_i,k,h}(\nu_{1},\nu_{2};\xi),
\\
V^{\langle\mathcal{S}^{\prime\pm}_i;\infty\xi\rangle}
_{\infty,k,h}(\nu_{1},\nu_{2};\xi)
&=
-\frac{e^{\pm\pi\sqrt{-1}\nu_{1}}\Gamma(\nu_{1}+\nu_{2}-\mu_{k}+1)}
     {\Gamma(\nu_{1}+1)\Gamma(\nu_{2}-\mu_{k}+1)}
W^{\langle\mathcal{S}^{\prime\pm}_i;\infty\xi\rangle}
_{\infty,k,h}(\nu_{1},\nu_{2};\xi),
\\
V^{\langle\mathcal{S}^{\prime\pm}_i;\eta_0 \infty\rangle}
_{\infty,k,h}(\nu_{1},\nu_{2};\xi)
&=
\frac{\Gamma(-\nu_{1})}
     {\Gamma(\mu_{k}-\nu_{1}-\nu_{2})\Gamma(\nu_{2}-\mu_{k}+1)}
W^{\langle\mathcal{S}^{\prime\pm}_i;\eta_0 \infty\rangle}
_{\infty,k,h}(\nu_{1},\nu_{2};\xi),
\end{aligned}
\end{displaymath}
where the double-signs correspond.
As a corollary to Theorems
\ref{thm:asymp_W_eta0_xi},
\ref{thm:asymp_W_t'i_xi},
\ref{thm:asymp_Winf_xi_inf}
and
\ref{thm:asymp_Winf_eta0_inf},
we have the following theorem.

\begin{theorem}[{cf.~\cite[\S7.4]{Hi}}] \label{thm:for_def_V}
Each of the
$V^{\langle\mathcal{S}^{\prime\pm}_i;ab\rangle}
_{\mathit{sing},k,h}(\nu_{1},\nu_{2};\xi)$'s
is not altered if $\nu_{1}$ and $\nu_{2}$ are interchanged.
\end{theorem}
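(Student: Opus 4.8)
The plan is to exploit the one structural fact that makes the statement work: the coefficient matrix of the linear system \eqref{sys:forW} satisfied by every one of the integrals $W^{\langle\mathcal{S}^{\prime\pm}_i;ab\rangle}_{\mathit{sing},k,h}$ is invariant under the interchange $\nu_1\leftrightarrow\nu_2$. Indeed the lower-left block $-(A'-\nu_1 I_n)(A'-\nu_2 I_n)P^{-1}$ is unchanged because the two factors commute, and $(\nu_1+\nu_2)I_n-A'$ is visibly symmetric. Hence, for each fixed $ab$ and $\mathit{sing}$, both $V^{\langle\mathcal{S}^{\prime\pm}_i;ab\rangle}_{\mathit{sing},k,h}(\nu_1,\nu_2;\xi)$ and $V^{\langle\mathcal{S}^{\prime\pm}_i;ab\rangle}_{\mathit{sing},k,h}(\nu_2,\nu_1;\xi)$ are solutions of one and the same system. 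The whole proof then reduces to showing that the two orderings produce solutions with the same characterizing data at a suitable singular point, so that uniqueness (guaranteed by the non-resonance assumptions \eqref{assumption:NUj_1}--\eqref{assumption:NUj_3}) forces them to coincide.

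First I would treat the two integrals whose upper endpoint is $\xi$, namely $V^{\langle\mathcal{S}^{\prime\pm}_i;t'_i\xi\rangle}_{t'_i,k,h}$ and $V^{\langle\mathcal{S}^{\prime\pm}_i;\infty\xi\rangle}_{\infty,k,h}$. Each of these is the subdominant solution of \eqref{sys:forW} at the point in question (exponent $\lambda_{i,k}$ at $\xi=t'_i$, resp.\ exponent $\mu_k-\nu_1-\nu_2$ at $\xi=\infty$), hence is pinned down completely by its leading coefficient. Substituting the asymptotics of Theorem \ref{thm:asymp_W_t'i_xi} (resp.\ Theorem \ref{thm:asymp_Winf_xi_inf}) into the defining normalizations, the asymmetric $\Gamma$- and exponential-prefactors cancel and the leading coefficient collapses to $(t'_i-\eta_0)^{-\nu_1-\nu_2}\varepsilon_{2n}(\cdots)$ (resp.\ to $\varepsilon_{2n}(\cdots)$), which is manifestly symmetric in $\nu_1,\nu_2$. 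Since both the common ODE and the leading coefficient are symmetric, the two orderings give identical solutions.

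The substantive part is the pair with $\eta_0$ as an endpoint, $V^{\langle\mathcal{S}^{\prime\pm}_i;\eta_0 t'_i\rangle}_{t'_i,k,h}$ and $V^{\langle\mathcal{S}^{\prime\pm}_i;\eta_0\infty\rangle}_{\infty,k,h}$; here the integral is no longer a pure subdominant solution but carries a holomorphic part as well, so a leading coefficient alone does not determine it. For these I would combine the relevant asymptotic theorem (Theorem \ref{thm:asymp_W_eta0_xi} together with the Cauchy relation of Theorem \ref{thm:rel_cauchy} in the $\eta_0 t'_i$ case, Theorem \ref{thm:asymp_Winf_eta0_inf} in the $\eta_0\infty$ case) with the global information, recorded in Theorems \ref{thm:asymp_W_t'i_eta0} and \ref{thm:asymp_Winf_eta0_inf}, that the underlying $W$ is single-valued and holomorphic off the segment $\Sigma^{[0,1]}_i$ (resp.\ off the ray $\{\eta_0+se^{\sqrt{-1}\phi^{\pm}_i}\}$). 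A short computation with $\Gamma(z)\Gamma(1-z)=\pi/\sin\pi z$ shows that, after normalization, the singular part of $V$ at $\xi=t'_i$ (resp.\ $\xi=\infty$) has the symmetric leading coefficient $e^{\pm\pi\sqrt{-1}\lambda_{i,k}}(t'_i-\eta_0)^{-\nu_1-\nu_2}\varepsilon_{2n}(\cdots)$ (resp.\ $e^{\pm\pi\sqrt{-1}(\mu_k-\nu_1-\nu_2)}\varepsilon_{2n}(\cdots)$). I would then study the difference $D=V(\nu_1,\nu_2;\xi)-V(\nu_2,\nu_1;\xi)$: its singular part at the relevant point cancels, so $D$ is holomorphic there; $D$ inherits single-valuedness off the segment (resp.\ ray), a property symmetric in $\nu_1,\nu_2$; and by \eqref{assumption:NUj_1} the exponents $-\nu_1,-\nu_2$ at $\xi=\eta_0$ are non-integral, so single-valuedness forces $D$ to be holomorphic at $\eta_0$ too. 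Since the defining segment/ray avoids all $t'_\ell$ with $\ell\ne i$, it follows that $D$ extends to a solution holomorphic on all of $\mathbb{P}^1$, which must vanish, yielding $V(\nu_1,\nu_2)=V(\nu_2,\nu_1)$.

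The main obstacle is exactly this last family of cases: unlike the $\xi$-endpoint integrals, the $\eta_0$-endpoint integrals are not recessive at a single point, so symmetry of a leading coefficient is insufficient and one is forced to characterize them globally. The care needed is twofold -- verifying that every piece of the characterizing data (the singular leading coefficient, the single-valuedness domain, and the non-integrality of the exponents at $\eta_0$) is symmetric under $\nu_1\leftrightarrow\nu_2$, and checking that the normalizing factors in the definitions of the $V$'s are precisely those that cancel the asymmetric $\Gamma$- and exponential-prefactors produced by the Euler integrals. Both steps are routine, but must be carried out explicitly through the reflection formula.
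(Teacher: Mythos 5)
Your proof is correct and takes essentially the same route as the paper: both rest on the $\nu_{1}\leftrightarrow\nu_{2}$-symmetry of the coefficient matrix of the system \eqref{sys:forW} together with uniqueness of the solution having the prescribed (symmetric) characterizing behavior, which the paper verifies explicitly only for $V^{\langle\mathcal{S}^{\prime\pm}_i;t'_i\xi\rangle}_{t'_i,k,h}$ and dispatches for the remaining integrals with ``follows from their asymptotic behavior also.'' Your extra step for the $\eta_0$-endpoint integrals --- observing that the local singular part alone does not pin them down and supplying the global single-valuedness/trivial-monodromy argument at $\xi=\eta_0$ --- is a correct and genuinely needed elaboration of that one-line remark rather than a different method.
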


\begin{proof}
The integrals
$V^{\langle\mathcal{S}^{\prime+}_i;t'_i \xi\rangle}
_{t'_i,k,h}(\nu_{1},\nu_{2};\xi)$
and
$V^{\langle\mathcal{S}^{\prime+}_i;t'_i \xi\rangle}
_{t'_i,k,h}(\nu_{2},\nu_{1};\xi)$
not only
satisfy the system $(\ref{sys:forW})$
but also
have the same asymptotic behavior as $\xi\longrightarrow t'_i$.
This implies that the integrals coincide with each other.
As for other integrals, the coincidence
when $\nu_{1}$ and $\nu_{2}$ are interchanged
follows from their asymptotic behavior also.
\end{proof}

\begin{remark}
From Theorems
\ref{thm:asymp_W_t'i_xi},
\ref{thm:asymp_W_t'i_eta0} and
\ref{thm:asymp_Winf_xi_inf}
we see that
$V^{\langle\mathcal{S}^{\prime\pm}_i;ab\rangle}
_{\mathit{sing},k,h}(\nu_{1},\nu_{2};\xi)$
and its analytic continuation satisfy
\begin{displaymath}
\begin{aligned}
V^{\langle\mathcal{S}^{\prime+}_i;t'_i \xi\rangle}
_{t'_i,k,h}(\nu_{1},\nu_{2};\xi)
&=
V^{\langle\mathcal{S}^{\prime-}_i;t'_i \xi\rangle}
_{t'_i,k,h}(\nu_{1},\nu_{2};\xi)
&&
\text{for\ } \xi\in
\mathcal{S}^{\prime-}_i \cup \Sigma^{(0,1)}_i
                        \cup \mathcal{S}^{\prime+}_i,
\\
V^{\langle\mathcal{S}^{\prime+}_i;\eta_0 t'_i\rangle}
_{t'_i,k,h}(\nu_{1},\nu_{2};\xi)
&=
V^{\langle\mathcal{S}^{\prime-}_i;\eta_0 t'_i\rangle}
_{t'_i,k,h}(\nu_{1},\nu_{2};\xi)
&&
\text{for\ } \xi\in
\mathbb{C} \setminus \Sigma^{[0,1]}_i,
\\
V^{\langle\mathcal{S}^{\prime+}_i;\infty\xi\rangle}
_{\infty,k,h}(\nu_{1},\nu_{2};\xi)
&=
V^{\langle\mathcal{S}^{\prime-}_i;\infty\xi\rangle}
_{\infty,k,h}(\nu_{1},\nu_{2};\xi)
&&
\text{for\ } \xi\in
\mathcal{S}^{\prime-}_i \cup \Sigma^{(1,\infty)}_i
                        \cup \mathcal{S}^{\prime+}_i.
\end{aligned}
\end{displaymath}
\end{remark}

Rewriting the relations stated in Theorem \ref{thm:rel_cauchy}
in terms of
$V^{\langle\mathcal{S}^{\prime\pm}_i;ab\rangle}
_{\mathit{sing},k,h}(\nu_{1},\nu_{2};\xi)$,
we obtain
\begin{equation} \label{rel:inf-finite}
\begin{aligned}
e^{-\pi\sqrt{-1}\nu_{1}}
W^{\langle\mathcal{S}^{\prime\pm}_i;\eta_0\xi\rangle}
_{t'_i,k,h}(\nu_{1},\nu_{2};\xi)
&
+\frac{e^{\pm\pi\sqrt{-1}\nu_{1}}
       \Gamma(\nu_{1}+1)\Gamma(\lambda_{i,k}-\nu_{1})}
      {\Gamma(\lambda_{i,k}+1)}
V^{\langle\mathcal{S}^{\prime\pm}_i;t'_i \xi\rangle}
_{t'_i,k,h}(\nu_{1},\nu_{2};\xi)
\\
&-\frac{\Gamma(\lambda_{i,k}-\nu_{1})\Gamma(-\lambda_{i,k})}
     {\Gamma(-\nu_{1})}
V^{\langle\mathcal{S}^{\prime\pm}_i;\eta_0 t'_i\rangle}
_{t'_i,k,h}(\nu_{1},\nu_{2};\xi)
=0
\end{aligned}
\end{equation}
and
\begin{equation} \label{rel:inf-inf}
\begin{aligned}
W^{\langle\mathcal{S}^{\prime\pm}_i;\eta_0\xi\rangle}
_{\infty,k,h}(\nu_{1},\nu_{2};\xi)
&
+\frac{e^{\mp\pi\sqrt{-1}\nu_{1}}
       \Gamma(\nu_{1}+1)\Gamma(\nu_{2}-\mu_{k}+1)}
      {\Gamma(\nu_{1}+\nu_{2}-\mu_{k}+1)}
V^{\langle\mathcal{S}^{\prime\pm}_i;\infty\xi\rangle}
_{\infty,k,h}(\nu_{1},\nu_{2};\xi)
\\
&-\frac{\Gamma(\nu_{2}-\mu_{k}+1)\Gamma(\mu_{k}-\nu_{1}-\nu_{2})}
     {\Gamma(-\nu_{1})}
V^{\langle\mathcal{S}^{\prime\pm}_i;\eta_0 \infty\rangle}
_{\infty,k,h}(\nu_{1},\nu_{2};\xi)
=0,
\end{aligned}
\end{equation}
where the double-signs correspond.

Combining
(\ref{rel:inf-finite})
with
(\ref{asymp:W_eta0_t'i})
and (\ref{rel:inf-inf})
with
(\ref{asymp:W_eta0_inf}),
we obtain the following relations.

\begin{theorem} \label{thm:inv_lincomb_Veta0xi}
We have
\begin{equation} \label{exp:Weta0xi_by_Vt'ixi}
W^{\langle\mathcal{S}^{\prime\pm}_i;\eta_0\xi\rangle}
_{t'_i,k,h}(\nu_{1},\nu_{2};\xi)
=\frac{e^{\pi\sqrt{-1}\nu_{1}}
       \Gamma(\nu_{1}+1)\Gamma(-\lambda_{i,k})}
      {\Gamma(\nu_{1}-\lambda_{i,k}+1)}
V^{\langle\mathcal{S}^{\prime\pm}_i;t'_i \xi\rangle}
_{t'_i,k,h}(\nu_{1},\nu_{2};\xi)
+\mathrm{hol}(\xi-t'_i)
\end{equation}
for $\xi\in\mathcal{S}^{\prime\pm}_i$ near $t'_i$,
and
\begin{equation} \label{exp:Weta0xi_by_Vinfxi}
W^{\langle\mathcal{S}^{\prime\pm}_i;\eta_0\xi\rangle}
_{\infty,k,h}(\nu_{1},\nu_{2};\xi)
=-\frac{\Gamma(\nu_{1}+1)\Gamma(\mu_{k}-\nu_{1}-\nu_{2})}
       {\Gamma(\mu_{k}-\nu_{2})}
V^{\langle\mathcal{S}^{\prime\pm}_i;\infty\xi\rangle}
_{\infty,k,h}(\nu_{1},\nu_{2};\xi)
+\mathrm{hol}\left(\frac{1}{\xi-\eta_0}\right)
\end{equation}
for $\xi\in\mathcal{S}^{\prime\pm}_i$ near $\infty$.
\end{theorem}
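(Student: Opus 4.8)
The plan is to treat these two identities as immediate corollaries of the Cauchy relations (\ref{rel:inf-finite}) and (\ref{rel:inf-inf}): in each case I solve the three-term relation for the integral $W^{\langle\mathcal{S}^{\prime\pm}_i;\eta_0\xi\rangle}$, and then dispose of the remaining $V^{\langle\mathcal{S}^{\prime\pm}_i;\eta_0 t'_i\rangle}$ (resp.\ $V^{\langle\mathcal{S}^{\prime\pm}_i;\eta_0\infty\rangle}$) term by reducing it, modulo a function holomorphic at $t'_i$ (resp.\ at $\infty$), to a scalar multiple of $V^{\langle\mathcal{S}^{\prime\pm}_i;t'_i\xi\rangle}$ (resp.\ $V^{\langle\mathcal{S}^{\prime\pm}_i;\infty\xi\rangle}$). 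The only analytic input is the local comparison of the two ``half-integrals'' already furnished by Theorems \ref{thm:asymp_W_t'i_eta0} and \ref{thm:asymp_Winf_eta0_inf}; everything else is algebra with the Gamma-function prefactors that define the $V$'s.

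For (\ref{exp:Weta0xi_by_Vt'ixi}) I first solve (\ref{rel:inf-finite}) for $W^{\langle\mathcal{S}^{\prime\pm}_i;\eta_0\xi\rangle}_{t'_i,k,h}$, writing it as a linear combination of $V^{\langle\mathcal{S}^{\prime\pm}_i;t'_i\xi\rangle}_{t'_i,k,h}$ and $V^{\langle\mathcal{S}^{\prime\pm}_i;\eta_0 t'_i\rangle}_{t'_i,k,h}$. To handle the latter near $t'_i$, I substitute the asymptotic relation (\ref{asymp:W_eta0_t'i}) into the definition of $V^{\langle\mathcal{S}^{\prime\pm}_i;\eta_0 t'_i\rangle}_{t'_i,k,h}$ and re-express $W^{\langle\mathcal{S}^{\prime\pm}_i;t'_i\xi\rangle}_{t'_i,k,h}$ through the definition of $V^{\langle\mathcal{S}^{\prime\pm}_i;t'_i\xi\rangle}_{t'_i,k,h}$. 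Applying $\Gamma(z)\Gamma(1-z)=\pi/\sin\pi z$ twice, at $z=-\nu_{1}$ and $z=-\lambda_{i,k}$, the accumulated prefactor collapses to $e^{\pm\pi\sqrt{-1}\lambda_{i,k}}$, yielding the clean intermediate identity $V^{\langle\mathcal{S}^{\prime\pm}_i;\eta_0 t'_i\rangle}_{t'_i,k,h}=e^{\pm\pi\sqrt{-1}\lambda_{i,k}}\,V^{\langle\mathcal{S}^{\prime\pm}_i;t'_i\xi\rangle}_{t'_i,k,h}+\mathrm{hol}(\xi-t'_i)$. Feeding this back, the bracketed coefficient of $V^{\langle\mathcal{S}^{\prime\pm}_i;t'_i\xi\rangle}_{t'_i,k,h}$ becomes sign-free after one further use of the reflection formula on the pair $\Gamma(\lambda_{i,k}-\nu_{1})$, $\Gamma(\nu_{1}-\lambda_{i,k}+1)$; clearing the factor $e^{-\pi\sqrt{-1}\nu_{1}}$ that multiplies $W^{\langle\mathcal{S}^{\prime\pm}_i;\eta_0\xi\rangle}_{t'_i,k,h}$ in (\ref{rel:inf-finite}) then produces exactly the displayed prefactor $e^{\pi\sqrt{-1}\nu_{1}}\Gamma(\nu_{1}+1)\Gamma(-\lambda_{i,k})/\Gamma(\nu_{1}-\lambda_{i,k}+1)$.

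The derivation of (\ref{exp:Weta0xi_by_Vinfxi}) is entirely parallel: I solve (\ref{rel:inf-inf}) for $W^{\langle\mathcal{S}^{\prime\pm}_i;\eta_0\xi\rangle}_{\infty,k,h}$, reduce $V^{\langle\mathcal{S}^{\prime\pm}_i;\eta_0\infty\rangle}_{\infty,k,h}$ near $\infty$ to a multiple of $V^{\langle\mathcal{S}^{\prime\pm}_i;\infty\xi\rangle}_{\infty,k,h}$ using (\ref{asymp:W_eta0_inf}) together with the defining prefactors, and simplify with the reflection formula, now at the arguments $\nu_{1}$ and $\mu_{k}-\nu_{1}-\nu_{2}$. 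The substantive point, and the only place where care is needed, is precisely this Gamma-function bookkeeping: one must check that the branch-dependent factors $e^{\pm\pi\sqrt{-1}(\,\cdot\,)}$ coming from the two sectors $\mathcal{S}^{\prime\pm}_i$ cancel against those hidden in the asymptotic relations, so that the surviving coefficients are the sign-free expressions displayed in (\ref{exp:Weta0xi_by_Vt'ixi}) and (\ref{exp:Weta0xi_by_Vinfxi}). I anticipate no analytic difficulty beyond this, since the holomorphy of the error terms near $t'_i$ and near $\infty$ is inherited verbatim from Theorems \ref{thm:asymp_W_t'i_eta0} and \ref{thm:asymp_Winf_eta0_inf}.
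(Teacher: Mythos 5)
Your proposal is correct and follows essentially the same route as the paper: both rewrite the asymptotic relations $(\ref{asymp:W_eta0_t'i})$ and $(\ref{asymp:W_eta0_inf})$ in terms of the $V$'s to obtain the intermediate identities $V^{\langle\mathcal{S}^{\prime\pm}_i;\eta_0 t'_i\rangle}_{t'_i,k,h}=e^{\pm\pi\sqrt{-1}\lambda_{i,k}}V^{\langle\mathcal{S}^{\prime\pm}_i;t'_i\xi\rangle}_{t'_i,k,h}+\mathrm{hol}(\xi-t'_i)$ and $V^{\langle\mathcal{S}^{\prime\pm}_i;\eta_0\infty\rangle}_{\infty,k,h}=e^{\pm\pi\sqrt{-1}(\mu_k-\nu_1-\nu_2)}V^{\langle\mathcal{S}^{\prime\pm}_i;\infty\xi\rangle}_{\infty,k,h}+\mathrm{hol}\bigl(\tfrac{1}{\xi-\eta_0}\bigr)$, and then substitute these into the Cauchy relations $(\ref{rel:inf-finite})$ and $(\ref{rel:inf-inf})$, with the reflection formula handling the Gamma-factor and sign bookkeeping. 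The Gamma-function simplifications you describe check out, so no gap remains.
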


\begin{proof}
Representing (\ref{asymp:W_eta0_t'i}) by
$V^{\langle\mathcal{S}^{\prime\pm}_i;\eta_0 t'_i\rangle}
_{t'_i,k,h}$  
and
$V^{\langle\mathcal{S}^{\prime\pm}_i;t'_i \xi\rangle}
_{t'_i,k,h}$,  
we have
\begin{displaymath}
V^{\langle\mathcal{S}^{\prime\pm}_i;\eta_0 t'_i\rangle}
_{t'_i,k,h}(\nu_{1},\nu_{2};\xi)
=
e^{\pm\pi\sqrt{-1}\lambda_{i,k}}
V^{\langle\mathcal{S}^{\prime\pm}_i;t'_i \xi\rangle}
_{t'_i,k,h}(\nu_{1},\nu_{2};\xi)
+\mathrm{hol}(\xi-t'_i).
\end{displaymath}
Substituting this into (\ref{rel:inf-finite}),
we obtain (\ref{exp:Weta0xi_by_Vt'ixi}).
Similarly,
representing (\ref{asymp:W_eta0_inf}) by
$V^{\langle\mathcal{S}^{\prime\pm}_i;\eta_0 \infty\rangle}
_{\infty,k,h}$  
and
$V^{\langle\mathcal{S}^{\prime\pm}_i;\infty\xi\rangle}
_{\infty,k,h}$,  
we have
\begin{displaymath}
V^{\langle\mathcal{S}^{\prime\pm}_i;\eta_0 \infty\rangle}
_{\infty,k,h}(\nu_{1},\nu_{2};\xi)
=
e^{\pm\pi\sqrt{-1}(\mu_k-\nu_{1}-\nu_{2})}
V^{\langle\mathcal{S}^{\prime\pm}_i;\infty\xi\rangle}
_{\infty,k,h}(\nu_{1},\nu_{2};\xi)
+\mathrm{hol}\left(\frac{1}{\xi-\eta_0}\right).
\end{displaymath}
Substituting this into (\ref{rel:inf-inf}),
we obtain (\ref{exp:Weta0xi_by_Vinfxi}).
\end{proof}

In the relations (\ref{rel:inf-finite}) and (\ref{rel:inf-inf})
interchanging $\nu_{1}$ and $\nu_{2}$ and then
using Theorem  \ref{thm:for_def_V},
we obtain
\begin{equation} \label{rel:inf-finite-change}
\begin{aligned}
e^{-\pi\sqrt{-1}\nu_{2}}
W^{\langle\mathcal{S}^{\prime\pm}_i;\eta_0\xi\rangle}
_{t'_i,k,h}(\nu_{2},\nu_{1};\xi)
&
+\frac{e^{\pm\pi\sqrt{-1}\nu_{2}}
       \Gamma(\nu_{2}+1)\Gamma(\lambda_{i,k}-\nu_{2})}
      {\Gamma(\lambda_{i,k}+1)}
V^{\langle\mathcal{S}^{\prime\pm}_i;t'_i \xi\rangle}
_{t'_i,k,h}(\nu_{1},\nu_{2};\xi)
\\
&-\frac{\Gamma(\lambda_{i,k}-\nu_{2})\Gamma(-\lambda_{i,k})}
     {\Gamma(-\nu_{2})}
V^{\langle\mathcal{S}^{\prime\pm}_i;\eta_0 t'_i\rangle}
_{t'_i,k,h}(\nu_{1},\nu_{2};\xi)
=0
\end{aligned}
\end{equation}
and
\begin{equation} \label{rel:inf-inf-change}
\begin{aligned}
W^{\langle\mathcal{S}^{\prime\pm}_i;\eta_0\xi\rangle}
_{\infty,k,h}(\nu_{2},\nu_{1};\xi)
&
+\frac{e^{\mp\pi\sqrt{-1}\nu_{2}}
       \Gamma(\nu_{2}+1)\Gamma(\nu_{1}-\mu_{k}+1)}
      {\Gamma(\nu_{1}+\nu_{2}-\mu_{k}+1)}
V^{\langle\mathcal{S}^{\prime\pm}_i;\infty\xi\rangle}
_{\infty,k,h}(\nu_{1},\nu_{2};\xi)
\\
&-\frac{\Gamma(\nu_{1}-\mu_{k}+1)\Gamma(\mu_{k}-\nu_{1}-\nu_{2})}
     {\Gamma(-\nu_{2})}
V^{\langle\mathcal{S}^{\prime\pm}_i;\eta_0 \infty\rangle}
_{\infty,k,h}(\nu_{1},\nu_{2};\xi)
=0,
\end{aligned}
\end{equation}
where the double-signs correspond.
Solving the simultaneous equations
(\ref{rel:inf-finite})
and
(\ref{rel:inf-finite-change})
with respect to
$V^{\langle\mathcal{S}^{\prime\pm}_i;t'_i \xi\rangle}
_{t'_i,k,h}$  
and
$V^{\langle\mathcal{S}^{\prime\pm}_i;\eta_0 t'_i\rangle}
_{t'_i,k,h}$,  
and
(\ref{rel:inf-inf})
and
(\ref{rel:inf-inf-change})
with respect to
$V^{\langle\mathcal{S}^{\prime\pm}_i;\infty\xi\rangle}
_{\infty,k,h}$  
and
$V^{\langle\mathcal{S}^{\prime\pm}_i;\eta_0\infty\rangle}
_{\infty,k,h}$,  
we have the following relations.

\begin{theorem} \label{thm:lincomb_Veta0xi}
For $\xi\in\mathcal{S}^{\prime\pm}_i$
we have
\begin{align} \label{exp:Vt'i_by_Veta0xi}
V^{\langle\mathcal{S}^{\prime\pm}_i;t'_i \xi\rangle}
_{t'_i,k,h}(\nu_{1},\nu_{2};\xi)
&=
-
\sum_{j=1}^{2}
\frac{e^{-\pi\sqrt{-1}\nu_{j}}\sin\pi\nu_{j'}\Gamma(\lambda_{i,k}+1)}
 {\sin\pi(\nu_{j'}-\nu_{j})\Gamma(\nu_{j}+1)\Gamma(\lambda_{i,k}-\nu_{j})}
W^{\langle\mathcal{S}^{\prime\pm}_i;\eta_0\xi\rangle}
_{t'_i,k,h}(\nu_{j},\nu_{j'};\xi),
\end{align}
and
\begin{align} \label{exp:Vinf_by_Veta0xi}
V^{\langle\mathcal{S}^{\prime\pm}_i;\infty\xi\rangle}
_{\infty,k,h}(\nu_{1},\nu_{2};\xi)
&=
-
\sum_{j=1}^{2}
\frac{\sin\pi\nu_{j'}\Gamma(\nu_{1}+\nu_{2}-\mu_{k}+1)}
     {\sin\pi(\nu_{j'}-\nu_{j})\Gamma(\nu_{j}+1)\Gamma(\nu_{j'}-\mu_{k}+1)}
W^{\langle\mathcal{S}^{\prime\pm}_i;\eta_0\xi\rangle}
_{\infty,k,h}(\nu_{j},\nu_{j'};\xi),
\end{align}
where the double-signs correspond
and $j'$ denotes the complement of $j$
in $\{1,2\}$, namely, $(j,j')=(1,2),(2,1)$.
\end{theorem}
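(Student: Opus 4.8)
The plan is to read (\ref{rel:inf-finite}) and (\ref{rel:inf-finite-change}) as a system of two linear equations in the two unknowns $V^{\langle\mathcal{S}^{\prime\pm}_i;t'_i \xi\rangle}_{t'_i,k,h}(\nu_{1},\nu_{2};\xi)$ and $V^{\langle\mathcal{S}^{\prime\pm}_i;\eta_0 t'_i\rangle}_{t'_i,k,h}(\nu_{1},\nu_{2};\xi)$, whose inhomogeneous terms are $W^{\langle\mathcal{S}^{\prime\pm}_i;\eta_0\xi\rangle}_{t'_i,k,h}(\nu_{1},\nu_{2};\xi)$ and $W^{\langle\mathcal{S}^{\prime\pm}_i;\eta_0\xi\rangle}_{t'_i,k,h}(\nu_{2},\nu_{1};\xi)$, respectively. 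The structural point that licenses this is Theorem \ref{thm:for_def_V}: the two $V$'s are unaltered under $\nu_{1}\leftrightarrow\nu_{2}$, so the very same pair of unknowns occurs in both equations while only the driving $W$-terms are swapped. I would then solve for $V^{t'_i \xi}$ by eliminating $V^{\eta_0 t'_i}$ (equivalently, Cramer's rule), and treat (\ref{rel:inf-inf}), (\ref{rel:inf-inf-change}) in exactly the same way for the unknowns $V^{\infty\xi}_{\infty,k,h}$ and $V^{\eta_0\infty}_{\infty,k,h}$.

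For the finite case the coefficient of the $V^{\eta_0 t'_i}$-term is $-\Gamma(\lambda_{i,k}-\nu_{j})\Gamma(-\lambda_{i,k})/\Gamma(-\nu_{j})$ ($j=1,2$); after clearing these I reach a single equation whose coefficient determinant, once the common factor $\Gamma(\lambda_{i,k}-\nu_{1})\Gamma(\lambda_{i,k}-\nu_{2})\Gamma(-\lambda_{i,k})/\Gamma(\lambda_{i,k}+1)$ is pulled out, reduces to
\[
\frac{e^{\pm\pi\sqrt{-1}\nu_{2}}\Gamma(\nu_{2}+1)}{\Gamma(-\nu_{1})}
-\frac{e^{\pm\pi\sqrt{-1}\nu_{1}}\Gamma(\nu_{1}+1)}{\Gamma(-\nu_{2})}.
\]
Applying $\Gamma(z)\Gamma(1-z)=\pi/\sin\pi z$ in the form $\Gamma(\nu+1)\Gamma(-\nu)=-\pi/\sin\pi\nu$ turns this into $-\pi\bigl(e^{\pm\pi\sqrt{-1}\nu_{2}}\sin\pi\nu_{1}-e^{\pm\pi\sqrt{-1}\nu_{1}}\sin\pi\nu_{2}\bigr)/(\sin\pi\nu_{1}\sin\pi\nu_{2})$.

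The step I expect to be the crux is the trigonometric identity
\[
e^{\pm\pi\sqrt{-1}\nu_{2}}\sin\pi\nu_{1}-e^{\pm\pi\sqrt{-1}\nu_{1}}\sin\pi\nu_{2}
=\sin\pi(\nu_{1}-\nu_{2}),
\]
valid for \emph{both} choices of the sign. This is what makes the determinant, and hence all the final coefficients, independent of the label $\pm$, in agreement with the sign-free right-hand sides of (\ref{exp:Vt'i_by_Veta0xi}) and (\ref{exp:Vinf_by_Veta0xi}). After substituting it and using once more $\Gamma(-\nu_{j})\sin\pi\nu_{j}=-\pi/\Gamma(\nu_{j}+1)$ to clear the remaining reflected Gammas, the coefficient of the $W(\nu_{1},\nu_{2})$-term collapses to the $j=1$ summand of (\ref{exp:Vt'i_by_Veta0xi}); the coefficient of $W(\nu_{2},\nu_{1})$ gives the $j=2$ summand by the identical computation with $1$ and $2$ interchanged, the determinant being antisymmetric under that swap, which is precisely what produces the $\sin\pi(\nu_{j'}-\nu_{j})$ in the denominator. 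The infinity case runs word for word the same way, with the $\lambda_{i,k}$-factors replaced by the $\mu_{k}$-factors coming from (\ref{rel:inf-inf}), (\ref{rel:inf-inf-change}) and with $e^{\pm}$ replaced by $e^{\mp}$; the same trigonometric collapse yields (\ref{exp:Vinf_by_Veta0xi}).

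I would close by recording solvability: the standing assumptions (\ref{assumption:NUj_1})--(\ref{assumption:NUj_3}) make every Gamma factor in the common prefactor finite and nonzero and give $\sin\pi\nu_{j}\ne0$, while the determinant is nonzero exactly when $\sin\pi(\nu_{1}-\nu_{2})\ne0$, which is precisely the condition under which the right-hand sides of (\ref{exp:Vt'i_by_Veta0xi}) and (\ref{exp:Vinf_by_Veta0xi}) are defined. The only genuine labor is bookkeeping the Gamma-quotients and tracking the exponentials through the $\pm$; the sole real subtlety is the sign-killing identity above, and once it is in hand both formulas drop out by routine simplification.
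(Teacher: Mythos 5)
Your proposal is correct and follows essentially the same route as the paper: the text immediately preceding the theorem derives (\ref{rel:inf-finite-change}) and (\ref{rel:inf-inf-change}) by swapping $\nu_{1}\leftrightarrow\nu_{2}$ and invoking Theorem \ref{thm:for_def_V}, and then obtains the theorem by solving the resulting $2\times 2$ linear systems for the $V$'s, exactly as you describe. Your determinant computation and the sign-independent identity $e^{\pm\pi\sqrt{-1}\nu_{2}}\sin\pi\nu_{1}-e^{\pm\pi\sqrt{-1}\nu_{1}}\sin\pi\nu_{2}=\sin\pi(\nu_{1}-\nu_{2})$ check out and correctly reproduce the stated coefficients.
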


Now, let us recall the connection formulas
(\ref{conn_formula:t'i_t'nu}) in $\mathcal{P}'$
and
(\ref{conn_formula:t'i_infty}) in $\check{\mathcal{P}}'$.
Note that we have determined the assignment of argument of
$\zeta-t'_i$ as
\begin{displaymath}
\arg(\zeta-t'_i)\in
\left\{\begin{aligned}
&(\theta'_i-2\pi,\,\theta'_i)
&& \text{for }\zeta\in\mathcal{P}',
\\
&(\theta'_i-\pi,\,\theta'_i+\pi)
&& \text{for }\zeta\in\check{\mathcal{P}}'
\end{aligned}\right.
\end{displaymath}
while
\begin{displaymath}
\arg(\zeta-t'_i)\in
\left\{\begin{aligned}
&(\theta'_i-\pi,\,\theta'_i)
&& \text{for }\zeta\in\mathcal{S}^{\prime-}_i,
\\
&(\theta'_i-2\pi,\,\theta'_i-\pi)
&& \text{for }\zeta\in\mathcal{S}^{\prime+}_i.
\end{aligned}\right.
\end{displaymath}

\begin{theorem} \label{thm:rel_Vti_ti+1}
For $\xi\in\mathcal{S}^{\prime-}_i\cap\mathcal{S}^{\prime+}_{i+1}$
we have
\begin{equation} \label{exp:Vt'ixi_by_Vt'i+1xi}
V^{\langle\mathcal{S}^{\prime-}_i;t'_i \xi\rangle}
_{t'_i,k,h}(\nu_{1},\nu_{2};\xi)
=\sum_{\tilde{k}=1}^{r_{i+1}} \sum_{\tilde{h}=1}^{\ell_{i+1,\tilde{k}}}
c_{t'_{i+1},\tilde{k},\tilde{h};t'_i,k,h}
V^{\langle\mathcal{S}^{\prime+}_{i+1};t'_{i+1} \xi\rangle}
_{t'_{i+1},\tilde{k},\tilde{h}}(\nu_{1},\nu_{2};\xi)
+\mathrm{hol}(\xi-t'_{i+1}),
\end{equation}
and for $\xi\in\mathcal{S}^{\prime+}_i\cap\mathcal{S}^{\prime-}_{i-1}$
\begin{equation} \label{exp:Vt'ixi_by_Vt'i-1xi}
V^{\langle\mathcal{S}^{\prime+}_i;t'_i \xi\rangle}
_{t'_i,k,h}(\nu_{1},\nu_{2};\xi)
=\sum_{\tilde{k}=1}^{r_{i-1}} \sum_{\tilde{h}=1}^{\ell_{i-1,\tilde{k}}}
c_{t'_{i-1},\tilde{k},\tilde{h};t'_i,k,h}
V^{\langle\mathcal{S}^{\prime-}_{i-1};t'_{i-1} \xi\rangle}
_{t'_{i-1},\tilde{k},\tilde{h}}(\nu_{1},\nu_{2};\xi)
+\mathrm{hol}(\xi-t'_{i-1}),
\end{equation}
where
$c_{t'_{i\pm1},\tilde{k},\tilde{h};t'_i,k,h}
=c_{t'_{i\pm1},\tilde{k},\tilde{h};t'_i,k,h}(0)$.
\end{theorem}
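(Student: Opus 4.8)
The plan is to establish the first relation (\ref{exp:Vt'ixi_by_Vt'i+1xi}); the second relation (\ref{exp:Vt'ixi_by_Vt'i-1xi}) follows verbatim after exchanging $i+1$ with $i-1$ and the superscripts $-$ and $+$. Throughout, $\xi$ lies in the common sector $\mathcal{S}^{\prime-}_i\cap\mathcal{S}^{\prime+}_{i+1}$, on which both sides are directly defined, and I work modulo $\mathrm{hol}(\xi-t'_{i+1})$. The exponents of the system (\ref{sys:forW}) at $\xi=t'_{i+1}$ are $0$ and the $\lambda_{i+1,\tilde{k}}$, the latter non-integral by (\ref{assumption:E2_1}); moreover, by Theorem \ref{thm:asymp_W_t'i_xi} together with the definition of the $V$'s, each $V^{\langle\mathcal{S}^{\prime+}_{i+1};t'_{i+1}\xi\rangle}_{t'_{i+1},\tilde{k},\tilde{h}}$ has the normalized leading term $(t'_{i+1}-\eta_0)^{-\nu_{1}-\nu_{2}}(\xi-t'_{i+1})^{\lambda_{i+1,\tilde{k}}}\{\varepsilon_{2n}(\cdots)+O(\xi-t'_{i+1})\}$. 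Hence these singular solutions are linearly independent from one another and from $\mathrm{hol}(\xi-t'_{i+1})$, and the task reduces to computing, for each $\tilde{k},\tilde{h}$, the coefficient of the $(\xi-t'_{i+1})^{\lambda_{i+1,\tilde{k}}}$-tower in the expansion of $W^{\langle\mathcal{S}^{\prime-}_i;t'_i \xi\rangle}_{t'_i,k,h}$ at $t'_{i+1}$ and matching it after passing to the $V$-normalization.

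To produce that tower I would substitute into the integrand of $W^{\langle\mathcal{S}^{\prime-}_i;t'_i \xi\rangle}_{t'_i,k,h}$ the connection formula (\ref{conn_formula:t'i_t'nu}) at $\rho=-\nu_{1}-1$, written globally as $w_{t'_i,k,h}(-\nu_{1}-1;\zeta)=\sum_{\tilde{k},\tilde{h}}c_{t'_{i+1},\tilde{k},\tilde{h};t'_i,k,h}(-\nu_{1}-1)\,w_{t'_{i+1},\tilde{k},\tilde{h}}(-\nu_{1}-1;\zeta)+w^{\mathrm{hol}}(\zeta)$, where $w^{\mathrm{hol}}$ is the solution of the underlying system holomorphic at $t'_{i+1}$. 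The contribution of $w^{\mathrm{hol}}$ is holomorphic in $\xi$ at $t'_{i+1}$, since the Riemann--Liouville-type fractional integral $\int_{t'_i}^{\xi}(\xi-\zeta)^{\nu_{1}}g(\zeta)\,d\zeta$ of a function $g$ holomorphic near $t'_{i+1}$ is again holomorphic there (its only possible branch point is at the fixed base point $t'_i$). Each remaining term has the same shape as the integral treated in Theorem \ref{thm:asymp_W_t'i_xi}; the change of variables $\zeta=t'_{i+1}+(\xi-t'_{i+1})s$ reduces its $(\xi-t'_{i+1})^{\lambda_{i+1,\tilde{k}}}$-tower to a Beta integral / ${}_2F_1$ and identifies it with a definite multiple of the tower of $W^{\langle\mathcal{S}^{\prime+}_{i+1};t'_{i+1}\xi\rangle}_{t'_{i+1},\tilde{k},\tilde{h}}$, the multiple being the branch phase relating $\int_{t'_i}^{\xi}$ to $\int_{t'_{i+1}}^{\xi}$ of $w_{t'_{i+1},\tilde{k},\tilde{h}}$ near $t'_{i+1}$.

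Collecting these contributions gives $W^{\langle\mathcal{S}^{\prime-}_i;t'_i \xi\rangle}_{t'_i,k,h}\equiv\sum c_{t'_{i+1},\tilde{k},\tilde{h};t'_i,k,h}(-\nu_{1}-1)\,(\text{phase})\,W^{\langle\mathcal{S}^{\prime+}_{i+1};t'_{i+1}\xi\rangle}_{t'_{i+1},\tilde{k},\tilde{h}}\pmod{\mathrm{hol}}$. Rewriting both sides through the $V$-normalizations, inserting the explicit $\rho$-dependence (\ref{conn_coef:t'i_t'nu}) at $\rho=-\nu_{1}-1$, and applying the reflection formula $\Gamma(z)\Gamma(1-z)=\pi/\sin\pi z$ to the pairs $\Gamma(\lambda_{i+1,\tilde{k}}-\nu_{1})\Gamma(\nu_{1}+1-\lambda_{i+1,\tilde{k}})$ and $\Gamma(-\lambda_{i+1,\tilde{k}})\Gamma(\lambda_{i+1,\tilde{k}}+1)$, all the $\nu_{1}$- and $\lambda_{i+1,\tilde{k}}$-dependent $\Gamma$-factors collapse to $e^{-\pi\sqrt{-1}\nu_{1}}\sin\pi\lambda_{i+1,\tilde{k}}/\sin\pi(\lambda_{i+1,\tilde{k}}-\nu_{1})$. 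This is exactly the reciprocal of the branch phase, so the net coefficient reduces to $c_{t'_{i+1},\tilde{k},\tilde{h};t'_i,k,h}(0)$, which is (\ref{exp:Vt'ixi_by_Vt'i+1xi}).

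I expect the branch phase to be the real difficulty. The connection formula (\ref{conn_formula:t'i_t'nu}) is normalized in $\mathcal{P}'$ under (\ref{branch:wt'ihk}), whereas the two integrals carry the sector-dependent arguments of (\ref{arg:xi-t'i}) and Tables \ref{tab:arg_t'i_minus} and \ref{tab:arg_t'i_plus}; the conventions for $\arg(\zeta-t'_{i+1})$ and $\arg(\xi-\zeta)$ differ by definite multiples of $\pi$ between these settings, and $w_{t'_{i+1},\tilde{k},\tilde{h}}$ is itself multivalued at $t'_{i+1}$ with exponent $\lambda_{i+1,\tilde{k}}-\nu_{1}-1$. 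The required value $e^{\pi\sqrt{-1}\nu_{1}}\sin\pi(\lambda_{i+1,\tilde{k}}-\nu_{1})/\sin\pi\lambda_{i+1,\tilde{k}}$ should emerge from tracing these arguments carefully, but because the clean ``two-solutions-with-the-same-leading-term'' device of Theorem \ref{thm:for_def_V} is not available for a genuine linear combination, the burden falls on keeping every factor of $e^{2\pi\sqrt{-1}\nu_{1}}$ and every sign consistent as $\xi$ runs through $\mathcal{S}^{\prime-}_i\cap\mathcal{S}^{\prime+}_{i+1}$.
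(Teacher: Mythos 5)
Your route---inserting the connection formula (\ref{conn_formula:t'i_t'nu}) for the underlying system directly into the integrand of $W^{\langle\mathcal{S}^{\prime-}_i;t'_i\xi\rangle}_{t'_i,k,h}$ and matching the $(\xi-t'_{i+1})^{\lambda_{i+1,\tilde{k}}}$-towers---is genuinely different from the paper's, but it is incomplete at precisely the point you flag as ``the real difficulty''. The factor $e^{\pi\sqrt{-1}\nu_{1}}\sin\pi(\lambda_{i+1,\tilde{k}}-\nu_{1})/\sin\pi\lambda_{i+1,\tilde{k}}$ is obtained in your write-up by working backwards from the desired conclusion, not by computing how $\int_{t'_i}^{\xi}$ applied to $w_{t'_{i+1},\tilde{k},\tilde{h}}$ relates to $\int_{t'_{i+1}}^{\xi}$ modulo $\mathrm{hol}(\xi-t'_{i+1})$. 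Note that this quantity is not a branch phase at all: it is not unimodular, and it equals $1-e^{-\pi\sqrt{-1}(\lambda_{i+1,\tilde{k}}-\nu_{1})}\sin\pi\nu_{1}/\sin\pi\lambda_{i+1,\tilde{k}}$, i.e.\ the sum of the trivial contribution of the subpath from $t'_{i+1}$ to $\xi$ and the nonholomorphic contribution of the subpath from $t'_i$ to $t'_{i+1}$. Extracting the latter requires repeating the monodromy/deformation argument of Theorem \ref{thm:asymp_W_t'i_eta0} with the base point $\eta_0$ replaced by $t'_i$ (redoing Figure \ref{fig:deformation_small} for the new geometry), and the outcome is sign-sensitive: replacing the exponent $-\pi\sqrt{-1}(\lambda_{i+1,\tilde{k}}-\nu_{1})$ by $+\pi\sqrt{-1}(\lambda_{i+1,\tilde{k}}-\nu_{1})$ changes the answer by $e^{2\pi\sqrt{-1}\nu_{1}}$, so the theorem genuinely hinges on the computation you have deferred. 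A secondary issue: after decomposing $w_{t'_i,k,h}$, each individual integral $\int_{t'_i}^{\xi}(\cdots)\,w_{t'_{i+1},\tilde{k},\tilde{h}}\,d\zeta$ and $\int_{t'_i}^{\xi}(\cdots)\,w^{\mathrm{hol}}\,d\zeta$ acquires endpoint exponents $\lambda_{i,k''}-\nu_{1}-1$ at $\zeta=t'_i$, so the termwise splitting is only valid as finite parts, which should at least be acknowledged.

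The paper avoids this entirely by never comparing integrals based at $t'_i$ with integrals based at $t'_{i+1}$. It first writes $V^{\langle\mathcal{S}^{\prime-}_i;t'_i\xi\rangle}_{t'_i,k,h}$ as the two-term combination (\ref{exp:Vt'i_by_Veta0xi}) of $W^{\langle\mathcal{S}^{\prime-}_i;\eta_0\xi\rangle}_{t'_i,k,h}(\nu_{j},\nu_{j'};\xi)$, $j=1,2$, whose common path $\overline{\eta_0\xi}$ stays away from every $t'_{\ell}$, so that (\ref{conn_formula:t'i_t'nu}) with $\rho=-\nu_{j}-1$ passes through the integral sign with no path-splitting to track; it then inserts the $\rho$-dependence (\ref{conn_coef:t'i_t'nu}), converts back with (\ref{exp:Weta0xi_by_Vt'ixi}) at $i+1$, and the role of your ``branch phase'' is played by the elementary identity (\ref{identity:sin}). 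If you wish to keep your single-parameter argument, you must prove the analogue of (\ref{asymp:W_eta0_t'i}) for the path $\overline{t'_i t'_{i+1}}$, including the determination of the sign in the exponential; as written, the proof is not complete.
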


\begin{proof}
We give the proof only for (\ref{exp:Vt'ixi_by_Vt'i+1xi}).
Substituting (\ref{conn_formula:t'i_t'nu})
with $\nu=i+1$ and $\rho=-\nu_{j}-1$
into the integral
$W^{\langle\mathcal{S}^{\prime-}_i;\eta_0\xi\rangle}
_{t'_i,k,h}(\nu_{j},\nu_{j'};\xi)$,
we have
\begin{displaymath}
W^{\langle\mathcal{S}^{\prime-}_i;\eta_0\xi\rangle}
_{t'_i,k,h}(\nu_{j},\nu_{j'};\xi)
=
\sum_{\tilde{k}=1}^{r_{i+1}} \sum_{\tilde{h}=1}^{\ell_{i+1,\tilde{k}}}
c_{t'_{i+1},\tilde{k},\tilde{h};t'_i,k,h}(-\nu_{j}-1)
W^{\langle\mathcal{S}^{\prime+}_{i+1};\eta_0\xi\rangle}
_{t'_{i+1},\tilde{k},\tilde{h}}(\nu_{j},\nu_{j'};\xi)
+\mathrm{hol}(\xi-t'_{i+1}).
\end{displaymath}
Here
the holomorphy of the integral of the term $\mathrm{hol}(\zeta-t'_{i+1})$
can be proven
by representing the term as a linear combination
of $\tilde{w}_{t'_{\nu},k,h}(-\nu_{j}-1;\zeta)$
($\nu\ne i+1$) and then applying Theorem \ref{thm:asymp_W_t'i_inf}.
Substituting this relation 
into 
(\ref{exp:Vt'i_by_Veta0xi})
and then
substituting (\ref{conn_coef:t'i_t'nu})
with $\nu=i+1$ and $\rho=-\nu_{j}-1$ into the result,
we have
\begin{displaymath}
\begin{aligned}
&
V^{\langle\mathcal{S}^{\prime-}_i;t'_i \xi\rangle}
_{t'_i,k,h}(\nu_{1},\nu_{2};\xi)
\\
&=
\sum_{\tilde{k}=1}^{r_{i+1}} \sum_{\tilde{h}=1}^{\ell_{i+1,\tilde{k}}}
\sum_{j=1}^{2}
\frac{e^{-2\pi\sqrt{-1}\nu_{j}}\sin\pi\nu_{j'}
      \Gamma(\nu_{j}-\lambda_{i+1,\tilde{k}}+1)}
     {\sin\pi(\nu_{j'}-\nu_{j})
      \Gamma(\nu_{j}+1)\Gamma(-\lambda_{i+1,\tilde{k}})}
c_{t'_{i+1},\tilde{k},\tilde{h};t'_i,k,h} 
W^{\langle\mathcal{S}^{\prime+}_{i+1};\eta_0\xi\rangle}
_{t'_{i+1},\tilde{k},\tilde{h}}(\nu_{j},\nu_{j'};\xi)
+\mathrm{hol}(\xi-t'_{i+1}).
\end{aligned}
\end{displaymath}
Lastly,
substituting (\ref{exp:Weta0xi_by_Vt'ixi})
with $i$ replaced by $i+1$,
we have
\begin{displaymath}
V^{\langle\mathcal{S}^{\prime-}_i;t'_i \xi\rangle}
_{t'_i,k,h}(\nu_{1},\nu_{2};\xi)
=
\sum_{\tilde{k}=1}^{r_{i+1}} \sum_{\tilde{h}=1}^{\ell_{i+1,\tilde{k}}}
\sum_{j=1}^{2}
 \frac{e^{-\pi\sqrt{-1}\nu_{j}}\sin\pi\nu_{j'}}
      {\sin\pi(\nu_{j'}-\nu_{j})}
c_{t'_{i+1},\tilde{k},\tilde{h};t'_i,k,h}
V^{\langle\mathcal{S}^{\prime+}_{i+1};t'_{i+1} \xi\rangle}
_{t'_{i+1},\tilde{k},\tilde{h}}(\nu_{1},\nu_{2};\xi)
+\mathrm{hol}(\xi-t'_{i+1}),
\end{displaymath}
which implies (\ref{exp:Vt'ixi_by_Vt'i+1xi})
since the identity
\begin{equation} \label{identity:sin}
\sum_{j=1}^{2}
 \frac{e^{-\pi\sqrt{-1}\nu_{j}}\sin\pi\nu_{j'}}
      {\sin\pi(\nu_{j'}-\nu_{j})}
=1
\end{equation}
holds.
\end{proof}

\begin{theorem} \label{thm:rel_Vti_inf}
For $\xi\in\mathcal{S}^{\prime-}_i$
we have
\begin{equation} \label{exp:Vt'ixi_by_Vinfxi_S'-}
V^{\langle\mathcal{S}^{\prime-}_i;t'_i \xi\rangle}
_{t'_i,k,h}(\nu_{1},\nu_{2};\xi)
=
\sum_{\tilde{k}=1}^{q} \sum_{\tilde{h}=1}^{m_{\tilde{k}}}
\frac{\Gamma(\mu_{\tilde{k}}+1)
      \Gamma(\mu_{\tilde{k}}-\nu_{1}-\nu_{2})}
     {\Gamma(\mu_{\tilde{k}}-\nu_{1})
      \Gamma(\mu_{\tilde{k}}-\nu_{2})}
c_{\infty,\tilde{k},\tilde{h};t'_i,k,h}
V^{\langle\mathcal{S}^{\prime-}_i;\infty\xi\rangle}
_{\infty,\tilde{k},\tilde{h}}(\nu_{1},\nu_{2};\xi)
+\mathrm{hol}\left(\frac{1}{\xi-\eta_0}\right),
\end{equation}
and for $\xi\in\mathcal{S}^{\prime+}_i$
\begin{equation} \label{exp:Vt'ixi_by_Vinfxi_S'+}
\begin{aligned}
V^{\langle\mathcal{S}^{\prime+}_i;t'_i \xi\rangle}
_{t'_i,k,h}(\nu_{1},\nu_{2};\xi)
&=
e^{-2\pi\sqrt{-1}\lambda_{i,k}}
\sum_{\tilde{k}=1}^{q} \sum_{\tilde{h}=1}^{m_{\tilde{k}}}
\frac{\Gamma(\mu_{\tilde{k}}+1)
      \Gamma(\mu_{\tilde{k}}-\nu_{1}-\nu_{2})}
     {\Gamma(\mu_{\tilde{k}}-\nu_{1})
      \Gamma(\mu_{\tilde{k}}-\nu_{2})}
c_{\infty,\tilde{k},\tilde{h};t'_i,k,h}
V^{\langle\mathcal{S}^{\prime+}_i;\infty\xi\rangle}
_{\infty,\tilde{k},\tilde{h}}(\nu_{1},\nu_{2};\xi)
\\
&\phantom{{}={}}
+\mathrm{hol}\left(\frac{1}{\xi-\eta_0}\right),
\end{aligned}
\end{equation}
where
$c_{\infty,\tilde{k},\tilde{h};t'_i,k,h}
=c_{\infty,\tilde{k},\tilde{h};t'_i,k,h}(0)$.
\end{theorem}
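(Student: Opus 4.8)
The plan is to mirror the proof of Theorem \ref{thm:rel_Vti_ti+1}, replacing the finite-to-finite connection data by the finite-to-infinity data in (\ref{conn_formula:t'i_infty}) and (\ref{conn_coef:t'i_infty}), and then to exploit the symmetry of the $V$-integrals in $\nu_1,\nu_2$ (Theorem \ref{thm:for_def_V}) to collapse the resulting two-term sum by the identity (\ref{identity:sin}). I shall give the argument for (\ref{exp:Vt'ixi_by_Vinfxi_S'-}) in detail; the case (\ref{exp:Vt'ixi_by_Vinfxi_S'+}) is identical except for one branch factor, which I isolate at the end.

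First I would substitute the connection formula (\ref{conn_formula:t'i_infty}) with $\rho=-\nu_j-1$ into the integral $W^{\langle\mathcal{S}^{\prime-}_i;\eta_0\xi\rangle}_{t'_i,k,h}(\nu_j,\nu_{j'};\xi)$. Since (\ref{conn_formula:t'i_infty}) is an exact expansion carrying no holomorphic remainder, linearity of the integral in its $w$-slot gives
\[
W^{\langle\mathcal{S}^{\prime-}_i;\eta_0\xi\rangle}_{t'_i,k,h}(\nu_j,\nu_{j'};\xi)
=\sum_{\tilde{k}=1}^{q}\sum_{\tilde{h}=1}^{m_{\tilde{k}}}
c_{\infty,\tilde{k},\tilde{h};t'_i,k,h}(-\nu_j-1)\,
W^{\langle\mathcal{S}^{\prime-}_i;\eta_0\xi\rangle}_{\infty,\tilde{k},\tilde{h}}(\nu_j,\nu_{j'};\xi),
\]
valid because the segment $\overline{\eta_0\xi}$ lies in $\check{\mathcal{P}}'$ with argument conventions matching those of $\check{\mathcal{P}}'$ when $\xi\in\mathcal{S}^{\prime-}_i$. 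I then insert this into (\ref{exp:Vt'i_by_Veta0xi}) and replace the $\rho$-dependence via (\ref{conn_coef:t'i_infty}) at $\rho=-\nu_j-1$, whose hypothesis $\rho+\mu_{\tilde{k}}\notin\mathbb{Z}_{\geq0}$ is precisely assumption (\ref{assumption:NUj_3}). The key cancellation is that the factors $\Gamma(\lambda_{i,k}-\nu_j)$ and $\Gamma(\lambda_{i,k}+1)$ produced by (\ref{conn_coef:t'i_infty}) annihilate the corresponding $\lambda_{i,k}$-factors coming from the coefficient in (\ref{exp:Vt'i_by_Veta0xi}), leaving a coefficient in which $\lambda_{i,k}$ survives only through $c_{\infty,\tilde{k},\tilde{h};t'_i,k,h}$.

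Next I would convert each $W^{\langle\mathcal{S}^{\prime-}_i;\eta_0\xi\rangle}_{\infty,\tilde{k},\tilde{h}}(\nu_j,\nu_{j'};\xi)$ into $V^{\langle\mathcal{S}^{\prime-}_i;\infty\xi\rangle}_{\infty,\tilde{k},\tilde{h}}$ by (\ref{exp:Weta0xi_by_Vinfxi}), which supplies the factor $\Gamma(\nu_j+1)\Gamma(\mu_{\tilde{k}}-\nu_1-\nu_2)/\Gamma(\mu_{\tilde{k}}-\nu_{j'})$ together with a remainder $\mathrm{hol}(1/(\xi-\eta_0))$, the latter being the only source of the holomorphic term in the final formula. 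By Theorem \ref{thm:for_def_V} the integral $V^{\langle\mathcal{S}^{\prime-}_i;\infty\xi\rangle}_{\infty,\tilde{k},\tilde{h}}(\nu_j,\nu_{j'};\xi)$ is unchanged under interchange of its two parameters, hence equals $V^{\langle\mathcal{S}^{\prime-}_i;\infty\xi\rangle}_{\infty,\tilde{k},\tilde{h}}(\nu_1,\nu_2;\xi)$ for both $j$, so it pulls out of the $j$-sum. Since the product $\Gamma(\mu_{\tilde{k}}-\nu_j)\Gamma(\mu_{\tilde{k}}-\nu_{j'})=\Gamma(\mu_{\tilde{k}}-\nu_1)\Gamma(\mu_{\tilde{k}}-\nu_2)$ and $\Gamma(\nu_j+1)$ cancels, the two minus signs combine to a plus, the factor $\Gamma(\mu_{\tilde{k}}+1)\Gamma(\mu_{\tilde{k}}-\nu_1-\nu_2)/(\Gamma(\mu_{\tilde{k}}-\nu_1)\Gamma(\mu_{\tilde{k}}-\nu_2))$ factors out, and the residual $j$-sum is exactly $\sum_{j}e^{-\pi\sqrt{-1}\nu_j}\sin\pi\nu_{j'}/\sin\pi(\nu_{j'}-\nu_j)=1$ by (\ref{identity:sin}). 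This produces (\ref{exp:Vt'ixi_by_Vinfxi_S'-}).

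For $\mathcal{S}^{\prime+}_i$ the only modification is in the first step: there $W^{\langle\mathcal{S}^{\prime+}_i;\eta_0\xi\rangle}_{t'_i,k,h}$ uses the branch $\arg(\zeta-t'_i)\in(\theta'_i-2\pi,\theta'_i-\pi)$, which is the $\check{\mathcal{P}}'$ branch of $w_{t'_i,k,h}(-\nu_j-1;\zeta)$ shifted by $-2\pi$, hence multiplied by $e^{-2\pi\sqrt{-1}(-\nu_j-1+\lambda_{i,k})}=e^{-2\pi\sqrt{-1}\lambda_{i,k}}e^{2\pi\sqrt{-1}\nu_j}$; the $w_{\infty,\tilde{k},\tilde{h}}$ branches are unaffected since $\arg(\zeta-\eta_0)$ remains in the $\check{\mathcal{P}}'$ range throughout $\overline{\eta_0\xi}$. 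The $j$-independent factor $e^{-2\pi\sqrt{-1}\lambda_{i,k}}$ comes out front, while the surplus $e^{2\pi\sqrt{-1}\nu_j}$ turns the phase in the final sum from $e^{-\pi\sqrt{-1}\nu_j}$ into $e^{\pi\sqrt{-1}\nu_j}$, and a short computation shows $\sum_{j}e^{\pi\sqrt{-1}\nu_j}\sin\pi\nu_{j'}/\sin\pi(\nu_{j'}-\nu_j)=1$ as well, yielding (\ref{exp:Vt'ixi_by_Vinfxi_S'+}). I expect the main obstacle to be precisely this branch bookkeeping in the $\mathcal{S}^{\prime+}_i$ case: fixing the exact exponent of the monodromy factor by comparing the sector convention (\ref{arg:xi-t'i}) with the $\check{\mathcal{P}}'$ convention used for (\ref{conn_formula:t'i_infty}), and verifying that the altered exponential sum still collapses to $1$.
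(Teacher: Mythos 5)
Your argument for \eqref{exp:Vt'ixi_by_Vinfxi_S'-} is exactly the paper's: substitute \eqref{conn_formula:t'i_infty} with $\rho=-\nu_j-1$ into $W^{\langle\mathcal{S}^{\prime-}_i;\eta_0\xi\rangle}_{t'_i,k,h}(\nu_j,\nu_{j'};\xi)$, feed the result into \eqref{exp:Vt'i_by_Veta0xi}, cancel the $\lambda_{i,k}$-factors via \eqref{conn_coef:t'i_infty}, convert to $V^{\langle\cdot;\infty\xi\rangle}_{\infty,\tilde{k},\tilde{h}}$ by \eqref{exp:Weta0xi_by_Vinfxi}, and collapse the $j$-sum with \eqref{identity:sin}; all the $\Gamma$-bookkeeping you describe checks out. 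Where you genuinely diverge is \eqref{exp:Vt'ixi_by_Vinfxi_S'+}. The paper does \emph{not} insert a branch-correction scalar into the connection formula; instead it analytically continues $W^{\langle\mathcal{S}^{\prime-}_i;\eta_0\xi\rangle}_{t'_i,k,h}$ and $W^{\langle\mathcal{S}^{\prime-}_i;\eta_0\xi\rangle}_{\infty,k,h}$ across $\Sigma^{(1,\infty)}_i$ by deforming $\overline{\eta_0\xi}$ to a path $\gamma(\eta_0\xi)$ (Figure \ref{fig:analyticconti_basesol}), on which \eqref{conn_formula:t'i_infty} still holds with the $\check{\mathcal{P}}'$ branches, and then derives the two path-decomposition relations $W^{\gamma(\eta_0\xi)}_{t'_i,k,h}=e^{2\pi\sqrt{-1}\lambda_{i,k}}W^{\langle\mathcal{S}^{\prime+}_i;t'_i\xi\rangle}_{t'_i,k,h}+W^{\langle\mathcal{S}^{\prime-}_i;\eta_0 t'_i\rangle}_{t'_i,k,h}$ and $W^{\gamma(\eta_0\xi)}_{\infty,k,h}=e^{2\pi\sqrt{-1}\nu_1}W^{\langle\mathcal{S}^{\prime+}_i;\infty\xi\rangle}_{\infty,k,h}+W^{\langle\mathcal{S}^{\prime-}_i;\eta_0\infty\rangle}_{\infty,k,h}$, which replace the Cauchy relations of Theorem \ref{thm:rel_cauchy} and let it rerun the $\mathcal{S}^{\prime-}$ derivation verbatim. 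Your shortcut — observing that on the $\mathcal{S}^{\prime+}_i$ side the Table/$\mathcal{P}'$ determination of $w_{t'_i,k,h}(-\nu_j-1;\cdot)$ is the $\check{\mathcal{P}}'$ determination times $e^{-2\pi\sqrt{-1}(\lambda_{i,k}-\nu_j-1)}$ while the $w_{\infty}$ branches agree, then reusing the already-proved $+$ versions of \eqref{exp:Vt'i_by_Veta0xi} and \eqref{exp:Weta0xi_by_Vinfxi} — is correct (the exponent is right, the modified exponential sum is indeed $1$, and the $e^{-2\pi\sqrt{-1}\lambda_{i,k}}$ prefactor emerges), and it is arguably more economical since it avoids re-deriving the whole chain for the new path. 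What you buy with the paper's route is that the branch bookkeeping is discharged once and for all by a picture: the claim that the scalar $e^{-2\pi\sqrt{-1}(\rho+\lambda_{i,k})}$ relates the two determinations along the \emph{entire} segment $\overline{\eta_0\xi}$, not merely near the cut $[\eta_0,t'_i]$, is precisely the homotopy statement encoded in $\gamma(\eta_0\xi)$; you correctly flag this as the one point still to be pinned down, and to make your version airtight you would need essentially that same deformation argument (or an appeal to single-valuedness of both determinations on a neighborhood of the segment joined to the near-$t'_i$ region).
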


\begin{proof}
First, we prove (\ref{exp:Vt'ixi_by_Vinfxi_S'-}).
Substituting (\ref{conn_formula:t'i_infty}) with $\rho=-\nu_{j}-1$
into the integral
$W^{\langle\mathcal{S}^{\prime-}_i;\eta_0\xi\rangle}
_{t'_i,k,h}(\nu_{j},\nu_{j'};\xi)$,
we have
\begin{displaymath}
W^{\langle\mathcal{S}^{\prime-}_i;\eta_0\xi\rangle}
_{t'_i,k,h}(\nu_{j},\nu_{j'};\xi)
=
\sum_{\tilde{k}=1}^{q} \sum_{\tilde{h}=1}^{m_{\tilde{k}}}
c_{\infty,\tilde{k},\tilde{h};t'_i,k,h}(-\nu_{j}-1)
W^{\langle\mathcal{S}^{\prime-}_i;\eta_0\xi\rangle}
_{\infty,\tilde{k},\tilde{h}}(\nu_{j},\nu_{j'};\xi).
\end{displaymath}
Substituting this relation into (\ref{exp:Vt'i_by_Veta0xi})
and then
substituting (\ref{conn_coef:t'i_infty}) with $\rho=-\nu_{j}-1$
into the result,
we obtain
\begin{displaymath}
V^{\langle\mathcal{S}^{\prime-}_i;t'_i \xi\rangle}
_{t'_i,k,h}(\nu_{1},\nu_{2};\xi)
=
-
\sum_{\tilde{k}=1}^{q} \sum_{\tilde{h}=1}^{m_{\tilde{k}}}
\sum_{j=1}^{2}
\frac{e^{-\pi\sqrt{-1}\nu_{j}}\sin\pi\nu_{j'}\Gamma(\mu_{\tilde{k}}+1)}
 {\sin\pi(\nu_{j'}-\nu_{j})\Gamma(\nu_{j}+1)\Gamma(\mu_{\tilde{k}}-\nu_{j})}
c_{\infty,\tilde{k},\tilde{h};t'_i,k,h}
W^{\langle\mathcal{S}^{\prime-}_i;\eta_0\xi\rangle}
_{\infty,\tilde{k},\tilde{h}}(\nu_{j},\nu_{j'};\xi).
\end{displaymath}
Lastly,
substituting (\ref{exp:Weta0xi_by_Vinfxi}),
we have
\begin{displaymath}
\begin{aligned}
V^{\langle\mathcal{S}^{\prime-}_i;t'_i \xi\rangle}
_{t'_i,k,h}(\nu_{1},\nu_{2};\xi)
&=
\sum_{\tilde{k}=1}^{q} \sum_{\tilde{h}=1}^{m_{\tilde{k}}}
\sum_{j=1}^{2}
\frac{e^{-\pi\sqrt{-1}\nu_{j}}\sin\pi\nu_{j'}}
 {\sin\pi(\nu_{j'}-\nu_{j})}
\frac{\Gamma(\mu_{\tilde{k}}+1)\Gamma(\mu_{k}-\nu_{1}-\nu_{2})}
       {\Gamma(\mu_{\tilde{k}}-\nu_{1})\Gamma(\mu_{\tilde{k}}-\nu_{2})}
c_{\infty,\tilde{k},\tilde{h};t'_i,k,h}
V^{\langle\mathcal{S}^{\prime-}_i;\infty\xi\rangle}
_{\infty,\tilde{k},\tilde{h}}(\nu_{1},\nu_{2};\xi)
\\
&\phantom{{}={}}
+\mathrm{hol}\left(\frac{1}{\xi-\eta_0}\right),
\end{aligned}
\end{displaymath}
which leads to (\ref{exp:Vt'ixi_by_Vinfxi_S'-}) by virtue of
the identity (\ref{identity:sin}).

\begin{figure}
 \centering
 \includegraphics{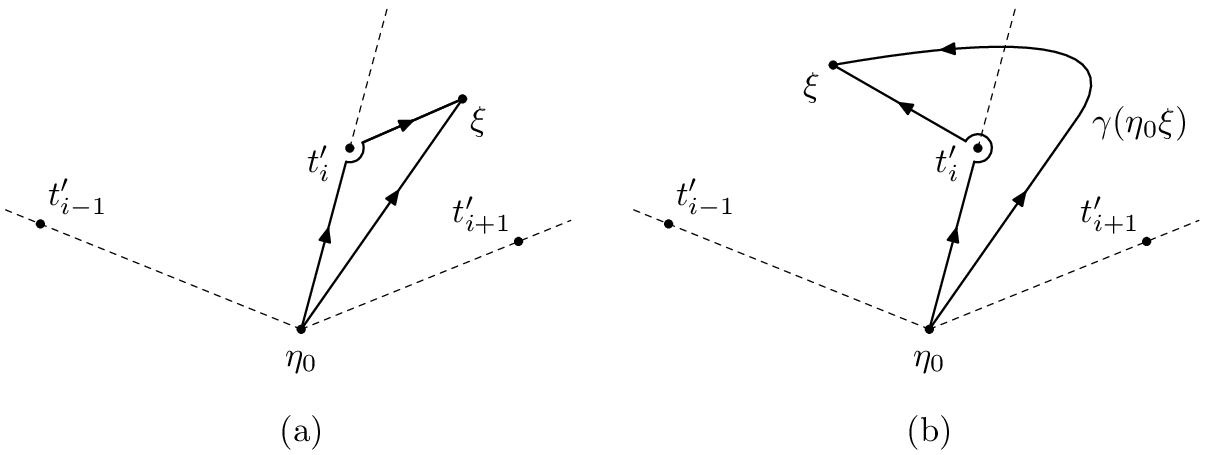}
 \caption{Deformation of the path $\overline{\eta_0 \xi}$}
 \label{fig:analyticconti_basesol}
\end{figure}

Next, we prove (\ref{exp:Vt'ixi_by_Vinfxi_S'+}).
Note that the assignment of argument of $\zeta-t'_i$
in $\mathcal{S}^{\prime+}_i$
differs from that in $\check{\mathcal{P}}'$.
We analytically continue
$W^{\langle\mathcal{S}^{\prime-}_i;\eta_0\xi\rangle}
_{t'_i,k,h}$
and
$W^{\langle\mathcal{S}^{\prime-}_i;\eta_0\xi\rangle}
_{\infty,k,h}$
across the ray $\Sigma^{(1,\infty)}_i$
into $\mathcal{S}^{\prime+}_i$.
The analytic continuation is found by deforming the path
of integration $\overline{\eta_0 \xi}$ to the path $\gamma(\eta_0\xi)$
indicated in Figure \ref{fig:analyticconti_basesol} (b),
and the relation
\begin{displaymath}
W^{\langle\mathcal{S}^{\prime-}_i;\gamma(\eta_0\xi)\rangle}
_{t'_i,k,h}(\nu_{j},\nu_{j'};\xi)
=\sum_{\tilde{k}=1}^{q} \sum_{\tilde{h}=1}^{m_{\tilde{k}}}
c_{\infty,\tilde{k},\tilde{h};t'_i,k,h}(-\nu_{j}-1)
W^{\langle\mathcal{S}^{\prime-}_i;\gamma(\eta_0\xi)\rangle}
_{\infty,\tilde{k},\tilde{h}}(\nu_{j},\nu_{j'};\xi)
\end{displaymath}
holds.
Here, from Figure \ref{fig:analyticconti_basesol} (b)
we see that the relation
\begin{displaymath}
W^{\langle\mathcal{S}^{\prime-}_i;\gamma(\eta_0\xi)\rangle}
_{t'_i,k,h}(\nu_{1},\nu_{2};\xi)
-
e^{2\pi\sqrt{-1}\lambda_{i,k}}
W^{\langle\mathcal{S}^{\prime+}_i;t'_i\xi\rangle}
_{t'_i,k,h}(\nu_{1},\nu_{2};\xi)
-
W^{\langle\mathcal{S}^{\prime-}_i;\eta_0t'_i\rangle}
_{t'_i,k,h}(\nu_{1},\nu_{2};\xi)
=0
\end{displaymath}
holds.  
Besides, from Theorem \ref{thm:asymp_Winf_xi_inf}
we see that the relation
\begin{displaymath}
W^{\langle\mathcal{S}^{\prime-}_i;\gamma(\eta_0\xi)\rangle}
_{\infty,k,h}(\nu_{1},\nu_{2};\xi)
-
e^{2\pi\sqrt{-1}\nu_{1}}
W^{\langle\mathcal{S}^{\prime+}_i;\infty\xi\rangle}
_{\infty,k,h}(\nu_{1},\nu_{2};\xi)
-
W^{\langle\mathcal{S}^{\prime-}_i;\eta_0 \infty\rangle}
_{\infty,k,h}(\nu_{1},\nu_{2};\xi)
=0
\end{displaymath}
holds.  
Using these relations instead of the first and the second relations
of Theorem \ref{thm:rel_cauchy},
we can establish the relation (\ref{exp:Vt'ixi_by_Vinfxi_S'+})
in a similar way to (\ref{exp:Vt'ixi_by_Vinfxi_S'-}).
\end{proof}


\section{Connection formulas}
\label{sec:connformulas}
Taking account of the transformation
(\ref{change:xitox}),
we see that the integrals
\begin{displaymath}
W^{\langle\mathcal{S}^{\prime\pm}_i;ab\rangle}
_{\mathit{sing},k,h}(\rho_{j},\rho_{j'};\eta_0+\frac{1}{x-t_{p+1}}),
\quad
V^{\langle\mathcal{S}^{\prime\pm}_i;ab\rangle}
_{\mathit{sing},k,h}(\rho_{1},\rho_{2};\eta_0+\frac{1}{x-t_{p+1}})
\end{displaymath}
become solutions of the system (\ref{sys:E2}).
We shall establish relations between these integrals
and the local solutions $U_{\mathit{sing},k,h}(x)$
defined in Section \ref{sec:localsol},
and then rewrite the connection formulas among
the integrals
$W^{\langle\mathcal{S}^{\prime\pm}_i;ab\rangle}
_{\mathit{sing},k,h}(\rho_{j},\rho_{j'};\xi)$
and
$V^{\langle\mathcal{S}^{\prime\pm}_i;ab\rangle}
_{\mathit{sing},k,h}(\rho_{1},\rho_{2};\xi)$
by means of $U_{\mathit{sing},k,h}(x)$.

From now on in the case that the endpoint $b$ of
the path $\overline{ab}$ is equal to $\xi$
we omit $\xi$ in the superscript, namely, we write
\begin{displaymath}
\begin{aligned}
W^{\langle\mathcal{S}^{\prime\pm}_i;\eta_0\rangle}
_{t'_i,k,h}
&=W^{\langle\mathcal{S}^{\prime\pm}_i;\eta_0\xi\rangle}
_{t'_i,k,h},
&
V^{\langle\mathcal{S}^{\prime\pm}_i;t'_i\rangle}
_{t'_i,k,h}
&=V^{\langle\mathcal{S}^{\prime\pm}_i;t'_i\xi\rangle}
_{t'_i,k,h},
\\
W^{\langle\mathcal{S}^{\prime\pm}_i;\eta_0\rangle}
_{\infty,k,h}
&=W^{\langle\mathcal{S}^{\prime\pm}_i;\eta_0\xi\rangle}
_{\infty,k,h},
&
V^{\langle\mathcal{S}^{\prime\pm}_i;\infty\rangle}
_{\infty,k,h}
&=V^{\langle\mathcal{S}^{\prime\pm}_i;\infty\xi\rangle}
_{\infty,k,h}.
\end{aligned}
\end{displaymath}
For $1\leq i\leq p$ we set
\begin{displaymath}
\begin{aligned}
\mathcal{S}^{+}_i
&=\{x\mid\theta_i<\arg(x-t_{p+1})<\min(\theta_{i+1}-\delta,\,\theta_i+\pi)\},
\\
\mathcal{S}^{-}_i
&=\{x\mid\max(\theta_{i-1}+\delta,\,\theta_i-\pi)<\arg(x-t_{p+1})<\theta_i\},
\end{aligned}
\end{displaymath}
where $\theta_0=\theta_{p+1}-2\pi$.
Note that
$\xi\in\mathcal{S}^{\prime-}_i$
(resp.\ $\mathcal{S}^{\prime+}_i$)
corresponds to
$x\in\mathcal{S}^{+}_i$
(resp.\ $\mathcal{S}^{-}_i$)
through the transformation (\ref{change:xitox}).

\subsection{Generic case}
First, we consider the connection formulas for the system (\ref{sys:E2})
in the case that
none of the $\rho_{j}$'s is an eigenvalue of the matrix $A$.
We assume the conditions
(\ref{assumption:E2_1})--(\ref{assumption:E2_4}),
(\ref{assumption:E2_0})
and
(\ref{assumption:underlying}).

\begin{proposition} \label{thm:V_Uinf}
For $1\leq i\leq p$ we have
\begin{equation} \label{relation:Wt'i_Uinf}
\begin{aligned}
W^{\langle\mathcal{S}^{\prime\pm}_i;\eta_0\rangle}
_{t'_i,k,h}(\rho_{j},\rho_{j'};\eta_0+\frac{1}{x-t_{p+1}})
=
 \frac{\Gamma(\rho_{j}+1)\Gamma(-\rho_{j'})}{\Gamma(\rho_{j}-\rho_{j'}+1)}
 \sum_{\tilde{h}=1}^{n}
 \gamma_{\tilde{h};t'_i,k,h}(-\rho_{j}-1)
 U_{\infty,j',\tilde{h}}(x)
\\
(1\leq j\leq 2, \ 1\leq k\leq r_i, \ 1\leq h\leq \ell_{i,k})
\end{aligned}
\end{equation}
for $x\in\mathcal{S}^{\mp}_i$,
where
\begin{displaymath}
\gamma_{\tilde{h};t'_i,k,h}(\rho)
=
[P^{-1}]_{\tilde{h}}(\eta_0 I_n-T')w_{t'_i,k,h}(\rho;\eta_0)
\quad
(1\leq \tilde{h}\leq n),
\end{displaymath}
and
\begin{equation} \label{relation:Winf_Uinf}
\begin{aligned}
W^{\langle\mathcal{S}^{\prime\pm}_i;\eta_0\rangle}
_{\infty,k,h}(\rho_{j},\rho_{j'};\eta_0+\frac{1}{x-t_{p+1}})
=
 \frac{\Gamma(\rho_{j}+1)\Gamma(-\rho_{j'})}{\Gamma(\rho_{j}-\rho_{j'}+1)}
 \sum_{\tilde{h}=1}^{n}
 \gamma^{\langle\mathcal{S}^{\prime\pm}_{i}\rangle}
       _{\tilde{h};\infty,k,h}(-\rho_{j}-1)
 U_{\infty,j',\tilde{h}}(x)
\\
(1\leq j\leq 2, \ 1\leq k\leq q, \ 1\leq h\leq m_{k})
\end{aligned}
\end{equation}
for $x\in\mathcal{S}^{\mp}_i$,
where
\begin{displaymath}
\gamma^{\langle\mathcal{S}^{\prime\pm}
 _{i}\rangle}_{\tilde{h};\infty,k,h}(\rho)
=
[P^{-1}]_{\tilde{h}}
(\eta_0 I_n-T')w_{\infty,k,h}|_{\mathcal{S}^{\prime\pm}_{i}}(\rho;\eta_0)
\quad
(1\leq \tilde{h}\leq n).
\end{displaymath}
\end{proposition}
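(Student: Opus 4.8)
The plan is to prove both identities by the same mechanism: after the substitution $\xi=\eta_0+\frac{1}{x-t_{p+1}}$, the left-hand side is a solution of (\ref{sys:E2}) belonging to the exponent $-\rho_{j'}$ at $x=\infty$, and it has exactly the same leading coefficient there as the right-hand side; uniqueness of the Frobenius solution attached to a non-resonant exponent then forces equality.

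First I would verify that $W^{\langle\mathcal{S}^{\prime\pm}_i;\eta_0\rangle}_{t'_i,k,h}(\rho_{j},\rho_{j'};\eta_0+\frac{1}{x-t_{p+1}})$ solves (\ref{sys:E2}). Putting $\nu_{1}=\rho_{j}$, $\nu_{2}=\rho_{j'}$ in (\ref{sys:forW}), the coefficient matrix becomes $\mathcal{A}_{P}$, since both $(A'-\nu_{1}I_n)(A'-\nu_{2}I_n)$ and $\nu_{1}+\nu_{2}$ are symmetric in $\nu_{1},\nu_{2}$, so the matrix is the same whether $(\nu_{1},\nu_{2})=(\rho_{1},\rho_{2})$ or $(\rho_{2},\rho_{1})$; the change of variables (\ref{change:xitox}), precisely as in Proposition \ref{prop:2n-integral}, then carries (\ref{sys:forW}) into (\ref{sys:E2}). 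Each $U_{\infty,j',\tilde{h}}(x)$ is a solution of (\ref{sys:E2}) by construction, so both sides are solutions.

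Next I would pin down the local structure of the left-hand side at $x=\infty$. The series displayed in the proof of Theorem \ref{thm:asymp_W_eta0_xi} shows that $W^{\langle\mathcal{S}^{\prime\pm}_i;\eta_0\xi\rangle}_{t'_i,k,h}(\rho_{j},\rho_{j'};\xi)$ equals $(\xi-\eta_0)^{-\rho_{j'}}$ times a function holomorphic at $\xi=\eta_0$; since $\xi-\eta_0=\frac{1}{x-t_{p+1}}$, the left-hand side is a solution of (\ref{sys:E2}) of the form $\left(\frac{1}{x-t_{p+1}}\right)^{-\rho_{j'}}\times(\text{holomorphic in }\frac{1}{x-t_{p+1}})$. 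Because the two exponents $-\rho_{1},-\rho_{2}$ differ by the non-integer $\rho_{1}-\rho_{2}$ (assumption (\ref{assumption:E2_3})), such a solution has no $\left(\frac{1}{x-t_{p+1}}\right)^{-\rho_{j}}$-component and therefore lies in the span of $\{U_{\infty,j',\tilde{h}}\}_{\tilde{h}=1}^{n}$; moreover the Frobenius recursion determines it uniquely from its leading coefficient.

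Finally I would match leading coefficients. By (\ref{asymp:W_eta0xi_t'i}) with $\nu_{1}=\rho_{j}$, $\nu_{2}=\rho_{j'}$, the leading coefficient of the left side is $\frac{\Gamma(\rho_{j}+1)\Gamma(-\rho_{j'})}{\Gamma(\rho_{j}-\rho_{j'}+1)}\begin{pmatrix}P\\\rho_{j'}I_n-A'\end{pmatrix}P^{-1}(\eta_0 I_n-T')w_{t'_i,k,h}(-\rho_{j}-1;\eta_0)$. On the right side $G_{\infty,j',\tilde{h}}(0)=\begin{pmatrix}P\\\rho_{j'}I_n-A'\end{pmatrix}\varepsilon_n(\tilde{h})$, so summing $\gamma_{\tilde{h};t'_i,k,h}(-\rho_{j}-1)=[P^{-1}(\eta_0 I_n-T')w_{t'_i,k,h}(-\rho_{j}-1;\eta_0)]_{\tilde{h}}$ against $\varepsilon_n(\tilde{h})$ collapses the leading coefficient to the identical expression. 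Since the coefficients agree, the two $\left(\frac{1}{x-t_{p+1}}\right)^{-\rho_{j'}}$-type solutions coincide; the correspondence $\xi\in\mathcal{S}^{\prime\pm}_i\leftrightarrow x\in\mathcal{S}^{\mp}_i$ yields the stated sector. The identity (\ref{relation:Winf_Uinf}) follows verbatim with Theorem \ref{thm:asymp_Winf_eta0_xi} in place of Theorem \ref{thm:asymp_W_eta0_xi}, the only change being $w_{t'_i,k,h}$ replaced by its analytic continuation $w_{\infty,k,h}|_{\mathcal{S}^{\prime\pm}_{i}}$ through the sector. The step needing the most care is the purity claim in the third paragraph, namely that the integral contributes nothing to the $-\rho_{j}$ exponent; this rests on reading off the full power series in the proof of Theorem \ref{thm:asymp_W_eta0_xi} together with the non-resonance $\rho_{1}-\rho_{2}\notin\mathbb{Z}$, and not merely on the leading asymptotic.
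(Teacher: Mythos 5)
Your proposal is correct and follows essentially the same route as the paper: the paper's proof likewise reads off the leading behavior at $x=\infty$ from Theorem \ref{thm:asymp_W_eta0_xi} (resp.\ Theorem \ref{thm:asymp_Winf_eta0_xi}), identifies the initial term with $\sum_{\tilde h}\gamma_{\tilde h;t'_i,k,h}(-\rho_j-1)G_{\infty,j',\tilde h}(0)$, and concludes by uniqueness of the local solution attached to the exponent $-\rho_{j'}$. The details you supply — that (\ref{sys:forW}) specializes to (\ref{sys:E2}), and that the full series expansion plus $\rho_1-\rho_2\notin\mathbb{Z}$ rules out any $-\rho_j$-component — are exactly what the paper leaves implicit.
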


\begin{proof}
Under the transformation (\ref{change:xitox}),
$\xi\longrightarrow\eta_0$ is equivalent to
$x\longrightarrow\infty$.
From Theorem \ref{thm:asymp_W_eta0_xi} we find
\begin{displaymath}
\begin{aligned}
&
\frac{\Gamma(\rho_{j}-\rho_{j'}+1)}{\Gamma(\rho_{j}+1)\Gamma(-\rho_{j'})}
W^{\langle\mathcal{S}^{\prime\pm}_i;\eta_0\rangle}
_{t'_i,k,h}(\rho_{j},\rho_{j'};\eta_0+\frac{1}{x-t_{p+1}})
\\
&=\left(x-t_{p+1}\right)^{\rho_{j'}}
\left\{
\begin{pmatrix}
P \\
\rho_{j'} I_n-A'
\end{pmatrix}
P^{-1}(\eta_0 I_n-T')
w_{t'_i,k,h}(-\rho_{j}-1;\eta_0)
 + O\left(\frac{1}{x-t_{p+1}}\right)
\right\}
\end{aligned}
\end{displaymath}
as $x\longrightarrow\infty$, $x\in\mathcal{S}^{\mp}_i$.
Here the initial term satisfies
\begin{displaymath}
\begin{pmatrix}
P \\
\rho_{j'} I_n-A'
\end{pmatrix}
P^{-1}(\eta_0 I_n-T')
w_{t'_i,k,h}(-\rho_{j}-1;\eta_0)
=
 \sum_{\tilde{h}=1}^{n}
 \gamma_{\tilde{h};t'_i,k,h}(-\rho_{j}-1)
 G_{\infty,j',\tilde{h}}(0),
\end{displaymath}
which implies (\ref{relation:Wt'i_Uinf}).
Similarly,
from Theorem \ref{thm:asymp_Winf_eta0_xi}
we obtain (\ref{relation:Winf_Uinf}).
\end{proof}

For $x\in\mathcal{S}^{\pm}_i$ we determine the assignment of argument of
$x-t_i$ as
\begin{equation} \label{arg:x-ti}
\begin{aligned}
 \arg(x-t_i) &\in (\theta_i,\,\theta_i+\pi)
 && \text{for\ }x\in\mathcal{S}^{+}_i, \\
 \arg(x-t_i) &\in (\theta_i-\pi,\,\theta_i)
 && \text{for\ }x\in\mathcal{S}^{-}_i.
  \end{aligned}
\end{equation}
Under the assignment (\ref{arg:xi-t'i}) and (\ref{arg:x-ti})
we have
\begin{displaymath}
\xi-t'_i
=\frac{e^{-\pi\sqrt{-1}}}{t_i-t_{p+1}}\cdot\frac{x-t_i}{x-t_{p+1}}.
\end{displaymath}

\begin{proposition} \label{thm:V_Uti}
For $1\leq i\leq p$ we have
\begin{equation} \label{relation:Vt'i_Uti}
\begin{aligned}
V^{\langle\mathcal{S}^{\prime\pm}_i;t'_i \rangle}
_{t'_i,k,h}(\rho_{1},\rho_{2};\eta_0+\frac{1}{x-t_{p+1}})
=
e^{-\pi\sqrt{-1}\lambda_{i,k}}
(t_i-t_{p+1})^{\rho_{1}+\rho_{2}-2\lambda_{i,k}}
U_{t_i,k,h}(x)
\quad 
(1\leq k\leq r_i, \ 1\leq h\leq \ell_{i,k})
\end{aligned}
\end{equation}
for $x\in\mathcal{S}^{\mp}_i$.
\end{proposition}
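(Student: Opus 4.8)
The plan is to identify both sides of (\ref{relation:Vt'i_Uti}) as solutions of the system (\ref{sys:E2}) and to match them through their leading behavior at $x=t_i$, invoking uniqueness of the local solution $U_{t_i,k,h}$. First I would record that the left-hand side is indeed a solution of (\ref{sys:E2}): by the discussion opening Section \ref{sec:connformulas}, substituting the parameters $(\nu_{1},\nu_{2})=(\rho_{1},\rho_{2})$ together with $\xi=\eta_0+1/(x-t_{p+1})$ turns $V^{\langle\mathcal{S}^{\prime\pm}_i;t'_i\xi\rangle}_{t'_i,k,h}$ into a solution of (\ref{sys:E2}), while the right-hand side is a solution because $U_{t_i,k,h}(x)$ is. Since the exponents $\lambda_{i,k}$ are nonresonant by (\ref{assumption:E2_1}), the solution of (\ref{sys:E2}) whose leading term at $t_i$ is $(x-t_i)^{\lambda_{i,k}}\varepsilon_{2n}(n_1+\cdots+n_{i-1}+\ell_{i,1}+\cdots+\ell_{i,k-1}+h)$ is unique. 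Thus it suffices to show that the left-hand side has exactly this leading term, multiplied by the claimed scalar $e^{-\pi\sqrt{-1}\lambda_{i,k}}(t_i-t_{p+1})^{\rho_{1}+\rho_{2}-2\lambda_{i,k}}$.

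Next I would unwind the definition of $V^{\langle\mathcal{S}^{\prime\pm}_i;t'_i\xi\rangle}_{t'_i,k,h}$ in terms of $W^{\langle\mathcal{S}^{\prime\pm}_i;t'_i\xi\rangle}_{t'_i,k,h}$ and insert the asymptotic formula (\ref{asymp:W_xi_t'i}) of Theorem \ref{thm:asymp_W_t'i_xi}, specialized to $\nu_{1}=\rho_{1}$, $\nu_{2}=\rho_{2}$ (the hypotheses $\rho_{1}\notin\mathbb{Z}_{<0}$ and $\rho_{1}-\lambda_{i,k}\notin\mathbb{Z}_{\geq0}$ follow from (\ref{assumption:E2_3}) and (\ref{assumption:E2_4})). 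The $\Gamma$-factors cancel, leaving
\[
V^{\langle\mathcal{S}^{\prime\pm}_i;t'_i\xi\rangle}_{t'_i,k,h}(\rho_{1},\rho_{2};\xi)
=(t'_i-\eta_0)^{-\rho_{1}-\rho_{2}}(\xi-t'_i)^{\lambda_{i,k}}
\bigl\{\varepsilon_{2n}(n_1+\cdots+n_{i-1}+\ell_{i,1}+\cdots+\ell_{i,k-1}+h)+O(\xi-t'_i)\bigr\}
\]
as $\xi\to t'_i$.

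Then I would translate the $\xi$-factors into $x$-factors. Since $t'_i-\eta_0=1/(t_i-t_{p+1})$, one has $(t'_i-\eta_0)^{-\rho_{1}-\rho_{2}}=(t_i-t_{p+1})^{\rho_{1}+\rho_{2}}$. Using the relation $\xi-t'_i=e^{-\pi\sqrt{-1}}(t_i-t_{p+1})^{-1}(x-t_i)/(x-t_{p+1})$ displayed just before the proposition together with $(x-t_{p+1})^{\lambda_{i,k}}\to(t_i-t_{p+1})^{\lambda_{i,k}}$ as $x\to t_i$, I obtain
\[
(\xi-t'_i)^{\lambda_{i,k}}
=e^{-\pi\sqrt{-1}\lambda_{i,k}}(t_i-t_{p+1})^{-2\lambda_{i,k}}(x-t_i)^{\lambda_{i,k}}\bigl(1+O(x-t_i)\bigr).
\]
Multiplying the two factors produces the prefactor $e^{-\pi\sqrt{-1}\lambda_{i,k}}(t_i-t_{p+1})^{\rho_{1}+\rho_{2}-2\lambda_{i,k}}$ times $(x-t_i)^{\lambda_{i,k}}\varepsilon_{2n}(\cdots)$, which is exactly the leading term of $e^{-\pi\sqrt{-1}\lambda_{i,k}}(t_i-t_{p+1})^{\rho_{1}+\rho_{2}-2\lambda_{i,k}}U_{t_i,k,h}(x)$; uniqueness then closes the argument.

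The main obstacle is the branch bookkeeping. Every sign and the phase $e^{-\pi\sqrt{-1}\lambda_{i,k}}$ hinge on the argument assignments (\ref{arg:xi-t'i}) for $\arg(\xi-t'_i)$ and (\ref{arg:x-ti}) for $\arg(x-t_i)$, and on the consistency $\theta'_i=-\theta_i$ which makes $(t'_i-\eta_0)^{-\rho_{1}-\rho_{2}}=(t_i-t_{p+1})^{\rho_{1}+\rho_{2}}$ hold with $\arg(t_i-t_{p+1})=\theta_i$. I would check that in both sectors $\mathcal{S}^{\prime-}_i$ and $\mathcal{S}^{\prime+}_i$ these conventions force the single value $e^{-\pi\sqrt{-1}}$ in the relation between $\xi-t'_i$ and $(x-t_i)/(x-t_{p+1})$, so that no residual double-sign survives in the final phase—consistent with the right-hand side of (\ref{relation:Vt'i_Uti}) being independent of the sign of the sector. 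Once these branch determinations are verified, the matching of leading coefficients and the uniqueness of $U_{t_i,k,h}$ complete the proof.
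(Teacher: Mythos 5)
Your proposal is correct and follows essentially the same route as the paper's proof: both substitute the relation $\xi-t'_i=\dfrac{e^{-\pi\sqrt{-1}}}{(t_i-t_{p+1})^2}(x-t_i)\{1+O(x-t_i)\}$ into the asymptotic formula (\ref{asymp:W_xi_t'i}), observe that the $\Gamma$-factors in the definition of $V^{\langle\mathcal{S}^{\prime\pm}_i;t'_i\xi\rangle}_{t'_i,k,h}$ cancel, and match the resulting leading term $(x-t_i)^{\lambda_{i,k}}\varepsilon_{2n}(\cdots)$ against the unique local solution $U_{t_i,k,h}$. Your additional explicit remarks on the branch assignments (\ref{arg:xi-t'i}), (\ref{arg:x-ti}) and on nonresonance only make precise what the paper leaves implicit.
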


\begin{proof}
Note that
\begin{equation} \label{relation:xi-t'i_x-ti}
\xi-t'_i
=
\frac{e^{-\pi\sqrt{-1}}}{(t_i-t_{p+1})^2}
\cdot(x-t_i)
\cdot\{1+O(x-t_i)\}
\end{equation}
as $x\longrightarrow t_i$,
where
\begin{displaymath}
\arg\{1+O(x-t_i)\}
=\arg\left(\frac{t_i-t_{p+1}}{x-t_{p+1}}\right)
\in(-\pi,\,\pi).
\end{displaymath}
Substituting (\ref{relation:xi-t'i_x-ti})
into (\ref{asymp:W_xi_t'i}),
we find
\begin{displaymath}
\begin{aligned}
V^{\langle\mathcal{S}^{\prime\pm}_i;t'_i \rangle}
_{t'_i,k,h}(\rho_{1},\rho_{2};\eta_0+\frac{1}{x-t_{p+1}})
&=
\left(\frac{1}{t_i-t_{p+1}}\right)^{-\rho_{1}-\rho_{2}}
\left(
\frac{e^{-\pi\sqrt{-1}}}{(t_i-t_{p+1})^2}
\right)^{\lambda_{i,k}}
\\
&\phantom{{}={}}
{}\times
(x-t_i)^{\lambda_{i,k}}
\left\{\varepsilon_{2n}
(n_1+\cdots+n_{i-1}+\ell_{i,1}+\cdots+\ell_{i,k-1}+h)
+O(x-t_i)\right\}
\end{aligned}
\end{displaymath}
as $x\longrightarrow t_i$, $x\in\mathcal{S}^{\mp}_i$.
This implies (\ref{relation:Vt'i_Uti}).
\end{proof}

\begin{proposition} \label{thm:V_Utp+1}
We have
\begin{equation} \label{relation:Vinf_Utp+1}
V^{\langle\mathcal{S}^{\prime\pm}_i;\infty\rangle}
_{\infty,k,h}(\rho_{1},\rho_{2};\eta_0+\frac{1}{x-t_{p+1}})
= U_{t_{p+1},k,h}(x)
\quad (1\leq k\leq q, \ 1\leq h\leq m_{k})
\end{equation}
for $x\in\mathcal{S}^{\mp}_i$.
\end{proposition}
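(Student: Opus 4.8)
The plan is to argue exactly as in the proofs of Propositions \ref{thm:V_Uti} and \ref{thm:V_Uinf}: match the leading local behavior of the two sides as $x\to t_{p+1}$ and invoke the uniqueness of local solutions. Recall that, as observed at the opening of this section, the substitution $\xi=\eta_0+\frac{1}{x-t_{p+1}}$ turns $V^{\langle\mathcal{S}^{\prime\pm}_i;\infty\xi\rangle}_{\infty,k,h}(\rho_{1},\rho_{2};\xi)$ into a solution of the system $(\ref{sys:E2})$, and $U_{t_{p+1},k,h}(x)$ is a solution as well; moreover $\xi\in\mathcal{S}^{\prime-}_i$ (resp.\ $\mathcal{S}^{\prime+}_i$) corresponds to $x\in\mathcal{S}^{+}_i$ (resp.\ $\mathcal{S}^{-}_i$), which matches the double-sign convention in the statement. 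It therefore suffices to show that the two solutions agree to leading order at $t_{p+1}$.

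First I would substitute the asymptotic formula $(\ref{asymp:W_xi_inf})$ of Theorem \ref{thm:asymp_Winf_xi_inf}, with $\nu_{1}=\rho_{1}$ and $\nu_{2}=\rho_{2}$, into the definition of $V^{\langle\mathcal{S}^{\prime\pm}_i;\infty\xi\rangle}_{\infty,k,h}$. The point that distinguishes this case from Proposition \ref{thm:V_Uti} is that the normalizing factor in the definition of this $V$-integral is precisely the reciprocal of the $\Gamma$- and exponential factor appearing in $(\ref{asymp:W_xi_inf})$: the product of $-e^{\pm\pi\sqrt{-1}\rho_{1}}\Gamma(\rho_{1}+\rho_{2}-\mu_{k}+1)/\bigl(\Gamma(\rho_{1}+1)\Gamma(\rho_{2}-\mu_{k}+1)\bigr)$ with $-e^{\mp\pi\sqrt{-1}\rho_{1}}\Gamma(\rho_{1}+1)\Gamma(\rho_{2}-\mu_{k}+1)/\Gamma(\rho_{1}+\rho_{2}-\mu_{k}+1)$ collapses to exactly $1$, leaving
\begin{displaymath}
V^{\langle\mathcal{S}^{\prime\pm}_i;\infty\xi\rangle}
_{\infty,k,h}(\rho_{1},\rho_{2};\xi)
=(\xi-\eta_0)^{\mu_{k}-\rho_{1}-\rho_{2}}
\left\{\varepsilon_{2n}(n+m_1+\cdots+m_{k-1}+h)
+O\left(\frac{1}{\xi-\eta_0}\right)\right\}.
\end{displaymath}
Passing to the variable $x$ via $\xi-\eta_0=\frac{1}{x-t_{p+1}}$, under which $\xi\to\infty$ corresponds to $x\to t_{p+1}$ and $O\bigl((\xi-\eta_0)^{-1}\bigr)=O(x-t_{p+1})$, and checking that the argument conventions $(\ref{arg:xi-t'i})$ on $\mathcal{S}^{\prime\pm}_i$ transport through this relation to the branch of the power function used in $U_{t_{p+1},k,h}$, so that $(\xi-\eta_0)^{\mu_{k}-\rho_{1}-\rho_{2}}=(x-t_{p+1})^{\rho_{1}+\rho_{2}-\mu_{k}}$ with multiplier exactly $1$, one obtains, as $x\to t_{p+1}$ with $x\in\mathcal{S}^{\mp}_i$,
\begin{displaymath}
V^{\langle\mathcal{S}^{\prime\pm}_i;\infty\rangle}
_{\infty,k,h}(\rho_{1},\rho_{2};\eta_0+\tfrac{1}{x-t_{p+1}})
=(x-t_{p+1})^{\rho_{1}+\rho_{2}-\mu_{k}}
\left\{\varepsilon_{2n}(n+m_1+\cdots+m_{k-1}+h)+O(x-t_{p+1})\right\}.
\end{displaymath}

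This is precisely the leading behavior of the local solution $U_{t_{p+1},k,h}(x)$ introduced in Section \ref{sec:localsol}, whose defining leading coefficient is $G_{t_{p+1},k,h}(0)=\varepsilon_{2n}(n+m_1+\cdots+m_{k-1}+h)$. To close the argument I would invoke the uniqueness of that local solution: under the non-resonance hypotheses $(\ref{assumption:E2_1})$--$(\ref{assumption:E2_4})$ and $(\ref{assumption:E2_0})$ the exponent $\rho_{1}+\rho_{2}-\mu_{k}$ at $x=t_{p+1}$ differs by a non-integer from the remaining exponents $0$ and $\rho_{1}+\rho_{2}-\mu_{l}$ ($l\ne k$), so no logarithms or contributions of the other exponents can occur, and since the leading coefficient is the single unit vector $\varepsilon_{2n}(n+m_1+\cdots+m_{k-1}+h)$ while the $G_{t_{p+1},k,\tilde h}(0)$ are distinct unit vectors, the solution must coincide with $U_{t_{p+1},k,h}(x)$, giving $(\ref{relation:Vinf_Utp+1})$. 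The only genuinely delicate step is the branch bookkeeping in the passage $\xi\mapsto x$; verifying that it contributes neither an exponential phase nor a power of $t_{i}-t_{p+1}$ is exactly what accounts for the absence here of the prefactor that appears in Proposition \ref{thm:V_Uti}.
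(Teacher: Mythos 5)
Your proposal is correct and follows essentially the same route as the paper's proof: substitute the asymptotic formula of Theorem \ref{thm:asymp_Winf_xi_inf} into the definition of $V^{\langle\mathcal{S}^{\prime\pm}_i;\infty\xi\rangle}_{\infty,k,h}$, observe that the normalizing factor cancels the prefactor exactly, pass to the variable $x$, and conclude by uniqueness of the local solution $U_{t_{p+1},k,h}$. The paper states this more tersely, but the extra detail you supply on the branch bookkeeping and the cancellation is exactly the computation the paper leaves implicit.
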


\begin{proof}
Note that $\xi\longrightarrow\infty$ is equivalent to
$x\longrightarrow t_{p+1}$.
From Theorem \ref{thm:asymp_Winf_xi_inf}
we obtain
\begin{displaymath}
V^{\langle\mathcal{S}^{\prime\pm}_i;\infty\rangle}
_{\infty,k,h}(\rho_{1},\rho_{2};\eta_0+\frac{1}{x-t_{p+1}})
=
\left(x-t_{p+1}\right)^{\rho_{1}+\rho_{2}-\mu_{k}}
\{
\varepsilon_{2n}(n+m_1+\cdots+m_{k-1}+h)
+O \left(x-t_{p+1}\right)
\}
\end{displaymath}
as $x\longrightarrow t_{p+1}$, $x\in\mathcal{S}^{\mp}_i$.
This implies (\ref{relation:Vinf_Utp+1}).
\end{proof}

Combining Theorems
\ref{thm:lincomb_Veta0xi},
\ref{thm:rel_Vti_ti+1} and
\ref{thm:rel_Vti_inf}
with Propositions
\ref{thm:V_Uinf},
\ref{thm:V_Utp+1} and
\ref{thm:V_Uti},
we obtain the following conclusions.

\begin{theorem}
For $1\leq i\leq p$ the coefficients
$C_{\infty,j,\tilde{h};t_i,k,h}$
in the connection formula
\begin{equation} \label{conn_form:Ui_Uinf}
U_{t_i,k,h}(x)
=
\sum_{j=1}^{2}
\sum_{\tilde{h}=1}^{n}
C_{\infty,j,\tilde{h};t_i,k,h}
U_{\infty,j,\tilde{h}}(x)
\quad
(1\leq k\leq r_i, \ 1\leq h\leq \ell_{i,k})
\end{equation}
for $x\in\mathcal{S}^{\pm}_{i}$
are given by
\begin{equation} \label{conn_coef:Ui_Uinf}
\begin{aligned}
C_{\infty,j,\tilde{h};t_i,k,h}
=
e^{\pi\sqrt{-1}(\lambda_{i,k}-\rho_{j'})}
(t_i-t_{p+1})^{2\lambda_{i,k}-\rho_{1}-\rho_{2}}
 \frac{\Gamma(\rho_{j}-\rho_{j'})\Gamma(\lambda_{i,k}+1)}
      {\Gamma(\rho_{j}+1)\Gamma(\lambda_{i,k}-\rho_{j'})}
 \gamma_{\tilde{h};t'_i,k,h}(-\rho_{j'}-1)
\\
(1\leq j\leq 2, \ 1\leq \tilde{h}\leq n, \
 1\leq k\leq r_i, \ 1\leq h\leq \ell_{i,k}).
\end{aligned}
\end{equation}
Besides,
the coefficients
$C^{\langle\mathcal{S}^{\pm}_{i}\rangle}
  _{\infty,j,\tilde{h};t_{p+1},k,h}$
in the connection formula
\begin{equation} \label{conn_form:Up+1_Uinf}
U_{t_{p+1},k,h}(x)
=
\sum_{j=1}^{2}
\sum_{\tilde{h}=1}^{n}
C^{\langle\mathcal{S}^{\pm}_{i}\rangle}
 _{\infty,j,\tilde{h};t_{p+1},k,h}
U_{\infty,j,\tilde{h}}(x)
\quad
(1\leq k\leq q, \ 1\leq h\leq m_{k})
\end{equation}
for $x\in\mathcal{S}^{\pm}_{i}$
are given by
\begin{equation} \label{conn_coef:Up+1_Uinf}
\begin{aligned}
C^{\langle\mathcal{S}^{\pm}_{i}\rangle}
 _{\infty,j,\tilde{h};t_{p+1},k,h}
=
\frac{\Gamma(\rho_{j}-\rho_{j'})\Gamma(\rho_{1}+\rho_{2}-\mu_{k}+1)}
      {\Gamma(\rho_{j}+1)\Gamma(\rho_{j}-\mu_{k}+1)}
\gamma^{\langle\mathcal{S}^{\prime\mp}_{i}\rangle}
      _{\tilde{h};\infty,k,h}(-\rho_{j'}-1)
\\
(1\leq j\leq 2, \ 1\leq \tilde{h}\leq n, \
 1\leq k\leq q, \ 1\leq h\leq m_{k}).
\end{aligned}
\end{equation}
\end{theorem}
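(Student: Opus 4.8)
The plan is to transport each local solution of the system $(\ref{sys:E2})$ to the underlying system through the integral representations and then read off the connection coefficients from the normalizations already fixed; I prove the two formulas in parallel. For $(\ref{conn_form:Ui_Uinf})$, using the correspondence $\mathcal{S}^{\prime\pm}_i\leftrightarrow\mathcal{S}^{\mp}_i$, I first invert $(\ref{relation:Vt'i_Uti})$ of Proposition \ref{thm:V_Uti} to write, for $x\in\mathcal{S}^{\pm}_i$,
\[
U_{t_i,k,h}(x)
= e^{\pi\sqrt{-1}\lambda_{i,k}}(t_i-t_{p+1})^{2\lambda_{i,k}-\rho_1-\rho_2}
V^{\langle\mathcal{S}^{\prime\mp}_i;t'_i\rangle}_{t'_i,k,h}
   (\rho_1,\rho_2;\eta_0+\tfrac{1}{x-t_{p+1}}).
\]
Then I expand this $V$ by $(\ref{exp:Vt'i_by_Veta0xi})$ of Theorem \ref{thm:lincomb_Veta0xi} with $\nu_1=\rho_1$, $\nu_2=\rho_2$ into a sum over $j=1,2$ of integrals $W^{\langle\mathcal{S}^{\prime\mp}_i;\eta_0\rangle}_{t'_i,k,h}(\rho_j,\rho_{j'};\cdot)$, and finally substitute $(\ref{relation:Wt'i_Uinf})$ of Proposition \ref{thm:V_Uinf}, which rewrites each of these as a linear combination of the $U_{\infty,j',\tilde h}(x)$.

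Since the $j$-th term produces $U_{\infty,j',\tilde h}$, the coefficient of $U_{\infty,\ell,\tilde h}$ is obtained by taking the summand with $j'=\ell$, i.e.\ $j=\ell'$; this bookkeeping step converts $\gamma_{\tilde h;t'_i,k,h}(-\rho_j-1)$ into the $\gamma_{\tilde h;t'_i,k,h}(-\rho_{\ell'}-1)$ of $(\ref{conn_coef:Ui_Uinf})$. The only genuine computation is then the collapse of the trigonometric and Gamma factors: the scalar multiplying $U_{\infty,\ell,\tilde h}$ carries the factor
\[
-\frac{\sin\pi\rho_\ell\,\Gamma(-\rho_\ell)}
      {\sin\pi(\rho_\ell-\rho_{\ell'})\,\Gamma(\rho_{\ell'}-\rho_\ell+1)},
\]
which I reduce to $\Gamma(\rho_\ell-\rho_{\ell'})/\Gamma(\rho_\ell+1)$ by two applications of $\Gamma(z)\Gamma(1-z)=\pi/\sin\pi z$ (with $z=-\rho_\ell$ and with $z=\rho_\ell-\rho_{\ell'}$). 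After this the exponential, the power of $t_i-t_{p+1}$, and the surviving $\Gamma(\lambda_{i,k}+1)/\Gamma(\lambda_{i,k}-\rho_{\ell'})$ reassemble into $(\ref{conn_coef:Ui_Uinf})$; no $\pm$ survives, so the coefficient is common to the two sectors, as its notation indicates.

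For $(\ref{conn_form:Up+1_Uinf})$ I follow the identical three steps with the data near $t'_i$ replaced by the data near $\infty$. I start from $(\ref{relation:Vinf_Utp+1})$ of Proposition \ref{thm:V_Utp+1}, which already identifies $U_{t_{p+1},k,h}(x)$ with $V^{\langle\mathcal{S}^{\prime\mp}_i;\infty\rangle}_{\infty,k,h}(\rho_1,\rho_2;\cdot)$ for $x\in\mathcal{S}^{\pm}_i$; expand by $(\ref{exp:Vinf_by_Veta0xi})$; and substitute $(\ref{relation:Winf_Uinf})$. The same index swap $j=\ell'$ and the same reflection-formula identity reduce the coefficient of $U_{\infty,\ell,\tilde h}$ to
\[
\frac{\Gamma(\rho_\ell-\rho_{\ell'})\,\Gamma(\rho_1+\rho_2-\mu_k+1)}
     {\Gamma(\rho_\ell+1)\,\Gamma(\rho_\ell-\mu_k+1)}
\gamma^{\langle\mathcal{S}^{\prime\mp}_i\rangle}_{\tilde h;\infty,k,h}(-\rho_{\ell'}-1),
\]
which is $(\ref{conn_coef:Up+1_Uinf})$.

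Every relation invoked --- Propositions \ref{thm:V_Uti}, \ref{thm:V_Utp+1}, \ref{thm:V_Uinf} and Theorem \ref{thm:lincomb_Veta0xi} --- is an exact identity of solutions on the relevant sector, so the composition carries no remainder term and the computed scalars are the connection coefficients themselves. I expect the main obstacle to be neither analytic nor algebraic but rather the consistent bookkeeping of sectors and branches: one must hold to the correspondence $\mathcal{S}^{\prime\pm}_i\leftrightarrow\mathcal{S}^{\mp}_i$ so that the sector label on the surviving $\gamma^{\langle\mathcal{S}^{\prime\mp}_i\rangle}_{\tilde h;\infty,k,h}$ agrees with the $\mathcal{S}^{\pm}_i$ recorded on $C^{\langle\mathcal{S}^{\pm}_i\rangle}_{\infty,j,\tilde h;t_{p+1},k,h}$, and must track which analytic continuation of $w_{\infty,k,h}$ to $\eta_0$ enters, since that continuation is exactly the data on which the second coefficient genuinely depends.
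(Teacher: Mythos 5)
Your proposal is correct and follows essentially the same route as the paper: substituting Propositions \ref{thm:V_Uti}, \ref{thm:V_Utp+1} and \ref{thm:V_Uinf} into the relations (\ref{exp:Vt'i_by_Veta0xi}) and (\ref{exp:Vinf_by_Veta0xi}) at $\nu_1=\rho_1$, $\nu_2=\rho_2$ and then interchanging $j$ and $j'$. The only difference is that you spell out the reflection-formula simplification of the $\sin$/$\Gamma$ factors, which the paper leaves implicit; your computation of that step checks out.
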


\begin{proof}
Substituting (\ref{relation:Wt'i_Uinf})
and (\ref{relation:Vt'i_Uti}) into
(\ref{exp:Vt'i_by_Veta0xi})${}_{\nu_{1}=\rho_{1},\nu_{2}=\rho_{2}}$
and then interchanging $j$ and $j'$,
we obtain (\ref{conn_form:Ui_Uinf}) with (\ref{conn_coef:Ui_Uinf}).
Similarly,
substituting (\ref{relation:Winf_Uinf})
and (\ref{relation:Vinf_Utp+1}) into
(\ref{exp:Vinf_by_Veta0xi})${}_{\nu_{1}=\rho_{1},\nu_{2}=\rho_{2}}$
and then interchanging $j$ and $j'$,
we obtain (\ref{conn_form:Up+1_Uinf}) with (\ref{conn_coef:Up+1_Uinf}).
\end{proof}

\begin{theorem}
For $1\leq i\leq p-1$ the coefficients
$C_{t_{i+1},\tilde{k},\tilde{h};t_i,k,h}$
in the connection formula
\begin{equation} \label{conn_form:Ui_Ui+1}
U_{t_i,k,h}(x)
=
\sum_{\tilde{k}=1}^{r_{i+1}} \sum_{\tilde{h}=1}^{\ell_{i+1,\tilde{k}}}
C_{t_{i+1},\tilde{k},\tilde{h};t_i,k,h}
U_{t_{i+1},\tilde{k},\tilde{h}}(x)
+\mathrm{hol}(x-t_{i+1})
\quad
(1\leq k\leq r_i, \ 1\leq h\leq \ell_{i,k})
\end{equation}
for $x\in\mathcal{S}^{+}_{i}\cup
         \mathcal{S}^{-}_{i+1}$
are given by
\begin{equation} \label{conn_coef:Ui_Ui+1}
\begin{aligned}
C_{t_{i+1},\tilde{k},\tilde{h};t_i,k,h}
=
\frac{e^{\pi\sqrt{-1}\lambda_{i,k}}
      (t_i-t_{p+1})^{2\lambda_{i,k}-\rho_{1}-\rho_{2}}}
     {e^{\pi\sqrt{-1}\lambda_{i+1,\tilde{k}}}
      (t_{i+1}-t_{p+1})^{2\lambda_{i+1,\tilde{k}}-\rho_{1}-\rho_{2}}}
c_{t'_{i+1},\tilde{k},\tilde{h};t'_i,k,h}
\\
(1\leq \tilde{k}\leq r_{i+1}, \ 1\leq \tilde{h}\leq \ell_{i+1,\tilde{k}}, \
1\leq k\leq r_i, \ 1\leq h\leq \ell_{i,k}).
\end{aligned}
\end{equation}
Besides,
for $2\leq i\leq p$ the coefficients
$C_{t_{i-1},\tilde{k},\tilde{h};t_i,k,h}$
in the connection formula
\begin{equation} \label{conn_form:Ui_Ui-1}
U_{t_i,k,h}(x)
=
\sum_{\tilde{k}=1}^{r_{i-1}} \sum_{\tilde{h}=1}^{\ell_{i-1,\tilde{k}}}
C_{t_{i-1},\tilde{k},\tilde{h};t_i,k,h}
U_{t_{i-1},\tilde{k},\tilde{h}}(x)
+\mathrm{hol}(x-t_{i-1})
\quad
(1\leq k\leq r_i, \ 1\leq h\leq \ell_{i,k})
\end{equation}
for $x\in\mathcal{S}^{-}_{i}\cup
         \mathcal{S}^{+}_{i-1}$
are given by
\begin{equation} \label{conn_coef:Ui_Ui-1}
\begin{aligned}
C_{t_{i-1},\tilde{k},\tilde{h};t_i,k,h}
=
\frac{e^{\pi\sqrt{-1}\lambda_{i,k}}
      (t_i-t_{p+1})^{2\lambda_{i,k}-\rho_{1}-\rho_{2}}}
     {e^{\pi\sqrt{-1}\lambda_{i-1,\tilde{k}}}
      (t_{i-1}-t_{p+1})^{2\lambda_{i-1,\tilde{k}}-\rho_{1}-\rho_{2}}}
c_{t'_{i-1},\tilde{k},\tilde{h};t'_i,k,h}
\\
(1\leq \tilde{k}\leq r_{i-1}, \ 1\leq \tilde{h}\leq \ell_{i-1,\tilde{k}}, \
1\leq k\leq r_i, \ 1\leq h\leq \ell_{i,k}).
\end{aligned}
\end{equation}
\end{theorem}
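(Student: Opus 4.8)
The plan is to obtain both formulas directly from the connection relation for the underlying system between adjacent finite singular points, namely Theorem~\ref{thm:rel_Vti_ti+1}, by transporting it through the change of variables (\ref{change:xitox}) together with Proposition~\ref{thm:V_Uti}. I will carry out (\ref{conn_form:Ui_Ui+1}) in full; formula (\ref{conn_form:Ui_Ui-1}) then follows by the same argument with $i+1$ replaced by $i-1$, the sectors $\mathcal{S}^{\prime-}$ and $\mathcal{S}^{\prime+}$ interchanged, and (\ref{exp:Vt'ixi_by_Vt'i-1xi}) used in place of (\ref{exp:Vt'ixi_by_Vt'i+1xi}).

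First I would take (\ref{exp:Vt'ixi_by_Vt'i+1xi}) at $\nu_{1}=\rho_{1}$, $\nu_{2}=\rho_{2}$ and set $\xi=\eta_0+1/(x-t_{p+1})$. Under this map $\xi\to t'_{i+1}$ corresponds to $x\to t_{i+1}$, and, exactly as in (\ref{relation:xi-t'i_x-ti}), $\xi-t'_{i+1}$ vanishes to first order in $x-t_{i+1}$ while the substitution is biholomorphic there; hence the error term $\mathrm{hol}(\xi-t'_{i+1})$ turns into a term $\mathrm{hol}(x-t_{i+1})$. The range $\xi\in\mathcal{S}^{\prime-}_i\cap\mathcal{S}^{\prime+}_{i+1}$ on which (\ref{exp:Vt'ixi_by_Vt'i+1xi}) holds corresponds, via the sector correspondence $\mathcal{S}^{\prime\mp}_\bullet\leftrightarrow\mathcal{S}^{\pm}_\bullet$ recorded in Section~\ref{sec:connformulas}, to $x\in\mathcal{S}^{+}_i\cap\mathcal{S}^{-}_{i+1}$.

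Next I would apply Proposition~\ref{thm:V_Uti} to each $V$-integral: the left-hand side becomes $e^{-\pi\sqrt{-1}\lambda_{i,k}}(t_i-t_{p+1})^{\rho_{1}+\rho_{2}-2\lambda_{i,k}}U_{t_i,k,h}(x)$, and each summand on the right becomes $e^{-\pi\sqrt{-1}\lambda_{i+1,\tilde{k}}}(t_{i+1}-t_{p+1})^{\rho_{1}+\rho_{2}-2\lambda_{i+1,\tilde{k}}}U_{t_{i+1},\tilde{k},\tilde{h}}(x)$, both valid on $x\in\mathcal{S}^{+}_i\cap\mathcal{S}^{-}_{i+1}$. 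Solving for $U_{t_i,k,h}(x)$ isolates the coefficient
\[
\frac{e^{-\pi\sqrt{-1}\lambda_{i+1,\tilde{k}}}(t_{i+1}-t_{p+1})^{\rho_{1}+\rho_{2}-2\lambda_{i+1,\tilde{k}}}}
     {e^{-\pi\sqrt{-1}\lambda_{i,k}}(t_{i}-t_{p+1})^{\rho_{1}+\rho_{2}-2\lambda_{i,k}}}
\,c_{t'_{i+1},\tilde{k},\tilde{h};t'_{i},k,h},
\]
and a plain cancellation of the factor $e^{\pi\sqrt{-1}(\lambda_{i,k}-\lambda_{i+1,\tilde{k}})}$ together with the powers of $(t_\bullet-t_{p+1})$ rewrites this as the claimed (\ref{conn_coef:Ui_Ui+1}).

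The one point I expect to require care is the enlargement of the domain: the identity is produced only on the overlap $\mathcal{S}^{+}_i\cap\mathcal{S}^{-}_{i+1}$, whereas (\ref{conn_form:Ui_Ui+1}) asserts it on the union $\mathcal{S}^{+}_i\cup\mathcal{S}^{-}_{i+1}$. I would justify this by noting that the integrals $V^{\langle\mathcal{S}^{\prime\mp}_\bullet;t'_\bullet\xi\rangle}_{t'_\bullet,\cdot,\cdot}$ are mutual analytic continuations across the intervening segment (the Remark after Theorem~\ref{thm:for_def_V}), so each side of the identity is a single analytic function on the connected union and agreement on the open overlap forces agreement throughout. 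Everything else—the phase factors $e^{-\pi\sqrt{-1}\lambda}$ and the sign bookkeeping $\mathcal{S}^{\prime\mp}\leftrightarrow\mathcal{S}^{\pm}$—is already packaged in Proposition~\ref{thm:V_Uti}, so no further analysis is needed.
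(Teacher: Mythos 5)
Your proposal is correct and follows essentially the same route as the paper: substitute Proposition \ref{thm:V_Uti} (and the same identity with $i$ replaced by $i\pm1$) into \eqref{exp:Vt'ixi_by_Vt'i+1xi} and \eqref{exp:Vt'ixi_by_Vt'i-1xi} at $\nu_{1}=\rho_{1}$, $\nu_{2}=\rho_{2}$, and divide out the prefactors. The only addition is your explicit justification of passing from the overlap $\mathcal{S}^{+}_i\cap\mathcal{S}^{-}_{i+1}$ to the union by analytic continuation, a point the paper leaves implicit; your coefficient bookkeeping reproduces \eqref{conn_coef:Ui_Ui+1} exactly.
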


\begin{proof}
Substituting (\ref{relation:Vt'i_Uti})
and the same expression with $i$ replaced by $i+1$ into
(\ref{exp:Vt'ixi_by_Vt'i+1xi})${}_{\nu_{1}=\rho_{1},\nu_{2}=\rho_{2}}$,
we obtain (\ref{conn_form:Ui_Ui+1}) with (\ref{conn_coef:Ui_Ui+1}).
Similarly,
substituting (\ref{relation:Vt'i_Uti})
and the same expression with $i$ replaced by $i-1$ into
(\ref{exp:Vt'ixi_by_Vt'i-1xi})${}_{\nu_{1}=\rho_{1},\nu_{2}=\rho_{2}}$,
we obtain (\ref{conn_form:Ui_Ui-1}) with (\ref{conn_coef:Ui_Ui-1}).
\end{proof}

\begin{theorem}
For $1\leq i\leq p$ the coefficients
$C^{\pm}_{t_{p+1},\tilde{k},\tilde{h};t_i,k,h}$
in the connection formula
\begin{equation} \label{conn_form:Ui_Up+1}
U_{t_i,k,h}(x)
=
\sum_{\tilde{k}=1}^{q} \sum_{\tilde{h}=1}^{m_{\tilde{k}}}
C^{\pm}_{t_{p+1},\tilde{k},\tilde{h};t_i,k,h}
U_{t_{p+1},\tilde{k},\tilde{h}}(x)
+\mathrm{hol}\left(x-t_{p+1}\right)
\quad
(1\leq k\leq r_i, \ 1\leq h\leq \ell_{i,k})
\end{equation}
for $x\in\mathcal{S}^{\pm}_{i}$
are given by
\begin{equation} \label{conn_coef:Ui_Up+1}
\begin{aligned}
C^{\pm}_{t_{p+1},\tilde{k},\tilde{h};t_i,k,h}
=
e^{\pm\pi\sqrt{-1}\lambda_{i,k}}
(t_i-t_{p+1})^{2\lambda_{i,k}-\rho_{1}-\rho_{2}}
\frac{\Gamma(\mu_{\tilde{k}}+1)
      \Gamma(\mu_{\tilde{k}}-\rho_{1}-\rho_{2})}
     {\Gamma(\mu_{\tilde{k}}-\rho_{1})
      \Gamma(\mu_{\tilde{k}}-\rho_{2})}
c_{\infty,\tilde{k},\tilde{h};t'_i,k,h}
\\
(1\leq \tilde{k}\leq q, \ 1\leq \tilde{h}\leq m_{\tilde{k}}, \
1\leq k\leq r_i, \ 1\leq h\leq \ell_{i,k}),
\end{aligned}
\end{equation}
where the double-signs correspond.
\end{theorem}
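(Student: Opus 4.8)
The plan is to reproduce the route of the two preceding proofs, this time feeding the relations of Theorem \ref{thm:rel_Vti_inf} into the dictionaries provided by Propositions \ref{thm:V_Uti} and \ref{thm:V_Utp+1}. Throughout I set $\nu_1=\rho_1$, $\nu_2=\rho_2$ and substitute $\xi=\eta_0+\frac{1}{x-t_{p+1}}$; note that $\frac{1}{\xi-\eta_0}=x-t_{p+1}$, so the remainders $\mathrm{hol}\bigl(\frac{1}{\xi-\eta_0}\bigr)$ occurring in Theorem \ref{thm:rel_Vti_inf} become exactly the $\mathrm{hol}(x-t_{p+1})$ appearing in (\ref{conn_form:Ui_Up+1}).

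For the upper sign I start from (\ref{exp:Vt'ixi_by_Vinfxi_S'-}), which holds for $\xi\in\mathcal{S}^{\prime-}_i$, i.e.\ for $x\in\mathcal{S}^{+}_i$ under the correspondence recorded after (\ref{change:xitox}). On the left-hand side I use (\ref{relation:Vt'i_Uti}) to rewrite $V^{\langle\mathcal{S}^{\prime-}_i;t'_i\rangle}_{t'_i,k,h}$ as $e^{-\pi\sqrt{-1}\lambda_{i,k}}(t_i-t_{p+1})^{\rho_1+\rho_2-2\lambda_{i,k}}U_{t_i,k,h}(x)$, and on the right-hand side I use (\ref{relation:Vinf_Utp+1}) to rewrite each $V^{\langle\mathcal{S}^{\prime-}_i;\infty\rangle}_{\infty,\tilde k,\tilde h}$ as $U_{t_{p+1},\tilde k,\tilde h}(x)$. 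Dividing by the scalar prefactor in front of $U_{t_i,k,h}(x)$ yields (\ref{conn_form:Ui_Up+1}) with coefficients $e^{+\pi\sqrt{-1}\lambda_{i,k}}(t_i-t_{p+1})^{2\lambda_{i,k}-\rho_1-\rho_2}$ times the $\Gamma$-quotient and $c_{\infty,\tilde k,\tilde h;t'_i,k,h}$, i.e.\ the upper-sign case of (\ref{conn_coef:Ui_Up+1}).

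For the lower sign I run the identical argument starting from (\ref{exp:Vt'ixi_by_Vinfxi_S'+}), valid for $\xi\in\mathcal{S}^{\prime+}_i$, i.e.\ for $x\in\mathcal{S}^{-}_i$. Since Propositions \ref{thm:V_Uti} and \ref{thm:V_Utp+1} carry the same scalar prefactors for either sector, the only difference from the upper-sign computation is the extra factor $e^{-2\pi\sqrt{-1}\lambda_{i,k}}$ already present in (\ref{exp:Vt'ixi_by_Vinfxi_S'+}); after dividing through by the prefactor of $U_{t_i,k,h}(x)$ it combines with the $e^{+\pi\sqrt{-1}\lambda_{i,k}}$ from inverting (\ref{relation:Vt'i_Uti}) to give $e^{-\pi\sqrt{-1}\lambda_{i,k}}$, which is the lower-sign entry of (\ref{conn_coef:Ui_Up+1}). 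The calculation is otherwise routine; the only point requiring care --- and hence the main, though mild, obstacle --- is the consistent bookkeeping of the sector correspondence $\mathcal{S}^{\prime\mp}_i\leftrightarrow\mathcal{S}^{\pm}_i$ and of which signs of Theorem \ref{thm:rel_Vti_inf} and Propositions \ref{thm:V_Uti}--\ref{thm:V_Utp+1} are paired, so that the phase $e^{\pm\pi\sqrt{-1}\lambda_{i,k}}$ emerges with the sign claimed.
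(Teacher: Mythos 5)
Your proposal is correct and follows essentially the same route as the paper's proof: substituting the identifications of Propositions \ref{thm:V_Uti} and \ref{thm:V_Utp+1} into (\ref{exp:Vt'ixi_by_Vinfxi_S'-}) with $\nu_1=\rho_1$, $\nu_2=\rho_2$ for $x\in\mathcal{S}^{+}_i$ and into (\ref{exp:Vt'ixi_by_Vinfxi_S'+}) for $x\in\mathcal{S}^{-}_i$, then dividing by the scalar prefactor. Your sign bookkeeping (the extra $e^{-2\pi\sqrt{-1}\lambda_{i,k}}$ combining with $e^{\pi\sqrt{-1}\lambda_{i,k}}$ in the lower-sign case) and the identification of the remainder terms are exactly what the paper's one-line proof leaves implicit.
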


\begin{proof}
Substituting (\ref{relation:Vt'i_Uti})
and (\ref{relation:Vinf_Utp+1}) into
(\ref{exp:Vt'ixi_by_Vinfxi_S'-})${}_{\nu_{1}=\rho_{1},\nu_{2}=\rho_{2}}$
for $x\in\mathcal{S}^{+}_i$ and
(\ref{exp:Vt'ixi_by_Vinfxi_S'+})${}_{\nu_{1}=\rho_{1},\nu_{2}=\rho_{2}}$
for $x\in\mathcal{S}^{-}_i$,
we obtain (\ref{conn_form:Ui_Up+1}) with (\ref{conn_coef:Ui_Up+1}).
\end{proof}

\begin{remark}
The quantities
$\gamma_{\tilde{h};t'_i,k,h}(-\rho_{j'}-1)$
and
$\gamma^{\langle\mathcal{S}^{\prime\pm}_{i}\rangle}
       _{\tilde{h};\infty,k,h}(-\rho_{j'}-1)$
do not depend on $\eta_0$,
since
$u(\sigma)=\left((\sigma+\eta_0)I_n-T'\right)w(\rho;\sigma+\eta_0)$
satisfies the system
\begin{displaymath}
\frac{du}{d\sigma}
=((\rho+1) I_n+A)\left(\sigma I_n-(T-t_{p+1}I_n)^{-1}\right)^{-1}u,
\end{displaymath}
which does not depend on $\rho$,
and $\left(\eta_0I_n-T'\right)w(\rho;\eta_0)=u(0)$.
The coefficients
$c_{t'_{i\pm1},\tilde{k},\tilde{h};t'_i,k,h}$
and
$c_{\infty,\tilde{k},\tilde{h};t'_i,k,h}$
do not depend on $\eta_0$ also.
\end{remark}

\subsection{Reducible case (i)}
Next, we consider the connection formulas for the system (\ref{sys:E1}).
We assume the conditions
(\ref{assumption:E2_1})--(\ref{assumption:E2_4}),
(\ref{assumption:E1_0})
and
(\ref{assumption:underlying}).
Thanks to Proposition \ref{prop:Wvanish},
we obtain the following results.

\begin{proposition}
For $1\leq i\leq p$ we have
\begin{alignat}{2}
W^{\langle\mathcal{S}^{\prime\pm}_i;\eta_0\rangle}
_{t'_i,k,h}(\rho_{1},\mu_{q};\eta_0+\frac{1}{x-t_{p+1}})
&=
 \frac{\Gamma(\rho_{1}+1)\Gamma(-\mu_{q})}{\Gamma(\rho_{1}-\mu_{q}+1)}
 \sum_{\tilde{h}=1}^{n}
 \gamma_{\tilde{h};t'_i,k,h}(-\rho_{1}-1)
 \begin{pmatrix}
 U'_{\infty,\mu_{q},\tilde{h}}(x)
 \\ 0_{m_{q}} \end{pmatrix} \notag
\\
 &&\llap{$(1\leq k\leq r_i, \ 1\leq h\leq \ell_{i,k}),$} \notag
\\
W^{\langle\mathcal{S}^{\prime\pm}_i;\eta_0\rangle}
_{t'_i,k,h}(\mu_{q},\rho_{1};\eta_0+\frac{1}{x-t_{p+1}})
&=
 \frac{\Gamma(\mu_{q}+1)\Gamma(-\rho_{1})}{\Gamma(\mu_{q}-\rho_{1}+1)}
 \sum_{\tilde{h}=1}^{n-m_{q}}
 \gamma_{\tilde{h};t'_i,k,h}(-\mu_{q}-1)
 \begin{pmatrix}
 U'_{\infty,1,\tilde{h}}(x)
 \\ 0_{m_{q}} \end{pmatrix} \notag
\\
 &&\llap{$(1\leq k\leq r_i, \ 1\leq h\leq \ell_{i,k})$} \notag
\end{alignat}
for $x\in\mathcal{S}^{\mp}_i$,
and
\begin{alignat}{2}
W^{\langle\mathcal{S}^{\prime\pm}_i;\eta_0\rangle}
_{\infty,k,h}(\rho_{1},\mu_{q};\eta_0+\frac{1}{x-t_{p+1}})
&=
 \frac{\Gamma(\rho_{1}+1)\Gamma(-\mu_{q})}{\Gamma(\rho_{1}-\mu_{q}+1)}
 \sum_{\tilde{h}=1}^{n}
 \gamma^{\langle\mathcal{S}^{\prime\pm}_{i}\rangle}
       _{\tilde{h};\infty,k,h}(-\rho_{1}-1)
 \begin{pmatrix}
 U'_{\infty,\mu_{q},\tilde{h}}(x)
 \\ 0_{m_{q}} \end{pmatrix} \notag
\\
 &&\llap{$(1\leq k\leq q, \ 1\leq h\leq m_{k}),$} \notag
\\
W^{\langle\mathcal{S}^{\prime\pm}_i;\eta_0\rangle}
_{\infty,k,h}(\mu_{q},\rho_{1};\eta_0+\frac{1}{x-t_{p+1}})
&=
 \frac{\Gamma(\mu_{q}+1)\Gamma(-\rho_{1})}{\Gamma(\mu_{q}-\rho_{1}+1)}
 \sum_{\tilde{h}=1}^{n-m_{q}}
 \gamma^{\langle\mathcal{S}^{\prime\pm}_{i}\rangle}
       _{\tilde{h};\infty,k,h}(-\mu_{q}-1)
 \begin{pmatrix}
 U'_{\infty,1,\tilde{h}}(x)
 \\ 0_{m_{q}} \end{pmatrix} \notag
\\
 &&\llap{$(1\leq k\leq q-1, \ 1\leq h\leq m_{k})$} \notag
\end{alignat}
for $x\in\mathcal{S}^{\mp}_i$.
\end{proposition}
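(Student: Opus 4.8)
The plan is to re-run the argument of Proposition \ref{thm:V_Uinf} for the two special parameter pairs $(\nu_1,\nu_2)=(\rho_1,\mu_q)$ and $(\mu_q,\rho_1)$ occurring in the statement, inserting Proposition \ref{prop:Wvanish} at the point where the limiting vector must be recognised as living inside the reduced system (\ref{sys:E1}). Since here $\rho_2=\mu_q$, these pairs are exactly $(\rho_1,\rho_2)$ and $(\rho_2,\rho_1)$; because the coefficient matrix of (\ref{sys:forW}) is unchanged under $\nu_1\leftrightarrow\nu_2$ (the factor $(A'-\nu_1 I_n)(A'-\nu_2 I_n)$ and the scalar $\nu_1+\nu_2$ are both symmetric, $A'$ being diagonal), each of the four integrals, transported through the change of variables (\ref{change:xitox}), is a solution of the system (\ref{sys:E2}), exactly as in Proposition \ref{prop:2n-integral}.

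First I would apply Proposition \ref{prop:Wvanish} with $l=q$. Since $n+m_1+\cdots+m_{q-1}=2n-m_q$, the index range $n+m_1+\cdots+m_{q-1}+1\le m\le n+m_1+\cdots+m_q$ is precisely the last $m_q$ components, i.e.\ the block annihilated in the passage from $\mathcal{A}_P$ to $\mathcal{A}'_P$. For the integrals with $\nu_2=\mu_q$ I invoke (\ref{vanish:Wnu1mul}), and for those with $\nu_1=\mu_q$ I invoke (\ref{vanish:Wmulnu2}). The differing ranges of $k$ in the statement arise exactly here: $W^{\langle\mathcal{S}^{\prime\pm}_i;\eta_0\rangle}_{\infty,k,h}(\rho_1,\mu_q)$ is never among the exceptions of (\ref{vanish:Wnu1mul}) (whose paths are $\overline{\infty\xi}$ and $\overline{\eta_0\infty}$), so that formula holds for all $1\le k\le q$, whereas $W^{\langle\mathcal{S}^{\prime\pm}_i;\eta_0\rangle}_{\infty,q,h}(\mu_q,\rho_1)$ is precisely the exceptional integral $W_{\infty,k,h}(\mu_k,\nu_2)$ with $k=q$ of (\ref{vanish:Wmulnu2}), which forces the restriction $1\le k\le q-1$. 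Away from these exceptions the last $m_q$ components of each integral vanish, so each integral has the form $\begin{pmatrix}\ast\\0_{m_q}\end{pmatrix}$ with top part a solution of (\ref{sys:E1}).

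Next I would extract the leading behaviour near $x=\infty$ just as in the proof of Proposition \ref{thm:V_Uinf}: Theorem \ref{thm:asymp_W_eta0_xi} (and Theorem \ref{thm:asymp_Winf_eta0_xi} for the $W_{\infty,k,h}$ integrals) applies since $\rho_1,\mu_q\notin\mathbb{Z}$, and under (\ref{change:xitox}) the factor $(\xi-\eta_0)^{-\nu_2}$ becomes $(x-t_{p+1})^{\nu_2}$, matching the exponent of $U'_{\infty,\mu_q,\tilde h}$ when $\nu_2=\mu_q$ and of $U'_{\infty,1,\tilde h}$ when $\nu_2=\rho_1$. Expanding the leading vector $\begin{pmatrix}P\\ \nu_2 I_n-A'\end{pmatrix}P^{-1}(\eta_0 I_n-T')\,w_{\mathit{sing},k,h}(-\nu_1-1;\eta_0)$ over the $\varepsilon_n(\tilde h)$ produces the coefficients $\gamma_{\tilde h;t'_i,k,h}(-\nu_1-1)$, resp.\ $\gamma^{\langle\mathcal{S}^{\prime\pm}_i\rangle}_{\tilde h;\infty,k,h}(-\nu_1-1)$. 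For the pairs with $\nu_1=\mu_q$, the identity $[P^{-1}(\eta_0 I_n-T')w_{\mathit{sing},k,h}(-\mu_q-1;\eta_0)]_m=0$ for $n-m_q+1\le m\le n$, established in the proof of Theorem \ref{thm:zero_conn.coef}, kills these coefficients for $\tilde h>n-m_q$ and truncates the sum at $n-m_q$, as claimed.

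The remaining, and main, step is to identify the reduced leading vectors with the initial data of (\ref{sys:E1}). Using $A'=\mathrm{diag}(\mu_1 I_{m_1},\dots,\mu_q I_{m_q})$ and the block form of $\mathcal{A}_P$ displayed before (\ref{sys:E1}), I would verify directly that $\begin{pmatrix}P\,\varepsilon_n(\tilde h)\\ (\mu_q I_n-A')\varepsilon_n(\tilde h)\end{pmatrix}=\begin{pmatrix}G'_{\infty,\mu_q,\tilde h}(0)\\ 0_{m_q}\end{pmatrix}$ and $\begin{pmatrix}P\,\varepsilon_n(\tilde h)\\ (\rho_1 I_n-A')\varepsilon_n(\tilde h)\end{pmatrix}=\begin{pmatrix}G'_{\infty,1,\tilde h}(0)\\ 0_{m_q}\end{pmatrix}$. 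The first of these must be checked separately on the ranges $1\le\tilde h\le n-m_q$ and $n-m_q+1\le\tilde h\le n$, matching the two-branch definition of $G'_{\infty,\mu_q,\tilde h}(0)$; this case split, together with the tracking of which vanishing statement and which exception applies, is where the bookkeeping is most delicate. Since both sides of each asserted formula are then solutions of (\ref{sys:E1}) sharing the same leading term at $x=\infty$, and the two relevant exponents $-\mu_q$ and $-\rho_1$ differ by a non-integer thanks to (\ref{assumption:E2_3}), the uniqueness of the local solution with prescribed leading coefficient (the existence–uniqueness theorem of the Reducible case (i) subsection) yields the four equalities.
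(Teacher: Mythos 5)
Your proposal is correct and follows exactly the route the paper intends: the paper gives no explicit proof of this proposition, stating only that it follows ``thanks to Proposition \ref{prop:Wvanish}'' combined with the generic-case argument of Proposition \ref{thm:V_Uinf}, and your write-up supplies precisely that chain — the vanishing of the last $m_q$ components (with the correct identification of which exceptions in (\ref{vanish:Wnu1mul}) and (\ref{vanish:Wmulnu2}) force the ranges $1\leq k\leq q$ versus $1\leq k\leq q-1$), the truncation of the $\tilde{h}$-sum via the component identities from the proof of Theorem \ref{thm:zero_conn.coef}, and the matching of the leading vectors with $G'_{\infty,\mu_q,\tilde{h}}(0)$ and $G'_{\infty,1,\tilde{h}}(0)$. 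The bookkeeping you flag as delicate all checks out, so nothing further is needed.
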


\begin{proposition}
For $1\leq i\leq p$ we have
\begin{displaymath}
V^{\langle\mathcal{S}^{\prime\pm}_i;t'_i \rangle}
_{t'_i,k,h}(\rho_{1},\mu_{q};\eta_0+\frac{1}{x-t_{p+1}})
=
e^{-\pi\sqrt{-1}\lambda_{i,k}}
(t_i-t_{p+1})^{\rho_{1}+\mu_{q}-2\lambda_{i,k}}
 \begin{pmatrix}
 U'_{t_i,k,h}(x)
 \\ 0_{m_{q}} \end{pmatrix}
 \quad
(1\leq k\leq r_i, \ 1\leq h\leq \ell_{i,k})
\end{displaymath}
for $x\in\mathcal{S}^{\mp}_i$.
\end{proposition}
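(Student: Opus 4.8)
The plan is to follow the proof of Proposition~\ref{thm:V_Uti} almost verbatim, now specializing $\nu_2=\mu_q$ and inserting Proposition~\ref{prop:Wvanish} to extract the block structure of the reducible system. With $\nu_1=\rho_1$ and $\nu_2=\mu_q$ the integral $V^{\langle\mathcal{S}^{\prime\pm}_i;t'_i\rangle}_{t'_i,k,h}(\rho_1,\mu_q;\xi)$ satisfies the system~(\ref{sys:forW}) with these parameters, and after the substitution $\xi=\eta_0+\frac{1}{x-t_{p+1}}$ it becomes a solution of the system~(\ref{sys:E2}) with $\rho_2$ replaced by $\mu_q$, that is, of the reducible system of Section~\ref{sec:localsol} whose invariant subsystem is~(\ref{sys:E1}).

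First I would record the local behavior near $x=t_i$. Since $V^{\langle\mathcal{S}^{\prime\pm}_i;t'_i\xi\rangle}_{t'_i,k,h}$ is a fixed scalar multiple of $W^{\langle\mathcal{S}^{\prime\pm}_i;t'_i\xi\rangle}_{t'_i,k,h}$, substituting the asymptotic formula~(\ref{asymp:W_xi_t'i}) (applicable because $\rho_1\notin\mathbb{Z}_{<0}$ and $\rho_1-\lambda_{i,k}\notin\mathbb{Z}_{\geq0}$ by~(\ref{assumption:E2_3})--(\ref{assumption:E2_4})) into the definition of $V$ cancels all the $\Gamma$-factors and signs and yields
\[
V^{\langle\mathcal{S}^{\prime\pm}_i;t'_i\xi\rangle}_{t'_i,k,h}(\rho_1,\mu_q;\xi)
=(t'_i-\eta_0)^{-\rho_1-\mu_q}(\xi-t'_i)^{\lambda_{i,k}}
\bigl\{\varepsilon_{2n}(n_1+\cdots+n_{i-1}+\ell_{i,1}+\cdots+\ell_{i,k-1}+h)+O(\xi-t'_i)\bigr\}.
\]
Using $t'_i-\eta_0=(t_i-t_{p+1})^{-1}$ together with the coordinate relation~(\ref{relation:xi-t'i_x-ti}) established in the proof of Proposition~\ref{thm:V_Uti}, this becomes
\[
V^{\langle\mathcal{S}^{\prime\pm}_i;t'_i\rangle}_{t'_i,k,h}(\rho_1,\mu_q;\eta_0+\tfrac{1}{x-t_{p+1}})
=e^{-\pi\sqrt{-1}\lambda_{i,k}}(t_i-t_{p+1})^{\rho_1+\mu_q-2\lambda_{i,k}}
(x-t_i)^{\lambda_{i,k}}\bigl\{\varepsilon_{2n}(\cdots)+O(x-t_i)\bigr\}
\]
as $x\longrightarrow t_i$, $x\in\mathcal{S}^{\mp}_i$.

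The decisive new step is to pin down the block form of this solution. I would invoke Proposition~\ref{prop:Wvanish}, equation~(\ref{vanish:Wnu1mul}) with $l=q$, applied to $W^{\langle\mathcal{S}^{\prime\pm}_i;t'_i\xi\rangle}_{t'_i,k,h}(\rho_1,\mu_q;\xi)$; this integral is of type $W_{t'_i,k,h}$ and hence is not one of the two exceptional integrals $W_{\infty,k,h}$ excluded there, so its components indexed by $n+m_1+\cdots+m_{q-1}+1\leq m\leq n+m_1+\cdots+m_q$, i.e.\ by $2n-m_q+1\leq m\leq 2n$ since $m_1+\cdots+m_q=n$, all vanish. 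Because $V$ is a scalar multiple of $W$, the same holds for $V^{\langle\mathcal{S}^{\prime\pm}_i;t'_i\rangle}_{t'_i,k,h}(\rho_1,\mu_q;\xi)$, so it has the form $\begin{pmatrix}V'\\0_{m_q}\end{pmatrix}$ with $V'$ a solution of the rank-$(2n-m_q)$ system~(\ref{sys:E1}). Since the leading index $n_1+\cdots+n_{i-1}+\ell_{i,1}+\cdots+\ell_{i,k-1}+h$ is at most $n\leq 2n-m_q$, the leading coefficient above is $\begin{pmatrix}\varepsilon_{2n-m_q}(\cdots)\\0_{m_q}\end{pmatrix}$, so $V'$ has precisely the leading behavior characterizing $U'_{t_i,k,h}(x)$ in Section~\ref{sec:localsol}. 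By the uniqueness of that local solution, $V'=e^{-\pi\sqrt{-1}\lambda_{i,k}}(t_i-t_{p+1})^{\rho_1+\mu_q-2\lambda_{i,k}}U'_{t_i,k,h}(x)$, which is the assertion.

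I expect the only genuine subtlety, beyond transcribing the generic-case computation, to be the two index checks in the last paragraph: confirming that $W^{\langle\mathcal{S}^{\prime\pm}_i;t'_i\xi\rangle}_{t'_i,k,h}$ is not among the integrals excluded from~(\ref{vanish:Wnu1mul}), and that the nonzero entry of the leading vector indeed lies in the retained block $1\leq m\leq 2n-m_q$, so that the asymptotics of $V'$ match those of $U'_{t_i,k,h}$. Once these index ranges are verified, the uniqueness theorem of Section~\ref{sec:localsol} closes the argument immediately.
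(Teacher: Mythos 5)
Your argument is exactly the one the paper intends: it gives no explicit proof for the reducible-case propositions, saying only that they follow ``thanks to Proposition~\ref{prop:Wvanish}'', and your write-up correctly fills this in by repeating the computation of Proposition~\ref{thm:V_Uti} with $(\nu_1,\nu_2)=(\rho_1,\mu_q)$ and then using (\ref{vanish:Wnu1mul}) with $l=q$ to kill the last $m_q$ components (the index checks $n+m_1+\cdots+m_{q-1}=2n-m_q$ and the exclusion of only the $\mathit{sing}=\infty$ integrals are both right). The proposal is correct and takes essentially the same approach as the paper.
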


\begin{proposition}
We have
\begin{displaymath}
V^{\langle\mathcal{S}^{\prime\pm}_i;\infty\rangle}
_{\infty,k,h}(\rho_{1},\mu_{q};\eta_0+\frac{1}{x-t_{p+1}})
=
 \begin{pmatrix}
   U'_{t_{p+1},k,h}(x)
 \\ 0_{m_{q}} \end{pmatrix}
\quad
(1\leq k\leq q-1, \ 1\leq h\leq m_{k})
\end{displaymath}
for $x\in\mathcal{S}^{\mp}_i$.
\end{proposition}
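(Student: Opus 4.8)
The plan is to follow the proof of Proposition~\ref{thm:V_Utp+1} verbatim for the leading-term computation, and then to superimpose the vanishing information of Proposition~\ref{prop:Wvanish} in order to land the integral inside the rank-$(2n-m_q)$ subsystem (\ref{sys:E1}). As in the generic case, the change of variable $\xi=\eta_0+1/(x-t_{p+1})$ turns $\xi\to\infty$ into $x\to t_{p+1}$. Feeding this into Theorem~\ref{thm:asymp_Winf_xi_inf} and using the definition of $V^{\langle\mathcal{S}^{\prime\pm}_i;\infty\xi\rangle}_{\infty,k,h}$, the $\Gamma$- and exponential prefactors cancel exactly (the product of the two normalizing constants is $1$), and with $(\xi-\eta_0)^{\mu_k-\nu_1-\nu_2}=(x-t_{p+1})^{\nu_1+\nu_2-\mu_k}$ one is left with
\[
V^{\langle\mathcal{S}^{\prime\pm}_i;\infty\rangle}_{\infty,k,h}\!\left(\rho_1,\mu_q;\eta_0+\tfrac{1}{x-t_{p+1}}\right)
=(x-t_{p+1})^{\rho_1+\mu_q-\mu_k}\bigl\{\varepsilon_{2n}(n+m_1+\cdots+m_{k-1}+h)+O(x-t_{p+1})\bigr\}
\]
as $x\to t_{p+1}$, $x\in\mathcal{S}^{\mp}_i$. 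The hypotheses of Theorem~\ref{thm:asymp_Winf_xi_inf} are met with $\nu_1=\rho_1$, $\nu_2=\mu_q$: indeed $\rho_1\notin\mathbb{Z}_{<0}$, and $\mu_q-\mu_k\notin\mathbb{Z}_{<0}$ follows from (\ref{assumption:E2_2}) since $k\le q-1$.

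The step that distinguishes this proposition from the generic one is showing that the bottom $m_q$ entries are identically zero. Because the second parameter is $\mu_q$, I would apply (\ref{vanish:Wnu1mul}) in Proposition~\ref{prop:Wvanish} with $l=q$, obtaining
\[
\bigl[W^{\langle\mathcal{S}^{\prime\pm}_i;\infty\xi\rangle}_{\infty,k,h}(\rho_1,\mu_q;\xi)\bigr]_m=0
\qquad (n+m_1+\cdots+m_{q-1}+1\le m\le n+m_1+\cdots+m_q).
\]
The one subtlety is the exception carried by Proposition~\ref{prop:Wvanish}: for $\mathit{sing}=\infty$ the vanishing fails precisely when $l=k$. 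Here $l=q$ while $1\le k\le q-1$, so $l\ne k$ and the exception is not triggered; hence the vanishing genuinely holds. Since $V^{\langle\mathcal{S}^{\prime\pm}_i;\infty\xi\rangle}_{\infty,k,h}$ is a scalar multiple of $W^{\langle\mathcal{S}^{\prime\pm}_i;\infty\xi\rangle}_{\infty,k,h}$, the same $m_q$ coordinates of the $V$-integral vanish, so it has the shape $\binom{\ast}{0_{m_q}}$.

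It then remains to identify the top $2n-m_q$ coordinates with $U'_{t_{p+1},k,h}(x)$. The $V$-integral is a solution of (\ref{sys:E2}) with $\rho_2=\mu_q$, and as $\mathcal{A}_P$ is block-triangular in this reducible case, a solution whose last $m_q$ entries vanish restricts to a solution of (\ref{sys:E1}); its leading behavior at $x=t_{p+1}$ is $(x-t_{p+1})^{\rho_1+\mu_q-\mu_k}\varepsilon_{2n-m_q}(n+m_1+\cdots+m_{k-1}+h)$, which is exactly the normalization recorded for $U'_{t_{p+1},k,h}(x)$ in the theorem on local solutions of (\ref{sys:E1}) near $t_{p+1}$. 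By the uniqueness of that local solution the two agree, giving the claim. I expect the only genuine obstacle to be the careful bookkeeping of the exception in Proposition~\ref{prop:Wvanish}; once the inequality $l=q\ne k$ is verified, everything else is a direct transcription of the generic argument of Proposition~\ref{thm:V_Utp+1}.
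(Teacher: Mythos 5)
Your proof is correct and follows exactly the route the paper intends: it states this proposition without a written proof, remarking only that the results of this subsection follow ``thanks to Proposition \ref{prop:Wvanish}'', i.e.\ by repeating the generic asymptotic computation of Proposition \ref{thm:V_Utp+1} and adjoining the vanishing of the last $m_q$ components. Your verification that the exception in Proposition \ref{prop:Wvanish} is not triggered because $l=q\neq k$ for $1\leq k\leq q-1$ is precisely the point that needs checking, and the final identification with $U'_{t_{p+1},k,h}$ via the block-triangular structure of $\mathcal{A}_P$ and uniqueness of the local solution is the same argument as in the generic case.
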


\begin{theorem}
For $1\leq i\leq p$ the coefficients
$C'_{\infty,1,\tilde{h};t_i,k,h}$
and
$C'_{\infty,\mu_{q},\tilde{h};t_i,k,h}$
in the connection formula
\begin{displaymath}
U'_{t_i,k,h}(x)
=
\sum_{\tilde{h}=1}^{n-m_{q}}
C'_{\infty,1,\tilde{h};t_i,k,h}
U'_{\infty,1,\tilde{h}}(x)
+
\sum_{\tilde{h}=1}^{n}
C'_{\infty,\mu_{q},\tilde{h};t_i,k,h}
U'_{\infty,\mu_{q},\tilde{h}}(x)
\quad
(1\leq k\leq r_i, \ 1\leq h\leq \ell_{i,k})
\end{displaymath}
for $x\in\mathcal{S}^{\pm}_{i}$
are given by
\begin{alignat}{2}
C'_{\infty,1,\tilde{h};t_i,k,h}
&=
e^{\pi\sqrt{-1}(\lambda_{i,k}-\mu_{q})}
(t_i-t_{p+1})^{2\lambda_{i,k}-\rho_{1}-\mu_{q}}
 \frac{\Gamma(\rho_{1}-\mu_{q})\Gamma(\lambda_{i,k}+1)}
      {\Gamma(\rho_{1}+1)\Gamma(\lambda_{i,k}-\mu_{q})}
 \gamma_{\tilde{h};t'_i,k,h}(-\mu_{q}-1) \notag
\\
 &&\llap{$(1\leq \tilde{h}\leq n-m_{q}, \
           1\leq k\leq r_i, \ 1\leq h\leq \ell_{i,k}),$} \notag
\\
C'_{\infty,\mu_{q},\tilde{h};t_i,k,h}
&=
e^{\pi\sqrt{-1}(\lambda_{i,k}-\rho_{1})}
(t_i-t_{p+1})^{2\lambda_{i,k}-\rho_{1}-\mu_{q}}
 \frac{\Gamma(\mu_{q}-\rho_{1})\Gamma(\lambda_{i,k}+1)}
      {\Gamma(\mu_{q}+1)\Gamma(\lambda_{i,k}-\rho_{1})}
 \gamma_{\tilde{h};t'_i,k,h}(-\rho_{1}-1) \notag
\\
 &&\llap{$(1\leq \tilde{h}\leq n, \
           1\leq k\leq r_i, \ 1\leq h\leq \ell_{i,k}).$} \notag
\end{alignat}
Besides,
the coefficients
$C^{\prime\langle\mathcal{S}^{\pm}_{i}\rangle}
  _{\infty,1,\tilde{h};t_{p+1},k,h}$
and
$C^{\prime\langle\mathcal{S}^{\pm}_{i}\rangle}
  _{\infty,\mu_{q},\tilde{h};t_{p+1},k,h}$
in the connection formula
\begin{displaymath}
U'_{t_{p+1},k,h}(x)
=
\sum_{\tilde{h}=1}^{n-m_{q}}
C^{\prime\langle\mathcal{S}^{\pm}_{i}\rangle}
 _{\infty,1,\tilde{h};t_{p+1},k,h}
U'_{\infty,1,\tilde{h}}(x)
+
\sum_{\tilde{h}=1}^{n}
C^{\prime\langle\mathcal{S}^{\pm}_{i}\rangle}
 _{\infty,\mu_{q},\tilde{h};t_{p+1},k,h}
U'_{\infty,\mu_{q},\tilde{h}}(x)
\quad
(1\leq k\leq q-1, \ 1\leq h\leq m_{k})
\end{displaymath}
for $x\in\mathcal{S}^{\pm}_{i}$
are given by
\begin{alignat}{2}
C^{\prime\langle\mathcal{S}^{\pm}_{i}\rangle}
 _{\infty,1,\tilde{h};t_{p+1},k,h}
&=
\frac{\Gamma(\rho_{1}-\mu_{q})\Gamma(\rho_{1}+\mu_{q}-\mu_{k}+1)}
      {\Gamma(\rho_{1}+1)\Gamma(\rho_{1}-\mu_{k}+1)}
\gamma^{\langle\mathcal{S}^{\prime\mp}_{i}\rangle}
      _{\tilde{h};\infty,k,h}(-\mu_{q}-1) \notag
\\
 &&\llap{$(1\leq \tilde{h}\leq n-m_{q}, \
           1\leq k\leq q-1, \ 1\leq h\leq m_{k}),$} \notag
\\
C^{\prime\langle\mathcal{S}^{\pm}_{i}\rangle}
 _{\infty,\mu_{q},\tilde{h};t_{p+1},k,h}
&=
\frac{\Gamma(\mu_{q}-\rho_{1})\Gamma(\rho_{1}+\mu_{q}-\mu_{k}+1)}
      {\Gamma(\mu_{q}+1)\Gamma(\mu_{q}-\mu_{k}+1)}
\gamma^{\langle\mathcal{S}^{\prime\mp}_{i}\rangle}
      _{\tilde{h};\infty,k,h}(-\rho_{1}-1) \notag
\\
 &&\llap{$(1\leq \tilde{h}\leq n, \
           1\leq k\leq q-1, \ 1\leq h\leq m_{k}).$} \notag
\end{alignat}
\end{theorem}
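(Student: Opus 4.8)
The plan is to imitate, almost verbatim, the proof of the generic-case theorem giving (\ref{conn_form:Ui_Uinf}) and (\ref{conn_form:Up+1_Uinf}), specializing the second exponent $\rho_{2}$ to the eigenvalue $\mu_{q}$ throughout. The one genuinely new feature is that the two values of the index $j$ in the relations (\ref{exp:Vt'i_by_Veta0xi}) and (\ref{exp:Vinf_by_Veta0xi}) no longer yield two copies of the same type of solution at infinity but instead separate the two distinct families $U'_{\infty,\mu_{q},\tilde h}$ and $U'_{\infty,1,\tilde h}$; this separation is made legitimate by Proposition \ref{prop:Wvanish}, which guarantees that the relevant integrals have vanishing lower $m_{q}$ block and hence represent solutions of the reducible subsystem (\ref{sys:E1}).

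For the first connection formula I would set $(\nu_{1},\nu_{2})=(\rho_{1},\mu_{q})$ in (\ref{exp:Vt'i_by_Veta0xi}). Its left-hand side is, by the preceding proposition, a constant multiple of $U'_{t_i,k,h}$, while its right-hand side is a sum over $j=1,2$ of multiples of $W^{\langle\mathcal{S}^{\prime\pm}_i;\eta_0\rangle}_{t'_i,k,h}(\nu_{j},\nu_{j'};\cdot)$. The preceding proposition identifies the summand carrying $(\nu_{j},\nu_{j'})=(\rho_{1},\mu_{q})$ with a multiple of $U'_{\infty,\mu_{q},\tilde h}$ and the one carrying $(\mu_{q},\rho_{1})$ with a multiple of $U'_{\infty,1,\tilde h}$. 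Discarding the trivial lower block and interchanging $j$ and $j'$ exactly as in the generic argument, I would read off $C'_{\infty,\mu_{q},\tilde h;t_i,k,h}$ from the $\mu_{q}$-family summand and $C'_{\infty,1,\tilde h;t_i,k,h}$ from the $1$-family summand. The second connection formula is handled identically, with (\ref{exp:Vinf_by_Veta0xi}) in place of (\ref{exp:Vt'i_by_Veta0xi}), using the two propositions for $W^{\langle\mathcal{S}^{\prime\pm}_i;\eta_0\rangle}_{\infty,k,h}$ and the proposition giving $V^{\langle\mathcal{S}^{\prime\pm}_i;\infty\rangle}_{\infty,k,h}(\rho_{1},\mu_{q};\cdot)=U'_{t_{p+1},k,h}$; here the correspondence $\mathcal{S}^{\prime\mp}_i\leftrightarrow\mathcal{S}^{\pm}_i$ from (\ref{change:xitox}) accounts for the flipped superscript on $\gamma^{\langle\mathcal{S}^{\prime\mp}_i\rangle}_{\tilde h;\infty,k,h}$ in the asserted coefficients.

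The main obstacle is bookkeeping of two kinds. First, the product of prefactors coming from (\ref{exp:Vt'i_by_Veta0xi})/(\ref{exp:Vinf_by_Veta0xi}), from the $W$-to-$U'$ identifications, and from the normalization of $V^{\langle\mathcal{S}^{\prime\pm}_i;t'_i\rangle}_{t'_i,k,h}$ must be shown to collapse, after the interchange of $j$ and $j'$, into precisely the stated $\Gamma$-quotients with $\rho_{1}+\rho_{2}$ everywhere replaced by $\rho_{1}+\mu_{q}$; this is a routine but lengthy $\Gamma$-function manipulation, structurally the same as in the generic case. Second, and this is the point I would check most carefully, the inner summation ranges genuinely differ between the two families ($1\le\tilde h\le n$ for $U'_{\infty,\mu_{q}}$ but only $1\le\tilde h\le n-m_{q}$ for $U'_{\infty,1}$); the truncation to $n-m_{q}$ is dictated by Proposition \ref{prop:Wvanish}, and I would verify that it is consistent with the multiplicities $-\rho_{1}\,(n-m_{q})$ and $-\mu_{q}\,(n)$ displayed in the Riemann scheme of (\ref{sys:E1}).
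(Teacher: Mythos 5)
Your proposal is correct and follows exactly the route the paper intends: the theorem in the reducible case (i) is obtained by specializing $(\nu_{1},\nu_{2})=(\rho_{1},\mu_{q})$ in (\ref{exp:Vt'i_by_Veta0xi}) and (\ref{exp:Vinf_by_Veta0xi}), substituting the $W$-to-$U'$ and $V$-to-$U'$ identifications of the preceding propositions (whose validity rests on Proposition \ref{prop:Wvanish} forcing the lower $m_{q}$ block to vanish), and interchanging $j$ and $j'$, just as in the generic-case proof of (\ref{conn_coef:Ui_Uinf}) and (\ref{conn_coef:Up+1_Uinf}). Your observations that the two $j$-summands now separate into the distinct families $U'_{\infty,\mu_{q},\tilde h}$ and $U'_{\infty,1,\tilde h}$, and that the truncated range $1\leq\tilde h\leq n-m_{q}$ for the latter is dictated by Proposition \ref{prop:Wvanish} consistently with the Riemann scheme of (\ref{sys:E1}), are precisely the points the paper leaves implicit.
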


\begin{theorem}
For $1\leq i\leq p-1$ the coefficients
$C'_{t_{i+1},\tilde{k},\tilde{h};t_i,k,h}$
in the connection formula
\begin{displaymath}
U'_{t_i,k,h}(x)
=
\sum_{\tilde{k}=1}^{r_{i+1}} \sum_{\tilde{h}=1}^{\ell_{i+1,\tilde{k}}}
C'_{t_{i+1},\tilde{k},\tilde{h};t_i,k,h}
U'_{t_{i+1},\tilde{k},\tilde{h}}(x)
+\mathrm{hol}(x-t_{i+1})
\quad
(1\leq k\leq r_i, \ 1\leq h\leq \ell_{i,k})
\end{displaymath}
for $x\in\mathcal{S}^{+}_{i}\cup
         \mathcal{S}^{-}_{i+1}$
are given by
\begin{displaymath}
\begin{aligned}
C'_{t_{i+1},\tilde{k},\tilde{h};t_i,k,h}
=
\frac{e^{\pi\sqrt{-1}\lambda_{i,k}}
      (t_i-t_{p+1})^{2\lambda_{i,k}-\rho_{1}-\mu_{q}}}
     {e^{\pi\sqrt{-1}\lambda_{i+1,\tilde{k}}}
      (t_{i+1}-t_{p+1})^{2\lambda_{i+1,\tilde{k}}-\rho_{1}-\mu_{q}}}
c_{t'_{i+1},\tilde{k},\tilde{h};t'_i,k,h}
\\
(1\leq \tilde{k}\leq r_{i+1}, \ 1\leq \tilde{h}\leq \ell_{i+1,\tilde{k}}, \
1\leq k\leq r_i, \ 1\leq h\leq \ell_{i,k}).
\end{aligned}
\end{displaymath}
Besides,
for $2\leq i\leq p$ the coefficients
$C'_{t_{i-1},\tilde{k},\tilde{h};t_i,k,h}$
in the connection formula
\begin{displaymath}
U'_{t_i,k,h}(x)
=
\sum_{\tilde{k}=1}^{r_{i-1}} \sum_{\tilde{h}=1}^{\ell_{i-1,\tilde{k}}}
C'_{t_{i-1},\tilde{k},\tilde{h};t_i,k,h}
U'_{t_{i-1},\tilde{k},\tilde{h}}(x)
+\mathrm{hol}(x-t_{i-1})
\quad
(1\leq k\leq r_i, \ 1\leq h\leq \ell_{i,k})
\end{displaymath}
for $x\in\mathcal{S}^{-}_{i}\cup
         \mathcal{S}^{+}_{i-1}$
are given by
\begin{displaymath}
\begin{aligned}
C'_{t_{i-1},\tilde{k},\tilde{h};t_i,k,h}
=
\frac{e^{\pi\sqrt{-1}\lambda_{i,k}}
      (t_i-t_{p+1})^{2\lambda_{i,k}-\rho_{1}-\mu_{q}}}
     {e^{\pi\sqrt{-1}\lambda_{i-1,\tilde{k}}}
      (t_{i-1}-t_{p+1})^{2\lambda_{i-1,\tilde{k}}-\rho_{1}-\mu_{q}}}
c_{t'_{i-1},\tilde{k},\tilde{h};t'_i,k,h}
\\
(1\leq \tilde{k}\leq r_{i-1}, \ 1\leq \tilde{h}\leq \ell_{i-1,\tilde{k}}, \
1\leq k\leq r_i, \ 1\leq h\leq \ell_{i,k}).
\end{aligned}
\end{displaymath}
\end{theorem}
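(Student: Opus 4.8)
The plan is to reproduce, almost verbatim, the argument used for the corresponding generic-case theorem (the one establishing $(\ref{conn_form:Ui_Ui+1})$–$(\ref{conn_coef:Ui_Ui-1})$), with the single change that the parameter $\rho_{2}$ is now specialized to the eigenvalue $\mu_{q}$. The crucial structural fact is that Theorem $\ref{thm:rel_Vti_ti+1}$ relates the integrals $V^{\langle\mathcal{S}^{\prime-}_i;t'_i \xi\rangle}_{t'_i,k,h}$ and $V^{\langle\mathcal{S}^{\prime+}_{i+1};t'_{i+1}\xi\rangle}_{t'_{i+1},\tilde{k},\tilde{h}}$ (resp.\ the $i,i-1$ pair) through the connection coefficient $c_{t'_{i+1},\tilde{k},\tilde{h};t'_i,k,h}$ of the \emph{underlying} system evaluated at $\rho=0$, and that this relation is valid for arbitrary parameters $\nu_{1},\nu_{2}$ satisfying $(\ref{assumption:NUj_1})$–$(\ref{assumption:NUj_3})$. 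First I would verify that the specialization $\nu_{1}=\rho_{1}$, $\nu_{2}=\mu_{q}$ is admissible under the standing hypotheses of reducible case (i): $\nu_{2}=\mu_{q}\notin\mathbb{Z}$ and $\nu_{2}-\lambda_{i,k}=\mu_{q}-\lambda_{i,k}\notin\mathbb{Z}_{\geq0}$ follow from $(\ref{assumption:underlying})$, while $\nu_{2}-\mu_{k}=\mu_{q}-\mu_{k}\notin\mathbb{Z}_{<0}$ follows from $(\ref{assumption:E2_2})$ (the case $k=q$ giving $0$); the conditions on $\nu_{1}=\rho_{1}$ are guaranteed by $(\ref{assumption:E2_3})$, $(\ref{assumption:E2_4})$ and $(\ref{assumption:E1_0})$. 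Hence $(\ref{exp:Vt'ixi_by_Vt'i+1xi})$ and $(\ref{exp:Vt'ixi_by_Vt'i-1xi})$ apply with these values.

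Next I would substitute into these relations the preceding proposition, which identifies
\[
V^{\langle\mathcal{S}^{\prime\pm}_i;t'_i \rangle}_{t'_i,k,h}\Bigl(\rho_{1},\mu_{q};\eta_0+\tfrac{1}{x-t_{p+1}}\Bigr)
=e^{-\pi\sqrt{-1}\lambda_{i,k}}(t_i-t_{p+1})^{\rho_{1}+\mu_{q}-2\lambda_{i,k}}\begin{pmatrix}U'_{t_i,k,h}(x)\\0_{m_{q}}\end{pmatrix},
\]
together with the same identity at index $i+1$ (resp.\ $i-1$). On the left this produces the block vector $\binom{U'_{t_i,k,h}}{0}$ and on the right a scalar linear combination of the block vectors $\binom{U'_{t_{i+1},\tilde{k},\tilde{h}}}{0}$. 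Because the trailing $0_{m_{q}}$ block is common to every term and is preserved along the analytic continuation (this is exactly the content of Proposition $\ref{prop:Wvanish}$ and the vanishing facts from the proof of Theorem $\ref{thm:zero_conn.coef}$), the relation descends to the genuine solutions $U'$ of the reduced system $(\ref{sys:E1})$. Solving for $\binom{U'_{t_i,k,h}}{0}$ and collecting the exponential factors $e^{\pm\pi\sqrt{-1}\lambda_{\bullet}}$ and the powers of $(t_{\bullet}-t_{p+1})$ coming from the two applications of the proposition then yields the stated coefficient, which is precisely $(\ref{conn_coef:Ui_Ui+1})$ (resp.\ $(\ref{conn_coef:Ui_Ui-1})$) with $\rho_{2}$ replaced by $\mu_{q}$. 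The domain $x\in\mathcal{S}^{+}_{i}\cup\mathcal{S}^{-}_{i+1}$ arises from $\xi\in\mathcal{S}^{\prime-}_i\cap\mathcal{S}^{\prime+}_{i+1}$ via the correspondence noted after $(\ref{change:xitox})$.

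The only genuinely delicate point — the step I expect to require care rather than real difficulty — is the treatment of the holomorphic remainder $\mathrm{hol}(\xi-t'_{i+1})$ in $(\ref{exp:Vt'ixi_by_Vt'i+1xi})$: I must confirm that under the biholomorphic change of variable $(\ref{change:xitox})$ near the finite point $t_{i+1}$ it transforms into a remainder $\mathrm{hol}(x-t_{i+1})$ that again has vanishing last $m_{q}$ components, so that no spurious nonholomorphic contribution is introduced when passing from the block vectors to the solutions $U'$. Since $\xi=\eta_0+1/(x-t_{p+1})$ is holomorphic and invertible near each $t_i$, and the block structure is stable under the relevant continuations, this causes no obstruction. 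Everything else is the same bookkeeping with $\Gamma$-factors as in the generic case, so the two connection formulas and their coefficients follow exactly as claimed.
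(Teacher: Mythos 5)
Your proposal is correct and matches the paper's (largely implicit) argument: the paper derives all of the reducible case (i) formulas by specializing $\nu_{1}=\rho_{1}$, $\nu_{2}=\mu_{q}$ in the general relations of Section 5, invoking Proposition 5.2 to identify the integrals with block vectors $\begin{pmatrix}U'\\0_{m_{q}}\end{pmatrix}$, and then repeating the generic-case substitution of the $V$--$U$ identifications into (5.11) and (5.12). Your added checks of the admissibility of the parameter specialization and of the block structure of the holomorphic remainder are exactly the points the paper leaves to the reader.
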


\begin{theorem}
For $1\leq i\leq p$ the coefficients
$C^{\prime\pm}_{t_{p+1},\tilde{k},\tilde{h};t_i,k,h}$
in the connection formula
\begin{displaymath}
U'_{t_i,k,h}(x)
=
\sum_{\tilde{k}=1}^{q-1} \sum_{\tilde{h}=1}^{m_{\tilde{k}}}
C^{\prime\pm}_{t_{p+1},\tilde{k},\tilde{h};t_i,k,h}
U'_{t_{p+1},\tilde{k},\tilde{h}}(x)
+\mathrm{hol}\left(x-t_{p+1}\right)
\quad
(1\leq k\leq r_i, \ 1\leq h\leq \ell_{i,k})
\end{displaymath}
for $x\in\mathcal{S}^{\pm}_{i}$
are given by
\begin{displaymath}
\begin{aligned}
C^{\prime\pm}_{t_{p+1},\tilde{k},\tilde{h};t_i,k,h}
=
e^{\pm\pi\sqrt{-1}\lambda_{i,k}}
(t_i-t_{p+1})^{2\lambda_{i,k}-\rho_{1}-\mu_{q}}
\frac{\Gamma(\mu_{\tilde{k}}+1)
      \Gamma(\mu_{\tilde{k}}-\rho_{1}-\mu_{q})}
     {\Gamma(\mu_{\tilde{k}}-\rho_{1})
      \Gamma(\mu_{\tilde{k}}-\mu_{q})}
c_{\infty,\tilde{k},\tilde{h};t'_i,k,h}
\\
(1\leq \tilde{k}\leq q-1, \ 1\leq \tilde{h}\leq m_{\tilde{k}}, \
1\leq k\leq r_i, \ 1\leq h\leq \ell_{i,k}),
\end{aligned}
\end{displaymath}
where the double-signs correspond.
\end{theorem}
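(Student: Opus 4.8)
The plan is to mirror the derivation of the generic-case formula (\ref{conn_coef:Ui_Up+1}), specializing the parameter $\rho_{2}$ to the eigenvalue $\mu_{q}$. I would start from the relations (\ref{exp:Vt'ixi_by_Vinfxi_S'-}) and (\ref{exp:Vt'ixi_by_Vinfxi_S'+}) of Theorem \ref{thm:rel_Vti_inf}, substituting $\nu_{1}=\rho_{1}$, $\nu_{2}=\mu_{q}$. The left-hand sides are the integrals $V^{\langle\mathcal{S}^{\prime\pm}_i;t'_i \rangle}_{t'_i,k,h}(\rho_{1},\mu_{q};\xi)$, which the first $V$-Proposition of this subsection identifies with $e^{-\pi\sqrt{-1}\lambda_{i,k}}(t_i-t_{p+1})^{\rho_{1}+\mu_{q}-2\lambda_{i,k}}\bigl(\begin{smallmatrix}U'_{t_i,k,h}(x)\\0_{m_{q}}\end{smallmatrix}\bigr)$, while the summands on the right involve $V^{\langle\mathcal{S}^{\prime\pm}_i;\infty\rangle}_{\infty,\tilde{k},\tilde{h}}(\rho_{1},\mu_{q};\xi)$, which the third $V$-Proposition identifies with $\bigl(\begin{smallmatrix}U'_{t_{p+1},\tilde{k},\tilde{h}}(x)\\0_{m_{q}}\end{smallmatrix}\bigr)$.

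The decisive observation is that with $\nu_{2}=\mu_{q}$ the $\tilde{k}=q$ summand in (\ref{exp:Vt'ixi_by_Vinfxi_S'-})--(\ref{exp:Vt'ixi_by_Vinfxi_S'+}) carries the factor $1/\Gamma(\mu_{\tilde{k}}-\nu_{2})=1/\Gamma(\mu_{q}-\mu_{q})=1/\Gamma(0)=0$, so it drops out and the sum collapses to $1\le\tilde{k}\le q-1$. This is exactly the range for which the third $V$-Proposition above is valid, and it is the mechanism by which the reducible summation $\sum_{\tilde{k}=1}^{q-1}$ arises; assumption (\ref{assumption:E1_0}) guarantees that the surviving coefficients for $\tilde{k}\le q-1$ have no spurious poles or zeros. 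The zero lower block $0_{m_{q}}$ on both sides is consistent by the vanishing recorded in Proposition \ref{prop:Wvanish}.

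Reading off the top $2n-m_{q}$ components then yields the asserted connection formula. For $x\in\mathcal{S}^{+}_{i}$ (equivalently $\xi\in\mathcal{S}^{\prime-}_i$) one applies (\ref{exp:Vt'ixi_by_Vinfxi_S'-}), and transposing the prefactor $e^{-\pi\sqrt{-1}\lambda_{i,k}}$ to the opposite side produces $e^{+\pi\sqrt{-1}\lambda_{i,k}}$ together with $(t_i-t_{p+1})^{2\lambda_{i,k}-\rho_{1}-\mu_{q}}$ and the Gamma quotient $\Gamma(\mu_{\tilde{k}}+1)\Gamma(\mu_{\tilde{k}}-\rho_{1}-\mu_{q})/\bigl(\Gamma(\mu_{\tilde{k}}-\rho_{1})\Gamma(\mu_{\tilde{k}}-\mu_{q})\bigr)$. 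For $x\in\mathcal{S}^{-}_{i}$ (equivalently $\xi\in\mathcal{S}^{\prime+}_i$) one instead applies (\ref{exp:Vt'ixi_by_Vinfxi_S'+}), whose extra factor $e^{-2\pi\sqrt{-1}\lambda_{i,k}}$ flips the exponential to $e^{-\pi\sqrt{-1}\lambda_{i,k}}$, giving the lower sign. This is the double-sign convention in the statement.

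The step I expect to be the only real subtlety is justifying the passage from the formal sum over $1\le\tilde{k}\le q$ to the reduced sum over $1\le\tilde{k}\le q-1$: one must check that the $\tilde{k}=q$ contribution genuinely vanishes (rather than producing a $0\cdot\infty$ indeterminacy) and that the holomorphic remainder $\mathrm{hol}(x-t_{p+1})$ is preserved under the specialization. Both follow from the explicit Gamma-function coefficients and the holomorphy arguments already established for the generic case, so once the vanishing is confirmed the remainder of the computation is identical to the generic proof with $\rho_{2}$ replaced by $\mu_{q}$ throughout.
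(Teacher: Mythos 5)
Your proposal is correct and follows exactly the route the paper intends: the reducible-case theorems in this subsection are stated without explicit proof because they are obtained by repeating the generic-case argument (substituting the three propositions of this subsection into (\ref{exp:Vt'ixi_by_Vinfxi_S'-}) and (\ref{exp:Vt'ixi_by_Vinfxi_S'+}) at $\nu_1=\rho_1$, $\nu_2=\mu_q$), and you correctly identify the one point that needs checking, namely that the $\tilde{k}=q$ term is annihilated by the factor $1/\Gamma(\mu_q-\nu_2)=1/\Gamma(0)=0$ multiplying a finite quantity. Your handling of the double signs via the extra $e^{-2\pi\sqrt{-1}\lambda_{i,k}}$ in (\ref{exp:Vt'ixi_by_Vinfxi_S'+}) also matches the generic-case proof.
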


\subsection{Reducible case (ii)}
Finally, we consider the connection formulas for the system (\ref{sys:E0}).
We assume the conditions
(\ref{assumption:E2_1})--(\ref{assumption:E2_4})
and
(\ref{assumption:underlying}).
We use the notation of an index set
\begin{displaymath}
\Lambda_{l}
=
\{1,\ldots,n\}
\setminus\{m_1+\cdots+m_{l-1}+1,\ldots,m_{1}+\cdots+m_{l}\}
\end{displaymath}
for $l=q-1,q$.

\begin{proposition}
For $1\leq i\leq p$ we have
\begin{alignat}{2}
W^{\langle\mathcal{S}^{\prime\pm}_i;\eta_0\rangle}
_{t'_i,k,h}(\mu_{q-1},\mu_{q};\eta_0+\frac{1}{x-t_{p+1}})
&=
 \frac{\Gamma(\mu_{q-1}+1)\Gamma(-\mu_{q})}{\Gamma(\mu_{q-1}-\mu_{q}+1)}
 \sum_{\tilde{h}\in\Lambda_{q-1}}
 \gamma_{\tilde{h};t'_i,k,h}(-\mu_{q-1}-1)
 \begin{pmatrix}
 U''_{\infty,\mu_{q},\tilde{h}}(x)
 \\ 0_{m_{q-1}+m_{q}} \end{pmatrix} \notag
\\
 &&\llap{$(1\leq k\leq r_i, \ 1\leq h\leq \ell_{i,k}),$} \notag
\\
W^{\langle\mathcal{S}^{\prime\pm}_i;\eta_0\rangle}
_{t'_i,k,h}(\mu_{q},\mu_{q-1};\eta_0+\frac{1}{x-t_{p+1}})
&=
 \frac{\Gamma(\mu_{q}+1)\Gamma(-\mu_{q-1})}{\Gamma(\mu_{q}-\mu_{q-1}+1)}
 \sum_{\tilde{h}\in\Lambda_{q}}
 \gamma_{\tilde{h};t'_i,k,h}(-\mu_{q}-1)
 \begin{pmatrix}
 U''_{\infty,\mu_{q-1},\tilde{h}}(x)
 \\ 0_{m_{q-1}+m_{q}} \end{pmatrix} \notag
\\
 &&\llap{$(1\leq k\leq r_i, \ 1\leq h\leq \ell_{i,k})$} \notag
\end{alignat}
for $x\in\mathcal{S}^{\mp}_i$,
and
\begin{alignat}{2}
W^{\langle\mathcal{S}^{\prime\pm}_i;\eta_0\rangle}
_{\infty,k,h}(\mu_{q-1},\mu_{q};\eta_0+\frac{1}{x-t_{p+1}})
&=
 \frac{\Gamma(\mu_{q-1}+1)\Gamma(-\mu_{q})}{\Gamma(\mu_{q-1}-\mu_{q}+1)}
 \sum_{\tilde{h}\in\Lambda_{q-1}}
 \gamma^{\langle\mathcal{S}^{\prime\pm}_{i}\rangle}
       _{\tilde{h};\infty,k,h}(-\mu_{q-1}-1)
 \begin{pmatrix}
 U''_{\infty,\mu_{q},\tilde{h}}(x)
 \\ 0_{m_{q-1}+m_{q}} \end{pmatrix} \notag
 \\
 &&\llap{$(1\leq k\ne q-1\leq q, \ 1\leq h\leq m_{k}),$} \notag
\\
W^{\langle\mathcal{S}^{\prime\pm}_i;\eta_0\rangle}
_{\infty,k,h}(\mu_{q},\mu_{q-1};\eta_0+\frac{1}{x-t_{p+1}})
&=
 \frac{\Gamma(\mu_{q}+1)\Gamma(-\mu_{q-1})}{\Gamma(\mu_{q}-\mu_{q-1}+1)}
 \sum_{\tilde{h}\in\Lambda_{q}}
 \gamma^{\langle\mathcal{S}^{\prime\pm}_{i}\rangle}
       _{\tilde{h};\infty,k,h}(-\mu_{q}-1)
 \begin{pmatrix}
 U''_{\infty,\mu_{q-1},\tilde{h}}(x)
 \\ 0_{m_{q-1}+m_{q}} \end{pmatrix} \notag
 \\
 &&\llap{$(1\leq k\leq q-1, \ 1\leq h\leq m_{k})$} \notag
\end{alignat}
for $x\in\mathcal{S}^{\mp}_i$.
\end{proposition}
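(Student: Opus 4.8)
The plan is to run the same argument that established Proposition \ref{thm:V_Uinf} in the generic case, now specializing the free parameters to $(\nu_1,\nu_2)=(\mu_{q-1},\mu_q)$ and $(\nu_1,\nu_2)=(\mu_q,\mu_{q-1})$ — which here coincide with $(\rho_1,\rho_2)$ and $(\rho_2,\rho_1)$ since $\rho_1=\mu_{q-1}$, $\rho_2=\mu_q$ — and then feeding in Proposition \ref{prop:Wvanish} to descend to the reduced system \eqref{sys:E0}. First I would invoke Theorem \ref{thm:asymp_W_eta0_xi} for the two $W_{t'_i,k,h}$ formulas and Theorem \ref{thm:asymp_Winf_eta0_xi} for the two $W_{\infty,k,h}$ formulas, substitute $\xi=\eta_0+1/(x-t_{p+1})$ so that $\xi\to\eta_0$ becomes $x\to\infty$ and $(\xi-\eta_0)^{-\nu_2}=(x-t_{p+1})^{\nu_2}$, and read off the leading coefficient $\frac{\Gamma(\nu_1+1)\Gamma(-\nu_2)}{\Gamma(\nu_1-\nu_2+1)}\begin{pmatrix}P\\\nu_2 I_n-A'\end{pmatrix}P^{-1}(\eta_0 I_n-T')w_{\mathit{sing},k,h}(-\nu_1-1;\eta_0)$. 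By the definition of the $\gamma$'s this equals $\frac{\Gamma(\nu_1+1)\Gamma(-\nu_2)}{\Gamma(\nu_1-\nu_2+1)}\sum_{\tilde h=1}^{n}\gamma_{\tilde h;\mathit{sing},k,h}(-\nu_1-1)\begin{pmatrix}P\varepsilon_n(\tilde h)\\(\nu_2 I_n-A')\varepsilon_n(\tilde h)\end{pmatrix}$, matching the power $(x-t_{p+1})^{\mu_q}$ (resp.\ $(x-t_{p+1})^{\mu_{q-1}}$) carried by $U''_{\infty,\mu_q,\tilde h}$ (resp.\ $U''_{\infty,\mu_{q-1},\tilde h}$).

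The decisive step is the collapse of the index set from $\{1,\dots,n\}$ to $\Lambda_{q-1}$ or $\Lambda_q$. For the first formula $\nu_1=\mu_{q-1}$, and the facts recorded in the proof of Theorem \ref{thm:zero_conn.coef} give $[P^{-1}(\zeta I_n-T')w_{t'_i,k,h}(-\mu_{q-1}-1;\zeta)]_m=0$ for $m$ in the $(q-1)$-th block; evaluating at $\zeta=\eta_0$ annihilates $\gamma_{\tilde h;t'_i,k,h}(-\mu_{q-1}-1)$ for $\tilde h$ in that block, so the sum shrinks to $\tilde h\in\Lambda_{q-1}$. Simultaneously, $\nu_2=\mu_q$ makes the $q$-th diagonal block of $\nu_2 I_n-A'$ vanish, so for $\tilde h$ in the $q$-th block the surviving summand is $\begin{pmatrix}P\varepsilon_n(\tilde h)\\0\end{pmatrix}$, which is exactly the second branch of $G''_{\infty,\mu_q,\tilde h}(0)$ (the $q$-th block being $\{n-m_q+1,\dots,n\}$), while the remaining $\tilde h\in\Lambda_{q-1}$ reproduce its first branch. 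Finally Proposition \ref{prop:Wvanish} forces the $(q-1)$-th and $q$-th lower blocks of the whole integral to vanish, placing $W$ in the subspace $\begin{pmatrix}U''\\0_{m_{q-1}+m_q}\end{pmatrix}$; comparing the leading vector $\sum_{\tilde h\in\Lambda_{q-1}}\gamma_{\tilde h;t'_i,k,h}(-\mu_{q-1}-1)\,G''_{\infty,\mu_q,\tilde h}(0)$ against the local solutions of Section \ref{sec:localsol} yields the first formula. The second formula is the same computation with $\mu_{q-1}$ and $\mu_q$ interchanged, using $\Lambda_q$ and $U''_{\infty,\mu_{q-1},\tilde h}$.

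For the two $W_{\infty,k,h}$ formulas I would repeat this verbatim, replacing $\gamma_{\tilde h;t'_i,k,h}$ by $\gamma^{\langle\mathcal{S}^{\prime\pm}_i\rangle}_{\tilde h;\infty,k,h}$ and Theorem \ref{thm:asymp_W_eta0_xi} by Theorem \ref{thm:asymp_Winf_eta0_xi}. The subtle point — and where I expect the main obstacle — is the exceptional cases in Proposition \ref{prop:Wvanish}: the vanishing \eqref{vanish:Wmulnu2} fails precisely for $W_{\infty,k,h}$ whose first parameter equals $\mu_k$. In the third formula the first parameter is $\mu_{q-1}$, so the $(q-1)$-th block vanishes, and $\gamma^{\langle\mathcal{S}^{\prime\pm}_i\rangle}_{\tilde h;\infty,k,h}(-\mu_{q-1}-1)=0$ on that block via the $\tilde k\ne l$ identity in the proof of Theorem \ref{thm:zero_conn.coef}, only when $k\ne q-1$; this is exactly the stated restriction, and likewise the fourth formula requires $k\le q-1$. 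Checking that these two exclusions are the only obstructions, that they reproduce $\Lambda_{q-1}$ and $\Lambda_q$, and that the remaining blocks of $G''$ are matched without spurious survivors is the delicate bookkeeping — everything else is a routine specialization of the generic-case calculation.
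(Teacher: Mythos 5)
Your proposal is correct and follows exactly the route the paper intends (the paper states these propositions without written proof, having flagged in the reducible case (i) that they follow from Proposition \ref{prop:Wvanish} combined with the generic-case computation): specialize Theorems \ref{thm:asymp_W_eta0_xi} and \ref{thm:asymp_Winf_eta0_xi} to $(\nu_1,\nu_2)=(\mu_{q-1},\mu_q)$ and $(\mu_q,\mu_{q-1})$, use the block-vanishing facts from the proof of Theorem \ref{thm:zero_conn.coef} to collapse the sum to $\Lambda_{q-1}$ or $\Lambda_q$, and use Proposition \ref{prop:Wvanish} to place the integral in the reduced subspace so the leading vectors match $G''_{\infty,\mu_q,\tilde h}(0)$. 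Your identification of the exceptional cases of \eqref{vanish:Wmulnu2} as the source of the restrictions $k\ne q-1$ and $k\le q-1$ is also the right reading.
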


\begin{proposition}
We have
\begin{displaymath}
\begin{aligned}
V^{\langle\mathcal{S}^{\prime\pm}_i;t'_i \rangle}
_{t'_i,k,h}(\mu_{q-1},\mu_{q};\eta_0+\frac{1}{x-t_{p+1}})
=
e^{-\pi\sqrt{-1}\lambda_{i,k}}
(t_i-t_{p+1})^{\mu_{q-1}+\mu_{q}-2\lambda_{i,k}}
 \begin{pmatrix}
 U''_{t_i,k,h}(x)
 \\ 0_{m_{q-1}+m_{q}} \end{pmatrix}
\\
(1\leq k\leq r_i, \ 1\leq h\leq \ell_{i,k})
\end{aligned}
\end{displaymath}
for $x\in\mathcal{S}^{\mp}_i$.
\end{proposition}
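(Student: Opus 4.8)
The plan is to repeat the argument of Proposition~\ref{thm:V_Uti} with $(\nu_1,\nu_2)=(\mu_{q-1},\mu_q)$, and to read off the block structure of the answer from Proposition~\ref{prop:Wvanish}. First I would note that, since here $\rho_1=\mu_{q-1}$ and $\rho_2=\mu_q$, the choice $\nu_1=\mu_{q-1}$, $\nu_2=\mu_q$ makes the coefficient of the system~(\ref{sys:forW}) coincide with $\mathcal{A}_P$; hence, after the substitution~(\ref{change:xitox}), the left-hand side of the asserted identity is a solution of~(\ref{sys:E2}) for the present parameters. The hypotheses of Theorem~\ref{thm:asymp_W_t'i_xi}, namely $\nu_1\notin\mathbb{Z}_{<0}$ and $\nu_1-\lambda_{i,k}\notin\mathbb{Z}_{\geq0}$, hold for $\nu_1=\mu_{q-1}$ by virtue of~(\ref{assumption:underlying}).

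For the asymptotics, I would substitute~(\ref{relation:xi-t'i_x-ti}) into the expansion~(\ref{asymp:W_xi_t'i}) and use the definition of $V^{\langle\mathcal{S}^{\prime\pm}_i;t'_i\xi\rangle}_{t'_i,k,h}$. Exactly as in the generic computation, the factors $(t'_i-\eta_0)^{-\mu_{q-1}-\mu_q}=(t_i-t_{p+1})^{\mu_{q-1}+\mu_q}$ and $(\xi-t'_i)^{\lambda_{i,k}}=e^{-\pi\sqrt{-1}\lambda_{i,k}}(t_i-t_{p+1})^{-2\lambda_{i,k}}(x-t_i)^{\lambda_{i,k}}\{1+O(x-t_i)\}$ combine to give
\begin{displaymath}
V^{\langle\mathcal{S}^{\prime\pm}_i;t'_i\rangle}_{t'_i,k,h}(\mu_{q-1},\mu_q;\eta_0+\tfrac{1}{x-t_{p+1}})
=e^{-\pi\sqrt{-1}\lambda_{i,k}}(t_i-t_{p+1})^{\mu_{q-1}+\mu_q-2\lambda_{i,k}}
(x-t_i)^{\lambda_{i,k}}\bigl\{\varepsilon_{2n}(N_{i,k,h})+O(x-t_i)\bigr\}
\end{displaymath}
as $x\to t_i$, $x\in\mathcal{S}^{\mp}_i$, where $N_{i,k,h}=n_1+\cdots+n_{i-1}+\ell_{i,1}+\cdots+\ell_{i,k-1}+h$.

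The new ingredient, compared with Proposition~\ref{thm:V_Uti}, is to establish the vanishing of the last $m_{q-1}+m_q$ components. Since $V^{\langle\mathcal{S}^{\prime\pm}_i;t'_i\rangle}_{t'_i,k,h}$ is a scalar multiple of $W^{\langle\mathcal{S}^{\prime\pm}_i;t'_i\xi\rangle}_{t'_i,k,h}$, which is not one of the exceptional integrals listed in Proposition~\ref{prop:Wvanish}, I may apply~(\ref{vanish:Wmulnu2}) with $\nu_1=\mu_{q-1}$ ($l=q-1$) and~(\ref{vanish:Wnu1mul}) with $\nu_2=\mu_q$ ($l=q$) together. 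The two index ranges, $n+m_1+\cdots+m_{q-2}+1\le m\le n+m_1+\cdots+m_{q-1}$ and $n+m_1+\cdots+m_{q-1}+1\le m\le 2n$, abut to cover exactly the last $m_{q-1}+m_q$ coordinates, so the integral takes the form $\begin{pmatrix}\ast\\0_{m_{q-1}+m_q}\end{pmatrix}$. By the block-triangular form of $\mathcal{A}_P$ in the reducible case (ii), its top part then solves the rank $2n-m_{q-1}-m_q$ system~(\ref{sys:E0}).

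To finish, I would invoke uniqueness of local solutions of~(\ref{sys:E0}): the top block has leading term $(x-t_i)^{\lambda_{i,k}}\varepsilon_{2n-m_{q-1}-m_q}(N_{i,k,h})$, which is precisely the initial datum $G''_{t_i,k,h}(0)$ of $U''_{t_i,k,h}(x)$ (note $N_{i,k,h}\le n\le 2n-m_{q-1}-m_q$, so the leading unit vector indeed lies in the retained block). Hence the top block equals $e^{-\pi\sqrt{-1}\lambda_{i,k}}(t_i-t_{p+1})^{\mu_{q-1}+\mu_q-2\lambda_{i,k}}U''_{t_i,k,h}(x)$, which is the assertion. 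The only genuine obstacle is the bookkeeping of the preceding paragraph: confirming that both exceptional lists of Proposition~\ref{prop:Wvanish} are avoided and that the two coordinate blocks fit together seamlessly.
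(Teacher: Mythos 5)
Your argument is correct and is exactly the route the paper intends: the paper states this proposition without a separate proof, relying on the computation of Proposition \ref{thm:V_Uti} specialized to $(\nu_1,\nu_2)=(\mu_{q-1},\mu_q)$ together with Proposition \ref{prop:Wvanish} for the vanishing of the last $m_{q-1}+m_q$ components. Your bookkeeping — that $W^{\langle\mathcal{S}^{\prime\pm}_i;t'_i\xi\rangle}_{t'_i,k,h}$ avoids both exceptional lists, that the two index ranges for $l=q-1$ and $l=q$ abut to cover the final block, and that the leading unit vector lies in the retained block so uniqueness of $U''_{t_i,k,h}$ applies — is all sound.
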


\begin{proposition}
We have
\begin{displaymath}
V^{\langle\mathcal{S}^{\prime\pm}_i;\infty\rangle}
_{\infty,k,h}(\mu_{q-1},\mu_{q};\eta_0+\frac{1}{x-t_{p+1}})
=
 \begin{pmatrix}
   U''_{t_{p+1},k,h}(x)
 \\ 0_{m_{q-1}+m_{q}} \end{pmatrix}
\quad
(1\leq k\leq q-2, \ 1\leq h\leq m_{k})
\end{displaymath}
for $x\in\mathcal{S}^{\mp}_i$.
\end{proposition}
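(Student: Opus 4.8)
The plan is to follow the proof of Proposition \ref{thm:V_Utp+1} for the generic case and to supplement it with the block-vanishing provided by Proposition \ref{prop:Wvanish}. Under the transformation (\ref{change:xitox}) the limit $\xi\to\infty$ corresponds to $x\to t_{p+1}$, and with $\nu_{1}=\mu_{q-1}$, $\nu_{2}=\mu_{q}$ the integral $V^{\langle\mathcal{S}^{\prime\pm}_i;\infty\rangle}_{\infty,k,h}(\mu_{q-1},\mu_{q};\xi)$ is a solution of the system (\ref{sys:E2}) in the $x$ variable. The new feature, compared with the generic case, is that both parameters are now eigenvalues of $A$, so one must locate the vanishing coordinates and confirm that the block structure of (\ref{sys:E0}) is reproduced.

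First I would extract the leading behavior near $x=t_{p+1}$ from Theorem \ref{thm:asymp_Winf_xi_inf}. Combining (\ref{asymp:W_xi_inf}) with the definition of $V^{\langle\mathcal{S}^{\prime\pm}_i;\infty\xi\rangle}_{\infty,k,h}$, the $\Gamma$- and exponential factors cancel exactly as in the generic case, leaving the asymptotics $(x-t_{p+1})^{\mu_{q-1}+\mu_{q}-\mu_{k}}\{\varepsilon_{2n}(n+m_1+\cdots+m_{k-1}+h)+O(x-t_{p+1})\}$ as $x\to t_{p+1}$, $x\in\mathcal{S}^{\mp}_i$. Since $1\le k\le q-2$ and $1\le h\le m_{k}$, the index $n+m_1+\cdots+m_{k-1}+h$ does not exceed $n+m_1+\cdots+m_{q-2}=2n-m_{q-1}-m_{q}$, so this leading vector lies in the first $2n-m_{q-1}-m_{q}$ coordinates and matches the exponent $\mu_{q-1}+\mu_{q}-\mu_{k}$ and the initial datum $G''_{t_{p+1},k,h}(0)=\varepsilon_{2n-m_{q-1}-m_{q}}(n+m_1+\cdots+m_{k-1}+h)$ of $U''_{t_{p+1},k,h}(x)$.

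The essential new step is to verify that the last $m_{q-1}+m_{q}$ coordinates of the integral vanish identically. Because $V^{\langle\mathcal{S}^{\prime\pm}_i;\infty\xi\rangle}_{\infty,k,h}$ is a scalar multiple of $W^{\langle\mathcal{S}^{\prime\pm}_i;\infty\xi\rangle}_{\infty,k,h}$, I would invoke Proposition \ref{prop:Wvanish}: taking $l=q$ in (\ref{vanish:Wnu1mul}) annihilates the coordinates $n+m_1+\cdots+m_{q-1}+1\le m\le 2n$, and taking $l=q-1$ in (\ref{vanish:Wmulnu2}) annihilates the coordinates $n+m_1+\cdots+m_{q-2}+1\le m\le n+m_1+\cdots+m_{q-1}$; together these are precisely the last $m_{q-1}+m_{q}$ coordinates. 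The delicate point is that neither statement falls under its listed exception: the exception in (\ref{vanish:Wnu1mul}) requires the second parameter to equal $\mu_{k}$ (here it is $\mu_{q}$, and $k\le q-2<q$), while that in (\ref{vanish:Wmulnu2}) requires the first parameter to equal $\mu_{k}$ (here it is $\mu_{q-1}$, and $k\le q-2<q-1$); both are excluded exactly by the hypothesis $k\le q-2$.

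Consequently the integral takes the form $\begin{pmatrix}V''\\0_{m_{q-1}+m_{q}}\end{pmatrix}$, so its upper block $V''$ solves the reduced system (\ref{sys:E0}). By the leading behavior recorded above, $V''$ has exponent $\mu_{q-1}+\mu_{q}-\mu_{k}$ at $t_{p+1}$ with initial vector $\varepsilon_{2n-m_{q-1}-m_{q}}(n+m_1+\cdots+m_{k-1}+h)$, and the uniqueness of the local solution $U''_{t_{p+1},k,h}(x)$ characterized in Section \ref{sec:localsol} then forces $V''=U''_{t_{p+1},k,h}$, which is the asserted identity. I expect the index bookkeeping of the vanishing step---confirming that the two exception clauses are inactive under $k\le q-2$---to be the only point demanding care; the remainder is parallel to the generic case.
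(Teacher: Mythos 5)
Your proposal is correct and is exactly the argument the paper intends: the paper omits an explicit proof for the reducible cases, relying on the generic-case computation (Proposition 6.3 via Theorem 5.8) for the exponent and initial vector at $x=t_{p+1}$, together with Proposition 5.13 for the vanishing of the last $m_{q-1}+m_{q}$ coordinates. Your index bookkeeping — applying (5.12) with $l=q$ and (5.13) with $l=q-1$ and checking that the exception clauses are inactive precisely because $k\leq q-2$ — and the final appeal to uniqueness of the local solution of the reduced system match the intended reasoning.
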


\begin{theorem}
For $1\leq i\leq p$ the coefficients
$C''_{\infty,\mu_{q-1},\tilde{h};t_i,k,h}$
and
$C''_{\infty,\mu_{q},\tilde{h};t_i,k,h}$
in the connection formula
\begin{displaymath}
U''_{t_i,k,h}(x)
=
 \sum_{\tilde{h}\in\Lambda_{q}}
C''_{\infty,\mu_{q-1},\tilde{h};t_i,k,h}
U''_{\infty,\mu_{q-1},\tilde{h}}(x)
+
 \sum_{\tilde{h}\in\Lambda_{q-1}}
C''_{\infty,\mu_{q},\tilde{h};t_i,k,h}
U''_{\infty,\mu_{q},\tilde{h}}(x)
\quad
(1\leq k\leq r_i, \ 1\leq h\leq \ell_{i,k})
\end{displaymath}
for $x\in\mathcal{S}^{\pm}_{i}$
are given by
\begin{alignat}{2}
C''_{\infty,\mu_{q-1},\tilde{h};t_i,k,h}
&=
e^{\pi\sqrt{-1}(\lambda_{i,k}-\mu_{q})}
(t_i-t_{p+1})^{2\lambda_{i,k}-\mu_{q-1}-\mu_{q}}
 \frac{\Gamma(\mu_{q-1}-\mu_{q})\Gamma(\lambda_{i,k}+1)}
      {\Gamma(\mu_{q-1}+1)\Gamma(\lambda_{i,k}-\mu_{q})}
 \gamma_{\tilde{h};t'_i,k,h}(-\mu_{q}-1) \notag
\\
 &&\llap{$(\tilde{h}\in\Lambda_{q}, \
           1\leq k\leq r_i, \ 1\leq h\leq \ell_{i,k}),$} \notag
\\
C''_{\infty,\mu_{q},\tilde{h};t_i,k,h}
&=
e^{\pi\sqrt{-1}(\lambda_{i,k}-\mu_{q-1})}
(t_i-t_{p+1})^{2\lambda_{i,k}-\mu_{q-1}-\mu_{q}}
 \frac{\Gamma(\mu_{q}-\mu_{q-1})\Gamma(\lambda_{i,k}+1)}
      {\Gamma(\mu_{q}+1)\Gamma(\lambda_{i,k}-\mu_{q-1})}
 \gamma_{\tilde{h};t'_i,k,h}(-\mu_{q-1}-1) \notag
\\
 &&\llap{$(\tilde{h}\in\Lambda_{q-1}, \
           1\leq k\leq r_i, \ 1\leq h\leq \ell_{i,k}).$} \notag
\end{alignat}
Besides,
the coefficients
$C^{\prime\prime\langle\mathcal{S}^{\pm}_{i}\rangle}
  _{\infty,\mu_{q-1},\tilde{h};t_{p+1},k,h}$
and
$C^{\prime\prime\langle\mathcal{S}^{\pm}_{i}\rangle}
  _{\infty,\mu_{q},\tilde{h};t_{p+1},k,h}$
in the connection formula
\begin{displaymath}
\begin{aligned}
U''_{t_{p+1},k,h}(x)
=
 \sum_{\tilde{h}\in\Lambda_{q}}
C^{\prime\prime\langle\mathcal{S}^{\pm}_{i}\rangle}
 _{\infty,\mu_{q-1},\tilde{h};t_{p+1},k,h}
U''_{\infty,\mu_{q-1},\tilde{h}}(x)
+
 \sum_{\tilde{h}\in\Lambda_{q-1}}
C^{\prime\prime\langle\mathcal{S}^{\pm}_{i}\rangle}
 _{\infty,\mu_{q},\tilde{h};t_{p+1},k,h}
U''_{\infty,\mu_{q},\tilde{h}}(x)
\\
(1\leq k\leq q-2, \ 1\leq h\leq m_{k})
\end{aligned}
\end{displaymath}
for $x\in\mathcal{S}^{\pm}_{i}$
are given by
\begin{alignat}{2}
C^{\prime\prime\langle\mathcal{S}^{\pm}_{i}\rangle}
 _{\infty,\mu_{q-1},\tilde{h};t_{p+1},k,h}
&=
\frac{\Gamma(\mu_{q-1}-\mu_{q})\Gamma(\mu_{q-1}+\mu_{q}-\mu_{k}+1)}
      {\Gamma(\mu_{q-1}+1)\Gamma(\mu_{q-1}-\mu_{k}+1)}
\gamma^{\langle\mathcal{S}^{\prime\mp}_{i}\rangle}
      _{\tilde{h};\infty,k,h}(-\mu_{q}-1) \notag
\\
 &&\llap{$(\tilde{h}\in\Lambda_{q}, \
           1\leq k\leq q-2, \ 1\leq h\leq m_{k}),$} \notag
\\
C^{\prime\prime\langle\mathcal{S}^{\pm}_{i}\rangle}
 _{\infty,\mu_{q},\tilde{h};t_{p+1},k,h}
&=
\frac{\Gamma(\mu_{q}-\mu_{q-1})\Gamma(\mu_{q-1}+\mu_{q}-\mu_{k}+1)}
      {\Gamma(\mu_{q}+1)\Gamma(\mu_{q}-\mu_{k}+1)}
\gamma^{\langle\mathcal{S}^{\prime\mp}_{i}\rangle}
      _{\tilde{h};\infty,k,h}(-\mu_{q-1}-1) \notag
\\
 &&\llap{$(\tilde{h}\in\Lambda_{q-1}, \
           1\leq k\leq q-2, \ 1\leq h\leq m_{k}).$} \notag
\end{alignat}
\end{theorem}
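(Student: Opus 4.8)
The plan is to follow the generic-case argument verbatim, specializing the master relations of Theorem~\ref{thm:lincomb_Veta0xi} to $\nu_1=\mu_{q-1}$, $\nu_2=\mu_q$ and composing them with the three propositions of this subsection that identify the reduced local solutions $U''$. Concretely, I would start from (\ref{exp:Vt'i_by_Veta0xi}) with $(\nu_1,\nu_2)=(\mu_{q-1},\mu_q)$, whose right-hand side is a sum over $j\in\{1,2\}$ of scalar multiples of $W^{\langle\mathcal{S}^{\prime\pm}_i;\eta_0\xi\rangle}_{t'_i,k,h}(\nu_j,\nu_{j'};\xi)$. The second proposition of this subsection expresses the left-hand integral $V^{\langle\mathcal{S}^{\prime\pm}_i;t'_i\xi\rangle}_{t'_i,k,h}(\mu_{q-1},\mu_q;\xi)$ as an explicit scalar multiple of $U''_{t_i,k,h}$ (embedded with vanishing lower block), while the first proposition expresses the two integrals $W^{\langle\mathcal{S}^{\prime\pm}_i;\eta_0\xi\rangle}_{t'_i,k,h}(\mu_{q-1},\mu_q;\xi)$ and $W^{\langle\mathcal{S}^{\prime\pm}_i;\eta_0\xi\rangle}_{t'_i,k,h}(\mu_q,\mu_{q-1};\xi)$ through $U''_{\infty,\mu_q,\tilde h}$ (over $\tilde h\in\Lambda_{q-1}$) and $U''_{\infty,\mu_{q-1},\tilde h}$ (over $\tilde h\in\Lambda_q$), respectively. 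After the substitution (\ref{change:xitox}) with $t=t_{p+1}$, feeding all three into (\ref{exp:Vt'i_by_Veta0xi}) converts the identity into the asserted connection formula for $U''_{t_i,k,h}$.

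The $j=1$ summand, carrying $(\nu_j,\nu_{j'})=(\mu_{q-1},\mu_q)$, then produces the $U''_{\infty,\mu_q,\tilde h}$ terms, and the $j=2$ summand the $U''_{\infty,\mu_{q-1},\tilde h}$ terms; after relabelling $j\leftrightarrow j'$ these are exactly the two sums in the statement. The coefficients are read off by collapsing the accumulated $\Gamma$- and $\sin$-factors with the reflection formula $\Gamma(z)\Gamma(1-z)=\pi/\sin\pi z$ — for instance $\sin\pi\mu_q\,\Gamma(-\mu_q)=-\pi/\Gamma(\mu_q+1)$ and $\pi/\{\sin\pi(\mu_q-\mu_{q-1})\,\Gamma(\mu_{q-1}-\mu_q+1)\}=\Gamma(\mu_q-\mu_{q-1})$ — and by dividing through the scalar prefactor $e^{-\pi\sqrt{-1}\lambda_{i,k}}(t_i-t_{p+1})^{\mu_{q-1}+\mu_q-2\lambda_{i,k}}$ coming from the left-hand proposition; this reproduces precisely $C''_{\infty,\mu_{q-1},\tilde h;t_i,k,h}$ and $C''_{\infty,\mu_q,\tilde h;t_i,k,h}$. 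The formula for $U''_{t_{p+1},k,h}$ is obtained identically from (\ref{exp:Vinf_by_Veta0xi}) together with the third and the remaining $W$-propositions, the only change being that the left-hand prefactor is now trivial.

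I expect the $\Gamma$-quotient bookkeeping to be purely mechanical once the reflection formula is applied term by term; the genuine point, and the reason this case is separated from the generic one, is reducibility. Since both $\mu_{q-1}=\rho_1$ and $\mu_q=\rho_2$ are eigenvalues of $A$, every integral in play must have its last $m_{q-1}+m_q$ components vanish, so that all the relations descend from $\mathbb{C}^{2n}$ to the block $\mathbb{C}^{2n-m_{q-1}-m_q}$ governing (\ref{sys:E0}); this is exactly what Proposition~\ref{prop:Wvanish} supplies, and it is what forces the restricted index sets $\Lambda_{q-1}$ and $\Lambda_q$. The main care needed is therefore in matching these index sets and the admissible ranges of $\tilde h$ for the $\mu_{q-1}$- versus $\mu_q$-components, together with the observation that the $W^{\langle\mathcal{S}^{\prime\pm}_i;\eta_0\xi\rangle}_{\infty,k,h}$ relations of the first proposition hold only for $k\ne q-1$ or for $k\le q-1$; this last restriction is what cuts the range down to $1\le k\le q-2$ in the connection formula for $U''_{t_{p+1},k,h}$.
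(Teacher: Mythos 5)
Your proposal is correct and follows essentially the same route as the paper: the paper leaves this theorem without an explicit proof precisely because it is the generic-case argument (substituting the subsection's three propositions into (\ref{exp:Vt'i_by_Veta0xi}) and (\ref{exp:Vinf_by_Veta0xi}) with $\nu_{1}=\mu_{q-1}$, $\nu_{2}=\mu_{q}$, then collapsing the $\sin$- and $\Gamma$-factors via the reflection formula), with Proposition \ref{prop:Wvanish} supplying the vanishing of the last $m_{q-1}+m_{q}$ components that yields the index sets $\Lambda_{q-1}$, $\Lambda_{q}$ and the range $1\leq k\leq q-2$. Your $\Gamma$-bookkeeping and the matching of which $j$-summand feeds which sum both check out, so there is nothing further to add.
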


\begin{theorem}
For $1\leq i\leq p-1$ the coefficients
$C''_{t_{i+1},\tilde{k},\tilde{h};t_i,k,h}$
in the connection formula
\begin{displaymath}
U''_{t_i,k,h}(x)
=
\sum_{\tilde{k}=1}^{r_{i+1}} \sum_{\tilde{h}=1}^{\ell_{i+1,\tilde{k}}}
C''_{t_{i+1},\tilde{k},\tilde{h};t_i,k,h}
U''_{t_{i+1},\tilde{k},\tilde{h}}(x)
+\mathrm{hol}(x-t_{i+1})
\quad
(1\leq k\leq r_i, \ 1\leq h\leq \ell_{i,k})
\end{displaymath}
for $x\in\mathcal{S}^{+}_{i}\cup
         \mathcal{S}^{-}_{i+1}$
are given by
\begin{displaymath}
\begin{aligned}
C''_{t_{i+1},\tilde{k},\tilde{h};t_i,k,h}
=
\frac{e^{\pi\sqrt{-1}\lambda_{i,k}}
      (t_i-t_{p+1})^{2\lambda_{i,k}-\mu_{q-1}-\mu_{q}}}
     {e^{\pi\sqrt{-1}\lambda_{i+1,\tilde{k}}}
      (t_{i+1}-t_{p+1})^{2\lambda_{i+1,\tilde{k}}-\mu_{q-1}-\mu_{q}}}
c_{t'_{i+1},\tilde{k},\tilde{h};t'_i,k,h}
\\
(1\leq \tilde{k}\leq r_{i+1}, \ 1\leq \tilde{h}\leq \ell_{i+1,\tilde{k}}, \
 1\leq k\leq r_i, \ 1\leq h\leq \ell_{i,k}).
\end{aligned}
\end{displaymath}
Besides,
for $2\leq i\leq p$ the coefficients
$C''_{t_{i-1},\tilde{k},\tilde{h};t_i,k,h}$
in the connection formula
\begin{displaymath}
U''_{t_i,k,h}(x)
=
\sum_{\tilde{k}=1}^{r_{i-1}} \sum_{\tilde{h}=1}^{\ell_{i-1,\tilde{k}}}
C''_{t_{i-1},\tilde{k},\tilde{h};t_i,k,h}
U''_{t_{i-1},\tilde{k},\tilde{h}}(x)
+\mathrm{hol}(x-t_{i-1})
\quad
(1\leq k\leq r_i, \ 1\leq h\leq \ell_{i,k})
\end{displaymath}
for $x\in\mathcal{S}^{-}_{i}\cup
         \mathcal{S}^{+}_{i-1}$
are given by
\begin{displaymath}
\begin{aligned}
C''_{t_{i-1},\tilde{k},\tilde{h};t_i,k,h}
=
\frac{e^{\pi\sqrt{-1}\lambda_{i,k}}
      (t_i-t_{p+1})^{2\lambda_{i,k}-\mu_{q-1}-\mu_{q}}}
     {e^{\pi\sqrt{-1}\lambda_{i-1,\tilde{k}}}
      (t_{i-1}-t_{p+1})^{2\lambda_{i-1,\tilde{k}}-\mu_{q-1}-\mu_{q}}}
c_{t'_{i-1},\tilde{k},\tilde{h};t'_i,k,h}
\\
(1\leq \tilde{k}\leq r_{i-1}, \ 1\leq \tilde{h}\leq \ell_{i-1,\tilde{k}}, \
 1\leq k\leq r_i, \ 1\leq h\leq \ell_{i,k}).
\end{aligned}
\end{displaymath}
\end{theorem}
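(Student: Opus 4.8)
The plan is to follow verbatim the argument used for the generic-case theorem on the connection between neighbouring finite singular points, now with the pair $(\rho_1,\rho_2)$ replaced by $(\mu_{q-1},\mu_q)$ and the local solutions $U_{t_i,k,h}$ replaced by the solutions $U''_{t_i,k,h}$ of the system $(\ref{sys:E0})$. Both ingredients are already in hand: the abstract relation $(\ref{exp:Vt'ixi_by_Vt'i+1xi})$ of Theorem $\ref{thm:rel_Vti_ti+1}$ between the normalized integrals $V^{\langle\mathcal{S}^{\prime\pm}_i;t'_i\xi\rangle}_{t'_i,k,h}$ at adjacent singular points, and the proposition of this subsection identifying $V^{\langle\mathcal{S}^{\prime\pm}_i;t'_i\rangle}_{t'_i,k,h}(\mu_{q-1},\mu_q;\eta_0+\frac{1}{x-t_{p+1}})$ with $\bigl(\begin{smallmatrix}U''_{t_i,k,h}(x)\\0_{m_{q-1}+m_q}\end{smallmatrix}\bigr)$ up to the scalar $e^{-\pi\sqrt{-1}\lambda_{i,k}}(t_i-t_{p+1})^{\mu_{q-1}+\mu_q-2\lambda_{i,k}}$.

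First I would specialize $(\ref{exp:Vt'ixi_by_Vt'i+1xi})$ to $\nu_1=\mu_{q-1}$, $\nu_2=\mu_q$. This is legitimate, since $\mu_{q-1}$ and $\mu_q$ satisfy the standing hypotheses $(\ref{assumption:NUj_1})$--$(\ref{assumption:NUj_3})$: by $(\ref{assumption:underlying})$ one has $\mu_j\notin\mathbb{Z}$ and $\lambda_{i,k}-\mu_j\notin\mathbb{Z}$, and by $(\ref{assumption:E2_2})$ one has $\mu_{q-1}-\mu_k,\ \mu_q-\mu_k\notin\mathbb{Z}$ for $k\ne q-1,q$, while the remaining difference is $0$. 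Next I would insert the identifying proposition both for the index $i$ on the left-hand side and for the index $i+1$ in each summand on the right-hand side, turning the relation into one between the $2n$-column vectors $\bigl(\begin{smallmatrix}U''_{t_i,k,h}\\0\end{smallmatrix}\bigr)$ and $\bigl(\begin{smallmatrix}U''_{t_{i+1},\tilde{k},\tilde{h}}\\0\end{smallmatrix}\bigr)$.

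Reading off the first $2n-m_{q-1}-m_q$ entries then produces the asserted connection formula for $U''_{t_i,k,h}$, in which the coefficient of $U''_{t_{i+1},\tilde{k},\tilde{h}}$ equals the ratio of the two normalizing scalars times $c_{t'_{i+1},\tilde{k},\tilde{h};t'_i,k,h}$; collecting the exponential factor and the powers of $(t_i-t_{p+1})$ and $(t_{i+1}-t_{p+1})$ yields precisely $C''_{t_{i+1},\tilde{k},\tilde{h};t_i,k,h}$. The statement for $x\in\mathcal{S}^-_i\cup\mathcal{S}^+_{i-1}$ follows identically from $(\ref{exp:Vt'ixi_by_Vt'i-1xi})$. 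The only point demanding attention is the remainder term: one must check that $\mathrm{hol}(\xi-t'_{i+1})$ in $(\ref{exp:Vt'ixi_by_Vt'i+1xi})$ becomes $\mathrm{hol}(x-t_{i+1})$ after the substitution $\xi=\eta_0+\frac{1}{x-t_{p+1}}$. This is immediate from $(\ref{relation:xi-t'i_x-ti})$ with $i$ replaced by $i+1$, which writes $\xi-t'_{i+1}$ as $(x-t_{i+1})$ times a factor that is holomorphic and non-vanishing near $x=t_{i+1}$, so holomorphy in $\xi-t'_{i+1}$ transfers to holomorphy in $x-t_{i+1}$. This bookkeeping, rather than any new analytic estimate, is the main thing to be verified.
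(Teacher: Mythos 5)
Your proposal is correct and matches the paper's intended argument: the paper proves this theorem only implicitly, by stating the identification propositions of the reducible case (ii) subsection and leaving the reader to repeat the generic-case proof, i.e.\ substituting the identification of $V^{\langle\mathcal{S}^{\prime\pm}_i;t'_i\rangle}_{t'_i,k,h}(\mu_{q-1},\mu_q;\cdot)$ with the $U''$-solutions into $(\ref{exp:Vt'ixi_by_Vt'i+1xi})$ and $(\ref{exp:Vt'ixi_by_Vt'i-1xi})$ specialized at $\nu_1=\mu_{q-1}$, $\nu_2=\mu_q$. Your verification of the standing hypotheses $(\ref{assumption:NUj_1})$--$(\ref{assumption:NUj_3})$ and of the transfer of the holomorphic remainder under $(\ref{relation:xi-t'i_x-ti})$ is exactly the bookkeeping the paper leaves implicit.
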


\begin{theorem}
For $1\leq i\leq p$ the coefficients
$C^{\prime\prime\pm}_{t_{p+1},\tilde{k},\tilde{h};t_i,k,h}$
in the connection formula
\begin{displaymath}
U''_{t_i,k,h}(x)
=
\sum_{\tilde{k}=1}^{q-2} \sum_{\tilde{h}=1}^{m_{\tilde{k}}}
C^{\prime\prime\pm}_{t_{p+1},\tilde{k},\tilde{h};t_i,k,h}
U''_{t_{p+1},\tilde{k},\tilde{h}}(x)
+\mathrm{hol}\left(x-t_{p+1}\right)
\quad
(1\leq k\leq r_i, \ 1\leq h\leq \ell_{i,k})
\end{displaymath}
for $x\in\mathcal{S}^{\pm}_{i}$
are given by
\begin{displaymath}
\begin{aligned}
C^{\prime\prime\pm}_{t_{p+1},\tilde{k},\tilde{h};t_i,k,h}
=
e^{\pm\pi\sqrt{-1}\lambda_{i,k}}
(t_i-t_{p+1})^{2\lambda_{i,k}-\mu_{q-1}-\mu_{q}}
\frac{\Gamma(\mu_{\tilde{k}}+1)
      \Gamma(\mu_{\tilde{k}}-\mu_{q-1}-\mu_{q})}
     {\Gamma(\mu_{\tilde{k}}-\mu_{q-1})
      \Gamma(\mu_{\tilde{k}}-\mu_{q})}
c_{\infty,\tilde{k},\tilde{h};t'_i,k,h}
\\
(1\leq \tilde{k}\leq q-2, \ 1\leq \tilde{h}\leq m_{\tilde{k}}, \
 1\leq k\leq r_i, \ 1\leq h\leq \ell_{i,k}),
\end{aligned}
\end{displaymath}
where the double-signs correspond.
\end{theorem}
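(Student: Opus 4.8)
The plan is to follow verbatim the argument used in the generic case for the formula (\ref{conn_coef:Ui_Up+1}), now specialized to $\nu_{1}=\mu_{q-1}$ and $\nu_{2}=\mu_{q}$, and to invoke the three propositions of this subsection in place of Propositions \ref{thm:V_Uti} and \ref{thm:V_Utp+1}. Concretely, I would start from the relation (\ref{exp:Vt'ixi_by_Vinfxi_S'-}) for $\xi\in\mathcal{S}^{\prime-}_i$ (equivalently $x\in\mathcal{S}^{+}_i$) and from (\ref{exp:Vt'ixi_by_Vinfxi_S'+}) for $\xi\in\mathcal{S}^{\prime+}_i$ (equivalently $x\in\mathcal{S}^{-}_i$), setting $\nu_{1}=\mu_{q-1}$ and $\nu_{2}=\mu_{q}$ throughout. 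Then I would substitute, on the left, the proposition of this subsection expressing $V^{\langle\mathcal{S}^{\prime\pm}_i;t'_i\rangle}_{t'_i,k,h}(\mu_{q-1},\mu_{q};\cdot)$ as the scalar $e^{-\pi\sqrt{-1}\lambda_{i,k}}(t_i-t_{p+1})^{\mu_{q-1}+\mu_{q}-2\lambda_{i,k}}$ times $\bigl(\begin{smallmatrix}U''_{t_i,k,h}(x)\\ 0\end{smallmatrix}\bigr)$, and, on the right, the proposition identifying $V^{\langle\mathcal{S}^{\prime\pm}_i;\infty\rangle}_{\infty,\tilde{k},\tilde{h}}(\mu_{q-1},\mu_{q};\cdot)$ with $\bigl(\begin{smallmatrix}U''_{t_{p+1},\tilde{k},\tilde{h}}(x)\\ 0\end{smallmatrix}\bigr)$. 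Solving the resulting identity for the coefficient of $U''_{t_{p+1},\tilde{k},\tilde{h}}$ then yields the claimed formula, the double sign $e^{\pm\pi\sqrt{-1}\lambda_{i,k}}$ being produced exactly as in the generic case by the extra factor $e^{-2\pi\sqrt{-1}\lambda_{i,k}}$ carried by (\ref{exp:Vt'ixi_by_Vinfxi_S'+}).

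The step that requires genuine attention—and which is the entire content of the ``reducible case (ii)'' specialization—is the collapse of the sum over $\tilde{k}$ from the full range $1\le\tilde{k}\le q$ down to $1\le\tilde{k}\le q-2$. In (\ref{exp:Vt'ixi_by_Vinfxi_S'-}) the $\tilde{k}$-th coefficient carries the factor $\Gamma(\mu_{\tilde{k}}-\nu_{1})^{-1}\Gamma(\mu_{\tilde{k}}-\nu_{2})^{-1}$; upon setting $\nu_{1}=\mu_{q-1}$ the term $\tilde{k}=q-1$ acquires $\Gamma(\mu_{q-1}-\mu_{q-1})^{-1}=\Gamma(0)^{-1}=0$, and upon setting $\nu_{2}=\mu_{q}$ the term $\tilde{k}=q$ acquires $\Gamma(\mu_{q}-\mu_{q})^{-1}=\Gamma(0)^{-1}=0$. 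I would then verify that these two vanishing coefficients multiply bona fide finite quantities $V^{\langle\mathcal{S}^{\prime\pm}_i;\infty\rangle}_{\infty,q-1,\tilde{h}}$ and $V^{\langle\mathcal{S}^{\prime\pm}_i;\infty\rangle}_{\infty,q,\tilde{h}}$, so that no $0\cdot\infty$ indeterminacy arises; this finiteness holds because the hypotheses of Theorem \ref{thm:asymp_Winf_xi_inf} are satisfied for these two indices under (\ref{assumption:E2_2}) and (\ref{assumption:underlying}) (namely $\mu_{q}-\mu_{q-1}\notin\mathbb{Z}_{<0}$ and $\mu_{j}\notin\mathbb{Z}$). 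This is precisely why the propositions of this subsection identify $V^{\langle\mathcal{S}^{\prime\pm}_i;\infty\rangle}_{\infty,\tilde{k},\tilde{h}}$ with a genuine local solution $U''_{t_{p+1},\tilde{k},\tilde{h}}$ only for $1\le\tilde{k}\le q-2$: the exceptional eigen-indices $q-1$ and $q$ never survive into the sum.

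Having discarded the two vanishing terms, the remainder is purely arithmetic. After dividing through by the left-hand scalar $e^{-\pi\sqrt{-1}\lambda_{i,k}}(t_i-t_{p+1})^{\mu_{q-1}+\mu_{q}-2\lambda_{i,k}}$, the coefficient of $U''_{t_{p+1},\tilde{k},\tilde{h}}(x)$ reads $e^{\pm\pi\sqrt{-1}\lambda_{i,k}}(t_i-t_{p+1})^{2\lambda_{i,k}-\mu_{q-1}-\mu_{q}}\Gamma(\mu_{\tilde{k}}+1)\Gamma(\mu_{\tilde{k}}-\mu_{q-1}-\mu_{q})\,c_{\infty,\tilde{k},\tilde{h};t'_i,k,h}/\bigl(\Gamma(\mu_{\tilde{k}}-\mu_{q-1})\Gamma(\mu_{\tilde{k}}-\mu_{q})\bigr)$, which is the asserted value of $C^{\prime\prime\pm}_{t_{p+1},\tilde{k},\tilde{h};t_i,k,h}$. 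No analogue of the trigonometric identity (\ref{identity:sin}) is required at this stage, since (\ref{exp:Vt'ixi_by_Vinfxi_S'-}) and (\ref{exp:Vt'ixi_by_Vinfxi_S'+}) are already the single-term-per-$(\tilde{k},\tilde{h})$ expansions; the only remaining care is to keep the branch bookkeeping of the sectors consistent with the correspondence $\xi\in\mathcal{S}^{\prime\mp}_i\leftrightarrow x\in\mathcal{S}^{\pm}_i$. Thus the main obstacle is conceptual rather than computational, namely confirming the clean dropout of the indices $q-1$ and $q$ and the attendant absence of any indeterminate product.
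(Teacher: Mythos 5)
Your proposal is correct and follows exactly the route the paper intends: the paper leaves this proof implicit, but by analogy with the generic case it amounts to substituting the reducible-case propositions into (\ref{exp:Vt'ixi_by_Vinfxi_S'-}) and (\ref{exp:Vt'ixi_by_Vinfxi_S'+}) at $\nu_{1}=\mu_{q-1}$, $\nu_{2}=\mu_{q}$, with the indices $\tilde{k}=q-1,q$ dropping out because the reciprocal factors $\Gamma(\mu_{\tilde{k}}-\nu_{1})^{-1}$, $\Gamma(\mu_{\tilde{k}}-\nu_{2})^{-1}$ become $\Gamma(0)^{-1}=0$ while the accompanying $V$-integrals remain finite. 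Your explicit verification of the absence of a $0\cdot\infty$ indeterminacy is exactly the point the paper relies on without comment.
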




\begin{thebibliography}{00}
\bibitem{BJL} W.\ Balser, W.\ B.\ Jurkat and D.\ A.\ Lutz,
  On the reduction of connection problems for differential
  equations with an irregular singular point to ones with
  only regular singularities, I,
  {\itshape SIAM J.\ Math.\ Anal.}, {\bf 12} (1981), 691--721.
\bibitem{B} G.\ D.\ Birkhoff,
  Singular points of ordinary linear differential equations,
  {\itshape Trans.\ Amer.\ Math.\ Soc.}, {\bf 10} (1909), 436--470.
\bibitem{D} G.\ Darboux,
  ``Le\c cons sur la th\'eorie g\'en\'erale des surfaces
  et les applications g\'eom\'etriques du calcul infinit\'esimal'',
  tome II, Gauthier-Villars, Paris, 1915.
\bibitem{Ha} Y.\ Haraoka,
  Integral representations of solutions of differential
  equations free from accessory parameters,
  {\itshape Adv.\ Math.}, {\bf 169} (2002), 187--240.
\bibitem{Hi} E.\ Hille,
  ``Ordinary Differential Equations in the Complex Domain'',
  John Wiley \& Sons, Inc., New York, 1976.
\bibitem{I}  E.\ L.\ Ince,
  ``Ordinary Differential Equations'', Dover, New York, 1956.
\bibitem{IKSY} K.\ Iwasaki, H.\ Kimura, S.\ Shimomura and M.\ Yoshida,
  ``From Gauss to Painlev\'e: A Modern Theory of Special Functions'',
  Vieweg, Wiesbaden, 1991.
\bibitem{K} M.\ Kohno,
  ``Global Analysis in Linear Differential Equations'',
  Kluwer Academic Publishers, Dordrecht, 1999.
\bibitem{M} W.\ Miller, Jr.,
  Symmetries of differential equations.
  The hypergeometric and Euler-Darboux equations,
  {\itshape SIAM J.\ Math.\ Anal.}, {\bf 4} (1973), 314--328.
\bibitem{O1} K.\ Okubo,
  Connection problems for systems of linear differential
  equations,
  {\itshape in}
  ``Japan-United States Seminar on Ordinary Differential
  Equations (Kyoto, 1971)'', 238--248.
  Lecture Notes in Math., {\bf 243}, Springer, Berlin, 1971.
\bibitem{O2} K.\ Okubo,
  ``On the group of Fuchsian equations'',
  Seminar Reports of Tokyo Metropolitan University, Tokyo, 1987.
\bibitem{S1} R.\ Sch\"afke,
  \"Uber das globale analytische Verhalten der L\"osungen
  der \"uber die Laplacetransformation zusammenh\"angenden
  Differentialgleichungen $tx'=(A+tB)x$ und
  $(s-B)v'=(\rho-A)v$,
  Doctoral Dissertation, University of Essen, Germany, 1979.
\bibitem{S2} R.\ Sch\"afke,
  \"Uber das globale Verhalten der Normall\"osungen
  von $x'(t)=(B+t^{-1}A)x(t)$ und zweier Arten von
  assoziierten Funktionen,
  {\itshape Math.\ Nachr.}, {\bf 121} (1985), 123--145.
\bibitem{T}  N.\ Takayama,
  Propagation of singularities of solutions of the
  Euler-Darboux equation and a global structure of the space of
  holonomic solutions
  I, {\itshape Funkcial.\ Ekvac.}, {\bf 35} (1992), 343--403;
  II, {\itshape ibid.}, {\bf 36} (1993), 187--234.
\bibitem{Y1} T.\ Yokoyama,
  A system of total differential equations of two
  variables and its monodromy group,
  {\itshape Funk.\ Ekvac.}, {\bf 35} (1992), 65--93.
\bibitem{Y2} T.\ Yokoyama,
  Construction of systems of differential equations
  of Okubo normal form with rigid monodromy,
  {\itshape Math.\ Nachr.}, {\bf 279} (2006), 327--348.
\end{thebibliography}
\end{document}